
\documentclass[%
DIV=10, 
english, 
titlepage=false, 
fontsize=11pt,
]{scrartcl}

\usepackage[utf8]{inputenc}
\usepackage[T1]{fontenc}
\usepackage{amsmath}
\usepackage{amssymb}
\usepackage{amsthm}
\usepackage{amsfonts}
\usepackage{babel}
\usepackage{lmodern}
\usepackage{csquotes}
\usepackage{enumitem}
\usepackage{array}
\usepackage{amsfonts}
\usepackage{mathtools}
\usepackage{esint}
\usepackage{array}
\usepackage{authblk}
\usepackage{nccmath}
\usepackage{bold-extra}
\usepackage{gensymb}
\usepackage{cite}
\usepackage{tikz}
\usepackage{placeins}
\usetikzlibrary{hobby}
\usetikzlibrary{backgrounds}

\usepackage{pgf}
\usepackage{pgfplots}
\pgfplotsset{compat=newest}
\usepackage{xcolor}
\usetikzlibrary{plotmarks,backgrounds,patterns}
\usetikzlibrary{external}
\tikzset{external/only named=true}

\usepackage{color}
\definecolor{LinkColor}{rgb}{0,0,1}
\definecolor{LinkColor2}{rgb}{0,0.5,0}
\definecolor{lg}{rgb}{.5,.5,.5}

\usepackage[%
left=1.25in,
right=1.25in,
bottom=1.2in,
top=1in,
footskip=0.5in,
]{geometry}


\usepackage[%
pdftitle={Titel},%
pdfauthor={Autor},%
pdfcreator={LaTeX, LaTeX with hyperref and KOMA-Script},
pdfsubject={Betreff}, 
pdfkeywords={Keywords}
]{hyperref} 
\hypersetup{%
    colorlinks	=true,
    linkcolor	=LinkColor,%
    citecolor	=LinkColor2,%
    urlcolor	=LinkColor,%
}

\setkomafont{section}{\centering\Large\rmfamily\normalfont\scshape}
\setkomafont{subsection}{\large\rmfamily\normalfont\bfseries}
\setkomafont{subsubsection}{\rmfamily\normalfont\bfseries}
\rmfamily
\numberwithin{equation}{section}

\makeatletter
\renewcommand{\@seccntformat}[1]{\csname the#1\endcsname.\hspace{1ex}}
\makeatother

\newtheorem*{Abs}{Abstract}
\newtheorem{Thm}{Theorem}[section]
\newtheorem{Lem}[Thm]{Lemma}

\newtheorem{Cor}[Thm]{Corollary}
\newtheorem{Def}[Thm]{Definition}

\theoremstyle{definition}
\newtheorem{Rem}[Thm]{Remark}

\makeatletter
\renewenvironment{proof}[1][\proofname]{%
    \par\pushQED{\qed}\normalfont%
    \topsep6\p@\@plus6\p@\relax
    \trivlist\item[\hskip\labelsep\bfseries#1\@addpunct{.}]%
    \ignorespaces
}{%
    \popQED\endtrivlist\@endpefalse
}
\makeatother

\makeatletter
\renewcommand\paragraph{\@startsection{paragraph}{4}{\z@}%
    {1ex \@plus1ex \@minus.2ex}%
    {-1em}%
    {\normalfont\normalsize\bfseries}}
\renewcommand\subparagraph{\@startsection{paragraph}{4}{\z@}%
    {1ex \@plus1ex \@minus.2ex}%
    {-1em}%
    {\normalfont\normalsize\itshape}}
\makeatother



\newcommand{\abs}[1]{\left\vert#1\right\vert}

\newcommand{\bigabs}[1]{\big\vert#1\big\vert}

\newcommand{\ap}[3]{\left(#1,#2\right)_{a_{\varepsilon}(#3)}}

\newcommand{\bp}[3]{\left(#1,#2\right)_{b_{\varepsilon}(#3)}}

\newcommand{\B}[1]{\boldsymbol{#1}}

\newcommand{\cp}[3]{\left(#1,#2\right)_{c_{\varepsilon}(#3)}}

\newcommand{\ee}[1]{#1^{\varepsilon}}

\newcommand{\ie}[1]{#1_{\varepsilon}}

\newcommand{\io}[1]{\int_{\Omega}#1\textup{\,d}x}

\newcommand{\ito}[1]{\int_{\tOmega}#1\textup{\,d}x}

\newcommand{\HO}{H^1(\Omega)}

\newcommand{\HOO}{H^1_{(0),\varphi}(\Omega)}

\newcommand{\Hz}{H^1_0(\Omega)}

\newcommand{\limsupse}{\underset{\varepsilon\searrow 0}{\limsup\,}}

\newcommand{\liminfse}{\underset{\varepsilon\searrow 0}{\liminf\,}}

\newcommand{\limse}{\underset{\varepsilon\searrow 0}{\lim\,}}

\newcommand{\Li}{L^{\infty}(\Omega)}

\newcommand{\Lu}{L^{\infty}(\Omega)}

\newcommand{\Lzc}{L^2_{\varphi}(\Omega)}

\newcommand{\norm}[1]{\left\Vert #1\right\Vert}

\newcommand{\tOmega}{\tilde{\Omega}}

\newcommand{\tve}{\tilde{\varphi}_{\eps}}

\newcommand{\dLL}{\;\mathrm d x}


\newcommand{\R}{\mathbb{R}}

\newcommand{\N}{\mathbb{N}}

\newcommand{\eps}{\varepsilon}

\newcommand{\bx}{\B{x}}

\usepackage{selinput}

\begin{document} 

\begin{titlepage}
	\begin{addmargin}{0.5 cm}
		\centering
		\LARGE{\scshape Phase-Field Methods for Spectral\\ Shape and Topology Optimization}\\
		\rmfamily\mdseries
		\vspace{0.02\paperheight} 
		\normalsize
        \today\\
        \vspace{0.02\paperheight} 
		\textsc{Harald Garcke\footnote{Fakul\"at f\"ur Mathematik, Universit\"at Regensburg, 93053 Regensburg, Germany. \href{mailto:harald.garcke@ur.de}{harald.garcke@ur.de},
        \href{mailto:paul.huettl@ur.de}{paul.huettl@ur.de},
        \href{mailto:patrik.knopf@ur.de}{patrik.knopf@ur.de}}, Paul H\"uttl\footnotemark[1], 
        Christian Kahle\footnote{Mathematisches Institut, Universität Koblenz, 56070 Koblenz, Germany. \href{mailto:christian.kahle@uni-koblenz.de}{christian.kahle@uni-koblenz.de}},
        Patrik Knopf\footnotemark[1]
        \newline
        and Tim Laux\footnote{Hausdorff Center for Mathematics, 
        University of Bonn, 53115 Bonn, Germany. \href{mailto:tim.laux@hcm.uni-bonn.de}{tim.laux@hcm.uni-bonn.de}}
        }
        \\[1ex]
		\smallskip 
		\begin{center}
			\footnotesize
			{
				\textit{This is a preprint version of the paper. Please cite as:} \\  
				H. Garcke, P. H\"uttl, C. Kahle, P. Knopf and Tim Laux,\\
                \textit{ESAIM Control Optim. Calc. Var.} \textbf{29} (2023), Paper No. 10 \\ 
				\url{https://doi.org/10.1051/cocv/2022090}
			}
		\end{center}
		\smallskip
		\small
		\begin{Abs}
		\normalfont
		We optimize a selection of eigenvalues of the Laplace operator with Dirichlet or Neumann boundary conditions by adjusting the shape of the domain on which the eigenvalue problem is considered. Here, a phase-field function is used to represent the shapes over which we minimize.
		The idea behind this method is to modify the Laplace operator by introducing phase-field dependent coefficients in order to 
		extend the eigenvalue problem on a fixed design domain containing all admissible shapes. The resulting shape and topology optimization problem can then be formulated as an optimal control problem with PDE constraints in which the phase-field function acts as the control.
		For this optimal control problem, we establish first-order necessary optimality conditions and we rigorously derive its sharp interface limit. 
		Eventually, we present and discuss several numerical simulations for our optimization problem.\\[1ex]
		\end{Abs}
		\flushleft\textbf{Keywords.} Eigenvalue optimization; shape optimization; topology optimization; PDE constrained optimization; phase-field approach; first order condition; sharp interface limit; $\Gamma$-limit; finite element approximation \\[1ex]
		\textbf{AMS subject classification.}
		35P05, 
		35P15, 
		35R35, 
		49M05, 
		49M41, 
		49K20, 
		49J20, 
		49J40 
		49Q10, 
		49R05. 
		\\
	\end{addmargin}
\end{titlepage}


\bigskip
\normalsize
\setlength{\parskip}{1ex}
\setlength{\parindent}{0ex}
\allowdisplaybreaks


\normalsize

\section{Introduction}\label{SEC:Intro}
Optimization linking the shape and topology of a domain to the eigenvalues of an elliptic operator is a fascinating field leading to attractive mathematical problems. At the same time this field has many applications as for instance the mechanical stability of vibrating objects, thermic properties of bodies and wave propagation. However, as stated by Henrot in  \cite{Henrot}, ``\dots, they are very simple to state and generally hard to solve!’’

There is a rich literature on classical shape optimization in the sense that the shape itself is controlling the problem, i.e., the quantities that are varied along the optimization process are themselves domains, which are referred to as \textit{shapes}, see \cite{Allaire2,Buttazzo,Bucur,Henrot,Henrot2}.

A fundamental problem is to optimize the eigenvalues of the Laplacian with either homogeneous Dirichlet or homogeneous Neumann boundary condition by adjusting the shape $D$ on which the eigenvalue problem is considered. To be precise, the overall goal is to find a shape $D$ such that a selection of eigenvalues of the problem
\begin{align}\tag{CL}\label{EWP}
	\left\{
    \begin{aligned}
        -\Delta w &=\lambda w &&\quad\text{in }D,\\
        w\vert_{\partial D} &= 0 &&\quad\text{on }\partial D,
    \end{aligned}
	\right.
	\qquad\text{or}\qquad
	\left\{
	\begin{aligned}
		-\Delta w &=\mu w &&\quad\text{in }D,\\
		\partial_{\B{n}} w &= 0 &&\quad\text{on }\partial D,
	\end{aligned}
	\right.
\end{align}
is optimal in a certain sense. In the following, we refer to \eqref{EWP} as the \textit{classical eigenvalue problem}.

In this paper, we consider a phase-field approximation to optimize a selection of eigenvalues of the classical problem \eqref{EWP}. Our approach shares similarities with the one in \cite{BogoselOudet} but allows for more general penalizing terms in the approximate eigenvalue problem and is able to deal with additional volume constraints and pointwise constraints. 
Furthermore, our analysis does not rely on any further knowledge of sets minimizing the sharp interface problem such as boundedness or openness, as discussed in \cite{DePhilippisVelichkov}. We will now briefly explain the general strategy.

Instead of directly varying the shape $D$, we formulate an approximate eigenvalue problem on a fixed domain $\Omega$, which is called the \textit{design domain}. This domain $\Omega$ comprises any admissible shape, which is now implicitly represented by a phase-field. 
More precisely, instead of interpreting the shape as the unknown quantity in the optimization process, we describe it via the phase-field $\varphi_{\eps}$ which attains the value $+1$ in most parts of the shape and the value $-1$ in most parts of its complement (with respect to the design domain $\Omega$). Close to the boundary of the shape, the phase-field exhibits a diffuse interfacial layer whose thickness is proportional to the interface parameter $\varepsilon>0$. Here a transition between the values $+1$ and $-1$ takes place.

The mentioned approximation is carried out by 
considering an elliptic eigenvalue problem on the whole design domain whose differential operator exhibits phase-field dependent coefficient functions. 
These coefficients are chosen in such a way that the corresponding classical eigenvalue problem \eqref{EWP} is approximately fulfilled on the shape $D_{\eps}=\{\ie{\varphi}\ge 0\}.$ For a more detailed discussion we refer to 
the next section, and especially to Subsection~\ref{Sub:SIAInt}, for explicit choices of the coefficients which ensure that \eqref{EWP} holds on the shape $D=\{\varphi_0=1\}$ in the sharp interface limit $\eps\to 0$.
For an overview of analytical results in the sharp interface case concerning the stability of classical eigenvalue problems under variation of the shape, e.g., continuity of eigenvalues under domain perturbation and shape differentiability, we refer to the books \cite{AttouchButtazzo, Bucur,Henrot, HenrotPierre}. There, the concept of $\gamma$-convergence, which is exactly the notion of convergence of sets under which eigenvalues are continuous, is a key tool. This concept is also needed in our framework as we need to approximate sets of finite perimeter with suitable smooth sets in order to construct a recovery sequence in the $\Gamma$-limit. Here, we will proceed in a similar fashion as in \cite{BogoselOudet}, see Section \ref{Sec:RigSIA}.

Before we present this approach in detail let us review some properties of eigenvalue optimization problems for the Laplace operator.
Let us consider a rectangular shape $D=D_{L,l}=(0,L)\times(0,l)$ with $L,l>0$. Then, the eigenvalues of \eqref{EWP}
are given as
\begin{align}\label{EV2d}
    \pi^2\left(\frac{m^2}{L^2}+\frac{k^2}{l^2}\right).
\end{align}
The difference is that in the Dirichlet case, $m$ and $k$ range over all positive integers, whereas in the Neumann case, $m=0$ and $k=0$ are also taken into account. 
The corresponding $L^2(D)$-normalized eigenfunctions are
\begin{align*}
        w^{D}_{m,k}(x,y)
    =
        \frac{2}{\sqrt{Ll}}
        \sin\left(\frac{m\pi x}{L}\right)
        \sin\left(\frac{k\pi y}{l}\right) 
   \quad m,k\ge 1,
\end{align*}
in the Dirichlet case,
and 
\begin{align*}
        w^{N}_{m,k}(x,y)
    =
        \frac{2}{\sqrt{Ll}}
        \cos\left(\frac{m\pi x}{L}\right)
        \cos\left(\frac{k\pi y}{l}\right)
   \quad m,k\ge 0,
\end{align*}
in the Neumann case. Considering a class of rectangles with fixed area, we hence observe that the smallest non-trivial Neumann eigenvalue approaches zero for very thin and long rectangles, which is not the case for eigenvalues of the Dirichlet problem. This indicates that maximizing the smallest non-trivial Neumann eigenvalue and minimizing the smallest Dirichlet eigenvalue are meaningful problems.

As explained above, classical variational problems vary the shape $D$ in order to optimize the behavior of the smallest non-trivial Dirichlet or Neumann eigenvalue.
In the Dirichlet case, the theorem of Faber--Krahn (see, e.g., \cite[Thm.~3.2.1]{Henrot}) states that for any fixed constant $c>0$, we have
\begin{align}\label{DirOpt}
        \lambda_1(B)
    =
        \min\left\{
            \lambda_1(D)\left| \,D \text{ open subset of }\mathbb{R}^n \text{ with} \abs{D}=c\right.
                \right\},
\end{align}
where $\abs{D}$ denotes the Lebesgue-measure of $D$ and $B$ is any ball in $\mathbb{R}^N$ with volume $c$. 
The analogon in the Neumann case is the theorem of Szeg\H{o}\, and Weinberger (see, e.g., \cite[Thm.~7.1.1]{Henrot}) which states that the first non-trivial eigenvalue satisfies
\begin{align}\label{NeuOpt}
        \mu_1(B)
    =
        \max\left\{
            \mu_1(D)
                \left| 
                    \begin{aligned}
                    &\,D \text{ open subset of }\mathbb{R}^n\text{ with}\\ 
                    &\text{ Lipschitz boundary and }
                    \abs{D}=c
                    \end{aligned}
                \right.
            \right\}.
\end{align}


Now, we briefly want to explain how the optimization problems in this paper are formulated. 
Let us fix an arbitrary $\eps>0$. For any phase-field $\varphi=\varphi_{\eps}$ belonging to some feasible set,
let $\left(\lambda^{\varepsilon,\varphi}_k\right)_{k\in\mathbb{N}}$ and $\left(\mu^{\varepsilon,\varphi}_k\right)_{k\in\mathbb{N}}$ denote the eigenvalues of the approximate equations on the design domain with Dirichlet and Neumann boundary data, respectively. In the Dirichlet case, we minimize the functional
\begin{align}\label{minD}
        J_{\varepsilon}^D(\varphi)
    &=
        \Psi(\lambda_{i_1}^{\varepsilon,\varphi},\dots,\lambda_{i_l}^{\varepsilon,\varphi})
        +\gamma E_{\text{GL}}^{\varepsilon}(\varphi),
\end{align}
and in the Neumann case, we minimize the functional
\begin{align}\label{minN}        
         J_{\varepsilon}^N(\varphi)
     &=
        \Psi(\mu_{i_1}^{\varepsilon,\varphi},\dots,\mu_{i_l}^{\varepsilon,\varphi})
                +\gamma E_{\text{GL}}^{\varepsilon}(\varphi),
\end{align}
where the indices $i_1,\dots,i_l\in \mathbb{N}$ pick eigenvalues from the above sequences and $\gamma>0$ is a given constant.
Here, $E_{\text{GL}}^{\varepsilon}$ stands for the \textit{Ginzburg--Landau energy} which is defined as
\begin{align*}
        E_{\text{GL}}^{\varepsilon}(\varphi)
    =
        \int_{\Omega}
            \left(
                \frac{\varepsilon}{2}\abs{\nabla\varphi}^2
                +\frac{1}{\varepsilon}\psi(\varphi)
            \right)
        \text{\,d}x,
\end{align*}
where $\varepsilon>0$ corresponds to the thickness of the diffuse interface and $\psi$ is a bulk potential that usually has a double-obstacle structure (cf. Section~\ref{GinLan}). In the framework of $\Gamma$-limits, this regularizing term can be understood as an approximation of the perimeter functional penalizing the free boundary of the shape
(see, e.g., \cite{Ambrosio}) and is needed for the optimization problems to be well-posed.
The function $\Psi:(\mathbb{R}_{>0})^l\to \mathbb{R}$ is used to formulate quite different extremal problems involving the above selection of eigenvalues,  such as linear combinations, see Section~\ref{SOPT}.
For a rigorous investigation of the $\Gamma$-limit, a componentwise monotonicity assumption on $\Psi$ will be crucial in order to preserve the monotonicity of eigenvalues with respect to set inclusion.

The goal of the paper at hand is to analyze the optimization problems \eqref{minD} and \eqref{minN}, to rigorously study the sharp interface limit for the Dirichlet problem, and to present and discuss several numerical simulations illustrating this theoretical discussion.

We point out that various methods are proposed in the literature to deal with this kind of shape and topology optimization problems on the sharp interface level. The most common one is the method of boundary variation (see \cite{Murat-Simon, Simon, Sokolowski}).
However, this approach does not allow for topological changes.
This can be overcome by the homogenization method (see \cite{AllaireBook}) or the more special SIMP method (see \cite{Bendsoe}).

A fruitful approach studied also in the context of spectral optimization is the level-set method, see e.g., \cite{Allaire1, Oudet}. Here the evolution of the shape is governed by a Hamilton--Jacobi equation and velocities to evolve the shape are computed in the framework of shape derivatives. This method has moderate numerical costs and also allows for topological changes but the creation of new holes can be challenging, although it can be handled by incorporating topological derivatives, see \cite{Burger}.

We want to mention that a further development of the level-set method from \cite{Oudet} is given in \cite{Antunes, AntunesOudet-NumericalResultsExtremalProblemsEigenvaluesLaplace} which evolves the shape by a gradient descent. The optimization in \cite{Antunes} is restricted to star-shaped domains whereas \cite{AntunesOudet-NumericalResultsExtremalProblemsEigenvaluesLaplace} also allows for multiple connected components. The works \cite{Oudet, Antunes, AntunesOudet-NumericalResultsExtremalProblemsEigenvaluesLaplace} have in common that they make use of a so called genetic algorithm (see \cite{OudetPhd}) in order to obtain a suitable initial guess for the shape.

In the present paper, we combine our phase-field approach with the \texttt{VMPT} method (see \cite{BlankRupprecht}), which is a generalization of the projected gradient method. The main advantage concerning numerical implementation is that it naturally allows for topology changes and especially the nucleation of holes.  
Our approach is capable of reproducing the results of \cite{AntunesOudet-NumericalResultsExtremalProblemsEigenvaluesLaplace}
but is also able to tackle a large variety of spectral shape optimization problems that go far beyond the classical ones in the literature. For instance, our method allows for cost functionals involving linear combinations of eigenvalues and even (at least numerically) for simultaneous maximization and minimization of a selection of eigenvalues. Furthermore, the phase-field method enables us to fix the shape in certain regions but also to place obstacles for the shape. This means we can initially prescribe regions in the design domain that must or must not be covered by the shape, respectively, see Section~\ref{sec:num}.
The phase-field approach was also used numerically in \cite{BogoselOudet} where no additional volume constraint is imposed.

Our paper is structured as follows. In Section~\ref{Sec:Form}, we give a mathematically precise formulation of the model and the optimization problems. Afterwards, in Section~\ref{SEC:Analysis}, we study the diffuse interface problem which allows us to state first-order necessary optimality conditions.  Here, the assumption that the considered eigenvalues are simple will be essential, as otherwise Fr\'echet-differentiability cannot be guaranteed. 
In Section~\ref{SEC:SIDir}, we perform an in-depth analysis of the sharp interface problem in the Dirichlet case.
In particular, we show the continuity of spectral quantities when passing to the limit $\eps\to 0$ under a certain rate condition.
Eventually, we prove an unconditional $\Gamma$-convergence result for the involved cost functionals. In the first part of our proof, we proceed in the spirit of \cite{BogoselOudet}, which contains many highly valuable ideas and key steps. Nevertheless, we present a detailed proof in order to avoid some inconsistencies of \cite{BogoselOudet} concerning the usage of the perimeter.
Moreover, the inclusion of an additional volume constraint in our framework requires a further in-depth analysis. We point out that we cannot prove a rigorous $\Gamma$-convergence result in the Neumann case as a certain coefficient in the approximating phase-field model degenerates in the limit $\eps\to 0$.
Based on the results of Section~\ref{SEC:Analysis}, we present several numerical simulations in Section~\ref{sec:num}, which exemplify optimal shapes in concrete situations.

\section{Formulation of the problem}\label{Sec:Form} 
In this section we introduce the mathematical model and the optimization problems. 

\subsection{The design domain and the phase-field variable}

We fix a bounded design domain $\Omega\subset \R^{d}$, $d\in \N$ with Lipschitz boundary.
For any suitable open shape $D\subseteq \Omega$, we consider the classical eigenvalue problem \eqref{EWP}, i.e., the eigenvalue equation for the Laplacian with either a homogeneous Dirichlet or a homogeneous Neumann boundary condition. 
The goal of our shape and topology optimization problems is to minimize a cost functional involving a selection of eigenvalues by adjusting the shape $D$ in an optimal way.

To approximate the shape $D$, we introduce a phase-field function $\ie{\varphi}: \Omega \to [-1,1]$ which attains the value $+1$ in most parts of $D$ and the value $-1$ in most parts of the relative complement $\Omega\backslash D$. The free boundary $\partial D$ is approximated by a thin interfacial layer in which $\ie{\varphi}$ continuously changes its values between $-1$ and $+1$. The thickness of this so-called diffuse interface is proportional to a small parameter $\eps>0$.

In the sharp interface limit $\varepsilon\to 0$, where the thickness of the diffuse interface is sent to zero, $\left\{\varphi_0= 1\right\}$ exactly represents the shape $D$ on which the eigenvalue problem \eqref{EWP} is satisfied. In this case, the set $\left\{\varphi_0= -1\right\}$ represents the relative complement $\Omega\backslash D$ which is not involved in the eigenvalue problem \eqref{EWP}, and $\partial D$ can be expressed as $\Omega\cap \partial \left\{\varphi_0= -1\right\} = \Omega\cap \partial \left\{\varphi_0= 1\right\}$. When considering the problem for a fixed $\eps>0$, we will often omit the index $\eps$ in the notation for the phase-field, i.e., we will just write $\varphi$ instead of $\varphi_\eps$ .

We further want to prescribe regions within the design domain $\Omega$ where the pure phases are prescribed. 
Mathematically speaking, we fix two disjoint measurable sets
$S_0,S_1\subset \Omega$ such that $\tOmega\coloneqq \Omega\backslash (S_0\cup S_1)$ is a domain with Lipschitz boundary.
This is, for example, the case if we choose $S_0$ and $S_1$ as closed balls which keep a fixed, positive distance between themselves and towards the boundary $\partial\Omega$ of the design domain.
Thus, the set representing these constraints on $\varphi$ is given as
\begin{align}\label{UCons}
        \mathcal{U}
    \coloneqq
        \left\{
            \left.
                \varphi\in L^1(\Omega)\right|
                \varphi=-1\text{ a.e.~on } S_0\text{ and }\varphi=1\text{ a.e.~on } S_1
        \right\}.
\end{align}
As the values of $\varphi\in \mathcal{U}$ are fixed on $S_0\cup S_1$, the relevant set in our optimization process is given as $\tOmega$ which will play also an important role in the definition of the Ginzburg--Landau energy.

We additionally prescribe general bounds on the mean value of $\varphi$ in order to take volume constraints into account. To this end, we impose the general constraint 
\begin{align*}
     \beta_1\bigabs{\tOmega}
     \le \int_{\tOmega}\varphi\text{\,d}x
     \le \beta_2\bigabs{\tOmega},
\end{align*}
with $\beta_1,\beta_2\in\mathbb{R}$, $\beta_1\le\beta_2$ and $\beta_1,\beta_2\in (-1,1)$. This condition $\beta_1,\beta_2\in (-1,1)$ ensures that in the sharp interface case, where $\varphi\in BV(\tOmega,\left\{\pm 1\right\})$, the sets $\left\{\varphi=1\right\}\cap\tOmega$ and $\left\{\varphi=-1\right\}\cap\tOmega$ both have strictly positive measure. Hence, the trivial cases are excluded.
Note that for $\beta_1=\beta_2$ we obtain an equality constraint for the mean value.

Furthermore, we require sufficient regularity of the phase-field, namely $H^1(\tOmega)$, in order for the Ginzburg--Landau energy (that will be introduced in the next subsection) to be well-defined.
All these constraints are summarized in the set
\begin{align}\label{GDef}
        \mathcal{G}^{\beta}
    =
        \left\{
            \varphi \in H^1(\tOmega)
                \left|\,
                    \abs{\varphi}\le 1, 
                    \beta_1\bigabs{\tOmega}
                    \le \int_{\tOmega}\varphi\text{\,d} x
                    \le \beta_2\bigabs{\tOmega}
                \right.
        \right\}.
\end{align}
We point out that for the upcoming analysis in Section~\ref{SEC:Analysis}, we could also include a constraint preventing the shape to touch the boundary, i.e., $\varphi=-1$ on $\partial\Omega$, which is used in the numerical simulations presented in Section~\ref{sec:num}. In order for this constraint to be well-defined in the trace sense, and not to interfere with the one formulated via $\mathcal{U}$ in \eqref{UCons}, we would also need to demand $S_1\subset\joinrel\subset \Omega$ to be compactly contained. Nevertheless, we do not include this constraint in the discussion of the sharp interface limit in Section~\ref{SEC:SIDir} as this would produce an additional term in the $\Gamma$-limit of the Ginzburg-Landau energy as explained in the following subsection.

\subsection{The Ginzburg--Landau energy}\label{GinLan}

For the definition of the objective functional and especially for the well-posedness of the minimization problem the so-called \textit{Ginzburg--Landau energy}
\begin{align*}
        E_{\text{GL}}^{\eps}(\varphi)
    =
        \int_{\tOmega}
            \left(
                \frac{\eps}{2}\abs{\nabla\varphi}^2+\frac{1}{\eps}\psi(\varphi)
            \right)\text{\,d}x, 
        \quad \eps>0,
\end{align*}
is crucial.
Here, the potential $\psi: \R\to \R\cup \left\{+\infty \right\}$ is assumed to have exactly the two global minimum points $-1$~and~$+1$ with 
\begin{align*}
    \underset{\R}{\min}\,\psi = \psi(\pm1) = 0,
\end{align*}
and a local maximum point in between (usually at zero).
Furthermore, $\psi$ is assumed to exhibit the decomposition $\psi(\varphi)=\psi_0(\varphi)+I_{[-1,1]}(\varphi)$ 
with $\psi_0\in C_{\text{loc}}^{1,1}(\R,\R)$ 
and $I_{[-1,1]}:\R\to \R\cup \left\{+\infty \right\}$ being the indicator functional
\begin{align*}
        I_{[-1,1]}(\xi)
     =
        \begin{cases}
            0&\text{if } \xi\in [-1,1],\\
            +\infty& \text{otherwise}.
        \end{cases}
\end{align*}
Common choices for $\psi_0$ would be the quadratic potential $\psi_0(\varphi)=\frac{1}{2}(1-\varphi^2)$ or the quartic potential $\psi_0(\varphi)=\frac{1}{2}(1-\varphi^2)^2$. For more details we refer to \cite{Blank,Garcke}.

In our optimization problems, we impose the phase-field constraint $\varphi\in \mathcal{G}^{\beta}\cap\mathcal{U}$. As in \cite{Blank,Garcke}, it thus suffices to include the regular part
\begin{align}\label{hatE}
        E^{\varepsilon}(\varphi)
    :=
        \int_{\tOmega}
            \left(
                \frac{\eps}{2}\abs{\nabla\varphi}^2+\frac{1}{\eps}\psi_0(\varphi)
            \right) \,\mathrm dx,                
\end{align}
of the Ginzburg--Landau energy in the cost functional, since $E^{\eps}(\varphi) = E_\text{GL}^{\eps}(\varphi)$ for all $\varphi\in \mathcal{G}^{\beta}$. 

Note that in the Ginzburg--Landau energy, we merely integrate over the domain $\tilde{\Omega}\subset\Omega$. If we demanded $\varphi\in H^1(\Omega)\cap \mathcal{U}$, we would obtain Dirichlet conditions
\begin{alignat*}{3}
    \varphi&=-1&&\quad\text{on }\partial S_0,\\
    \varphi&=1&&\quad\text{on }\partial S_1.
\end{alignat*}
This would produce an additional contact energy term in the $\Gamma$-limit of $E^{\varepsilon}$ as $\varepsilon\to 0$, see \cite{Owen}. In order to avoid this phenomenon and to obtain the classical $\Gamma$-limit as studied in \cite{Modica}, the energy and the $H^1$-regularity are restricted to the subset $\tOmega\subset \Omega$.

\subsection{The approximate eigenvalue problems}\label{Sub:coeff}
For any $\varepsilon>0$, we now introduce approximate eigenvalue problems with Dirichlet boundary condition and Neumann boundary condition, respectively. They will act as the state equation in the forthcoming optimization problems.
We either consider
\begin{alignat}{3}
    \left\{
        \begin{aligned}\label{statesD}
                -\nabla\cdot\left[a_{\varepsilon}(\varphi)\nabla w\right]
                +b_{\varepsilon}(\varphi)w
           &=
                \lambda^{\eps,\varphi}c_{\varepsilon}(\varphi)w
           &&
                \quad\text{in }\Omega,\\      
                w&=0&&\quad\text{on }\partial\Omega, 
        \end{aligned}             
    \right.
\end{alignat}
or
\begin{alignat}{3}
    \left\{
        \begin{aligned}\label{statesN}
                -\nabla\cdot\left[a_{\varepsilon}(\varphi)\nabla w\right]
           &=
                \mu^{\eps,\varphi}c_{\varepsilon}(\varphi)w
           &&
                \quad\text{in }\Omega,\\      
                \frac{\partial w}{\partial\B{\nu}}&=0&&\quad\text{on }\partial\Omega.
        \end{aligned}             
    \right.
\end{alignat}
Here $\B{\nu}$ is the outer unit normal vector on $\partial\Omega$, and $a_{\varepsilon},b_{\varepsilon},c_{\varepsilon}:\R\to \R$ are coefficient functions which depend on the phase-field $\varphi$ and the interface parameter $\eps>0$. 

For fixed $\eps>0$, we demand that $a_{\varepsilon}, c_{\varepsilon}>C_{\varepsilon}>0$ and $\ie{b}\ge 0$ in $\mathbb{R}$ in order to avoid degeneration. We further assume $a_{\varepsilon}, b_{\varepsilon}, c_{\varepsilon}\in C^{1,1}_{loc}(\mathbb{R})$.
These properties allow us to define the following scalar products on $L^2(\Omega)$ depending on the phase-field $\varphi\in L^{\infty}(\Omega)$:
 \begin{align*}
        \ap{u}{\eta}{\varphi}
    \coloneqq 
        \int_{\Omega}a_{\varepsilon}(\varphi)u\eta\text{\,d}x,\quad
        \cp{u}{\eta}{\varphi}
    \coloneqq 
        \int_{\Omega}c_{\varepsilon}(\varphi)u\eta\text{\,d}x,
       \quad u,\eta \in L^2(\Omega).
 \end{align*}
The induced norms on $L^2(\Omega)$ are
\begin{align}
	\label{DEF:SCP}
        \norm{u}_{a_{\varepsilon}(\varphi)}
    =
        \ap{u}{u}{\varphi}^{\frac{1}{2}},
        \quad
        \norm{u}_{c_{\varepsilon}(\varphi)}
    =
        \cp{u}{u}{\varphi}^{\frac{1}{2}}.
\end{align}
In the following, we use the notation $\Lzc$ to indicate that $L^2(\Omega)$ is equipped with the $\varphi$-dependent scalar product $\cp{\cdot}{\cdot}{\varphi}$.
Similarly, we equip the spaces $\Hz$ and 
\begin{align*}
    \HOO=
        \left\{
            w\in\HO\left|
            \int_{\Omega}
                c_{\varepsilon}(\varphi)w
            \text{\,d}x=0\right.
        \right\}
\end{align*}
with the scalar product $\left(\nabla\cdot,\nabla\cdot\right)_{\ie{a}(\varphi)}$. 
For the purpose of a clearer presentation we further define a positive semi-definite bilinear form $\bp{\cdot}{\cdot}{\varphi}$ in the same fashion as for the coefficient functions $a_{\varepsilon},c_{\varepsilon}$. However, this bilinear form does not define a scalar product as it possibly degenerates.

In the subsequent analysis, we will work with the weak formulations of the approximate problems \eqref{statesD} and \eqref{statesN} which are given as
\begin{align}\label{statewD}
    \ap{\nabla w}{\nabla \eta}{\varphi}
    +\bp{w}{\eta}{\varphi}
    =\lambda^{\eps,\varphi}\cp{w}{\eta}{\varphi}
    \quad\text{for all }\eta\in \Hz,
\end{align}
and
\begin{align}\label{statewN}
    \ap{\nabla w}{\nabla \eta}{\varphi}
    =\mu^{\eps,\varphi}\cp{w}{\eta}{\varphi}
    \quad\text{for all }\eta\in \HO,
\end{align}
respectively.
In Theorem~\ref{EEW} we will see that for any $\varphi\in \Lu$, all eigenvalues in either the Dirichlet or the Neumann case can be written as a sequence
\begin{align*}
    0<\lambda_1^{\eps,\varphi}
    \le\lambda_2^{\eps,\varphi}
    \le\lambda_3^{\eps,\varphi}
    \le\dots\to \infty,
\end{align*}
or
\begin{align*}
    0=\mu_0^{\eps,\varphi}
    <\mu_1^{\eps,\varphi}
    \le\mu_2^{\eps,\varphi}
    \le\mu_3^{\eps,\varphi}
    \le\dots\to \infty,
\end{align*}
respectively.

\subsection{The sharp interface limit}\label{Sub:SIAInt}
Before we formulate the optimization problems in which \eqref{statewD} and \eqref{statewN} serve as the state equations, we formally discuss their behavior when taking the limit $\eps\to 0$.

In both cases \eqref{statesD} and \eqref{statesN} we want to ensure that the boundary condition is not only fulfilled on the fixed boundary $\partial\Omega$
but also on the free boundary obtained in the sharp interface limit $\varepsilon\to 0$. By our diffuse interface approach we want to approximate this behavior.
 
Although the analytical results for $\varepsilon>0$ are independent of the following considerations as they can be carried out under the general assumptions on the coefficient functions made in Section~\ref{Sub:coeff}, we want to formally discuss how the coefficient functions need to be chosen explicitly to obtain the desired properties in the sharp interface limit.
 
For phase-field functions $\ie{\varphi}$ 
that are expected to converge to $\varphi_0$ in the sharp interface limit $\eps\to 0$  (with $\varphi_0$ attaining only the values $-1$ and $+1$), we define 
\begin{align*}
    \ee{\Omega}_{+}&
\coloneqq
    \left\{\left.\bx\in \Omega\right| \ie{\varphi}(\bx)\ge 0\right\},\\
    \ee{\Omega}_{-}&
\coloneqq
    \left\{
        \left.\bx\in \Omega\right| \ie{\varphi}(\bx)<0
    \right\},
    \\
    \Omega_{\pm}&
\coloneqq
    \left\{
        \left.\bx\in \Omega\right| \varphi_0(\bx)=\pm 1
    \right\}, 
    \\
    \Gamma&
\coloneqq
    \partial\Omega_{+}\cap\Omega,        
\end{align*}
where $\B{n}_{\varepsilon}$ is the outer unit normal vector field on $\partial\Omega_+^{\eps}\cap \Omega$ and $\B{n}$ is the outer unit normal vector field on $\Gamma$.
An illustration of the diffuse interface and the sharp interface limit can be found in Figure \ref{FIG:Setting}.

\begin{figure}
	\hspace{0.1\linewidth}
    \begin{minipage}[c]{\linewidth}
        \parbox{0.65\linewidth}{
            \begin{center}
                \begin{tikzpicture}[
                    scale=1.2,
                    background rectangle/.style={draw=black,line width=.3ex},
                    show background rectangle]
                    \draw[
                    	fill=gray!70,
                    	draw=white, 
                    	line width=1ex]
                    		(0.5,0.5) to 
                    		[closed, curve through = 
                    		{(1,0.5) (1.5,0.5) (1.7,1) (2,1) (3,1.5) (4,2) (4.1,2) (5,2.5)}]
                    	(0,3); 
                    \draw[
                        gray!30,  
                        line width=2ex]
                            (0.5,0.5) to 
                            [closed, curve through = 
                                {(1,0.5) (1.5,0.5) (1.7,1) (2,1) (3,1.5) (4,2) (4.1,2) (5,2.5)}]
                            (0,3);  
                    \draw[
                        black, 
                        line width=.1ex]
                            (0.5,0.5) to 
                            [closed, curve through = 
                                {(1,0.5) (1.5,0.5) (1.7,1) (2,1) (3,1.5) (4,2) (4.1,2) (5,2.5)}]
                            (0,3); 
                    \node (M) at (5,5.5) {\large $\Omega_{-}$};                                    
                    \node (P) at (1,2.5) {\large $\Omega_{+}$};
                    \node (G) at (2.3,1.3) {\large $\Gamma$};
                    \node (E) at (3.5,3.3) {%
                    \small
                        $\begin{aligned}
                            \begin{drcases}
                                -\Delta w = \lambda w\\
                                -\Delta w = \mu w
                            \end{drcases}
                        	\text{in $\Omega_+$}
                        \end{aligned}
                        $
                    }; 
                    \node (E) at (4.5,1.3) {%
                	\small
                	$\begin{aligned}
                		\begin{drcases}
                			w = 0\\
                			\partial_{\B{n}} w = 0
                		\end{drcases}
                		\text{on $\Gamma$}
                	\end{aligned}
                	$
                };   
                \end{tikzpicture}
            \end{center}
        }
        \parbox{0.1\linewidth}{ 
            \begin{align*}
               	\small
                    \begin{drcases}
                         w = 0\\
                         \partial_{\B{\nu}} w = 0
                    \end{drcases}
                \text{on $\partial\Omega$}
            \end{align*}
         }
    \end{minipage}
    \caption{The classical eigenvalue problems on $D=\Omega_+$ approximated by the diffuse interface approach. The diffuse interface is represented by the light gray surrounding of $\Gamma$. }\label{FIG:Setting}
\end{figure}
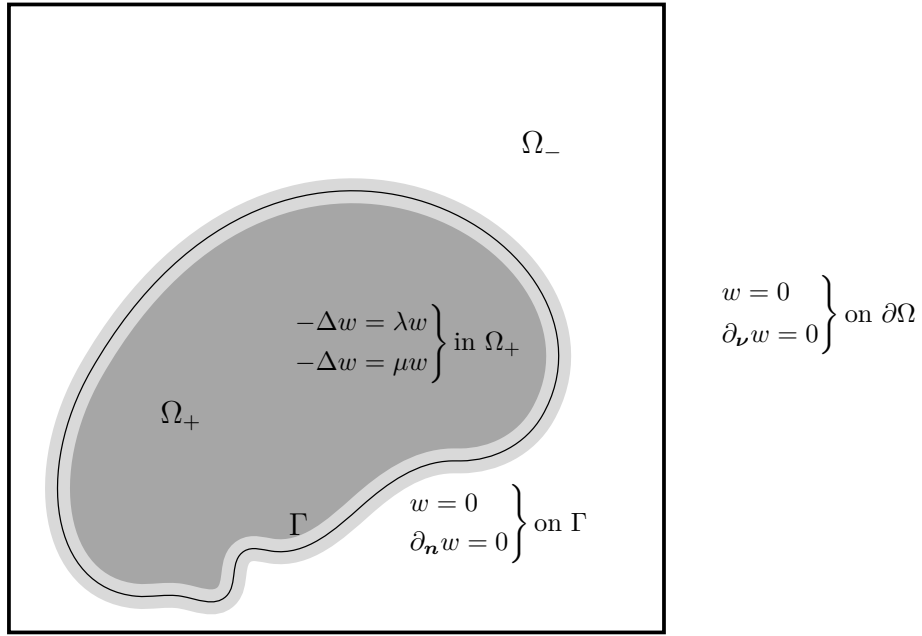

Now, the coefficient functions are to be chosen in such a way that they enforce the boundary condition%
\begin{align}\label{Dir}
w=0 \quad\text{on }\Gamma,
\end{align}
in the Dirichlet case and the boundary condition
\begin{align}\label{Neu}
\frac{\partial w}{\partial \B{n}}=0 \quad\text{on }\Gamma,
\end{align}
in the Neumann case.
We now present suitable choices for the coefficients $a_\eps$, $b_\eps$ and $c_\eps$ and we formally discuss how
the boundary conditions \eqref{Dir} and \eqref{Neu} are obtained in the sharp interface limit. 

In the Neumann case, we choose
 \begin{align}\label{ac}
     a_{\varepsilon}(1)=c_{\varepsilon}(1)=1,\qquad
     a_{\varepsilon}(-1)=a\varepsilon,\qquad
     c_{\varepsilon}(-1)=c\varepsilon,
 \end{align} 
with constants $a,c>0$. 
Then, condition \eqref{Neu} will be implicitly enforced in the following sense. 
The weak formulation of \eqref{statesN} is given by
 \begin{align}\label{WeakNeum}
        \int_{\Omega}
            a_{\varepsilon}(\ie{\varphi})\nabla w^{\eps,\ie{\varphi}}\cdot \nabla \eta
        \text{\,d}x
    =
        \mu^{\eps,\ie{\varphi}}
        \int_{\Omega}
            c_{\varepsilon}(\ie{\varphi})w^{\eps,\ie{\varphi}}\eta
        \text{\,d}x\quad\text{for all}\; \eta\in \HO.
 \end{align}
Assuming that the convergence $\ie{\varphi}\to\varphi_0$ implies the convergence of all appearing $\eps$-dependent quantities, we use \eqref{ac} to recover
\begin{align*}
          \int_{\Omega_+}
            \nabla w^{\varphi_0}\cdot \nabla \eta
        \text{\,d}x
    =
        \mu^{\varphi_0}
        \int_{\Omega_+}
            w^{\varphi_0}\eta
        \text{\,d}x\quad\text{for all}\; \eta\in \HO,  
\end{align*}
that is the weak formulation of the classical eigenvalue problem with Neumann boundary data
\begin{alignat*}{3}
        -\Delta w^{\varphi_0}
    &=
        \mu^{\varphi_0}w^{\varphi_0}
    &&\quad\text{in } \phantom{\partial}D &&=\Omega_{+},\\
        \frac{\partial w^{\varphi_0}}{\partial \B{n}}&=0
    &&\quad \text{on }\partial D &&=\Gamma,  
\end{alignat*}
by formally sending $\varepsilon\to 0$.

For the Dirichlet case let us fix $a_{\varepsilon}=c_{\varepsilon}\equiv 1$.
Then condition \eqref{Dir} will be ensured by the coefficient function $b_{\varepsilon}$ appearing in the state equation \eqref{statesD} by prescribing the following properties
  \begin{align}\label{bZI}
      b_{\varepsilon}(1)=0,\qquad
      \underset{\varepsilon\searrow 0}{\lim}\, b_{\varepsilon}(-1)=\infty.
  \end{align}
The idea of adding such a coefficient function comes from the porous medium approach that is used to model fluid dynamics phenomena, see e.g., \cite{GarHe,GarHeHin}.

Assuming again that the convergence $\ie{\varphi}\to\varphi_0$ implies the convergence of all appearing $\eps$-dependent quantities, we infer that
 \begin{align*}
    \int_{\ee{\Omega}_{-}}b_{\varepsilon}(\ie{\varphi})\abs{w^{\eps,\ie{\varphi}}}^2\text{\,d}x
 \end{align*}
stays bounded for $\varepsilon\to 0$ which can only be the case if $w^{\varphi_0}=0$ almost everywhere on the set $\Omega\backslash \Omega_{+}=\left\{\varphi_0=-1\right\}$. 
Proceeding as in the Neumann case and formally passing to the limit $\eps\to 0$ in the weak formulation, we conclude that $w^{\varphi_0}$ is a solution to the classical eigenvalue problem on $\Omega_{+}$ with Dirichlet boundary data, that is
\begin{align*}
\left\{
\begin{aligned}
        -\Delta w^{\varphi_0}
    &=
        \lambda^{\varphi_0}w^{\varphi_0}
    &&\quad\text{in }\phantom{\partial}D&&=\Omega_{+},\\
        w^{\varphi_0}&=0
    &&\quad \text{on }\partial D&&=\Gamma.     
\end{aligned}
\right.
\end{align*}
For a detailed rigorous analysis of the sharp interface limit in the Dirichlet case we refer to Section~\ref{SEC:SIDir}.
However, a rigorous analysis of the Neumann problem in our framework is not possible as the coefficient function $a_{\varepsilon}$ chosen in \eqref{ac} degenerates outside the prescribed shape. 
More explicitly, testing the weak formulation of the Neumann problem \eqref{WeakNeum} with the eigenfunction $w^{\eps,\varphi_{\eps}}$ yields
\begin{align*}
	\int_{\Omega}a_{\eps}(\varphi_{\eps})\abs{\nabla w^{\eps,\varphi_{\eps}}}^2\text{\,d}x=\mu^{\eps,\varphi_{\eps}},
\end{align*}
as we can assume the eigenfunction to be normalized with respect to $\norm{\,\cdot\,}_{c_{\eps}(\varphi)}$. To apply classical compactness results, we need to control the Dirichlet energy of the eigenfunctions, but $a_{\eps}$ as chosen in \eqref{ac} degenerates in the phase $\left\{\varphi_{\eps}=-1\right\}$ as $\eps\to 0$, i.e., in the left-hand side we obtain the term
\begin{align*}
	a\eps \int_{\left\{\varphi_{\eps}=-1\right\}}\abs{\nabla w^{\eps,\varphi_{\eps}}}^2\text{\,d}x.
\end{align*}
In other words, knowing that the sequence of eigenvalues $\left(\mu^{\eps,\varphi_{\eps}}\right)_{\eps>0}$ is bounded, does not imply that, on the whole of $\Omega$, also the Dirichlet energy is bounded.	

We are now in a position to introduce the optimization problems for $\eps>0$ in which \eqref{statesD} and \eqref{statesN}, respectively, can be regarded as the state equation.

\subsection{The optimization problems}\label{SOPT}
For any fixed $l\in\mathbb{N}$ and indices $i_1,\dots,i_l\in\mathbb{N}$ with $1\le i_1<i_2<\dots<i_l\,$, we include the eigenvalues $\lambda_{i_1},\dots,\lambda_{i_l}$ or $\mu_{i_1},\dots,\mu_{i_l}$, respectively, in the cost functional via the function
\begin{align*}
    \Psi: (\mathbb{R}_{>0})^{l}\to \mathbb{R},
\end{align*}
which is assumed to be of class $C^1$.
As mentioned above, the Ginzburg--Landau energy also needs to be included in the cost functional. Hence for $\eps>0$, we define the objective functional as
\begin{align}
    \label{OBJD}
            J_{l}^{D,\eps}(\varphi)
        \coloneqq
            \Psi(\lambda_{i_1}^{\eps,\varphi},\dots, \lambda_{i_l}^{\eps,\varphi})
        +
            \gamma E_{{\mathrm{GL}}}^{\eps}(\varphi),
\end{align}
in the Dirichlet case, and
\begin{align}\label{OBJN}
            J_{l}^{N,\eps}(\varphi)
        \coloneqq
            \Psi(\mu_{i_1}^{\eps,\varphi},\dots, \mu_{i_l}^{\eps,\varphi})
        +
            \gamma E_{{\mathrm{GL}}}^{\eps}(\varphi),
\end{align}
in the Neumann case, where $\gamma>0$ is a weighting parameter. 
Recalling \eqref{UCons} and \eqref{GDef}, we further define the set of admissible phase-fields as
\begin{align*}
    \Phi_{\text{ad}}\coloneqq\mathcal{G}^{\beta}\cap \mathcal{U}.
\end{align*}
Now, the optimization problem reads as
\begin{align}\label{PD} \tag{$\mathcal{P}^{D,\eps}_{l}$}
    \left\{
        \begin{aligned}
            &\text{ min} 
            &&J^{D,\eps}_l(\varphi),\\
            &\text{ s.t.} 
            &&\varphi\in \Phi_{\text{ad}},\\
            &
            &&\lambda^{\eps,\varphi}_{i_1},\dots, \lambda^{\eps,\varphi}_{i_l} 
            \;\text{are eigenvalues of } \eqref{statewD}
        \end{aligned}
    \right.
\end{align}
in the Dirichlet case, and
\begin{align}\label{PN} \tag{$\mathcal{P}^{N,\eps}_{l}$}
    \left\{
        \begin{aligned}
            &\text{ min} 
            &&J^{N,\eps}_l(\varphi),\\
            &\text{ s.t.} 
            &&\varphi\in \Phi_{\text{ad}},\\
            &
            &&\mu^{\eps,\varphi}_{i_1},\dots, \mu^{\eps,\varphi}_{i_l} 
            \;\text{are eigenvalues of } \eqref{statewN}
        \end{aligned}
    \right.
\end{align}
in the Neumann case.

Note that we do not need an additional assumption on the function $\Psi$ itself to be bounded from below in order for the minimization problem to possess a minimizer, as we can show that any eigenvalue of our approximate problem is bounded by the corresponding eigenvalue of the classical eigenvalue problem up to multiplicative constants, see Lemma~\ref{Lem:Bound}.
This allows us to cover a large variety of optimization problems.
For example, the problems \eqref{DirOpt} and \eqref{NeuOpt} could be formulated within our framework by choosing
\begin{align*}
    \Psi(\lambda_1^{\eps,\varphi})=\lambda_1^{\eps,\varphi},\quad
    \Psi(\mu_1^{\eps,\varphi})=-\mu_1^{\eps,\varphi},
\end{align*}
in \eqref{PD} and \eqref{PN},
respectively. 
 In Section~\ref{sec:num}, we will further demonstrate that the optimization of linear combinations of eigenvalues
\begin{align*}
        \Psi(\lambda_{i_1}^{\varepsilon,\varphi},\dots,\lambda_{i_l}^{\varepsilon,\varphi})
    =
        \sum_{j=1}^{l}\alpha_j\lambda_{i_j},\quad
        \Psi(\mu_{i_1}^{\varepsilon,\varphi},\dots,\mu_{i_l}^{\varepsilon,\varphi})
    =
        \sum_{j=1}^{l}\alpha_j\mu_{i_j},
\end{align*}
(with coefficients $\alpha_j\in\R$) can also be handled at least numerically.

Recall that $E^\eps(\varphi) = E^\eps_\text{GL}(\varphi)$ for all $\varphi\in \mathcal{G}^{\beta} \cap \mathcal{U}$.
Therefore, it suffices to merely include the regular part $E^\eps$ of the Ginzburg--Landau energy $E^\eps_\text{GL}$ in the cost functional.
This is important for the analysis as it allows us to compute directional derivatives 
\begin{align*}
    \frac{\text{d}}{\text{d}t}E^{\varepsilon}(\varphi+t(\vartheta-\varphi))\big\vert_{t=0}=
    \int_{\tOmega}
        \varepsilon\nabla\varphi\cdot\nabla(\vartheta-\varphi)
    \text{\,d}x
    +
    \int_{\tOmega}
        \frac{1}{\varepsilon}\psi_0^{\prime}(\varphi)(\vartheta-\varphi)
    \text{\,d}x,
\end{align*}
in \textit{every} direction $\vartheta-\varphi$ with $\vartheta\in \mathcal{G}^{\beta}\cap \mathcal{U}\,$, which would not be possible for $E^\eps_\text{GL}$.

To investigate these optimal control problems, we first have to establish the existence of eigenvalues and associated eigenfunctions. Furthermore, we need to analyze the continuity and differentiability properties of these quantities with respect to the phase-field variable. 

\section{Analysis of the diffuse interface problem}\label{SEC:Analysis}
\subsection{The state equations and their properties}
In this section we fix $\varepsilon>0$ and therefore, as mentioned above, we will just write $\varphi$ instead of $\varphi_\eps$. For a cleaner presentation, we also omit the superscript $\varepsilon$ when denoting eigenvalues and eigenfunctions as the $\eps$-dependency is indicated by the coefficient functions.
\begin{Def}[Definition of eigenvalues and eigenfunctions]\label{DEF:EEW}
	Let $\varphi\in \Li$ be arbitrary. 
    \begin{enumerate}[label=\textnormal{(\alph*)},leftmargin=*]
    \item $\lambda^\varphi$ is called a \textit{Dirichlet eigenvalue} of the state equation \eqref{statewD} if there exists a nontrivial weak solution $w^{D,\varphi}$ to \eqref{statewD}, i.e., $0\neq w^{D,\varphi} \in \Hz$, and it holds that
	\begin{align}\label{WEstateD}
	       \ap{\nabla w^{D,\varphi}}{\nabla \eta}{\varphi}
           +\bp{w^{D,\varphi}}{\eta}{\varphi}
	   =
	       \lambda^{\varphi}
	       \cp{w^{D,\varphi}}{\eta}{\varphi}
	\quad \text{for all $\eta\in \Hz$}.
	\end{align}
	In this case, the function $w^{D,\varphi}$ is called an \textit{eigenfunction} to the eigenvalue $\lambda^\varphi$.
    \item $\mu^\varphi$ is called a \textit{Neumann eigenvalue} of the state equation \eqref{statewN} if there exists a nontrivial weak solution $w^{N,\varphi}$ to \eqref{statewN}, i.e., $0\neq w^{N,\varphi} \in \HO$, and it holds that
   	\begin{align}\label{WEstateN}
    	  \ap{\nabla w^{N,\varphi}}{\nabla \eta}{\varphi}
    	   =
    	     \mu^{\varphi}
    	     \cp{w^{N,\varphi}}{\eta}{\varphi}
    	\quad \text{for all $\eta\in \HO$}.
    	\end{align}
    	In this case, the function $w^{N,\varphi}$ is called an \textit{eigenfunction} to the eigenvalue $\mu^\varphi$.
    \end{enumerate}
\end{Def}
The properties and assumptions of the previous section allow us to prove two classical functional analytic results concerning the eigenvalues and eigenfunctions.
\begin{Thm}[Existence and properties of eigenvalues and eigenfunctions]\label{EEW}
	$\,$\newline Let $\varphi\in \Li$ be arbitrary.
	\begin{enumerate}[label=\textnormal{(\alph*)},leftmargin=*]
		\item
		There exist sequences
		\begin{align*}
    		  \left(
    		      w^{D,\varphi}_k,\lambda_k^{\varphi}
    		  \right)_{k\in \N}
    		\subset\Hz\times \R,\quad
                \left(
            		      w^{N,\varphi}_k,\mu_k^{\varphi}
                \right)_{k\in \N_0}
            		\subset\HO\times \R,
		\end{align*}
		possessing the following properties:
		\begin{itemize}
			\item For all $k\in\N$, $w^{D,\varphi}_k$ is an eigenfunction to the eigenvalue $\lambda^\varphi_k$ and $w^{N,\varphi}_k$ is an eigenfunction to the eigenvalue $\mu^\varphi_k$ in the sense of Definition~\ref{DEF:EEW}.
			\item The eigenvalues $\lambda_k^{\varphi},\mu_k^{\varphi}$ (which are repeated according to their multiplicity) can be ordered in the following way:
			\begin{align*}
    			0<\lambda_1^{\varphi}
    			&\le\lambda_2^{\varphi}
    			\le\lambda_3^{\varphi}
    			\le\cdots,\\
                0=\mu_0^{\varphi}<\mu_1^{\varphi}
                &\le\mu_2^{\varphi}
        	    \le\mu_3^{\varphi}
                \le\cdots.
			\end{align*}
			Moreover, it holds that $\lambda_k^{\varphi},\mu_k^{\varphi}\to \infty$ as $k\to\infty$, and there exist no further  eigenvalues of the state equation $\eqref{WEstateD}$ and $\eqref{WEstateN}$.
			\item Both the Dirichlet eigenfunctions $\{w^{D,\varphi}_1, w^{D,\varphi}_2,\dots\} \subset \Hz$ and the Neumann eigenfunctions $\{w^{N,\varphi}_0, w^{N,\varphi}_1,\dots\} \subset \HO$ form an orthonormal basis of the space $\Lzc$. Furthermore, the eigenfunctions $\{w^{N,\varphi}_1, w^{N,\varphi}_2,\dots\}$ belong to the space $\HOO$
        and form an $\Lzc$-orthonormal basis of the space
        \begin{align*}
            L^2_{(0),\varphi}(\Omega)=\left\{
                            w\in \Lzc\left|\int_{\Omega}c_{\varepsilon}(\varphi)w\textup{\,d}x=0
                            \right.
                        \right\}.
        \end{align*}
        In particular, this implies that any eigenfunction to a non-trivial eigenvalue belongs to the space $\HOO$.
        \end{itemize}
		\item
		For $k\in\N$, we have the Courant--Fischer characterizations
		\begin{align}\label{CFD}
                \lambda_k^{\varphi}
            =
                \underset{V\in\mathcal{S}_{k-1}}{\max}\min
                    \left\{
                        \left.
                            \frac{
                                \int_{\Omega}
                                    \ie{a}(\varphi)\abs{\nabla v}^2
                                \text{\,d}x
                              +\int_{\Omega}
                                    \ie{b}(\varphi)\abs{v}^2
                                 \textup{\,d}x
                             }
                             {
                                \int_{\Omega}\ie{c}(\varphi)\abs{v}^2\textup{\,d}x
                             }
                      \right|
                        \begin{aligned}
                        	&v\in V^{\perp,\Lzc}\cap \Hz, \\
                            &v\neq 0
                        \end{aligned}
                \right\},
        \end{align}
        and
		\begin{align}\label{CFN}
                \mu_k^{\varphi}
            =
                \underset{V\in\mathcal{S}_{k-1}}{\max}\min
                \left\{
                    \left.
                        \frac{
                            \int_{\Omega}
                                \ie{a}(\varphi)\abs{\nabla v}^2
                            \textup{\,d}x
                         }
                         {
                             \int_{\Omega}\ie{c}(\varphi)\abs{v}^2\textup{\,d}x
                         }
                    \right|
                    \begin{aligned}
                    	&v\in V^{\perp,\Lzc}\cap \HOO, \\
                        &v\neq 0
                    \end{aligned}
                \right\}.
        \end{align}        
 		Here, $\mathcal{S}_{k-1}$ denotes the collection of all $(k-1)$-dimensional subspaces of 
 		$\Lzc$.
 		
		The set $V^{\perp,\Lzc}$ denotes the orthogonal complement of $V\subset L^2(\Omega)$ with respect to the scalar product $\cp{\cdot}{\cdot}{\varphi}$ on $\Lzc$.
		
		Moreover, the maximum is attained at the subspace
		\begin{align*}
		      V
          =
              \langle                   
                w^{D,\varphi}_1,\dots,w^{D,\varphi}_{k-1}
              \rangle_{\textup{span}}\quad\text{and}\quad
             V
         =
            \langle 
                w^{N,\varphi}_1,\dots,w^{N,\varphi}_{k-1}
            \rangle_{\textup{span}},
		\end{align*}
		respectively.
        \item \label{l1s}
        We can choose the eigenfunction $w_1^{D,\varphi}$ such that it is positive almost everywhere in $\Omega$. Furthermore, every solution $w\in \Hz$ of
        \begin{align*}
        	       \ap{\nabla w}{\nabla\eta}{\varphi}
                   +\bp{w}{\eta}{\varphi}
        	   =
        	       \lambda_1^{\varphi}
        	       \cp{w}{\eta}{\varphi}
        	\quad \text{for all $\eta\in \Hz$},
        \end{align*}
        is a multiple of $w_1^{D,\varphi}$, i.e., there is a constant $\xi\in \mathbb{R}$ such that $w=\xi w_1^{D,\varphi}$ almost everywhere in $\Omega$. This means that the eigenspace to $\lambda_1^{\varphi}$ is simple.
	\end{enumerate}
\end{Thm}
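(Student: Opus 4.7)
The plan is to apply the spectral theorem for compact self-adjoint operators on a Hilbert space. For the Dirichlet case, I would first note that the bilinear form $B^D(w,\eta) := \ap{\nabla w}{\nabla \eta}{\varphi} + \bp{w}{\eta}{\varphi}$ is continuous, symmetric and coercive on $\Hz$ (using $\ie{a} \geq C_{\eps} > 0$, $\ie{b} \geq 0$, and Poincaré). Lax--Milgram then provides a solution operator $T^D : \Lzc \to \Hz$ determined by $B^D(T^D f,\eta) = \cp{f}{\eta}{\varphi}$ for all $\eta \in \Hz$. Composed with the compact Rellich embedding, $T^D : \Lzc \to \Lzc$ is compact, self-adjoint (by symmetry of $B^D$ and of $\cp{\cdot}{\cdot}{\varphi}$) and positive definite (from $\cp{T^D f}{f}{\varphi} = B^D(T^D f, T^D f)$ combined with injectivity of $T^D$). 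The Hilbert--Schmidt spectral theorem then produces a sequence of positive eigenvalues $\kappa_k^\varphi \searrow 0$ with $\Lzc$-orthonormal eigenfunctions that span $\Lzc$. Setting $\lambda_k^\varphi := 1/\kappa_k^\varphi$ yields the claimed Dirichlet sequence and the absence of further eigenvalues follows from the completeness of the eigenbasis of $T^D$.

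The Neumann case is treated analogously with $B^N(w,\eta) := \ap{\nabla w}{\nabla \eta}{\varphi}$, which is only positive semi-definite on $\HO$ with one-dimensional kernel spanned by the constants. The normalized constant function is then an eigenfunction to the trivial eigenvalue $\mu_0^\varphi = 0$. I would restrict the analysis to $\HOO$, where a generalized Poincaré inequality restores coercivity, and apply Lax--Milgram, Rellich compactness, and the spectral theorem exactly as above to obtain $(\mu_k^\varphi)_{k \geq 1}$ together with an $\Lzc$-orthonormal basis of $L^2_{(0),\varphi}(\Omega)$. Adjoining the normalized constant then completes the basis of $\Lzc$ asserted in (a).

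Part (b) is an instance of the min-max principle for compact self-adjoint operators applied to $T^D$ and $T^N$. Substituting $v = T^\bullet f$ converts the Rayleigh quotient of $T^\bullet$ (whose extremal values are the $\kappa_k^\varphi$, respectively the reciprocals of the $\mu_k^\varphi$) into the reciprocal of the quotient appearing in \eqref{CFD} and \eqref{CFN}, producing the claimed max-min formulas, with extremizing subspaces spanned by the first $k-1$ eigenfunctions by the standard orthogonality argument.

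For part (c), the idea is that $|\nabla |w|| = |\nabla w|$ almost everywhere by Stampacchia's truncation lemma, while $\bp{w}{w}{\varphi}$ and $\cp{w}{w}{\varphi}$ depend only on $|w|$. Hence replacing a minimizer $w$ of the $k=1$ Rayleigh quotient \eqref{CFD} by $|w|$ yields another minimizer and thus a nonnegative eigenfunction to $\lambda_1^\varphi$. The strong maximum principle / Harnack inequality for uniformly elliptic equations (applicable since $\ie{a}$ is Lipschitz and uniformly positive and $\ie{b},\ie{c}$ are bounded) then forces $|w| > 0$ almost everywhere in $\Omega$. Simplicity follows by contradiction: two linearly independent first eigenfunctions would span a nontrivial linear combination changing sign on a set of positive measure, contradicting the definite-sign property just established. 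The principal technical obstacle throughout is executing the functional-analytic machinery consistently in the $\varphi$-weighted scalar products $\cp{\cdot}{\cdot}{\varphi}$ and $\ap{\nabla\cdot}{\nabla\cdot}{\varphi}$ rather than the standard ones, which requires careful use of the equivalence of norms granted by the uniform positivity and $\Li$-bounds on $\ie{a}$ and $\ie{c}$, and the justification of the maximum principle in the presence of the merely Lipschitz coefficient $\ie{a}$.
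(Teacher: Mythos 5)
Your outline reproduces exactly the standard machinery that the paper's proof invokes by citation: (a) is ``a direct consequence of the spectral theorem for compact self-adjoint operators'' (the paper cites Alt, and your Lax--Milgram/Rellich construction of $T^D$, $T^N$ is precisely what that reference carries out), (b) is the min-max principle ``as in Garcke et al., Thm.~3.2'', and (c) is ``from Gilbarg--Trudinger, Thm.~8.38'', whose underlying argument is the Rayleigh-quotient plus Harnack/maximum-principle reasoning you sketch. So the proposal is correct and takes essentially the same route.

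Two small imprecisions in (c) are worth noting, though neither affects the overall correctness. First, the coefficient on $\Omega$ is $a_\eps(\varphi)$ with $\varphi\in L^\infty(\Omega)$ only, so the coefficient is merely bounded measurable, not Lipschitz; the relevant fact is that the De~Giorgi--Nash--Moser Harnack inequality and the local H\"older regularity of weak solutions hold for $L^\infty$ coefficients, which is exactly the setting of Gilbarg--Trudinger, Ch.~8. Second, ``$|w|>0$ a.e.'' by itself does not say that $w$ has a definite sign, so your contradiction argument for simplicity needs one more step: writing $w=w^+-w^-$ and observing that $\|\nabla w\|^2 = \|\nabla w^+\|^2 + \|\nabla w^-\|^2$ (and likewise for the zeroth-order terms), both $w^\pm$ are first eigenfunctions if nontrivial; Harnack then gives $w^\pm>0$ a.e.\ which contradicts $w^+w^-\equiv 0$ unless one of them vanishes. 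With every first eigenfunction thus having a definite sign, orthogonalizing a second putative eigenfunction against $w_1^{D,\varphi}$ in $\Lzc$ and using $c_\eps(\varphi)>0$ forces it to vanish, giving simplicity.
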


\begin{proof}[Proof of Theorem~\ref{EEW}]
(a) The assertion is a direct consequence of the spectral theorem for compact self-adjoint operators (see e.g., \cite[Sect.~12.12]{Alt}).

(b) The claim is established in the same fashion as in \cite[Thm.~3.2]{Garcke}.

(c) The assertion follows directly from \cite[Thm.~8.38]{Gilbarg}.
\end{proof}

\begin{Rem}
    In the following, we impose weaker assumptions on our phase field $\varphi$ compared to \cite{Garcke}, namely we only consider $\varphi\in H^1(\tOmega)\cap L^{\infty}(\Omega)$ instead of $\varphi\in H^1(\Omega)\cap L^{\infty}(\Omega)$, where $\tOmega=\Omega\backslash(S_0\cap S_1)$ with $S_0$ and $S_1$ being the sets appearing in the pointwise constraint \eqref{UCons}. As explained in Section~\ref{GinLan}, we consider this reduction of $H^1$-regularity in order to avoid additional Dirichlet boundary conditions for the phase-field on the boundaries of $S_0$ and $S_1$ which would complicate the sharp interface limit $\varepsilon\to 0$ that is discussed in Section~\ref{SEC:SIDir}.
    
    Nevertheless, we can still formulate and prove all the continuity and differentiability results established in \cite{Garcke} as they only rely on the pointwise almost everywhere convergence of phase-field sequences $\varphi_k\to \varphi$ in $\Omega$ and the boundedness in $L^{\infty}(\Omega)$ in order to apply Lebesgue's dominated convergence theorem.
\end{Rem}

\medskip

Following this remark, we only display the most important results of \cite{Garcke} adapted to our setting, namely the continuity of eigenvalues and eigenfunctions as well as the Fréchet-differentiability of simple eigenvalues.
\begin{Thm}[Continuity properties for the eigenvalues and their eigenfunctions]\label{slw}
	Let $j\in \N$ be arbitrary and
	let $\left(\varphi_k\right)_{k\in\N}\subset \Li$ be a bounded sequence with
	\begin{align*}
	   \varphi_k\to \varphi\quad \text{a.e.~in $\Omega$ as $k\to\infty$}.
	\end{align*} 
	Moreover, let $(u_j^{D,\varphi_k})_{k\in\N}\subset \Hz$ and $(u_j^{N,\varphi_k})_{k\in\N}\subset \HOO$ be sequences of $L_{\varphi_k}^2(\Omega)$-normalized eigenfunctions to the eigenvalues $(\lambda_{j}^{\varphi_k})_{k\in\N}$ and $(\mu_{j}^{\varphi_k})_{k\in\N}$ respectively.
	
	Then it holds that
	\begin{align*}
	   \lambda^{\varphi_k}_j\to \lambda^{\varphi}_j 
	   \quad\text{and}\quad
       \mu^{\varphi_k}_j\to \mu^{\varphi}_j
       \quad \text{as $k\to\infty$}.
	\end{align*}
	
	Furthermore, there exist $\Lzc$-normalized eigenfunctions $\overline{u}_{j}^D\in \Hz,\,\overline{u}_{j}^N\in \HOO$ to the eigenvalue $\lambda_j^\varphi,\mu_j^{\varphi}$ respectively, such that for $\zeta\in \left\{D,N\right\}$,
	\begin{align*}
    	   u^{\zeta,\varphi_k}_{j}\rightharpoonup\overline{u}^{\zeta}_j
           \quad \text{in }\HO,
    	\quad\text{and}\quad
    	   u^{\zeta,\varphi_k}_j\to \overline{u}^{\zeta}_j
           \quad \text{in }\Lzc,
	\end{align*}
	as $k\to \infty$ along a non-relabeled subsequence.
\end{Thm}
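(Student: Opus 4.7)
The plan is to adapt the strategy of \cite{Garcke} to the present setting; as emphasized in the preceding remark, all arguments there rely only on the a.e.\ pointwise convergence $\varphi_k\to\varphi$ and the $L^\infty$-boundedness, so the weakening of the regularity from $H^1(\Omega)$ to $H^1(\tOmega)$ does not affect anything. Concretely, I would proceed in four steps.

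\textbf{Step 1 (uniform bounds).} Since $(\varphi_k)$ is bounded in $L^\infty(\Omega)$ and $a_\eps,b_\eps,c_\eps\in C^{1,1}_{\text{loc}}(\R)$, the coefficient sequences $a_\eps(\varphi_k),b_\eps(\varphi_k),c_\eps(\varphi_k)$ are uniformly bounded in $L^\infty(\Omega)$, and $a_\eps(\varphi_k),c_\eps(\varphi_k)$ are uniformly bounded from below by $C_\eps>0$. Inserting fixed test spaces (e.g.\ spanned by the first $j$ eigenfunctions of the pure Laplacian on $\Omega$) into the Courant--Fischer characterizations \eqref{CFD}, \eqref{CFN} yields a uniform upper bound on $\lambda_j^{\varphi_k}$ and $\mu_j^{\varphi_k}$. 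Testing the weak formulations \eqref{WEstateD}, \eqref{WEstateN} with $\eta=u_j^{\zeta,\varphi_k}$ and exploiting the $L^2_{\varphi_k}$-normalization then gives a uniform $H^1(\Omega)$-bound on the eigenfunctions.

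\textbf{Step 2 (extraction of limits).} By Banach--Alaoglu and Rellich--Kondrachov, along a non-relabelled subsequence $u_j^{\zeta,\varphi_k}\rightharpoonup \overline{u}_j^\zeta$ in $H^1(\Omega)$ and strongly in $L^2(\Omega)$. The pointwise a.e.\ convergence $\varphi_k\to\varphi$ together with the continuity of $a_\eps,b_\eps,c_\eps$ and the uniform $L^\infty$-bound allows Lebesgue's dominated convergence theorem to be applied in each fixed $L^p(\Omega)$, $p<\infty$, yielding $a_\eps(\varphi_k)\to a_\eps(\varphi)$, and analogously for $b_\eps, c_\eps$. Combining strong $L^2$-convergence of $u_j^{\zeta,\varphi_k}$ with $L^p$-convergence of the coefficients then gives
\[
\cp{u_j^{\zeta,\varphi_k}}{u_j^{\zeta,\varphi_k}}{\varphi_k}
\longrightarrow
\cp{\overline{u}_j^\zeta}{\overline{u}_j^\zeta}{\varphi}=1,
\]
so the limit is nontrivial and $L^2_\varphi$-normalized. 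Passing to the limit in \eqref{WEstateD}/\eqref{WEstateN} (weak convergence of gradients paired with strong coefficient convergence on the bilinear-form side, strong $L^2$-convergence on the right) shows that $\overline{u}_j^\zeta$ is an eigenfunction at some limit value $\overline{\lambda}_j$, $\overline{\mu}_j$ with $\lambda_j^{\varphi_k}\to\overline{\lambda}_j$, $\mu_j^{\varphi_k}\to\overline{\mu}_j$ (up to subsequence).

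\textbf{Step 3 (identification of the index).} This is the only genuinely delicate point: a priori $\overline{\lambda}_j$ could coincide with $\lambda_m^\varphi$ for some $m\neq j$. I would proceed inductively on $j$. Because the scalar products $\cp{\cdot}{\cdot}{\varphi_k}$ converge to $\cp{\cdot}{\cdot}{\varphi}$ in the sense above, the $L^2_{\varphi_k}$-orthonormality of $\{u_1^{\zeta,\varphi_k},\dots,u_j^{\zeta,\varphi_k}\}$ passes to $L^2_\varphi$-orthonormality of $\{\overline{u}_1^\zeta,\dots,\overline{u}_j^\zeta\}$; hence the limits are linearly independent. Testing the Courant--Fischer formula \eqref{CFD} (resp.\ \eqref{CFN}) for $\lambda_j^\varphi$ with the span of the first $j$ limit eigenfunctions gives the upper bound $\overline{\lambda}_j\le \lambda_j^\varphi$, while testing \eqref{CFD} for $\lambda_j^{\varphi_k}$ with the span of the first $j{-}1$ eigenfunctions of the limit problem and passing to the limit yields $\overline{\lambda}_j\ge \lambda_j^\varphi$. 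The analogous argument applies in the Neumann case.

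\textbf{Step 4 (uniqueness of limit).} Since every subsequence extracted in Step~2 yields the same limit $\lambda_j^\varphi$ (resp.\ $\mu_j^\varphi$), the full sequence of eigenvalues converges by the standard subsequence principle. For the eigenfunctions one obtains convergence of a suitable subsequence as stated. The anticipated main obstacle is Step~3: it requires careful bookkeeping to rule out index crossings in the presence of possible multiplicities, and the use of the limiting Courant--Fischer formula is essential. Once this is handled, the remaining passages to the limit are routine consequences of dominated convergence.
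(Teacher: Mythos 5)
Your four-step plan — uniform bounds via Courant--Fischer, Banach--Alaoglu plus Rellich to extract limits, induction on $j$ to identify the index, and the subsequence principle — is exactly the route the paper takes, since its proof of Theorem~\ref{slw} simply points to \cite[Thm.~4.4]{Garcke} together with the Courant--Fischer characterization from Theorem~\ref{EEW} and Banach--Alaoglu.

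The one place that needs repair is the directionality in Step~3. You write that testing the Courant--Fischer formula for $\lambda_j^\varphi$ with the span of the first $j$ \emph{limit} eigenfunctions gives the upper bound $\overline{\lambda}_j\le\lambda_j^\varphi$; in fact that test space produces the \emph{lower} bound. Concretely: letting $\overline{W}=\langle\overline{u}_1,\dots,\overline{u}_j\rangle_{\mathrm{span}}$, which is $j$-dimensional because the $L^2_{\varphi_k}$-orthonormality passes to $L^2_\varphi$-orthonormality, the min-max form gives $\lambda_j^\varphi\le\max_{v\in\overline{W}\setminus\{0\}}\mathrm{Rayleigh}_\varphi(v)=\overline{\lambda}_j$ (the maximum is $\overline{\lambda}_j$ because $\overline{u}_m$ is, by the induction hypothesis and Step~2, an eigenfunction to $\lambda_m^\varphi$ for $m<j$ and to $\overline{\lambda}_j$ for $m=j$, with $\overline{\lambda}_j\ge\lambda_{j-1}^\varphi$). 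For the complementary bound $\overline{\lambda}_j\le\lambda_j^\varphi$, one should instead insert the \emph{fixed} $j$-dimensional space $W=\langle w_1^{D,\varphi},\dots,w_j^{D,\varphi}\rangle_{\mathrm{span}}$ spanned by eigenfunctions of the limit problem into the min-max formula for $\lambda_j^{\varphi_k}$; since $W$ is fixed and finite-dimensional, the Rayleigh quotients $\mathrm{Rayleigh}_{\varphi_k}$ converge uniformly on its unit sphere, so $\limsup_k\lambda_j^{\varphi_k}\le\max_{v\in W\setminus\{0\}}\mathrm{Rayleigh}_\varphi(v)=\lambda_j^\varphi$. Trying instead to pass to the limit in the max-min form with a $\varphi_k$-dependent orthogonal complement, as your outline suggests, is awkward because the complement itself changes with $k$. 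Once these two bounds are stated with the correct test spaces, your proposal is a faithful rendering of the argument the paper relies on.
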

\begin{proof}
    The assertion can be established inductively by proceeding as in \cite[Thm.~4.4]{Garcke}, using the Courant--Fischer characterization from Theorem~\ref{EEW}, and the Banach--Alaoglu theorem applied to the sequence of eigenfunctions.
\end{proof}

In the remaining analysis of this section, namely the Fréchet-differentiability of eigenvalues, the assumption that the considered eigenvalues are simple is crucial. For instance, it already becomes clear in finite dimension (see \cite[Sec.~2.5]{Henrot}) that multiple eigenvalues are in general  not differentiable as two different eigenvalues can cross in a non-differentiable way. To overcome this issue, one can switch to a weaker notion of differentiability such as directional differentiability or semi-differentiability, cf.~\cite{Henrot, Garcke}. Nevertheless, our numerical method needs the first order conditions to be formulated in the framework of classical Fréchet-differentiability.
However multiple eigenvalues can to some extent still be handled numerically by adaptively reformulating the cost functional as we will see in Section~\ref{sec:num}.

Let us first recall the \textit{sign condition} for simple eigenvalues that was introduced in \cite{Garcke}.
\begin{Lem} \label{LEM:SIGN}
	Let $i\in \N$ and $\varphi\in \Li$ be arbitrary. We suppose that the eigenvalue $\lambda_i^{\varphi}$ $(\text{or }\mu_i^{\varphi})$ is simple.
	Let $w_i^{D,\varphi}\in \Hz$ $\big(\text{or }w_i^{N,\varphi}\in \HOO\big)$ be an $\Lzc$-normalized eigenfunction to the eigenvalue $\lambda_i^\varphi$ $($or $\mu_i^{\varphi})$.
	
	Then, for all $0<\sigma<1$, there exists $\delta>0$ such that for all
	\begin{align*}
	   h\in \Li
       \quad\text{with}
       \quad \norm{h}_{\Li}<\delta,
	\end{align*}
	there exist a unique $L^2_{\varphi+h}(\Omega)$-normalized eigenfunction $w_i^{D,\varphi+h}\in \Hz$ $\big($or $w_i^{N,\varphi+h}\in \HOO\big)$ to the eigenvalue $\lambda_i^{\varphi+h}$ $($or $\mu_i^{\varphi+h})$ satisfying the condition 
	\begin{align}
	 \label{SC}
	   0<\sigma<\cp{w^{D,\varphi+h}_i}{w^{D,\varphi}_i}{\varphi}
	   \qquad\Bigg(\;\text{or}\;\;
	   0<\sigma<\cp{w^{N,\varphi+h}_i}{w^{N,\varphi}_i}{\varphi}\;
	   \Bigg).
	\end{align}
    
	In particular, the eigenvalue $\lambda_i^{\varphi+h}$ $($or $\mu_i^{\varphi+h})$ is simple.
\end{Lem}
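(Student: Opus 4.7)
The plan is a two-step argument relying on Theorem~\ref{slw}. The Neumann case being entirely analogous, I focus on the Dirichlet setting. First I will use the continuity of all eigenvalues to upgrade simplicity of $\lambda_i^{\varphi}$ to local simplicity of $\lambda_i^{\varphi+h}$; then, for each admissible $h$, I will fix the sign of the (up to sign unique) normalized eigenfunction so as to force the scalar product with $w_i^{D,\varphi}$ to tend to $1$.

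\emph{Step 1 (local simplicity).} Simplicity of $\lambda_i^{\varphi}$ means, with the convention $\lambda_0^{\varphi}=-\infty$, that $\lambda_{i-1}^{\varphi}<\lambda_i^{\varphi}<\lambda_{i+1}^{\varphi}$. Applying Theorem~\ref{slw} to an arbitrary sequence $h_k\to 0$ in $L^\infty(\Omega)$ yields $\lambda_j^{\varphi+h_k}\to \lambda_j^{\varphi}$ for $j\in\{i-1,i,i+1\}$, and a standard subsequence argument shows that $\lambda_j^{\varphi+h}\to\lambda_j^{\varphi}$ as $\|h\|_{L^\infty}\to 0$. Consequently there exists $\delta_1>0$ such that the strict ordering $\lambda_{i-1}^{\varphi+h}<\lambda_i^{\varphi+h}<\lambda_{i+1}^{\varphi+h}$ persists whenever $\|h\|_{L^\infty}<\delta_1$, i.e., $\lambda_i^{\varphi+h}$ is simple.

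\emph{Step 2 (sign choice and convergence of the scalar product).} For $\|h\|_{L^\infty}<\delta_1$ the one-dimensional eigenspace contains exactly two $L^2_{\varphi+h}$-normalized eigenfunctions $\pm\tilde{w}^{h}$; I define $w_i^{D,\varphi+h}$ to be the choice for which $\cp{\tilde{w}^{h}}{w_i^{D,\varphi}}{\varphi}\ge 0$. Arguing by contradiction, suppose there is a $\sigma\in(0,1)$ and a sequence $h_k\to 0$ with $\cp{w_i^{D,\varphi+h_k}}{w_i^{D,\varphi}}{\varphi}\le\sigma$ for all $k$. By Theorem~\ref{slw}, along a non-relabeled subsequence $w_i^{D,\varphi+h_k}\to \overline{u}$ strongly in $L^2_\varphi(\Omega)$ for some $L^2_\varphi$-normalized eigenfunction $\overline{u}$ to $\lambda_i^{\varphi}$. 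Simplicity of $\lambda_i^{\varphi}$ forces $\overline{u}=\pm w_i^{D,\varphi}$, and passing to the limit in the (nonnegative) scalar product yields the value $+1$, contradicting $\sigma<1$. Hence, for every $\sigma\in(0,1)$ there exists $\delta\in(0,\delta_1)$ such that \eqref{SC} holds for all $\|h\|_{L^\infty}<\delta$.

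Uniqueness of $w_i^{D,\varphi+h}$ subject to \eqref{SC} is then automatic: the only other $L^2_{\varphi+h}$-normalized eigenfunction is $-w_i^{D,\varphi+h}$, whose scalar product with $w_i^{D,\varphi}$ tends to $-1$ and hence cannot be positive for small $\|h\|_{L^\infty}$. The main obstacle is Step~2: it is crucial that Theorem~\ref{slw} provides \emph{strong} convergence in $L^2_\varphi(\Omega)$ (weak $H^1$-convergence would not suffice to pass to the limit inside the scalar product), and equally crucial that simplicity of $\lambda_i^{\varphi}$ rules out any limit other than $\pm w_i^{D,\varphi}$; once these two ingredients are in place, the sign fixing does the remaining work.
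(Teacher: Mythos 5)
Your proof is correct, and it is the natural argument given Theorem~\ref{slw}. The paper itself does not prove Lemma~\ref{LEM:SIGN} but cites \cite{Garcke} for it, so I cannot compare your reasoning to an in-text proof; that said, your two-step structure --- persist simplicity via continuity of neighbouring eigenvalues, then extract strong $L^2_\varphi$-convergence and use simplicity to pin the limit down to $\pm w_i^{D,\varphi}$, with a sign choice forcing the scalar product toward $+1$ --- is precisely what the proof in \cite{Garcke} is built on. Two small remarks: the eigenvalue convergence in Theorem~\ref{slw} holds for the full sequence (only the eigenfunction convergence requires a subsequence), so the "standard subsequence argument" is unnecessary in Step~1; and in Step~2 your sign convention $\cp{\tilde w^h}{w_i^{D,\varphi}}{\varphi}\ge 0$ is a priori ambiguous when the scalar product vanishes, but the contradiction argument you give (the case $\le\sigma$ includes $=0$) shows this cannot occur for small $\|h\|_{L^\infty}$, so the choice is in fact well-defined there. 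The emphasis you place on strong (not merely weak-$H^1$) convergence and on simplicity as the mechanism ruling out other limits correctly identifies the two ingredients that make the argument go through.
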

In the following, for the derivatives of the coefficient functions, we will use also the notation
\begin{align*}
        \left(u,v\right)_{\ie{a}^{\prime}(\varphi)h}
    \coloneqq
        \int_{\Omega}\ie{a}^{\prime}(\varphi)h\, uv\text{\,d}x
\end{align*}
to provide a clearer presentation.

\begin{Thm}[Fr\'echet-differentiability of simple eigenvalues and their eigenfunctions]\label{difewv}
	Let $\varphi\in \Li$ be arbitrary and suppose that for $i\in\N$, the eigenvalue $\lambda_i^\varphi$ $(\text{or } \mu_i^\varphi)$ is simple. 
	We further fix a corresponding $\Lzc$-normalized eigenfunction $w_i^{D,\varphi}$ $(\text{or }w_i^{N,\varphi})$.
	
	Then there exist constants $\delta_i^\varphi$, $r_i^\varphi>0$ such that the operator
	\begin{align*}
	&\left\{
	\begin{aligned}
        	S^{D,\varphi}_i: 
        	B_{\delta_i^\varphi}(\varphi)\subset \Li
    	&\to 
        	B_{r_i^\varphi}
        	   \big(
        	       (w^{D,\varphi}_i,\lambda_i^{\varphi})
        	   \big)
        	\subset\Hz\times \R\\
        	\vartheta
    	&\mapsto 
    	   \big(w^{D,\vartheta}_i,\lambda^{\vartheta}_i\big)
    \end{aligned}
    \right.	
    \\[1ex]
    \Bigg(\text{or}\quad
    &\left\{\begin{aligned}
        	S^{N,\varphi}_i: 
        	B_{\delta_i^\varphi}(\varphi)\subset \Li
    	&\to 
        	B_{r_i^\varphi}
        	   \big(
        	       (w^{N,\varphi}_i,\mu_i^{\varphi})
        	   \big)
        	\subset\HOO\times \R,\\
        	\vartheta
    	&\mapsto 
    	   \big(w^{N,\vartheta}_i,\mu^{\vartheta}_i\big)
    \end{aligned}
    \right.
    \Bigg),
	\end{align*}
	is well-defined and continuously Fr\'echet differentiable.
    
    Here, for any $\vartheta\in B_{\delta_i^\varphi}(\varphi)$, $w_i^{D,\vartheta}$ $(\text{or }w_i^{N,\vartheta})$ denotes the unique $L^2_\vartheta(\Omega)$-normalized eigenfunction to the eigenvalue $\lambda_i^\vartheta$ $(\text{or }\mu_i^\vartheta)$ satisfying the sign condition \eqref{SC} written for $h=\vartheta-\varphi$.
	    
	Moreover, for any $h\in \Li$, the Fr\'echet derivative $(\lambda^{\vartheta}_i)^{\prime}h$ of the Dirichlet eigenvalue $\lambda^{\vartheta}_i$ at $\vartheta\in B_{\delta_i^\varphi}(\varphi)$ in the direction $h$ reads as
	\begin{align}\label{lhdefD}
        \begin{aligned}
        	   \big(\lambda^{\vartheta}_i\big)^{\prime}h
           	&= 	
        	   \left(
        	       S^{D,\varphi}_{i,2}(\vartheta)
        	    \right)^{\prime}h\\
        	&=
    	       \left(
                    \nabla w_i^{D,\vartheta},\nabla w_i^{D,\vartheta}
               \right)_{\ie{a}^{\prime}(\vartheta)h}
               +\left(
                    w_i^{D,\vartheta},w_i^{D,\vartheta}
               \right)_{\ie{b}^{\prime}(\vartheta)h}  
        	   -\lambda^{\vartheta}_i
                \left(
                    w_i^{D,\vartheta},w_i^{D,\vartheta}
                \right)_{\ie{c}^{\prime}(\vartheta)h}.
        \end{aligned}
	\end{align}
    On the other hand, for any $h\in \Li$, the Fr\'echet derivative $(\mu^{\vartheta}_i)^{\prime}h$ of the Neumann eigenvalue $\mu^{\vartheta}_i$ at $\vartheta\in B_{\delta_i^\varphi}(\varphi)$ in the direction $h$ reads as
    	\begin{align}\label{lhdefN}
        	   \big(\mu^{\vartheta}_i\big)^{\prime}h
        	= 	
        	   \left(
        	       S^{N,\varphi}_{i,2}(\vartheta)
        	    \right)^{\prime}h
        	=
        	       \left(
                       \nabla w_i^{N,\vartheta},\nabla w_i^{N,\vartheta}
                   \right)_{\ie{a}^{\prime}(\vartheta)h}
        	   -
        	       \mu^{\vartheta}_i
        	       \left(
                        w_i^{N,\vartheta},w_i^{N,\vartheta}
                    \right)_{\ie{c}^{\prime}(\vartheta)h}.
    	\end{align}
\end{Thm}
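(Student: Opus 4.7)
The plan is to apply the classical Implicit Function Theorem (IFT), following the approach of \cite{Garcke}. I describe the Dirichlet case in detail; the Neumann case is structurally identical once $\Hz$ is replaced by $\HOO$ and the $\bp{\cdot}{\cdot}{\vartheta}$-term is dropped, the orthogonality to constants being already built into $\HOO$.

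The first step is to introduce the nonlinear operator $F: \Li \times \Hz \times \R \to (\Hz)^{*} \times \R$ defined by
\[
    F(\vartheta, w, \lambda) = \Bigl( \eta \mapsto \ap{\nabla w}{\nabla \eta}{\vartheta} + \bp{w}{\eta}{\vartheta} - \lambda\, \cp{w}{\eta}{\vartheta},\; \tfrac{1}{2}\bigl(\cp{w}{w}{\vartheta} - 1\bigr) \Bigr),
\]
whose zeros are precisely the $L^2_\vartheta(\Omega)$-normalized Dirichlet eigenpairs; by construction $F(\varphi, w_i^{D,\varphi}, \lambda_i^\varphi) = 0$. Continuous Fr\'echet differentiability of $F$ is standard: the dependence on $(w, \lambda)$ is polynomial, and the $\vartheta$-dependence of the bilinear forms is handled by $\ie a, \ie b, \ie c \in C^{1,1}_{\text{loc}}(\R)$ together with Lebesgue's dominated convergence, as in the Remark following Theorem~\ref{EEW}.

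The core step is to verify that the partial derivative $D_{(w,\lambda)} F(\varphi, w_i^{D,\varphi}, \lambda_i^\varphi)$ is a topological isomorphism from $\Hz \times \R$ onto $(\Hz)^{*} \times \R$. This map sends $(v,\ell)$ to
\[
    \Bigl( \eta \mapsto \ap{\nabla v}{\nabla \eta}{\varphi} + \bp{v}{\eta}{\varphi} - \lambda_i^\varphi\, \cp{v}{\eta}{\varphi} - \ell\, \cp{w_i^{D,\varphi}}{\eta}{\varphi},\; \cp{v}{w_i^{D,\varphi}}{\varphi} \Bigr).
\]
The underlying elliptic operator is self-adjoint with respect to $\cp{\cdot}{\cdot}{\varphi}$, and by the simplicity hypothesis of the theorem its kernel equals $\langle w_i^{D,\varphi}\rangle$. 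The Fredholm alternative then applies: for a prescribed target $(r,s)\in(\Hz)^{*}\times\R$, testing the first equation with $w_i^{D,\varphi}$ uniquely forces $\ell = -\langle r, w_i^{D,\varphi}\rangle_{(\Hz)^{*},\Hz}$, the resulting equation for $v$ is solvable modulo $\langle w_i^{D,\varphi}\rangle$, and the scalar condition $\cp{v}{w_i^{D,\varphi}}{\varphi} = s$ pins down the remaining free direction. The IFT now yields $\delta_i^\varphi, r_i^\varphi > 0$ and a continuously Fr\'echet differentiable map $S_i^{D,\varphi}$ with the advertised domain and codomain. Its local uniqueness, combined with Lemma~\ref{LEM:SIGN} (shrinking $\delta_i^\varphi$ if necessary), guarantees that this branch coincides with the one selected by the sign condition \eqref{SC}.

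To derive the formula \eqref{lhdefD}, one differentiates $F(\vartheta, S_i^{D,\varphi}(\vartheta)) = 0$ in direction $h \in \Li$ and tests the first component with $\eta = w_i^{D,\vartheta}$. Writing $v := (w_i^{D,\vartheta})'h \in \Hz$, the terms linear in $v$ amount to $\ap{\nabla v}{\nabla w_i^{D,\vartheta}}{\vartheta} + \bp{v}{w_i^{D,\vartheta}}{\vartheta} - \lambda_i^\vartheta\, \cp{v}{w_i^{D,\vartheta}}{\vartheta}$, which vanishes because $w_i^{D,\vartheta}$ satisfies \eqref{WEstateD} with $v$ as admissible test function and all three forms are symmetric. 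Exploiting the normalization $\cp{w_i^{D,\vartheta}}{w_i^{D,\vartheta}}{\vartheta} = 1$ in what remains produces precisely \eqref{lhdefD}. I expect the genuinely delicate point to be the isomorphism property of the linearization: this is the only place where simplicity of $\lambda_i^\varphi$ is essential, whereas everything else is either routine IFT bookkeeping or an algebraic consequence of the symmetry of the bilinear forms. The Neumann formula \eqref{lhdefN} is obtained in exactly the same way, with $\HOO$ already quotienting out the trivial kernel direction associated with the constants.
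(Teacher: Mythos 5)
Your proposal is the standard implicit function theorem argument, and it matches the approach the paper relies on by citing \cite{Garcke} (the paper itself does not reproduce the proof). The key steps — defining $F$ on $\Li\times\Hz\times\R$ with the normalization as the second component, using simplicity plus self-adjointness and the Fredholm alternative to show the linearization is an isomorphism, and then extracting the eigenvalue derivative by testing against $w_i^{D,\vartheta}$ — are all correct and complete.

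One place where your ``structurally identical'' remark for the Neumann case is a bit too quick: the space $\HOO$ is defined by the constraint $\int_\Omega c_\eps(\varphi)\,w\,\mathrm{d}x = 0$ and therefore depends on $\varphi$. The eigenfunction $w_i^{N,\vartheta}$ for nearby $\vartheta$ satisfies $\int_\Omega c_\eps(\vartheta)\,w_i^{N,\vartheta}\,\mathrm{d}x = 0$, which is a different linear constraint, so it is generally not an element of $\HOO$. To run the IFT cleanly one should either work on the full space $\HO$ and add the zero-mean condition $\int_\Omega c_\eps(\vartheta)\,w\,\mathrm{d}x = 0$ as a second scalar component of $F$ (in which case the kernel of the linearization is two-dimensional, spanned by constants and $w_i^{N,\varphi}$, and both scalar constraints are needed to pin it down), or introduce a smooth $\vartheta$-dependent isomorphism identifying $H^1_{(0),\vartheta}(\Omega)$ with the fixed space $\HOO$. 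This is a technical fix rather than a gap in the strategy, but it is not handled by simply ``replacing $\Hz$ by $\HOO$.''
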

\subsection{First order optimality conditions}\label{SEC:OPT:1}
We now intend to apply the theory developed in Section~\ref{SEC:Analysis} to show that the optimization problems $\eqref{PD}$ and $\eqref{PN}$ (that were introduced in Section~\ref{SOPT}) possess a minimizer. However, in the Neumann case, we first need to establish an additional boundedness property.

Recall that one possible application of our model is to \textit{maximize} the first non-trivial Neumann eigenvalue. 
Since \eqref{PN} is formulated as a minimization problem, we thus allow for functions $\Psi$ that are not bounded from below. A possible choice to realize a maximization of the first Neumann eigenvalue would be $\Psi(\mu_1^{\varphi})=-\mu_1^{\varphi}$ (meaning that $\Psi(x)=-x$ for all $x\in\R$).
To apply the direct method in the calculus of variations, we need to show that the cost functional $J_1^{\eps}(\varphi)=\Psi(\mu_1^{\varphi})+\gamma E^{\eps}(\varphi)$ remains bounded from below on the admissible set $\Phi_{\text{ad}}$, even if $\Psi$ is not bounded from below.

Our goal is to show that any Dirichlet eigenvalue of \eqref{WEstateD} and any Neumann eigenvalue of \eqref{WEstateN}, is uniformly bounded by expressions involving the corresponding eigenvalue of the classical eigenvalue problem on the \textit{whole} design domain $\Omega$. This allows us to deduce that each Dirichlet and Neumann eigenvalue belongs to a compact subset of $\mathbb{R}_{>0}$. Hence, as the function $\Psi$ is assumed to be continuous on $(\mathbb{R}_{>0})^{l}$, it is bounded on such compact sets.

\begin{Lem}\label{Lem:Bound}
    Let $k\in\mathbb{N}$ and let $\lambda_k^{LD}$ denote the $k$-th eigenvalue of the classical Dirichlet eigenvalue problem, i.e.,
    \begin{align*}
        (\nabla w,\nabla \eta)=\lambda_k^{LD}(w,\eta)\quad \forall\eta \in \Hz,
    \end{align*}
    and let $\mu_k^{LD}$ denote the $k$-th eigenvalue of the classical Neumann eigenvalue problem, i.e.,
    \begin{align*}
        (\nabla w,\nabla \eta)=\mu_k^{LD}(w,\eta)\quad \forall\eta \in \HO,
    \end{align*} 
    where $(\cdot,\cdot)$ denotes the standard scalar product on $L^2(\Omega)$.
    Then there exist constants $C_{1,\eps},$ $C_{2,\eps}>0$ depending only on the choice of coefficient functions $a_{\eps}$, $b_{\eps}$, and $c_{\eps}$ such that
    \begin{alignat*}{2}
        C_{1,\eps}\,\lambda_k^{LD}
        &\le\lambda_k^{\varphi}
        &&\le C_{2,\eps}\,(\lambda_k^{LD}+1),\\
        C_{1,\eps}\,\mu_k^{LD}
        &\le\mu_k^{\varphi}
        &&\le C_{2,\eps}\,\mu_k^{LD},
    \end{alignat*}
    for all $\varphi\in \Phi_{\textup{ad}}$.
\end{Lem}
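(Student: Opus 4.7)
The plan is to reduce the statement to a pointwise comparison of Rayleigh quotients by rewriting the eigenvalues via a $\varphi$-independent Courant--Fischer min-max principle. First I would extract uniform coefficient bounds: every $\varphi\in \Phi_{\text{ad}}\subset \mathcal{G}^{\beta}$ satisfies $\abs{\varphi}\le 1$ almost everywhere, so continuity of $a_{\varepsilon},b_{\varepsilon},c_{\varepsilon}\in C^{1,1}_{\text{loc}}(\R)$ on the compact interval $[-1,1]$, together with the positivity assumptions $a_{\varepsilon},c_{\varepsilon}>C_{\varepsilon}>0$ and $b_{\varepsilon}\ge 0$, yields constants $0<a_-\le a_+$, $0\le b_+$ and $0<c_-\le c_+$, depending only on $\varepsilon$ and the coefficient functions, such that
\begin{align*}
a_-\le a_{\varepsilon}(\varphi)\le a_+,\qquad 0\le b_{\varepsilon}(\varphi)\le b_+,\qquad c_-\le c_{\varepsilon}(\varphi)\le c_+
\end{align*}
almost everywhere in $\Omega$, uniformly in $\varphi\in \Phi_{\text{ad}}$.

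Next I would reformulate the Courant--Fischer characterizations \eqref{CFD} and \eqref{CFN} by means of the standard min-max principle for generalized symmetric eigenvalue problems with positive-definite mass form. This gives the equivalent representations
\begin{align*}
\lambda_k^{\varphi}
&=\min_{\substack{V\subset\Hz\\ \dim V=k}}\;\max_{v\in V\setminus\{0\}}\,\frac{\int_{\Omega}a_{\varepsilon}(\varphi)\abs{\nabla v}^2\,\mathrm{d}x+\int_{\Omega}b_{\varepsilon}(\varphi)\abs{v}^2\,\mathrm{d}x}{\int_{\Omega}c_{\varepsilon}(\varphi)\abs{v}^2\,\mathrm{d}x},\\
\mu_k^{\varphi}
&=\min_{\substack{V\subset\HO\\ \dim V=k+1}}\;\max_{v\in V\setminus\{0\}}\,\frac{\int_{\Omega}a_{\varepsilon}(\varphi)\abs{\nabla v}^2\,\mathrm{d}x}{\int_{\Omega}c_{\varepsilon}(\varphi)\abs{v}^2\,\mathrm{d}x},
\end{align*}
and, by taking $a_{\varepsilon}(\varphi),c_{\varepsilon}(\varphi)$ equal to $1$ and $b_{\varepsilon}(\varphi)$ equal to $0$, the analogous formulae for $\lambda_k^{LD}$ and $\mu_k^{LD}$. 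The crucial feature is that the collection of admissible test subspaces is now $\varphi$-independent.

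Finally I would combine the two ingredients. The pointwise coefficient bounds yield, for every admissible test function $v\neq 0$,
\begin{align*}
\frac{a_-}{c_+}\,\frac{\int_{\Omega}\abs{\nabla v}^2\,\mathrm{d}x}{\int_{\Omega}\abs{v}^2\,\mathrm{d}x}
\le\frac{\int_{\Omega}\big(a_{\varepsilon}(\varphi)\abs{\nabla v}^2+b_{\varepsilon}(\varphi)\abs{v}^2\big)\,\mathrm{d}x}{\int_{\Omega}c_{\varepsilon}(\varphi)\abs{v}^2\,\mathrm{d}x}
\le\frac{a_+}{c_-}\,\frac{\int_{\Omega}\abs{\nabla v}^2\,\mathrm{d}x}{\int_{\Omega}\abs{v}^2\,\mathrm{d}x}+\frac{b_+}{c_-},
\end{align*}
and the analogous estimate without the $b$-term for the Neumann Rayleigh quotient. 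The upper bound on $\lambda_k^{\varphi}$ (resp.\ $\mu_k^{\varphi}$) then follows by substituting into the $\varphi$-dependent min-max the $k$-dimensional (resp.\ $(k+1)$-dimensional) subspace spanned by the first classical Dirichlet (resp.\ Neumann) eigenfunctions of $\Omega$, over which the outer max of the unweighted Rayleigh quotient equals $\lambda_k^{LD}$ (resp.\ $\mu_k^{LD}$). The lower bound is obtained by picking an optimal subspace $V_k^{\varphi}$ for the $\varphi$-dependent min-max and applying the min-max inequality for the classical problem to this specific subspace. Setting $C_{1,\varepsilon}=a_-/c_+$ and $C_{2,\varepsilon}=\max\{a_+/c_-,\,b_+/c_-\}$ (with $C_{2,\varepsilon}=a_+/c_-$ being sufficient in the Neumann case) then yields the claimed two-sided estimates.

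The main obstacle is the passage from the max-min formulation of Theorem~\ref{EEW}\,(b), whose test subspaces sit inside $L^2_{\varphi}(\Omega)$ and hence carry a $\varphi$-dependence through the weighted inner product, to the $\varphi$-independent min-max formulation employed above. This is a classical fact for generalized symmetric eigenvalue problems with positive-definite mass form, but it has to be invoked carefully; in particular, in the Neumann case the dimension must be shifted by one in order to correctly absorb the trivial eigenvalue $\mu_0^{\varphi}=0$. Once this equivalence is in place, the remaining bookkeeping of coefficient bounds is entirely routine.
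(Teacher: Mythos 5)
Your proposal is correct and follows essentially the same route as the paper's proof: both pass from the max--min characterization of Theorem~\ref{EEW}(b) to the equivalent $\varphi$-independent min--max characterization over $k$-dimensional (resp.\ $(k{+}1)$-dimensional) subspaces of $\Hz$ (resp.\ $\HO$) citing the classical principle from Henrot, use the uniform pointwise bounds on $a_\eps,b_\eps,c_\eps$ on the compact interval $[-1,1]$, and compare Rayleigh quotients termwise, with the $+1$ in the Dirichlet upper bound absorbing the constant term $b_+/c_-$. The dimension shift you flag for the Neumann case is exactly the caveat the paper also notes when invoking the min--max formulation.
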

\begin{proof}
    Let us start with the Neumann case.
    We will work with a Courant--Fischer characterization which is equivalent to \eqref{CFN} namely
    \begin{align}\label{CFAlt}
        \mu_k^{\varphi}
            &=
                \underset{V\in\mathcal{\tilde{S}}_{k+1}}{\min}\max
                \left\{
                    \left.
                        \frac{
                            \int_{\Omega}
                               \ie{a}(\varphi)\abs{\nabla v}^2
                            \text{\,d}x
                         }
                         {
                             \int_{\Omega}\ie{c}(\varphi)\abs{v}^2\text{\,d}x
                         }
                    \right|
                    \begin{aligned}
                    	&v\in V, \\
                        &v\neq 0
                    \end{aligned}
                \right\},   
    \end{align}
    see \cite[Sec.~1.3.1]{Henrot}. Here, ${\tilde{S}}_{k+1}$ denotes the collection of all $(k+1)$-dimensional subspaces of $H^1(\Omega)$. Note that compared to \cite{Henrot} we have to consider dimension $(k+1)$ instead of $k$, as we start our labeling of Neumann eigenvalues with the index $0$ and not with $1$.
    Obviously, we obtain the characterization of the classical Neumann eigenvalue by setting $a_{\eps}\equiv c_{\eps}\equiv 1$, i.e.,
    \begin{align}\label{CFAltL}
        \mu_k^{LD}
            &=
                \underset{V\in\mathcal{\tilde{S}}_{k+1}}{\min}\max
                \left\{
                    \left.
                        \frac{
                            \int_{\Omega}
                               \abs{\nabla v}^2
                            \text{\,d}x
                         }
                         {
                             \int_{\Omega}\abs{v}^2\text{\,d}x
                         }
                    \right|
                    \begin{aligned}
                    	&v\in V, \\
                        &v\neq 0
                    \end{aligned}
                \right\}.    
    \end{align}
    We now recall the assumptions on the coefficient functions from Section~\ref{Sub:coeff}. In particular, we know that there is a constant $\tilde{C}_{\eps}>0$ such that $a_{\eps}(\varphi),c_{\eps}(\varphi)<\tilde{C}_{\eps}$ for all $\varphi\in \Phi_{\text{ad}}$, as $a_{\eps},c_{\eps}\in C^0(\mathbb{R})$ and $\abs{\varphi}\le 1$. Furthermore, we assumed $\ie{a},\ie{c}$ to be uniformly bounded from below, i.e., $a_{\eps},c_{\eps}\ge C_{\eps}$
    for some constant $C_{\eps}>0$.
    Thus, we deduce that there are constants $C_{1,\eps},C_{2,\eps}>0$ that only depend on the choice of the real functions $a_{\eps},c_{\eps}$, such that
    \begin{align*}
        C_{1,\eps}\,\frac{
            \int_{\Omega}
                \abs{\nabla v}^2
            \text{\,d}x
            }
            {
            \int_{\Omega}\abs{v}^2\text{\,d}x
            }    
        \le
        \frac{
            \int_{\Omega}
                a_{\eps}(\varphi)\abs{\nabla v}^2
            \text{\,d}x
            }
            {
            \int_{\Omega}c_{\eps}(\varphi)\abs{v}^2\text{\,d}x
            }
        \le    
        C_{2,\eps}\,\frac{
            \int_{\Omega}
                \abs{\nabla v}^2
            \text{\,d}x
            }
            {
            \int_{\Omega}\abs{v}^2\text{\,d}x
            },          
    \end{align*}
    for all $0\neq v\in H^1(\Omega)$ and $\varphi\in \Phi_{\text{ad}}.$
    Comparing \eqref{CFAlt} and \eqref{CFAltL}, this directly allows us to deduce the claim in the Neumann case.
    
    The Dirichlet case works completely analogously but one has to mind the additional term coming from the coefficient function $b_{\varepsilon}\in C^{1,1}_{loc}(\R)$ which is assumed to be non-negative. Thus we obtain here
    \begin{align*}
        C_{1,\eps}\,\frac{
            \int_{\Omega}
                \abs{\nabla v}^2
            \text{\,d}x
            }
            {
            \int_{\Omega}\abs{v}^2\text{\,d}x
            }    
        \le
        \frac{
            \int_{\Omega}
                a_{\eps}(\varphi)\abs{\nabla v}^2
                +b_{\eps}(\varphi)\abs{v}^2
            \text{\,d}x
            }
            {
            \int_{\Omega}c_{\eps}(\varphi)\abs{v}^2\text{\,d}x
            }
        \le    
        C_{2,\eps}\,\left(\frac{
            \int_{\Omega}
                \abs{\nabla v}^2
            \text{\,d}x
            }
            {
            \int_{\Omega}\abs{v}^2\text{\,d}x
            }+1\right),          
    \end{align*} 
    for all $0\neq v\in H^1_0(\Omega)$ and $\varphi\in \Phi_{\text{ad}}$.
    \end{proof}
\begin{Thm}[Existence of a minimizer to \eqref{PD} and \eqref{PN}] \label{Exm}
	 The problems $\eqref{PD}$ and \eqref{PN} possess a minimizer $\varphi^D\in \mathcal{G}^{\beta}\cap \mathcal{U}$ and $\varphi^N\in \mathcal{G}^{\beta}\cap \mathcal{U}$, respectively.
\end{Thm}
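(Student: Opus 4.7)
The plan is to apply the direct method of the calculus of variations to each of the two problems simultaneously, treating the Dirichlet and Neumann cases in parallel and writing $J^{\eps}$ for either $J_l^{D,\eps}$ or $J_l^{N,\eps}$ and $\nu_k^{\varphi}$ for either $\lambda_k^{\varphi}$ or $\mu_k^{\varphi}$.

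First I would check that $J^{\eps}$ is bounded from below on $\Phi_{\text{ad}}$. The Ginzburg--Landau part $E^{\eps}$ is bounded from below because its gradient term is non-negative and $\psi_0$, being $C_{\text{loc}}^{1,1}$, is bounded on $[-1,1]$, which together with $|\varphi|\le 1$ controls $\int_{\tilde\Omega}\psi_0(\varphi)\,\textup dx$ from below. For the eigenvalue term, Lemma~\ref{Lem:Bound} together with the monotonicity of the classical spectrum yields constants $0<m_{\eps}\le M_{\eps}<\infty$ such that
\begin{align*}
    \nu_{i_j}^{\varphi}\in [m_{\eps},M_{\eps}]\quad\text{for all }\varphi\in\Phi_{\text{ad}},\ j=1,\dots,l.
\end{align*}
Since $\Psi\in C^1((\mathbb{R}_{>0})^{l},\mathbb{R})$, it is bounded on the compact box $[m_{\eps},M_{\eps}]^l$, so $\inf_{\Phi_{\text{ad}}}J^{\eps}>-\infty$.

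Next I would select a minimizing sequence $(\varphi_k)_{k\in\mathbb{N}}\subset\Phi_{\text{ad}}$. The pointwise bound $|\varphi_k|\le 1$ yields a uniform $L^\infty(\Omega)$-bound, hence a uniform $L^2(\tilde\Omega)$-bound, while the gradient part of $E^{\eps}(\varphi_k)$ (which is bounded because $J^{\eps}(\varphi_k)$ is bounded and we just controlled the eigenvalue term and the bulk potential term) provides a uniform bound on $\|\nabla\varphi_k\|_{L^2(\tilde\Omega)}$. Thus $(\varphi_k)$ is bounded in $H^1(\tilde\Omega)$. By the Banach--Alaoglu and Rellich--Kondrachov theorems, along a non-relabelled subsequence, $\varphi_k\rightharpoonup\varphi^{\ast}$ in $H^1(\tilde\Omega)$, $\varphi_k\to\varphi^{\ast}$ in $L^2(\tilde\Omega)$ and pointwise almost everywhere in $\tilde\Omega$. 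Extending $\varphi^{\ast}$ by $-1$ on $S_0$ and $+1$ on $S_1$, the a.e.\ convergence actually holds in $\Omega$. The admissible set $\Phi_{\text{ad}}=\mathcal{G}^{\beta}\cap\mathcal{U}$ is closed under these convergences: the pointwise bound $|\varphi^{\ast}|\le 1$, the boundary traces on $S_0,S_1$, and the integral constraint $\beta_1|\tilde\Omega|\le\int_{\tilde\Omega}\varphi^{\ast}\,\textup dx\le\beta_2|\tilde\Omega|$ are all preserved, so $\varphi^{\ast}\in\Phi_{\text{ad}}$.

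Finally I would pass to the liminf in $J^{\eps}$. For the Ginzburg--Landau part, weak lower semi-continuity of the $H^1$-seminorm gives $\liminf_k\int_{\tilde\Omega}\frac{\eps}{2}|\nabla\varphi_k|^2\,\textup dx\ge\int_{\tilde\Omega}\frac{\eps}{2}|\nabla\varphi^{\ast}|^2\,\textup dx$, while the uniform $L^\infty$ bound and a.e.\ convergence combined with Lebesgue's dominated convergence theorem yield $\int_{\tilde\Omega}\frac{1}{\eps}\psi_0(\varphi_k)\,\textup dx\to\int_{\tilde\Omega}\frac{1}{\eps}\psi_0(\varphi^{\ast})\,\textup dx$. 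For the eigenvalue part, Theorem~\ref{slw} applied to the sequence $(\varphi_k)$, which is bounded in $L^\infty(\Omega)$ and converges a.e.\ to $\varphi^{\ast}$, gives $\nu_{i_j}^{\varphi_k}\to\nu_{i_j}^{\varphi^{\ast}}$ for $j=1,\dots,l$. Combining these with the continuity of $\Psi$ shows
\begin{align*}
    J^{\eps}(\varphi^{\ast})\le\liminf_{k\to\infty}J^{\eps}(\varphi_k)=\inf_{\Phi_{\text{ad}}}J^{\eps},
\end{align*}
so $\varphi^{\ast}$ is the desired minimizer. The main subtlety here is the possible unboundedness of $\Psi$ from below, but this is defused by Lemma~\ref{Lem:Bound}, which confines all the relevant eigenvalues into a fixed compact subset of $\mathbb{R}_{>0}$ on which the continuous function $\Psi$ is automatically bounded.
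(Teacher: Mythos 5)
Your proposal is correct and follows essentially the same route as the paper: lower boundedness via Lemma~\ref{Lem:Bound} and the continuity of $\Psi$ on a compact box, $H^1(\tilde\Omega)$-compactness from the Ginzburg--Landau term, almost-everywhere convergence on all of $\Omega$ because the values on $S_0\cup S_1$ are pinned by $\mathcal U$, and then Theorem~\ref{slw} to pass the eigenvalues to the limit together with weak lower semi-continuity of the gradient term. The only (harmless) imprecision is calling the $\mathcal U$-constraints ``boundary traces''; they are pointwise a.e.\ conditions on the measurable sets $S_0,S_1$, which is exactly what makes the extension to $\Omega$ trivial.
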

\begin{proof}
    We proceed by applying the direct method in the calculus of variations.
    First of all, the feasible set $\mathcal{G}^{\beta}\cap\mathcal{U}$ is non-empty since it contains the function which is identical to $\beta_1$ in $\tOmega$, $-1$ in $S_0$ and $1$ in $S_1$.
    By the previous discussion we already know that the objective functionals $J^{D,\varepsilon}_l$ and $J^{N,\varepsilon}_l$ are bounded from below.
    Since $\varphi\in \mathcal{G}^{\beta}\subset L^\infty(\Omega)$, the term
    \begin{align*}
        \gamma E^\eps(\varphi) =
         \gamma \int_{\tOmega}
            \left(
                \frac{\eps}{2}\abs{\nabla\varphi}^2+\frac{1}{\eps}\psi_0(\varphi)
            \right) \,\mathrm dx,
    \end{align*}
    in the cost functional can be used to control $\varphi$ in the $H^1(\tOmega)$-norm, but not in the whole $H^1(\Omega)$-norm. This means that for any minimizing sequence, we can only apply compactness on $\tOmega$ which implies strong convergence in $L^2(\tOmega)$. However, as the elements of this minimizing sequence are additionally contained in the feasible set $\mathcal{G}^{\beta}\cap \mathcal{U}$, their values on $S_0\cup S_1$ are a priori fixed, which gives us the desired pointwise almost everywhere convergence on the whole domain $\Omega$. This allows us to apply Theorem~\ref{slw} which provides the continuity of the eigenvalues with respect to the phase-field variable.
\end{proof}
Now, invoking the differentiability properties established in Section~\ref{SEC:Analysis}, we can derive a first-order necessary condition for local optimality. The following variational inequalities follow directly from the fact that for $\zeta\in \left\{D,N\right\}$ and $\vartheta\in \mathcal{G}^{\beta}\cap \mathcal{U}$
\begin{align*}
    	0\le 
            \frac{\mathrm d}{\mathrm d t} 
                \left[J^{\zeta,\varepsilon}_{l}\big(
                    \varphi^{\zeta}+t(\vartheta-\varphi^{\zeta}) 
                  \big)\right]_{\vert_{t=0}} 
          = 
                \left(J^{\zeta,\varepsilon}_l\right)'
                    \big(\varphi^{\zeta}\big)(\vartheta-\varphi^{\zeta}),
\end{align*}
as the convexity of $\mathcal{G}^{\beta}\cap \mathcal{U}$ implies that $\varphi^{\zeta}+t(\vartheta-\varphi^{\zeta})\in \mathcal{G}^{\beta}\cap \mathcal{U}$ for $t\in [0,1]$.

\begin{Thm}[The optimality system to \eqref{PD}] \label{VUD}
	Let $\varphi\in \mathcal{G}^{\beta}\cap \mathcal{U}$ be a local minimizer of the optimization problem $\eqref{PD}$, i.e., there exists $\delta>0$ such that 
	\begin{align*}
    	   J_l^{D,\varepsilon}(\vartheta)
       &\ge
            J_{l}^{D,\varepsilon}(\varphi)
       \quad\text{for all $\vartheta\in \mathcal{G}^{\beta}\cap \mathcal{U}$ with } \norm{\vartheta-\varphi}_{\Li}<\delta.
	\end{align*}
	Suppose that the eigenvalues $\lambda^{\varphi}_{i_1},\dots,\lambda^{\varphi}_{i_l}$ are simple and let us fix associated $\Lzc$-normalized eigenfunctions $w^{D,\varphi}_{i_1},\dots,w^{D,\varphi}_{i_l}\in \Hz$.
	
	Then the following optimality system is satisfied:
	\begin{itemize}
    		\item The state equations
        \begin{alignat}{3}\tag{$SD_j$}
            \left\{
                \begin{aligned}
                        -\nabla\cdot
                            \left[
                                a_{\varepsilon}(\varphi)\nabla w^{D,\varphi}_{i_j}
                            \right]
                        +b_{\varepsilon}(\varphi)w^{D,\varphi}_{i_j}
                   &=
                        \lambda_{i_j}^{\varphi}c_{\varepsilon}(\varphi)w^{D,\varphi}_{i_j}
                   &&
                        \quad\text{in }\Omega,\\      
                        w^{D,\varphi}_{i_j}&=0&&\quad\text{on }\partial\Omega
                \end{aligned}             
            \right.
        \end{alignat}
    		are fulfilled in the weak sense for all $j\in\{1,\dots,l\}$. 
    		\item 
    		The variational inequality
       		\begin{align}
       		\tag{${VD}$}
           		\begin{aligned}
             		\hspace{-3em}
             		0
               		&\le 
                   		\gamma\eps
                   		\int_{\tOmega}
                           \nabla\varphi\cdot\nabla(\vartheta-\varphi)\,
                         \textup{d}x
                   		+\frac{\gamma}{\eps}
                   		   \int_{\tOmega}
                               \psi_0^{\prime}(\varphi)(\vartheta-\varphi)
                            \textup{\,d}x \\[1ex]
               		&\; 
                   		+\sum_{j=1}^{l}
                   		\Bigg\{
                            [\partial_{\lambda_{i_j}}
                            \hspace{-0.7ex}\Psi]
                		          \big(
                                    \lambda^{\varphi}_{i_1},\dots,\lambda^{\varphi}_{i_l}
                                    \big)\;
                		              \Bigg(
                		                  \left(
                                            \nabla w^{D,\varphi}_{i_j},\nabla w^{D,\varphi}_{i_j}
                                           \right)_{\ie{a}^{\prime}(\varphi)(\vartheta-\varphi)}
                                           +\left(
                                                w^{D,\varphi}_{i_j},w^{D,\varphi}_{i_j}
                                           \right)_{\ie{b
                                           }^{\prime}(\varphi)(\vartheta-\varphi)}     \\[-1ex]
               		&\qquad\qquad\qquad\qquad\qquad\qquad\qquad
                		                  -\lambda^{\varphi}_{i_j}
                                            \left(
                                            w^{D,\varphi}_{i_j},w^{D,\varphi}_{i_j}
                                           \right)_{\ie{c}^{\prime}(\varphi)(\vartheta-\varphi)}
                		              \Bigg)
              		  \Bigg\}
           		\end{aligned}
       		\end{align}
    		is satisfied for all $\vartheta\in \mathcal{G}^{\beta}\cap \mathcal{U}$.
	\end{itemize}
\end{Thm}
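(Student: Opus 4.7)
The state equations $(SD_j)$ are immediate: by hypothesis each $w_{i_j}^{D,\varphi}$ is an eigenfunction to $\lambda_{i_j}^{\varphi}$ in the sense of Definition~\ref{DEF:EEW}, so \eqref{WEstateD} is precisely the weak form of $(SD_j)$. The substantive content of the theorem is the variational inequality $(VD)$, which I would establish by the standard convex perturbation argument.

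The admissible set $\mathcal{G}^{\beta}\cap\mathcal{U}$ is convex, being an intersection of sets defined by linear pointwise and integral constraints on $\varphi$. Hence, for any $\vartheta\in\mathcal{G}^{\beta}\cap\mathcal{U}$ and $t\in[0,1]$, the convex combination $\varphi_t := \varphi+t(\vartheta-\varphi)$ lies in $\mathcal{G}^{\beta}\cap\mathcal{U}$; moreover $|\varphi|,|\vartheta|\le 1$ almost everywhere implies $\|\varphi_t-\varphi\|_{L^\infty(\Omega)}\le 2t$. Choosing $t>0$ small enough so that $2t$ is smaller than both the local-optimality radius $\delta$ and every $\delta_{i_j}^{\varphi}$ from Theorem~\ref{difewv}, local minimality yields
\begin{align*}
0 \;\le\; \tfrac{1}{t}\bigl(J_l^{D,\eps}(\varphi_t)-J_l^{D,\eps}(\varphi)\bigr),
\end{align*}
and sending $t\searrow 0$ gives $0\le \tfrac{\mathrm d}{\mathrm d t}J_l^{D,\eps}(\varphi_t)\big|_{t=0}$ provided the right derivative exists.

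To evaluate this derivative I split $J_l^{D,\eps}=\Psi\circ\Lambda+\gamma E^{\eps}$ with $\Lambda(\cdot):=(\lambda_{i_1}^{\cdot},\dots,\lambda_{i_l}^{\cdot})$. Theorem~\ref{difewv} provides continuous Fréchet differentiability of each component of $\Lambda$ from $L^\infty(\Omega)$ to $\R$ in a neighbourhood of $\varphi$, with directional derivative at $\varphi$ given by \eqref{lhdefD} evaluated at $h=\vartheta-\varphi$; combined with the $C^1$-regularity of $\Psi$, the chain rule yields the eigenvalue block of $(VD)$. For the Ginzburg--Landau part, the $C^{1,1}_{\mathrm{loc}}$-regularity of $\psi_0$ together with dominated convergence produces the derivative formula already displayed at the end of Section~\ref{SOPT}, which accounts for the two $\gamma$-terms in $(VD)$. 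Assembling the contributions reproduces the inequality $(VD)$ exactly.

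The argument is essentially routine once Theorem~\ref{difewv} is invoked. The one point meriting care is that the chain rule requires Fréchet differentiability of \emph{all} $l$ simple eigenvalues on a \emph{common} $L^\infty$-neighbourhood of $\varphi$ together with single-valued eigenfunction branches; the former is secured by intersecting finitely many open balls $B_{\delta_{i_j}^\varphi}(\varphi)$, and the latter by the sign condition \eqref{SC} under which the derivative formula \eqref{lhdefD} is valid. No further difficulty arises.
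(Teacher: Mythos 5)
Your proposal is correct and follows the paper's own argument: a convex perturbation $\varphi_t=\varphi+t(\vartheta-\varphi)$ in the convex admissible set $\mathcal{G}^{\beta}\cap\mathcal{U}$, invoking the Fréchet differentiability from Theorem~\ref{difewv} and the chain rule with $\Psi$ together with the directional derivative of $E^{\eps}$ recorded at the end of Section~\ref{SOPT}. Nothing essentially different — the observation $\|\varphi_t-\varphi\|_{L^\infty}\le 2t$ and the need for a common neighbourhood across the $l$ simple eigenvalues are useful explicit details that the paper leaves implicit.
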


\begin{Thm}[The optimality system to \eqref{PN}] \label{VUN}
	Let $\varphi\in \mathcal{G}^{\beta}\cap \mathcal{U}$ be a local minimizer of the optimization problem $\eqref{PN}$, i.e., there exists $\delta>0$ such that 
	\begin{align*}
           J_l^{N,\varepsilon}(\vartheta)
       &\ge 
           J_{l}^{N,\varepsilon}(\varphi)
       \quad 
           \text{for all $\vartheta\in \mathcal{G}^{\beta}\cap \mathcal{U}$ with } \norm{\vartheta-\varphi}_{\Li}<\delta.
	\end{align*}
	Suppose that the eigenvalues $\mu^{\varphi}_{i_1},\dots,\mu^{\varphi}_{i_l}$ are simple and let us fix associated $\Lzc$-normalized eigenfunctions  $w^{N,\varphi}_{i_1},\dots,w^{N,\varphi}_{i_l}\in \HOO$.
	
	Then the following optimality system is satisfied:
	\begin{itemize}
    		\item The state equations
        \begin{alignat}{3}\tag{$SN_j$}
            \left\{
            \begin{aligned}
                    -\nabla\cdot\left[a_{\varepsilon}(\varphi)\nabla w^{N,\varphi}_{i_j}\right]
               &=
                    \mu_{i_j}^{\varphi}c_{\varepsilon}(\varphi)w^{N,\varphi}_{i_j}
               &&
                    \quad\text{in }\Omega,\\      
                    \frac{\partial w^{N,\varphi}_{i_j}}{\partial \B{\nu}}&=0&&\quad\text{on }\partial\Omega
            \end{aligned}             
            \right.
        \end{alignat}
    		are fulfilled in the weak sense for all $j\in\{1,\dots,l\}$. 
    		\item 
    		The variational inequality
        		\begin{align}
        		\tag{${VN}$}
            		\begin{aligned}
                		  0
                		&\le 
                    		\gamma\eps
                    		\int_{\tOmega}
                                \nabla\varphi\cdot\nabla(\vartheta-\varphi)\,
                             \textup{d}x
                    		+\frac{\gamma}{\eps}
                    		\int_{\tOmega}
                                \psi_0^{\prime}(\varphi)(\vartheta-\varphi)
                             \textup{\,d}x \\[1ex]
                		&\qquad 
                    		+\sum_{j=1}^{l}
                    		\Bigg\{
                              [\partial_{\lambda_{i_j}}
                                \hspace{-0.7ex}\Psi]   \big(\mu^{\varphi}_{i_1},\dots,\mu^{\varphi}_{i_l}\big)\;
                    		              \Bigg(
                    		                  \left(
                                                w^{N,\varphi}_{i_j},w^{N,\varphi}_{i_j}
                                               \right)_{\ie{a}^{\prime}(\varphi)(\vartheta-\varphi)} \\[-1ex]
                		&\qquad\qquad\qquad\qquad\qquad\qquad\qquad
                    		                  -\mu^{\varphi}_{i_j}
                                          \left(
                                            w^{N,\varphi}_{i_j},w^{N,\varphi}_{i_j}
                                           \right)_{\ie{c}^{\prime}(\varphi)(\vartheta-\varphi)}
                		                  \Bigg)
                 		  \Bigg\}
            		\end{aligned}
        		\end{align}
    		is satisfied for all $\vartheta\in \mathcal{G}^{\beta}\cap \mathcal{U}$.
	\end{itemize}
\end{Thm}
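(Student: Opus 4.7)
The proof closely parallels that of Theorem~\ref{VUD}. The state equations $(SN_j)$ hold in the weak sense essentially by definition: they are precisely the identity \eqref{WEstateN} from Definition~\ref{DEF:EEW}(b), instantiated with each eigenpair $(w^{N,\varphi}_{i_j},\mu^{\varphi}_{i_j})$ for $j=1,\dots,l$. The natural boundary condition $\partial_{\boldsymbol{\nu}} w=0$ on $\partial\Omega$ is automatically encoded by testing with functions in $\HO$, so no further argument is needed for this part.

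For the variational inequality $(VN)$, the plan is a directional-derivative argument along convex admissible perturbations. Since $\mathcal{G}^{\beta}\cap\mathcal{U}$ is convex in $\Li$, for every $\vartheta\in\mathcal{G}^{\beta}\cap\mathcal{U}$ the curve $\varphi_t:=\varphi+t(\vartheta-\varphi)$ lies in $\mathcal{G}^{\beta}\cap\mathcal{U}$ for all $t\in[0,1]$, and $\norm{\varphi_t-\varphi}_{\Li}\le 2t$. For $t$ sufficiently small, $\varphi_t$ therefore lies both in the $\delta$-neighbourhood of local optimality and in each of the balls $B_{\delta^{\varphi}_{i_j}}(\varphi)\subset\Li$ provided by Theorem~\ref{difewv}. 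Local optimality then yields $t^{-1}\bigl(J^{N,\eps}_l(\varphi_t)-J^{N,\eps}_l(\varphi)\bigr)\ge 0$, and the task reduces to passing to the limit $t\searrow 0$.

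The limit is computed additively. The Ginzburg--Landau contribution $\gamma E^{\eps}$ produces the two integrals over $\tOmega$ on the first line of $(VN)$; this is exactly the formula already displayed in Section~\ref{SOPT}, and its derivation relies only on $\psi_0\in C^{1,1}_{\text{loc}}(\R)$. For the spectral part $\Psi\bigl(\mu^{\varphi_t}_{i_1},\dots,\mu^{\varphi_t}_{i_l}\bigr)$, the chain rule applied to $\Psi\in C^1$ together with Theorem~\ref{difewv} gives $\frac{\mathrm d}{\mathrm dt}\mu^{\varphi_t}_{i_j}\bigl\vert_{t=0}=(\mu^{\varphi}_{i_j})'(\vartheta-\varphi)$, and inserting the explicit formula \eqref{lhdefN} yields precisely the sum over $j=1,\dots,l$ that appears in $(VN)$.

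The principal technical obstacle is justifying this Fréchet-differentiability step. It depends crucially on the standing simplicity assumption on $\mu^{\varphi}_{i_1},\dots,\mu^{\varphi}_{i_l}$, which by Lemma~\ref{LEM:SIGN} persists under sufficiently small $\Li$-perturbations, together with the regularity $a_{\eps},c_{\eps}\in C^{1,1}_{\text{loc}}(\R)$ and the fact that the admissible direction $\vartheta-\varphi$ lies in $\Li$ so that Theorem~\ref{difewv} can be evaluated in it. Combining the Ginzburg--Landau and spectral contributions and taking $t\searrow 0$ in the difference quotient then delivers $(VN)$ for every $\vartheta\in\mathcal{G}^{\beta}\cap\mathcal{U}$.
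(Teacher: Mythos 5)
Your proposal is correct and matches the paper's approach: the authors likewise derive $(VN)$ from the non-negativity of the one-sided directional derivative $\frac{\mathrm d}{\mathrm dt}J^{N,\eps}_l(\varphi+t(\vartheta-\varphi))\big\vert_{t=0}$ along convex perturbations in $\mathcal{G}^{\beta}\cap\mathcal{U}$, plugging in the explicit Fr\'echet derivative formula \eqref{lhdefN} from Theorem~\ref{difewv} (together with Lemma~\ref{LEM:SIGN} to preserve simplicity) for the spectral part and the directional-derivative formula for $E^{\eps}$ from Section~\ref{SOPT} for the Ginzburg--Landau part, while the state equations $(SN_j)$ hold by Definition~\ref{DEF:EEW}. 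Your identification of the key hypotheses (simplicity of the eigenvalues, $a_\eps,c_\eps\in C^{1,1}_{\mathrm{loc}}(\R)$, and $\vartheta-\varphi\in\Li$) is exactly the right justification.
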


\section{Sharp Interface Asymptotics for the Dirichlet Case}\label{SEC:SIDir}

In this section, we want to discuss the sharp interface asymptotics for the Dirichlet eigenvalue optimization problem \eqref{PD}, i.e., its behavior when $\varepsilon\to 0$.
For the sake of a rigorous discussion we need to make additional assumptions that are supposed to hold throughout the remainder of this section. 

    \begin{enumerate}[label=\textnormal{\bfseries{(A\arabic*)}}]
        \item \label{Asmpt:Dom} We assume that the design domain $\Omega$ is a bounded Lipschitz domain in $\R^d$ with $d\ge 2$.
        \item\label{Asmp:ac}
            We fix $a_{\varepsilon}=c_{\varepsilon}= 1$ on $[-1,1]$.
        \item\label{Asmp:b}
            Let 
            \begin{align}
            \label{BEPS}
                b_{\varepsilon}:[-1,1]\to [0,\overline{b}_{\varepsilon}], 
                \;\; \eps>0,
                \quad\text{and}\quad
                b_0:[-1,1]\to [0,+\infty]  ,
            \end{align}
            be functions with
            \begin{itemize}
                \item $b_{\eps}$ is decreasing, continuous and surjective
                \item $b_0$ is continuous at the point $1$   
                \item $b_0(0)<+\infty$,
                \item $b_{\varepsilon}\to b_0$ pointwise in $[-1,1]$ as $\eps\to 0$,
                \item and $b_{\delta} \ge b_{\varepsilon}$ on $[-1,1]$ for all $0\le\delta\le\varepsilon$.
            \end{itemize}
            Here, the interval $[0,+\infty]$ is to be understood as a subset of the extended real numbers $\overline\R = \R\cup\{\pm\infty\}$, on which we use the common conventions $\pm \infty \,\cdot\, 0 = 0$ and $0^{-1} = +\infty$.
            
            Moreover, the numbers $\overline{b}_\eps$ in \eqref{BEPS}
            are chosen such that
            \begin{align*}
                \underset{\varepsilon\to 0}{\lim}\;\overline{b}_{\varepsilon}=+\infty
                \quad\text{with}\quad
                \overline{b}_{\varepsilon}=o(\varepsilon^{-\kappa}) \quad\text{as $\eps\to 0$,}
            \end{align*}       
    where, depending on the dimension $d$,
    \begin{align*}
        \begin{cases}
            \kappa\in (0,1)
            &\text{if}\; d = 2, \\
            \kappa = \frac 2d 
            &\text{if}\; d\ge 3.
        \end{cases}
    \end{align*}
        \item \label{Asmp:NTV}
            In the following, we only consider elements $\varphi\in BV(\tOmega,\left\{\pm 1\right\})\cap \mathcal{U}$ such that the set
            \begin{align*}
                E^{\varphi}\coloneqq
                \left\{
                    x\in\Omega\left|\,
                    \varphi(x)=1\right.
                \right\}
            \end{align*}
            contains an open ball $B$. From this assumption we infer $C_0^{\infty}(B)\subset V^{\varphi}$ hence this excludes the pathological case that the space
            \begin{align*}
                V^{\varphi}\coloneqq
                \left\{
                    \eta\in\Hz\left|\,
                    \eta=0\text{ a.e.~in }\Omega\backslash E^{\varphi}\right.
                \right\}
            \end{align*} 
            is trivial or finite dimensional. 
            Recalling the definition of the set $\mathcal U$ in Section~\ref{SOPT}, this condition on $E^{\varphi}$ can be implemented in the constraint $\varphi\in \mathcal U$ by simply demanding $B\subset S_1$ for any prescribed open ball $B\subset \Omega$. 
             Later, in Subsection~\ref{Sec:LimProp}, we will discuss how $V^{\varphi}$ is related to ``Sobolev-like'' spaces in the context of quasi-open sets. 
            We further define the space
            \begin{align*}
                H^{\varphi}\coloneqq
                \left\{
                    \eta\in L^2(\Omega)\left|\,
                    \eta=0\text{ a.e.~in }\Omega\backslash E^{\varphi}\right.
                \right\}.
            \end{align*}
        \item In addition to the assumptions of Section~\ref{Sec:Form}, we demand that $S_0$ and $S_1$ are sets of finite perimeter in $\Omega$. Then \cite[Thm.~3.87]{AmbrosioFuscoPallara} yields that any $\varphi\in BV(\tOmega,\left\{\pm 1\right\})\cap \mathcal{U}$ is indeed an element of $BV(\Omega,\left\{\pm 1\right\}).$ 
        Hence, in particular, $E^{\varphi}$ is a set of finite perimeter in $\Omega$.
        Therefore, we consider $\varphi\in BV(\Omega,\left\{\pm 1\right\})\cap \mathcal{U}$ in the following.
        \item \label{Asmpt:Pot} For the potential $\psi$ appearing in the Ginzburg--Landau energy, we choose the double-obstacle potential whose regular part is given as $\psi_0(\varphi)=\frac{1}{2}(1-\varphi^2)$. This type of free energy is for example studied in \cite{BloweyElliott}.
    \end{enumerate}

\begin{Rem}~\label{RemLimit}
    \begin{enumerate}[label=\textnormal{(\alph*)},leftmargin=*]
    \item The case of dimension $d=1$ needs to be excluded as here some of the fundamental theorems about quasi-open sets and capacity theory are not true, see, e.g., \cite[Chap.~4]{Bucur}.
    \item 
        The growth condition $\ie{\overline{b}}=o(\varepsilon^{-\kappa})$ is chosen in order to obtain the desired convergence of the term involving $b_{\varepsilon}$ in Lemma~\ref{LEM:lam1Conv} and Theorem~\ref{Thm:Sconv} via a Hölder estimate in dependence of the dimension $d\ge2$, see \cite[Proof of Lem.~3, 2nd step]{GarHe} for the case $d=3$.
        As explained in \cite[Rem. 2]{GarHe}, for $d=2$, this growth condition can be weakened to $\ie{\overline{b}}=o(\varepsilon^{-\kappa})$ for any $\kappa\in (0,1)$.
        \item \label{itemCr}
        According to \cite[Lem.~3.2]{Hecht}, we find a crack free representative $E^{\varphi}_c$ of $E^{\varphi}$ that is a set of finite perimeter with $\textup{int}(E^{\varphi}_c)=\textup{int}(\overline{E^{\varphi}_c})$ (where int denotes the interior) and 
        \begin{align*}
            \abs{E^{\varphi}\bigtriangleup E_c^{\varphi}}=0, 
            \quad\text{where}\quad
            E^{\varphi}\bigtriangleup E_c^{\varphi} = (E^{\varphi} \cup E_c^{\varphi}) \backslash (E^{\varphi} \cap E_c^{\varphi}).
        \end{align*}
        We further point out that assumption \ref{Asmp:NTV} guarantees that $\textup{int}(E^{\varphi}_c)\neq \emptyset$ and we can thus apply \cite[Thm.~6.3]{DelZol} from which we infer
        \begin{align*}
                V^{\varphi}
            \subset
                    \left\{v\in H_0^1(\Omega)\left|\,
                    v=0\text{ a.e.~in }\Omega\backslash E^{\varphi}_c\right.
                    \right\}
            = 
                H_0^1(\textup{int}(E^{\varphi}_c)).
        \end{align*}
        This means any function in the abstract space $V^{\varphi}$ can be seen as an element of the restricted Sobolev space $H_0^1(\textup{int}(E^{\varphi}_c))$. This will help us to understand the limit problem in the remainder of this section.
        \item Instead of employing the potential as declared in \ref{Asmpt:Pot}, it would also be possible to use different choices.
        For instance, the quartic regular part $\psi_0(\varphi)=\frac{1}{2}(1-\varphi^2)^2$ could also be chosen. 
        This choice would only affect the choice of profiles used to construct a recovery sequence in \textit{Step~1} of the proof of Theorem~\ref{GamHelp} but our theory would still remain valid.
    \end{enumerate}
\end{Rem}

Under the above assumptions, we can prove that eigenfunctions of \eqref{WEstateD} for $\varepsilon>0$ converge to eigenfunctions of a limit problem as $\varepsilon\searrow 0$, and these limit functions have suitable properties.
To this end, we first want to develop a better understanding of the limit problem.

\subsection{The limit problem and its properties}\label{Sec:LimProp}

In the following, we discuss the limit eigenvalue problem and its most important properties. It is well known that due to well-posedness of the minimization problem on the sharp interface level, we need to consider the Dirichlet eigenvalue problem in its relaxed form using Borel measures as introduced, e.g., in \cite{MasoMurat,AttouchButtazzo, ButtazzoMaso91, Bucur, HenrotPierre}. In the spirit of \cite{BogoselOudet,DePhilippis,BucurHenrot} we only recall the key facts of this theory needed in our framework, the details can be found there and the references therein.

The following theory strongly relies on so called \textit{quasi-open} sets and the notion of capacity as it is closely related to the relaxed Dirichlet problem.
The \textit{capacity} of a measurable set $E\subset \R^d$ is defined as
\begin{align*}
	\textup{cap}(E)=\inf\left\{\left.\int_{\R^d}\abs{\nabla u}^2+u^2\text{\,d}x\right\vert u\in \mathcal{U}_{E}\right\},
\end{align*}	
where $\mathcal{U}_{E}$ is the set of functions $u\in H^1(\R^d)$ such that $u\ge1$ a.e.~in a neighborhood of  $E$. Here, the expression ``a.e.~in a neighborhood'' means that there exists an open set $\mathcal{O}$ containing $E$ such that $u\ge 1$ a.e.~in $\mathcal{O}$. We say that a relation holds \textit{quasi-everywhere} (short q.e.) if it holds up to a set of zero capacity.

A set $\omega \subset \R^d$ is called \textit{quasi-open} if for every $\delta>0$ there is an open set $\omega_{\delta}\subset \R^d$ such that $\textup{cap}(\omega \bigtriangleup \omega_{\delta})<\delta$. Here $\bigtriangleup$ denotes the symmetric difference of sets.\\
Above, we have introduced the space $V^{\varphi}$ based on a function $\varphi\in BV(\Omega,\left\{\pm 1\right\})$. More general, for any measurable set $E\subset \Omega$, we define the ``Sobolev-like'' space 
\begin{align*}
	\tilde{H}_0^1(E)=\left\{\left. u\in H^1_0(\Omega) \,\right\vert\, u=0 \text{ a.e.~in }\Omega\backslash E\right\}.
\end{align*}
It is clear that $\tilde{H}_0^1(E)\subset H_0^1(\Omega)$ is a closed subspace. In general, for open sets $E\subset \Omega$, the inclusion $H_0^1(E)\subset \tilde{H}_0^1(E)$ might be strict. However, if the open set $E$ additionally has a Lipschitz boundary, then equality holds.
A crucial property of the space $\tilde{H}_0^1(E)$ is that there exists a unique quasi-open set $\omega\subset \Omega$ such that
\begin{align}\label{MeasQuasi}
	\tilde{H}_0^1(E)=H_0^1(\omega) = \left\{\left. u\in H_0^1(\Omega) \,\right\vert\, \tilde{u}=0 \text{ q.e.~in } \Omega\backslash \omega\right\}.
\end{align}
This can be verified as in the proof of \cite[Prop.~3.3.44]{HenrotPierre}. Note that uniqueness is to be understood up to a set of zero capacity. Here, $\tilde{u}\in H_0^1(\Omega)$ denotes the quasi-everywhere unique quasi-continuous representative of $u\in H_0^1(\Omega)$ and in the following, we identify each function in $H_0^1(\Omega)$ with its quasi-continuous representative. We refer to \cite[Sec.~3.3.4]{HenrotPierre} for further details.
It is important to notice that, in general, it merely holds $\omega\subset E$. Indeed, the case $\abs{E\backslash \omega}>0$ may occur, see \cite{BucurHenrot}.

The relation \eqref{MeasQuasi} can now further be refined via the following Laplace equation.  Let $u_{E}\in \tilde{H}_0^1(E)$ be the unique solution of
\begin{align}\label{wE}
	\int_{\Omega}\nabla u_{E}\cdot \nabla v\text{\,d}x=\int_{\Omega} 1v\text{\,d}x \quad \forall v\in \tilde{H}_0^1(E)
\end{align}	
and $u_{\omega}\in H_0^1(\omega)$ be the unique solution of
\begin{align}\label{Lap1H}
	\int_{\Omega}\nabla u_{\omega}\cdot \nabla v\text{\,d}x=\int_\Omega 1v\text{\,d}x \quad \forall v\in {H}_0^1(\omega).
\end{align}
We thus infer $u_{E}=u_{\omega}$ in $H_0^1(\omega)$. 
We now associate the quasi-open set $\omega$ with the Borel measure
\begin{align*}
	\infty_{\Omega\backslash \omega}(B)=
	\begin{cases}
		0&\quad \text{if } \mathrm{cap}(B\cap\Omega\backslash\omega)=0,\\
		+\infty &\quad \text{else},
	\end{cases}	
\end{align*}
for any Borel-set $B$.
Using the notation $\mu_{\omega}\coloneqq \infty_{\Omega\backslash\omega}$, this allows us to define the so called \textit{relaxed Dirichlet problem} associated to \eqref{Lap1H}: Find $u_{\mu_{\omega}}\in X_{\mu_{\omega}}(\Omega)\coloneqq H_0^1(\Omega)\cap L^2_{\mu_{\omega}}(\Omega)$
\begin{align}\label{RelDir}
	\int_{\Omega}\nabla u_{\mu_{\omega}}\cdot \nabla v\text{\,d}x+\int_{\Omega} u_{\mu_{\omega}} v\text{\,d}\mu_{\omega}=
	\int_{\Omega}1v\text{\,d}x\quad \forall v\in X_{\mu_{\omega}}(\Omega).
\end{align}
Here $H_0^1(\Omega)\cap L^2_{\mu_{\omega}}(\Omega)$ denotes the space of functions $v\in H_0^1(\Omega)$ fulfilling
\begin{align*}
	\int_{\Omega}v^2\text{\,d}\mu_{\omega}<\infty.
\end{align*}
This problem is studied in depth in \cite{MasoMurat,AttouchButtazzo, ButtazzoMaso91, Bucur, HenrotPierre} as this is the canonical limiting problem when passing to the limit with a sequence of solutions of the classical Dirichlet--Laplace problem formulated on open sets $(\Omega_n)_{n\in\N}$. Due to \cite{ButtazzoMaso91, Bucur}, $X_{\mu_{\omega}}$ is a Hilbert space with the scalar product
\begin{align*}
	(u,v)_{X_{\mu_{\omega}}}\coloneqq \int_{\Omega}\nabla u\cdot \nabla v\text{\,d}x+\int_{\Omega}uv\text{\,d}\mu_{\omega}.
\end{align*}
Using \cite[Ex.~4.3.2]{Bucur} and the fact that there exists a finely open set $\mathcal{A}\subset \Omega$ and a set of zero capacity $\mathcal{N}$ such that $\omega=\mathcal{A}\cup \mathcal{N}$ (see \cite[Thm.~4.1.5]{Bucur}), one deduces that
\begin{align*}
	H_0^1(\omega)=X_{\mu_{\omega}},
\end{align*}
and therefore $u_{\mu_{\omega}}=u_{\omega}$ in $H_0^1(\omega)$.
Note that the comparison principle known for classical elliptic PDEs still holds for the relaxed Dirichlet problem, see \cite[Prop.~2.6]{MasoMurat}, which allows us to deduce that $u_{\mu_{\omega}} \ge 0$ q.e.~in $\Omega$ and $u_{\mu_{\omega}}\in L^{\infty}(\Omega).$ Furthermore, using the aforementioned comparison principle along with the relations shown above, we eventually obtain
\begin{align*}
	\tilde{H}_0^1(E)=H_0^1(\omega)=H_0^1(\left\{u_{\omega}>0\right\})=H_0^1(\left\{u_{\mu_\omega}>0\right\})=H_0^1(\left\{u_E>0\right\}).
\end{align*}
Using these identities, it is easy to see that
\begin{align*}
    H_0^1(\left\{u_{\omega}>0\right\})=\tilde{H}_0^1(\left\{u_{\omega}>0\right\}).
\end{align*}
This theory will be particularly helpful in Theorem~\ref{GamHelp}, where the $\limsup$ inequality will be established.

First of all, we now analyze the limit eigenvalue problem. After that, we will establish its connection to the diffuse eigenvalue problem \eqref{WEstateD}.

\begin{Thm}\label{LimEV}
 In addition to the assumptions made in Section~\ref{Sec:Form}, we suppose that the assumptions \ref{Asmpt:Dom}--\ref{Asmpt:Pot} are fulfilled. 
For any given $\varphi\in BV(\Omega,\left\{\pm 1\right\})\cap \mathcal{U}$, we consider the following eigenvalue problem:
Find $(w,\lambda)\in \left(V^{\varphi}\backslash\{0\}\right)\times \R$ such that
\begin{align}\label{EW0}
    \io{\nabla w\cdot \nabla \eta}=\lambda\io{w\eta}\quad \forall \eta\in V^{\varphi}.
\end{align}
Then the following holds true:
	\begin{enumerate}[label=\textnormal{(\alph*)},leftmargin=*]
		\item\label{itemEV}
		There exists a sequence
		\begin{align*}
    		  \left(
    		      w_k^{0,\varphi},\lambda_k^{0,\varphi}
    		  \right)_{k\in \N}
    		\subset \left(
    		V^{\varphi}\backslash\{0\}\right)\times \R,
		\end{align*}
		having the following properties:
		\begin{itemize}
			\item \label{itemSpec} For all indices $k\in\N$, $w^{0,\varphi}_k$ is an $L^2(\Omega)$-normalized eigenfunction of \eqref{EW0} to the eigenvalue $\lambda^{0,\varphi}_k$ and these eigenfunctions are pairwise orthogonal with respect to the canonical scalar product on $L^2(\Omega)$ denoted by $(\cdot,\cdot).$
			\item The eigenvalues $\lambda_k^{0,\varphi}$ (which are repeated according to their multiplicity) can be ordered in the following way:
			\begin{align*}
    			0<\lambda_1^{0,\varphi}
    			\le\lambda_2^{0,\varphi}
    			\le\lambda_3^{0,\varphi}
    			\le\cdots.
			\end{align*}
			Moreover, it holds that $\lambda_k^{0,\varphi}\to \infty$ as $k\to\infty$, and there exist no further eigenvalues of the limit problem $\eqref{EW0}$.
			\item The normalized eigenfunctions 
			\begin{align*}
			  \left\{
			    \frac{w^{0,\varphi}_1}{\sqrt{\lambda^{0,\varphi}_1}},
			    \frac{w^{0,\varphi}_2}{\sqrt{\lambda^{0,\varphi}_2}},
			    \dots
			  \right\}\subset V^{\varphi},
			\end{align*}
		form an orthonormal basis of the space $V^{\varphi}$ and any $u\in V^{\varphi}$ can be expressed as
		\begin{align*}
		    u=\sum_{i=1}^{\infty}(u,w_i^{0,\varphi})w_i^{0,\varphi},
		\end{align*}
		where the series converges in $V^{\varphi}$.
        \end{itemize}
		\item\label{itemCF}
		For any $k\in\N$, we have the Courant--Fischer characterization
		\begin{align}\label{CFD0}
                \lambda_k^{0,\varphi}
            =
                \underset{U\in\mathcal{S}_{k-1}}{\max}\min
                    \left\{
                        \left.
                            \frac{
                                \int_{\Omega}
                                    \abs{\nabla v}^2
                                \textup{\,d}x
                             }
                             {
                                \int_{\Omega}\abs{v}^2\textup{\,d}x
                             }
                      \right|
                        \begin{aligned}
                        	&v\in U^{\perp}\cap V^{\varphi}, \\
                            &v\neq 0
                        \end{aligned}
                \right\}.
        \end{align} 
		Here, $\mathcal S_{k-1}$ denotes the collection of all $(k-1)$-dimensional subspaces of $L^2(\Omega)$.
		The set $U^{\perp}$ denotes the orthogonal complement of $U\subset L^2(\Omega)$ with respect to the canonical scalar product.
		
		Moreover, the maximum is attained at the subspace
		\begin{align*}
		      U
          =
              \langle                   
                w^{0,\varphi}_1,\dots,w^{0,\varphi}_{k-1}
              \rangle_{\textup{span}}.
		\end{align*}
	\end{enumerate}
\end{Thm}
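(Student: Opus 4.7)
The plan is to rephrase \eqref{EW0} as the spectral problem for a compact, self-adjoint, positive operator on a suitable Hilbert space, and then invoke the classical spectral theorem together with a standard max-min argument. First I would observe that $V^{\varphi}$ is a closed subspace of $\Hz$, so by the Poincar\'e inequality the bilinear form $\langle u,v\rangle_{V^{\varphi}}:=\io{\nabla u\cdot\nabla v}$ defines a scalar product on $V^{\varphi}$ equivalent to the $H^1$-norm, turning $V^{\varphi}$ into a Hilbert space. Assumption \ref{Asmp:NTV} guarantees $C_0^{\infty}(B)\subset V^{\varphi}$ for some open ball $B$, so $V^{\varphi}$ is infinite-dimensional, and the Rellich--Kondrachov theorem yields a compact embedding $V^{\varphi}\hookrightarrow L^2(\Omega)$.

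Next, for $f\in V^{\varphi}$, I would define $Tf\in V^{\varphi}$ as the unique solution (via the Riesz representation theorem) of $\io{\nabla(Tf)\cdot\nabla\eta}=\io{f\eta}$ for all $\eta\in V^{\varphi}$. A routine argument based on the compact embedding shows that $T$ is compact, the symmetry relation $\langle Tf,g\rangle_{V^{\varphi}}=\io{fg}=\langle f,Tg\rangle_{V^{\varphi}}$ yields self-adjointness, and $\langle Tf,f\rangle_{V^{\varphi}}=\io{f^2}$ implies $T$ is positive definite with trivial kernel. The spectral theorem for compact self-adjoint operators then delivers a non-increasing null sequence $(\mu_k)_{k\in\N}$ of positive eigenvalues of $T$ together with a $V^{\varphi}$-orthonormal basis $(v_k)_{k\in\N}$ of corresponding eigenfunctions. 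Setting $\lambda_k^{0,\varphi}:=1/\mu_k$ and $w_k^{0,\varphi}:=\sqrt{\lambda_k^{0,\varphi}}\,v_k$ produces pairs satisfying \eqref{EW0}; the identity $\langle v_j,v_k\rangle_{V^{\varphi}}=\delta_{jk}$ combined with the eigenvalue relation then yields $\io{w_j^{0,\varphi}w_k^{0,\varphi}}=\delta_{jk}$, establishing the $L^2$-normalization, the pairwise $L^2$-orthogonality, the divergence $\lambda_k^{0,\varphi}\to\infty$, and the exhaustiveness of the list (any further eigenfunction would be $L^2$-orthogonal to the complete family $(v_k)$). The expansion formula in \ref{itemEV} then follows by expanding $u\in V^{\varphi}$ in the $V^{\varphi}$-basis $\{v_k\}$ and using $\langle u,v_k\rangle_{V^{\varphi}}=\sqrt{\lambda_k^{0,\varphi}}\,(u,w_k^{0,\varphi})$.

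For the Courant--Fischer formula in \ref{itemCF}, I would argue by a standard two-sided comparison. Taking $U=\langle w_1^{0,\varphi},\dots,w_{k-1}^{0,\varphi}\rangle_{\textup{span}}$, every $v\in U^{\perp}\cap V^{\varphi}$ admits a convergent $V^{\varphi}$-expansion $v=\sum_{i\ge k}(v,w_i^{0,\varphi})\,w_i^{0,\varphi}$, and Parseval together with the ordering of the $\lambda_i^{0,\varphi}$ bounds the Rayleigh quotient from below by $\lambda_k^{0,\varphi}$, with equality attained at $v=w_k^{0,\varphi}$. For the reverse direction, given an arbitrary $U\in\mathcal{S}_{k-1}$, a dimension count shows that $\langle w_1^{0,\varphi},\dots,w_k^{0,\varphi}\rangle_{\textup{span}}\cap U^{\perp}\neq\{0\}$, and testing the Rayleigh quotient against any non-zero element therein yields a value bounded above by $\lambda_k^{0,\varphi}$. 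The main technical obstacle in the whole proof lies entirely on the setup side: one has to justify that the abstractly defined space $V^{\varphi}$ fits into the standard compact self-adjoint framework, which is exactly what assumption \ref{Asmp:NTV} together with Remark~\ref{RemLimit}\ref{itemCr} secure; once this is in place, everything else is a routine adaptation of the classical spectral theory for the Dirichlet Laplacian.
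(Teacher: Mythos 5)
Your proposal is correct and differs from the paper's proof in a small but genuine way: you set up the solution operator $T$ on $V^{\varphi}$ equipped with the $H^1$-type scalar product $\langle u,v\rangle_{V^{\varphi}}=\io{\nabla u\cdot\nabla v}$, whereas the paper defines a solution operator $\mathcal{T}:H^{\varphi}\to V^{\varphi}\hookrightarrow H^{\varphi}$ on the larger $L^2$-type space $H^{\varphi}=\{\eta\in L^2(\Omega)\,\vert\,\eta=0\text{ a.e.\ in }\Omega\setminus E^{\varphi}\}$. The two constructions are equivalent in outcome but trade off differently: on $V^{\varphi}$ the identity $\langle Tf,f\rangle_{V^{\varphi}}=\|f\|_{L^2}^2$ gives injectivity for free, so you get a clean bijection between the spectrum of $T$ and the eigenvalue problem; by contrast, on $H^{\varphi}$ the operator $\mathcal{T}$ may have a non-trivial kernel (functions in $H^{\varphi}$ that are $L^2$-orthogonal to all of $V^{\varphi}$), and the paper has to note this and record the decomposition $H^{\varphi}=N(\mathcal{T})\perp\langle w_1^{0,\varphi},w_2^{0,\varphi},\dots\rangle_{\mathrm{span}}$. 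On the other hand, the paper's route gives the $L^2$-orthogonality of the eigenfunctions immediately from the spectral theorem (since $\mathcal{T}$ is self-adjoint on an $L^2$-space), whereas in your route this needs the small extra computation $\io{v_jv_k}=\langle Tv_j,v_k\rangle_{V^{\varphi}}=\mu_j\delta_{jk}$, which you correctly carry out via the rescaling $w_k^{0,\varphi}=\sqrt{\lambda_k^{0,\varphi}}\,v_k$. Both proofs lean in the same way on Assumption~\ref{Asmp:NTV} and the crack-free representative from Remark~\ref{RemLimit} to guarantee that $V^{\varphi}$ is infinite-dimensional and that the Rellich--Kondrachov embedding applies; the Courant--Fischer part is the standard two-sided comparison in either formulation.
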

\begin{proof}
\textit{Proof of \ref{itemEV}.} Equipping $V^{\varphi}\subset \Hz$ with the canonical $\Hz$ scalar product 
\begin{align*}
    (\cdot,\cdot)_{V^{\varphi}}:
    V^{\varphi}\times V^{\varphi}&\to \mathbb{R},\quad
    (v,w)_{V^{\varphi}}:= \io{\nabla v\cdot \nabla w},
\end{align*}
it is a closed subspace of $\Hz$ and hence, it is a Hilbert space. Using standard arguments, we conclude the existence of a self-adjoint and compact solution operator 
\begin{align*}
    \mathcal{T}: H^{\varphi}&\to V^{\varphi}\hookrightarrow H^{\varphi},\quad
    \mathcal{T}(f):= v_{f},
\end{align*}
where $v_{f}$ denotes the unique solution of
\begin{align*}
        \io{\nabla v_{f}\cdot \nabla \eta}
    =
        \io{f\eta}
        \quad\forall \eta\in V^{\varphi}.
\end{align*}
We point out that the operator $\mathcal{T}$ is not necessarily injective. However, by assumption \ref{Asmp:NTV}, we have $C_0^{\infty}(B)\subset V^{\varphi}\subset H^{\varphi}$ and the operator $\mathcal{T}$ restricted to $C_0^{\infty}(B)$ is obviously injective due to the fundamental lemma in the calculus of variations. We thus conclude that the image $\mathcal{T}(H^{\varphi})$ of the non-restricted operator is an infinite dimensional space.

Now, the spectral theorem for compact self-adjoint linear operators yields the first two claims of \ref{itemSpec} as well as the decomposition
\begin{align}\label{EWDecomp}
    H^{\varphi}
 =
    N(\mathcal{T})\perp \langle w_1^{0,\varphi},w_2^{0,\varphi},\dots\rangle_{\text{span}},
\end{align}
where $N(\mathcal{T})$ denotes the kernel of $\mathcal{T}$. The remaining assertions of (a) are established by applying the same techniques as in the proof of \cite[Thm.~2]{Evans}.

\textit{Proof of \ref{itemCF}.} Using the results above, the claim can be verified by proceeding similarly as in \cite[Thm.~3.2(b)]{Garcke}.
\end{proof}

In general, due to possible cracks within the set $E^{\varphi}$, we cannot guarantee that an eigenfunction $w$ of the limit problem vanishes on the whole of $\partial E^{\varphi}$.
Nevertheless, we know from Remark~\ref{RemLimit}.\ref{itemCr} that $w=0$ on $\partial\left(\textup{int}(E^{\varphi}_c)\right)$ in the trace sense (provided that the boundary is sufficiently smooth), meaning that $w$ has trace zero at least on the \textit{outer} boundary of $E^{\varphi}$. For more details we refer to \cite[Sec.~3]{Hecht}.
 If $E^{\varphi}$ is actually an open set with Lip\-schitz boundary, then indeed $w\in H_0^1(E^{\varphi})=\tilde{H}_0^1(E^{\varphi})$ where $H_0^1(E^{\varphi})$ can be understood in the standard sense, meaning that the trace of $w$ vanishes on $\partial E^{\varphi}$.
We thus interpret \eqref{EW0} as the weak formulation of the classical eigenvalue problem
\begin{align*}
\left\{
\begin{aligned}
    -\Delta w&=\lambda w&&\quad\text{in }E^{\varphi},\\
    w&=0&&\quad\text{on }\partial E^{\varphi},
\end{aligned}
\right.
\end{align*}
where the boundary condition is included in the space $V^{\varphi}$ of test functions in a relaxed way.

\begin{Rem}\label{epsEV}
Recall from Theorem~\ref{EEW} that for given $\varepsilon>0$ and $\varphi\in\Li$, we can fix the sequence
\begin{align*}
    \left(
        w_k^{\varepsilon,\varphi},\lambda_k^{\varepsilon,\varphi}
    \right)_{k\in \mathbb{N}}\subset \Hz\times \mathbb{R}^{+}
\end{align*}
of eigenfunctions and eigenvalues to \eqref{WEstateD}, where $\left\{w_1^{\varepsilon,\varphi},w_2^{\varepsilon,\varphi},\dots,\right\}\subset L^2(\Omega)$ forms an orthonormal basis. This notation will be used throughout the paper, and we will drop the additional index $\varepsilon$ if the context is clear.
\end{Rem}

\paragraph{Notation.}
Here, and in the remainder of this paper, the simplified notation $\left(\ie{\zeta}\right)_{\varepsilon>0}$ stands for $\left(\zeta_{\eps_k}\right)_{k\in\N}$ where $(\eps_k)_{k\in\N}$ denotes an arbitrary sequence with $\eps_k \to 0$ as $k\to\infty$. In this sense, a subsequence extraction from $\left(\ie{\zeta}\right)_{\varepsilon>0}$ is to be understood as a subsequence extraction from the associated sequence $(\eps_k)_{k\in\N}$. For convenience, our subsequences will not be relabeled, meaning that for any subsequence, we will use the same notation as for the whole sequence it was extracted from.

\bigskip

The following lemma will now link the diffuse interface problem $\eqref{WEstateD}$ to the sharp interface problem \eqref{EW0} for the first eigenvalue and it will serve as initial case for all higher eigenvalues.
This rigorous continuity analysis is new to the best of the authors' knowledge. We would like to mention that there are further results in optimal partitioning for the principal eigenvalue using a phase-field approach which show the convergence of minimizing eigenvalues of the optimization problem, see \cite{Bourdin, BogoselVelichkov}.

\begin{Lem}\label{LEM:lam1Conv}
     In addition to the assumptions made in Section~\ref{Sec:Form}, we suppose that the assumptions \ref{Asmpt:Dom}--\ref{Asmpt:Pot} are fulfilled. 
     Let $\left(\ie{\varphi}\right)_{\varepsilon>0}\subset L^1(\Omega)$ with $\abs{\ie{\varphi}}\le 1$ almost everywhere in $\Omega$ and let $\varphi\in BV(\Omega,\left\{\pm 1\right\})\cap \mathcal{U}$ such that
     \begin{align}\label{phieL1}
         \underset{\varepsilon\searrow 0}{\lim}
         \norm{\ie{\varphi}-\varphi}_{L^1(\Omega)}=0,
     \end{align}
     and the convergence exhibits the rate 
     \begin{align*}
        \norm{\ie{\varphi}-\varphi}_{
           L^1\left(E^{\varphi}\cap\left\{\ie{\varphi}<0\right\}\right)
        }=\mathcal{O}(\varepsilon).
     \end{align*}
     
     Then there exists an eigenfunction $u\in V^{\varphi}$ of the limit problem \eqref{EW0} to the eigenvalue $\lambda_1^{0,\varphi}$ such that
     \begin{align*}
        \underset{\varepsilon\searrow 0}{\lim}
        \io{b_{\varepsilon}(\varphi_{\varepsilon})\abs{w_1^{\varphi_{\varepsilon}}}^2}=
        \io{b_0(\varphi)\abs{u}^2}&=0,
     \end{align*}
     as well as
     \begin{align*}
         \underset{\varepsilon\searrow 0}{\lim}
         \norm{w_1^{\varphi_{\varepsilon}}-u}_{\HO}=0
         \quad\text{and}\quad
         \limse\lambda_1^{\varepsilon,\varphi_{\varepsilon}}=\lambda_1^{0,\varphi},
     \end{align*}
     up to subsequence extraction.
\end{Lem}

\pagebreak[2]

\begin{Rem}~
    \begin{enumerate}[label=\textnormal{(\alph*)},leftmargin=*]
        \item We point out that we will always use the letter $u$ (or $u_1,u_2,\dots$) to denote the limit of eigenfunctions. This is done in order to avoid confusion with the orthogonal system $\{w_1^{0,\varphi},w_2^{0,\varphi},\dots\}\subset V^{\varphi}$ of eigenfunctions to the limit problem we obtained in Theorem~\ref{LimEV}. 
        From the above lemma, we merely know that $u$ belongs to the first eigenspace which is spanned by the first eigenfunctions $w_i^{0,\varphi}$ in accordance with the multiplicity of the space. However, we cannot relate $u$ and $w_i^{0,\varphi}$ any further.
        \item Note that, in contrast to \cite{GarHe}, it would suffice to demand the above convergence rate condition only on the set $\tilde{E}^{\varphi}\cap\left\{\varphi_{\varepsilon}<0\right\}$ with $\tilde{E}^{\varphi}\coloneqq\{x\in \tOmega \,\vert\, \varphi(x)=1\}\subset \tOmega$ instead of $E^{\varphi}=\{x\in \Omega \,\vert\,\varphi(x)=1\}\subset \Omega$. As we have $\ie{\varphi},\varphi\in \mathcal{U}$, the difference $\ie{\varphi}-\varphi$ vanishes on $S_0\cup S_1$ anyway. 
    \end{enumerate}
\end{Rem}

\begin{proof}[Proof of Lemma~\ref{LEM:lam1Conv}]
    Some ideas of the proof are the same as in \cite[Lem.~3]{GarHe}, and we will mention those parts only briefly. We will also divide the proof into several steps for the sake of readability.
    In the following, due to \eqref{phieL1}, we may consider a non-relabeled subsequence of $(\varphi_{\eps})_{\eps>0}$ such that $\varphi_{\eps}\to\varphi$ a.e.~in $\Omega$.
   
    \textit{Step 1: For almost every $x\in \Omega$, it holds that
                $$\limse\; \ie{b}(\ie{\varphi}(x))=b_0(\varphi(x)).$$}%
   For the inequality
    \begin{align*}
        \limsupse b_{\eps}(\varphi_{\eps}(x))\le b_0(\varphi(x)),
    \end{align*}
     in the proof of Step~1 of \cite[Lem.~1]{GarHe}, the additionally assumed continuity of $b_0$ in the point $1$ given in \ref{Asmp:b} is needed. More precisely, as $\varphi\in BV(\Omega,\left\{\pm 1\right\})$ the inequality is clear if $\varphi(x)=-1$ because then $b_0(\varphi(x))=+\infty$. If $\varphi(x)=1$ then due to the pointwise convergence $\varphi_{\eps}(x)\to \varphi(x)$, the continuity of $b_0$ in $1$ and the fact $b_{\eps}\le b_0$ pointwise, we obtain
    \begin{align*}
        \limsupse b_{\eps}(\varphi_{\eps}(x))\le b_0(\varphi(x)).
    \end{align*}
    To prove the $\liminf$ inequality, we proceed exactly as in \cite{GarHe}. We point out that for this step, the convergence rate imposed on $(\varphi_{\eps})_{\eps>0}$ is not needed.
    
    \textit{Step 2:
    For any $v\in \HO$ with
    \begin{align}\label{vEvan}
       v\vert_{ \Omega\backslash E^{\varphi}}=0,
    \end{align}
    i.e., $v\in V^{\varphi}$, it holds that
    \begin{align*}
    \limse \io{\ie{b}(\ie{\varphi})\abs{v}^2}=\io{b_0(\varphi)\abs{v}^2}=0.
    \end{align*}}%
    This step can be established as in \cite{GarHe} since by assumption \ref{Asmp:b}, the coefficient function $\ie{b}$ possesses all the properties of \cite{GarHe}. This step heavily relies on the convergence rate imposed on $(\varphi_{\eps})_{\eps>0}$.
    
    In the remainder of this proof, we establish the convergence properties for the eigenvalues and eigenfunctions using the Courant--Fisher characterization.
    In the following, we write $\ie{w}$ and $\ee{\lambda}$ instead of $w_1^{\ie{\varphi}}$ and $\lambda_1^{\varepsilon,\ie{\varphi}}$, respectively, for convenience.
    
    \textit{%
        Step 3: We find a subsequence of $\left(w_{\varepsilon}\right)_{\varepsilon>0}$ and an $L^2(\Omega)$-normalized function $u\in V^{\varphi}$
        such that
        \begin{alignat}{3}\label{wConv}
            \begin{aligned}
                \ie{w}\rightharpoonup u&&\text{in }\HO\quad\text{and}\quad
                \ie{w}\to u&&\text{in }L^2(\Omega).
            \end{aligned}
        \end{alignat}}%
    For any eigenfunction solving \eqref{WEstateD} to the smallest eigenvalue $\lambda^{\eps}$ we recall the Courant--Fischer characterization \eqref{CFD} which simplifies to
	\begin{align*}
            \ee{\lambda}
        &=
            \min
                \left\{
                    \left.
                        \frac{
                            \int_{\Omega}
                                \abs{\nabla v}^2
                            \text{\,d}x
                          +\int_{\Omega}
                                b_{\varepsilon}(\ie{\varphi})\abs{v}^2
                             \text{\,d}x
                         }
                         {
                            \int_{\Omega} \abs{v}^2\text{\,d}x
                         }
                  \right|
                    \begin{aligned}
                             	&v\in \Hz, \\
                        &v\neq 0
                    \end{aligned}
            \right\}\\
        &=
            \min
                \left\{
                    \left.
                            \int_{\Omega}
                                \abs{\nabla v}^2
                            \text{\,d}x
                          +\int_{\Omega}
                                b_{\varepsilon}(\ie{\varphi})\abs{v}^2
                             \text{\,d}x
                  \right|
                    \begin{aligned}
                             	&v\in \Hz, \\
                        &\norm{v}_{L^2(\Omega)}=1
                    \end{aligned}
            \right\},                    
    \end{align*}
 as we have fixed the coefficient functions $\ie{a}$ and $\ie{c}$ in assumption \ref{Asmp:ac}. We define
 \begin{align*}
        F_{\varepsilon}: Q&\to \mathbb{R}^{+}_0, \quad 
        v\mapsto
        \int_{\Omega}
             \abs{\nabla v}^2
        \text{\,d}x
        +\int_{\Omega}
            b_{\varepsilon}(\ie{\varphi})\abs{v}^2
        \text{\,d}x,
 \end{align*}
 with $Q\coloneqq \{v\in \Hz \,\vert\, \norm{v}_{L^2(\Omega)}=1 \}$.
 In this way, $w_{\varepsilon}\in Q$ fulfills
 \begin{align}\label{wMin}
    F_{\varepsilon}(w_{\varepsilon})=\underset{v\in Q}{\min}\,F_{\varepsilon}(v).
 \end{align}
 To describe the limit situation, we similarly define
 \begin{align*}
     F_0: Q\to [0,+\infty],\quad
    v\mapsto
       \int_{\Omega}
             \abs{\nabla v}^2
        \text{\,d}x
        +\int_{\Omega}
            b_{0}(\varphi)\abs{v}^2
        \text{\,d}x.
 \end{align*}   
 
 By assumption \ref{Asmp:NTV}, $V^{\varphi}$ is non trivial and hence, there exists a function $\overline{v}\in Q$ satisfying property \eqref{vEvan}.
 Then, $\eqref{wMin}$ obviously entails that
 \begin{align*}
    \ie{F}(\ie{w})\le \ie{F}(\overline{v}),
 \end{align*}
 and from \textit{Step~2}, we already know that there is a constant $C>0$ such that for all $\varepsilon>0$,
 \begin{align*}
    \io{\ie{b}(\ie{\varphi})\abs{\overline{v}}^2}\le C.
 \end{align*}
 We thus infer that
 \begin{align*}
    \norm{\nabla \ie{w}}^2_{L^2(\Omega)} \le \ie{F}(\ie{w}) \le \ie{F}(\overline{v}) \le C,
 \end{align*}
 and combining these two bounds, the Banach--Alaoglu theorem implies the desired convergences \eqref{wConv} up to subsequence extraction.
 
\textit{Step 4: The function $u\in Q$ is a minimizer of $F_0$ and we have
            \begin{align}\label{GammaConv}
                \limse F_{\varepsilon}(w_{\varepsilon})=F_0(u),
            \end{align}
        along a non-relabeled subsequence.}

If we can show that $F_{\varepsilon}$ $\Gamma$-converges to $F_0$ on $Q$ with respect to the weak topology on $\HO$, we can apply classical results from $\Gamma$-convergence theory which give exactly the claimed properties, cf.~\cite{DalMaso}.
To prove $\Gamma$-convergence we have to verify the corresponding $\limsup$ and $\liminf$ inequalities.

To verify the $\limsup$ inequality, for any $v\in Q$ we need to find a so called recovery sequence $(v_{\varepsilon})_{\varepsilon>0}\subset Q$ that converges to $v\in Q$ weakly in $\HO$ and satisfies
\begin{align}\label{limsupIneq}
    \limsupse \ie{F}(\ie{v})\le F_0(v).
\end{align}
Here, we can simply choose the constant sequence $v_\eps := v\in Q$. Without loss of generality, we can assume that $F_0(v)<+\infty$,
as otherwise \eqref{limsupIneq} is trivially fulfilled. This assumption implies that
\begin{align*}
    \io{b_0(\varphi)\abs{v}^2}<\infty,
\end{align*}
and since $b_0(-1)=+\infty$ and $\varphi\in BV(\Omega,\left\{\pm 1\right\})$, we conclude that $v\in V^{\varphi}$. Hence, we infer from \textit{Step~2} that
\begin{align}\label{infVan}
    \io{b_0(\varphi)\abs{v}^2}=\limse\; \io{\ie{b}(\ie{\varphi})\abs{v}^2}=0.
\end{align}
By construction of $\ie{F}$ and $F_0$, this already implies \eqref{limsupIneq}.

For the $\liminf$ inequality we need to show that for any sequence $(\ie{v})_{\varepsilon>0}\subset Q$ converging to a $v\in Q$ weakly in $\HO$ topology, it holds
\begin{align}\label{liminfIneq}
    F_0(v)\le \liminfse \ie{F}(\ie{v}).
\end{align}
By the compact embedding $\HO\hookrightarrow L^2(\Omega)$ we know that $v_{\varepsilon}\to v$ almost everywhere in $\Omega$ up to subsequence extraction. Furthermore, we have already seen in \textit{Step~1} that for almost every $x\in \Omega$,
\begin{align*}
    \limse \ie{b}(\ie{\varphi}(x))=b_0(\varphi(x)).
\end{align*}
Therefore, we deduce
\begin{align*}
        \io{b_0(\varphi_0(x))\abs{v(x)}^2}
    &=
        \io{\limse \ie{b}(\ie{\varphi}(x))\; \limse \abs{\ie{v}(x)}^2}\\
    &=
        \io{\liminfse\, \big[\ie{b}(\ie{\varphi}(x))\abs{\ie{v}(x)}^2\big]}\\
    &\le
        \liminfse \io{\ie{b}(\ie{\varphi}(x))\abs{\ie{v}(x)}^2},
\end{align*}
by means of Fatou's lemma. Noting that the $H^1_0(\Omega)$ norm $\norm{\nabla \,\cdot\,}_{L^2(\Omega)}$ is weakly lower semicontinuous this yields the $\liminf$ inequality \eqref{liminfIneq}.

In summary, this means that
\begin{align*}
    F_{\varepsilon}\overset{\Gamma}{\to}F_0,
\end{align*}
with respect to the weak $H^1(\Omega)$-topology. Eventually, we can use standard $\Gamma$-convergence results (see, e.g., \cite{DalMaso}) to deduce the claim of \textit{Step~4}.

\medskip

We now want to complete the proof by applying the results established in the previous steps.
Proceeding as in \textit{Step~4}, and using the convergence properties in \eqref{wConv}, we deduce
\begin{align*}
    \io{b_0(\varphi)\abs{u}^2}&\le \liminfse \io{\ie{b}(\ie{\varphi})\abs{\ie{w}}^2},\\
    \io{\abs{\nabla u}^2}&\le \liminfse\io{\abs{\nabla w_{\varepsilon}}^2}.
\end{align*}
As both sequences are bounded from below by zero, we can use \eqref{GammaConv} along with \cite[Lem.~4]{GarHe} to infer
\begin{align*}
    \io{b_0(\varphi)\abs{u}^2}&= \limse \io{\ie{b}(\ie{\varphi})\abs{\ie{w}}^2},\\
    \io{\abs{\nabla u}^2}&= \limse\io{\abs{\nabla w_{\varepsilon}}^2}.
\end{align*}
From the second convergence and the weak convergence \eqref{wConv} from \textit{Step~3}, we conclude that
\begin{align*}
     \underset{\varepsilon\searrow 0}{\lim}
     \norm{\ie{w}-u}_{\HO}=0.
\end{align*}
Furthermore, we have seen in the previous step that $u\in Q$ minimizes $F_0$. Hence,
\begin{align*}
    \io{b_0(\varphi)\abs{u}^2}<+\infty,
\end{align*}
and arguing as in \eqref{infVan}, we find that
\begin{align}\label{w0Van}
    \io{b_0(\varphi)\abs{u}^2}=0.
\end{align}

To complete the proof, we still have to prove the following assertion:

\textit{%
    Step 5: The function $u\in V^{\varphi}$ solves
    \begin{align}
            \io{\nabla u\cdot \nabla\eta}
        =
            \lambda_1^{0,\varphi}\io{u\eta} \quad \text{for all }\eta\in V^{\varphi},
    \end{align}
    and
    $\lambda_1^{0,\varphi}=\underset{\varepsilon\searrow 0}{\lim}\;{\ee{\lambda}}$.} 

If we can show that $u\in V^{\varphi}\cap Q$ is even a minimizer of
\begin{align}\label{LimMin1}
        v\mapsto
        \frac{
        \int_{\Omega}
            \abs{\nabla v}^2
        \text{\,d}x
     }
     {
        \int_{\Omega} \abs{v}^2\text{\,d}x
     }
     \quad\text{subject to}\quad 
    v\in V^{\varphi}, 
    v\neq 0,
\end{align}    
we directly infer from the first order condition associated with this minimization problem and the Courant--Fischer characterization \eqref{CFD0} that $u$ solves \eqref{EW0} to the eigenvalue $\lambda_1^{0,\varphi}$.

As $u\in V^{\varphi}$ is a minimizer of $F_0$ over $Q$ we use \eqref{w0Van} to deduce
\begin{align*}
    \min
        \left\{
            \left.
                \frac{
                    \int_{\Omega}
                        \abs{\nabla v}^2
                    \text{\,d}x
                 }
                 {
                    \int_{\Omega} \abs{v}^2\text{\,d}x
                 }
          \right|
            \begin{aligned}
                &v\in V^{\varphi}, \\
                &v\neq 0
            \end{aligned}
    \right\}
    &\le
        \frac{
            \int_{\Omega}
                \abs{\nabla u}^2
            \text{\,d}x
            }
            {
                \int_{\Omega} \abs{u}^2\text{\,d}x
            }
    =F_0(u)\\        
    &=\min
        \left\{
            \left.
                \frac{
                    \int_{\Omega}
                        \abs{\nabla v}^2
                    \text{\,d}x
                  +\int_{\Omega}
                        b_0(\varphi)\abs{v}^2
                     \text{\,d}x
                 }
                 {
                    \int_{\Omega} \abs{v}^2\text{\,d}x
                 }
          \right|
            \begin{aligned}
                              	&v\in \Hz, \\
                &v\neq 0
            \end{aligned}
        \right\}\\
    &\le\min
        \left\{
            \left.
                \frac{
                    \int_{\Omega}
                        \abs{\nabla v}^2
                    \text{\,d}x
                  +\int_{\Omega}
                        b_0(\varphi)\abs{v}^2
                     \text{\,d}x                    
                 }
                 {
                    \int_{\Omega} \abs{v}^2\text{\,d}x
                 }
          \right|
            \begin{aligned}
                &v\in V^{\varphi}, \\
                &v\neq 0
            \end{aligned}
        \right\} .   
\end{align*}
Here, the last inequality holds because $V^{\varphi}\subset \Hz$. However, we already know from \textit{Step~2} that $\io{b_0(\varphi)\abs{v}^2}=0$
for all $v\in V^{\varphi}$. Hence, we conclude from the above estimate that $u\in V^{\varphi}$ minimizes \eqref{LimMin1}, and in particular, $F_0(u)=\lambda_1^{0,\varphi}$.
Now, the second claim of \textit{Step~5} follows from \eqref{GammaConv} since $\ie{F}(\ie{w})=\ee{\lambda}$ holds by construction.

The proof of Lemma~\ref{LEM:lam1Conv} is now complete.
\end{proof}

This lemma now serves as initial step to show the analogous properties also for all higher eigenvalues and eigenfunctions via induction.
\begin{Thm}\label{Thm:Sconv}
 In addition to the assumptions made in Section~\ref{Sec:Form}, we suppose that the assumptions \ref{Asmpt:Dom}--\ref{Asmpt:Pot} are fulfilled. 
     Let $k\in \mathbb{N}$, and suppose that $\left(\ie{\varphi}\right)_{\varepsilon>0}\subset L^1(\Omega)$ and $\varphi\in BV(\Omega,\left\{\pm 1\right\})\cap\mathcal{U}$ fulfill the same assumptions as in Lemma~\ref{LEM:lam1Conv}.
     
     Then, there exists an eigenfunction $u_k\in V^{\varphi}$ of the limit problem \eqref{EW0} to the eigenvalue $\lambda_k^{0,\varphi}$ such that the convergences
    \begin{gather*}
        \underset{\varepsilon\searrow 0}{\lim}
        \io{\ie{b}(\ie{\varphi})\abs{w^{\ie{\varphi}}_k}^2}=
        \io{b_0(\varphi)\abs{u_k}^2}=0,\\
         \underset{\varepsilon\searrow 0}{\lim}
         \norm{w^{\ie{\varphi}}_k-u_k}_{\HO} =0,
         \quad
         \limse\lambda_k^{\ie{\varphi}} =\lambda_k^{0,\varphi}
    \end{gather*}
    hold up to subsequence extraction.
\end{Thm}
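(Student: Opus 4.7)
I proceed by induction on $k$, with the base case $k=1$ being exactly Lemma~\ref{LEM:lam1Conv}. Assuming the claim holds for indices $1,\dots,k-1$, I obtain eigenfunctions $u_1,\dots,u_{k-1}\in V^{\varphi}$ of the limit problem \eqref{EW0} associated to the eigenvalues $\lambda_1^{0,\varphi},\dots,\lambda_{k-1}^{0,\varphi}$. Since $c_{\eps}\equiv 1$ by \ref{Asmp:ac}, the family $\{w_i^{\varphi_\eps}\}_{i=1,\dots,k-1}$ is $L^2(\Omega)$-orthonormal by Theorem~\ref{EEW}, and the strong $L^2(\Omega)$-convergence from the induction hypothesis transfers orthonormality to $u_1,\dots,u_{k-1}$.

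Next, I would establish the upper bound $\limsup_{\eps\searrow 0}\lambda_k^{\eps,\varphi_\eps}\le \lambda_k^{0,\varphi}$ via the min-max characterization equivalent to \eqref{CFD}. As test subspace in $H^1_0(\Omega)$ I take the $k$-dimensional space $V=\langle w_1^{0,\varphi},\dots,w_k^{0,\varphi}\rangle\subset V^{\varphi}$ provided by Theorem~\ref{LimEV}. For any $v=\sum_{i=1}^k c_i w_i^{0,\varphi}$ with $\sum c_i^2=1$, the $L^2$-orthonormality combined with the relation $\int_{\Omega}\nabla w_i^{0,\varphi}\cdot\nabla w_j^{0,\varphi}\,\mathrm{d}x=\lambda_i^{0,\varphi}\delta_{ij}$ (which follows from testing the limit equation against $w_j^{0,\varphi}\in V^{\varphi}$) yields $\int_{\Omega}|\nabla v|^2\,\mathrm{d}x\le \lambda_k^{0,\varphi}$, while the Hölder/growth-rate argument from Step~2 of the proof of Lemma~\ref{LEM:lam1Conv} gives $\int_{\Omega}b_\eps(\varphi_\eps)|v|^2\,\mathrm{d}x\to 0$ uniformly on the unit sphere of $V$. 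The min-max inequality then produces the desired upper bound.

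For the existence of the limit eigenfunction, the bound on $\lambda_k^{\eps,\varphi_\eps}$ combined with the normalization $\|w_k^{\varphi_\eps}\|_{L^2(\Omega)}=1$ and the state equation implies uniform bounds on $\|w_k^{\varphi_\eps}\|_{H^1_0(\Omega)}$ and on $\int_{\Omega}b_\eps(\varphi_\eps)|w_k^{\varphi_\eps}|^2\,\mathrm{d}x$. Extracting a subsequence, I obtain $w_k^{\varphi_\eps}\rightharpoonup u_k$ weakly in $H^1_0(\Omega)$, strongly in $L^2(\Omega)$, and almost everywhere, together with $\lambda_k^{\eps,\varphi_\eps}\to \lambda_k^{\ast}$ for some $\lambda_k^{\ast}\ge 0$. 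Fatou's lemma combined with $b_0(-1)=+\infty$ and $\varphi\in BV(\Omega,\{\pm 1\})$ forces $u_k\in V^{\varphi}$, just as in the corresponding step of the proof of Lemma~\ref{LEM:lam1Conv}. Strong $L^2$-convergence preserves both the normalization $\|u_k\|_{L^2(\Omega)}=1$ and the orthogonality relations $(u_k,u_i)=0$ for $i<k$. Passing to the limit in the weak formulation \eqref{WEstateD} tested against $\eta\in V^{\varphi}$ (using Cauchy--Schwarz together with $\int_{\Omega}b_\eps(\varphi_\eps)|\eta|^2\,\mathrm{d}x\to 0$ to kill the $b_\eps$-cross-term) then shows that $u_k$ solves \eqref{EW0} with eigenvalue $\lambda_k^{\ast}$.

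The matching lower bound follows from a linear-independence argument: since $u_1,\dots,u_k$ are $k$ pairwise $L^2$-orthogonal eigenfunctions of \eqref{EW0} with eigenvalues $\lambda_1^{0,\varphi},\dots,\lambda_{k-1}^{0,\varphi},\lambda_k^{\ast}$, the characterization of $\lambda_k^{0,\varphi}$ in Theorem~\ref{LimEV}\ref{itemEV} as the $k$-th eigenvalue counted with multiplicity forces $\lambda_k^{\ast}\ge \lambda_k^{0,\varphi}$; combined with the upper bound this yields $\lambda_k^{\ast}=\lambda_k^{0,\varphi}$. Finally, strong $H^1$-convergence and $\int_{\Omega}b_\eps(\varphi_\eps)|w_k^{\varphi_\eps}|^2\,\mathrm{d}x\to 0$ follow from the energy identity $\int_{\Omega}|\nabla w_k^{\varphi_\eps}|^2\,\mathrm{d}x+\int_{\Omega}b_\eps(\varphi_\eps)|w_k^{\varphi_\eps}|^2\,\mathrm{d}x=\lambda_k^{\eps,\varphi_\eps}$ together with weak lower semicontinuity and Fatou's lemma, exactly as in the concluding part of the proof of Lemma~\ref{LEM:lam1Conv}. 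I expect the main obstacle to be the careful preservation of $L^2$-orthogonality of the eigenfunctions in the limit, since this is precisely what justifies the linear-independence argument that produces the matching lower bound and thereby identifies the limit eigenvalue as $\lambda_k^{0,\varphi}$ rather than some smaller one.
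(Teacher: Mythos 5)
Your proof is correct, but it follows a genuinely different route from the paper's. Both are strong inductions with Lemma~\ref{LEM:lam1Conv} as the base case, and both hinge on the preserved $L^2$-orthonormality of the limits $u_1,\dots,u_{k-1}$, so the inductive setup is the same. The divergence is in how you pin down the limit eigenvalue. The paper mimics the $\Gamma$-convergence structure of the base case on the $\varepsilon$-dependent constrained sets $Q_\varepsilon=\{v\in H_0^1(\Omega): \|v\|_{L^2}=1,\ v\perp \langle w_1^{\varphi_\varepsilon},\dots,w_{k-1}^{\varphi_\varepsilon}\rangle\}$: it constructs explicit recovery sequences via the projections $P_\varepsilon$ and $P_0$, shows $u_k$ minimizes $F_0$ over $Q_0$ and $F_\varepsilon(w_k^{\varphi_\varepsilon})\to F_0(u_k)$, and then identifies $F_0(u_k)=\lambda_k^{0,\varphi}$ through the Courant--Fischer characterization \eqref{CFD0}, distinguishing the cases $\lambda_k^{0,\varphi}=\lambda_{k-1}^{0,\varphi}$ and $\lambda_k^{0,\varphi}>\lambda_{k-1}^{0,\varphi}$ (in the latter arguing $W_0=\langle w_1^{0,\varphi},\dots,w_{k-1}^{0,\varphi}\rangle$). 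You instead obtain the uniform bound on $\lambda_k^{\varepsilon,\varphi_\varepsilon}$ directly from the min-max characterization with the fixed test space $\langle w_1^{0,\varphi},\dots,w_k^{0,\varphi}\rangle$, pass to the limit in the weak PDE (killing the $b_\varepsilon$-cross-term by Cauchy--Schwarz and Step~2 of Lemma~\ref{LEM:lam1Conv}) to see that $u_k$ solves \eqref{EW0} with eigenvalue $\lambda_k^\ast:=\lim\lambda_k^{\varepsilon,\varphi_\varepsilon}$, and establish the matching lower bound $\lambda_k^\ast\ge\lambda_k^{0,\varphi}$ by a dimension count: if $\lambda_k^\ast<\lambda_k^{0,\varphi}$, then $u_k$ together with those $u_i$, $i<k$, whose eigenvalue is below $\lambda_k^{0,\varphi}$ would give more orthonormal vectors than the dimension of the sum of eigenspaces with eigenvalue below $\lambda_k^{0,\varphi}$ allows. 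This spectral-theoretic shortcut avoids the $\varepsilon$-dependent feasible sets and the explicit recovery sequence construction, and replaces the paper's two-case analysis with a single clean argument, while still relying on the rate condition (through the cited Step~2) exactly where the paper does. The one point you gloss over slightly is the uniformity of $\int_\Omega b_\varepsilon(\varphi_\varepsilon)|v|^2\,\mathrm dx\to 0$ over the unit sphere of your finite-dimensional test space; this is immediate by expanding $v$ in the basis and applying Cauchy--Schwarz, so it does not create a gap, but deserves a line in a full write-up.
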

\begin{proof}
 We prove the assertion via induction. The initial step was carried out in Lemma~\ref{LEM:lam1Conv}.
 
 Let us now assume that the assertion is already established for the first $k-1$ eigenfunctions $w_1^{\varepsilon}\coloneqq w_1^{\ie{\varphi}},\dots,w_{k-1}^{\varepsilon}\coloneqq w_{k-1}^{\ie{\varphi}}$ of \eqref{WEstateD}. 
 
 To prove the result for $w_k^{\varepsilon}\coloneqq w_k^{\ie{\varphi}}$, 
 let $\ie{W}\coloneqq\langle \ee{w_1},\dots,\ee{w_{k-1}}\rangle_{\text{span}}\subset L^2(\Omega)$ denote the space spanned by the first $k-1$ eigenfunctions and $\ie{W}^{\perp}$ its orthogonal complement with respect to the canonical scalar product on $L^2(\Omega)$ that is denoted by $\left(\cdot,\cdot\right)$.
 We further define the space
 \begin{align*}
        \ie{Q}
    \coloneqq
        \left\{
            v\in \Hz
          \left|
            \norm{v}_{L^2(\Omega)}=1\text{ and }
            v\in \ie{W}^{\perp}
          \right.  
        \right\},
 \end{align*}
 as well as the operator 
 \begin{align*}
        F_{\varepsilon}: Q_{\varepsilon}&\to [0,+\infty],\quad
        v\mapsto
        \int_{\Omega}
             \abs{\nabla v}^2
        \text{\,d}x
        +\int_{\Omega}
            b_{\varepsilon}(\ie{\varphi})\abs{v}^2
        \text{\,d}x.
 \end{align*}
Then, from the Courant-Fischer characterization \eqref{CFD} we infer that
 \begin{align*}
        \lambda^{\varepsilon}_k
    \coloneqq
        \lambda^{\ie{\varphi}}_k
    =
        \underset{v\in \ie{Q}}{\min} F_{\varepsilon}(v).                      
 \end{align*}

In the limit situation, we define the space
$W_0\coloneqq \langle u_1,\dots,u_{k-1}\rangle_{\text{span}}$ where the functions $u_i\in V^{\varphi}$, ${i=1,\dots,k-1}$ are determined by the induction hypothesis as the limits
 \begin{align*}
    \ee{w}_i\to u_i\quad \text{in } H^1(\Omega) \text{ as }\varepsilon\to 0,
 \end{align*} 
up to subsequence extractions, as discussed in Lemma~\ref{LEM:lam1Conv}. 
In particular, for $i,j=1,\dots,{k-1}$ with $i\neq j$, we have $(u_i,u_j)=0$ since $(\ee{w}_i,\ee{w}_j)=0$.
Moreover, we know that $\norm{\ee{w}_i}_{L^2(\Omega)}=1$ and hence, we also have $\norm{u_i}_{L^2(\Omega)}=1$ for $i=1,\dots,k-1$.
This means that $\left\{u_1,\dots,u_{k-1}\right\}\subset L^2(\Omega)$ is an orthonormal basis of the $(k-1)$-dimensional space $W_0\subset V^{\varphi}$.
We further set
 \begin{align*}
    Q_0
    \coloneqq
        \left\{
            v\in \Hz
          \left|
            \norm{v}_{L^2(\Omega)}=1\text{ and }
            v\in W_0^{\perp}
          \right.  
        \right\},    
 \end{align*}
and we define the operator
 \begin{align*}
     F_0: Q_0&\to [0,+\infty],\quad
    v\mapsto
       \int_{\Omega}
             \abs{\nabla v}^2
        \text{\,d}x
        +\int_{\Omega}
            b_{0}(\varphi)\abs{v}^2
        \text{\,d}x.
 \end{align*}

 Now we introduce the orthogonal projections
  \begin{align*}
     &\ie{P}: L^2(\Omega)\to \ie{W},\quad
     v\mapsto \sum_{i=1}^{k-1}(v,\ee{w}_i)\ee{w}_i,\\
     &P_0: L^2(\Omega)\to W_0,\quad
     v\mapsto \sum_{i=1}^{k-1}(v,u_i)u_i.
  \end{align*}
  In the following, these projections will be a useful tool to construct recovery sequences.
  
 Per construction, $\ee{w_k}\in \Hz$ is a minimizer of $F_{\varepsilon}$. Now, we need to show that there exists a constant $C>0$ that does not depend on $\eps>0$ such that  
 \begin{align}\label{boundF}
    \ie{F}(\ee{w_k})\le C,
 \end{align}
 as this allows us to bound $\left(w_k^{\varepsilon}\right)_{\varepsilon>0}$ in the $\HO$ norm.
 
 As in \textit{Step~3} of the proof of Lemma~\ref{LEM:lam1Conv}, we want to choose suitable elements $\ee{v}$ in the feasible sets $\ie{Q}$ for which we can bound the sequence $(\ie{F}(\ee{v}))_{\varepsilon>0}.$ Here, the situation is more complicated compared to Lemma~\ref{LEM:lam1Conv} as the feasible set $\ie{Q}$ depends on $\varepsilon$.
 
 Due to assumption \ref{Asmp:NTV}, we find $v^0\in V^{\varphi}$ such that
 \begin{align*}
    v^0\in W_0^{\perp}\backslash \left\{0\right\}.
 \end{align*}
 Otherwise, $V^{\varphi}$ would be a subset of the $(k-1)$-dimensional space $W_0$, which is a contradiction to the fact that $V^{\varphi}$ is infinite dimensional.
 Let us define the sequence
 \begin{align*}
    \ee{v}\coloneqq v^0-\sum_{i=1}^{k-1}\left(v^0,\ee{w_i}\right)\ee{w_i}=v^0-\ie{P}(v^0)\in \Hz\cap \ie{W}^{\perp}.
 \end{align*}
 Now, by the induction hypothesis, for every $i=1,\dots,k-1$, we know that
 \begin{align}\label{wiConv}
    \ee{w_i}\to u_i \quad\text{in } H^1(\Omega),
 \end{align}
along a suitable non-relabeled subsequence. Hence, from the construction of $v^\eps$, we infer
 \begin{align}\label{vConv}
    \ee{v}\to v^0\quad\text{in }H^1(\Omega).
 \end{align}
 In particular, for $\varepsilon>0$ sufficiently small, we thus have $\ee{v}\neq 0$.
 Altogether this allows us to define the sequence
 \begin{align*}
    \ee{\overline{v}}=\frac{\ee{v}}{\norm{\ee{v}}_{L^2(\Omega)}}\in \ie{Q},
 \end{align*}
 which fulfills the convergence 
 \begin{align*}
    \ee{\overline{v}}\to \overline{v}^0\coloneqq \frac{v^0}{\norm{v^0}_{L^2(\Omega)}}\in {Q}_0
    \quad\text{in $H^1(\Omega)$.}
 \end{align*}
 If we can now verify that 
 \begin{align}\label{wvEst}
  F_{\varepsilon}(\ee{\overline{v}})\le C,
 \end{align}
 uniformly in $\varepsilon$, \eqref{boundF} directly follows as our minimizer $\ee{w}_k\in Q_{\varepsilon}$ obviously fulfills
 \begin{align*}
     \ie{F}(\ee{w}_k)\le F_{\varepsilon}(\ee{\overline{v}}).
 \end{align*}
 Therefore, we recall that
 \begin{align*}
     F_{\varepsilon}(\ee{\overline{v}})=\io{\abs{\nabla \ee{\overline{v}}}^2}+\io{\ie{b}(\ie{\varphi})\abs{\ee{\overline{v}}}^2}.
 \end{align*}
 For the first summand on the right-hand side, we obtain
 \begin{align*}
    \io{\abs{\nabla \ee{\overline{v}}}^2}=
    \frac{1}{\norm{\ee{v}}^2_{L^2(\Omega)}}
    \left(
        \io{\abs{\nabla \ee{v}}^2}
    \right).
 \end{align*}
 Hence, this term is bounded because of \eqref{vConv} which further entails the convergence $\norm{\ee{v}}_{L^2(\Omega)}\to\norm{v^0}_{L^2(\Omega)}>0$.
For the second summand, we use Young's inequality to obtain
 \begin{align*}
    \io{\ie{b}(\ie{\varphi})\abs{\ee{\overline{v}}}^2}\le
    \frac{2}{\norm{\ee{v}}^2_{L^2(\Omega)}}
    \left(
        \io{\ie{b}(\ie{\varphi})\bigabs{v^0}^2}+
        \io{\ie{b}(\ie{\varphi})\bigabs{\ie{P}(v^0)}^2}
    \right).
 \end{align*}
 As, per construction, $v^0\in V^{\varphi}$ fulfills property $\eqref{vEvan}$ we can apply \textit{Step 2} of the proof of Lemma~\ref{LEM:lam1Conv} which yields
 \begin{align*}
 \limse \io{\ie{b}(\ie{\varphi})\bigabs{v^0}^2}=0.
 \end{align*}
 Furthermore, Lemma~\ref{LEM:lam1Conv} implies that for $i=1,\dots,k-1$,
 \begin{align*}
 \limse \io{\ie{b}(\ie{\varphi})\bigabs{\ee{w}_i}^2}=0.
 \end{align*}
 Now, as
 \begin{align*}
    \ie{P}(v^0)=\sum_{i=1}^{k-1}\left(v^0,\ee{w_i}\right)\ee{w_i},
 \end{align*}
 we obtain  
 \begin{align*}
    \limse \io{\ie{b}(\ie{\varphi})\bigabs{\ie{P}(v^0)}^2}=0,
 \end{align*}
 by applying Young's inequality again.
 Altogether, we deduce
 \begin{align*}
       \limse \io{\ie{b}(\ie{\varphi})\abs{\ee{\overline{v}}}^2}=0.     
 \end{align*}
 This proves the estimate \eqref{wvEst} which directly entails the uniform bound \eqref{boundF}. In particular, we have
 \begin{align*}
        \io{\abs{\nabla\ee{w}_k}^2}
    \le
        \ie{F}(\ee{w}_k)
    \le
        C.    
 \end{align*}
 Applying the Banach--Alaoglu theorem and the compact embedding $\HO\hookrightarrow L^2(\Omega)$ we infer the existence of a limit $u_k\in \Hz$ such that
 \begin{align}\label{wkConv}
        \ee{w}_k\rightharpoonup u_k\quad\text{in }\HO,
        \quad
        \ee{w}_k\to u_k\quad\text{in }L^2(\Omega),
        \quad
        \ee{w}_k\to u_k\quad\text{a.e.~on }\Omega
 \end{align}
as $\varepsilon\to 0$ up to subsequence extraction.
 
 Our next task is to show that $u_k$ belongs to $Q_0$ and fulfills
 \begin{align}\label{wkMin}
    F_0(u_k)=\underset{v\in Q_0}{\min}F_0(v).
 \end{align}
 First of all, we can use the convergence \eqref{wiConv} of the first $k-1$ eigenfunctions along with \eqref{wkConv} to obtain the convergence
 \begin{align*}
    \limse (\ee{w}_k,\ee{w}_i)=(u_k,u_i),
 \end{align*}
 for $i=1,\dots,k-1$. However, by the orthogonality of the eigenfunctions for $\varepsilon>0$, we know $0=(\ee{w}_k,\ee{w}_i)$, and thus $(u_k,u_i)=0$, $i=1,\dots,k-1$. This already proves that $u_k\in W_0^{\perp}$.
 Notice that $\norm{u_k}_{L^2(\Omega)}=1$,  as the $\ee{w_k}$ are assumed to be $L^2(\Omega)$-normalized. All in all, we get $u_k\in Q_0$.
 
To verify \eqref{wkMin} we cannot directly apply the theory of $\Gamma$-convergence as in Lemma~\ref{LEM:lam1Conv} but we can establish similar estimates that will help us to obtain the desired properties.
For the sake of a clearer presentation we divide this part of the proof into several steps.
 
 \textit{%
    Step 1: The following $\liminf$ inequality holds:
    \begin{align}\label{wkLiminf}
        F_0(u_k)\le \liminfse \ie{F}(\ee{w}_k).
    \end{align}
 }%
 To prove the assertion, we recall that
 \begin{align*}
        F_0(u_k)
    =
        \io{\abs{\nabla u_k}^2}+\io{b_0(\varphi_0)\abs{u_k}^2}.
 \end{align*}
 For the gradient term, we obtain the inequality
 \begin{align*}
        \io{\abs{\nabla u_k}^2}
    \le
        \liminfse\io{\abs{\nabla \ee{w}_k}^2},
 \end{align*}
 by using the weak lower semicontinuity of this expression.
 Now, due to the convergence properties \eqref{wkConv}, we are exactly in the same situation as in \textit{Step~4} of the proof of Lemma~\ref{LEM:lam1Conv}. Hence, Fatou's lemma yields
 \begin{align*}
        \io{b_0(\varphi)\abs{u_k}^2}
    \le
        \liminfse\io{\ie{b}(\ie{\varphi})\abs{\ee{w_k}}^2}.
 \end{align*}
 In summary, we infer \eqref{wkLiminf}.
 
 \textit{%
    Step 2: For any $v\in Q_0$ there exists a sequence $(\ee{\overline{v}})_{\eps>0} \subset Q_{\varepsilon}$ which satisfies
    \begin{align}\label{wkLimsup}
        \limsupse \ie{F}(\ee{\overline{v}})\le F_0(v).
    \end{align}
 }%
 Here, finding such a recovery sequence is more complicated than in \textit{Step~4} of Lemma~\ref{LEM:lam1Conv}, as we cannot just take the constant sequence $v$.
 Without loss of generality, we assume that $F_0(v)<+\infty$ as otherwise \eqref{wkLimsup} is trivially fulfilled. This guarantees that
 \begin{align*}
     \io{b_0(\varphi_0)\abs{v}^2}<+\infty.
 \end{align*}
Hence, $v$ fulfills property \eqref{vEvan} (i.e., $v\in V^{\varphi}$) which will be needed later.
Analogously to the beginning of this proof, we now define the sequence
 \begin{align*}
    \ee{v}:= v-\sum_{i=1}^{k-1}(v,\ee{w}_i)\ee{w}_i=v-\ie{P}(v)\in \ie{W}^{\perp}.
 \end{align*}
 Exactly as in \eqref{vConv}, we obtain the convergence
 \begin{align*}
    \ee{v}\to v-\sum_{i=1}^{k-1}(v,u_i)u_i=v-P_0(v)\in W_0^\perp
        \quad\text{in $H^1(\Omega)$.}
 \end{align*}
However, since $v\in Q_0$, we also have $v\in W_0^{\perp}$ meaning that $P_0(v)=0$. We thus get
$\ee{v}\to v$ in $\HO$.
 Furthermore, $v\in Q_0$ ensures that $\norm{v}_{L^2(\Omega)}=1>0$ and hence, we infer that for $\varepsilon>0$ sufficiently small, it holds that
$\norm{\ee{v}}_{L^2(\Omega)}>0$.
We can thus consider the normalized sequence
 \begin{align}
    \ee{\overline{v}}\coloneqq \frac{\ee{v}}{\norm{\ee{v}}_{L^2(\Omega)}}\in \ie{Q},
    \quad\text{which fulfills}\quad
    \label{uConv}
    \ee{\overline{v}}\to v\quad\text{in }\HO.
 \end{align}
We now prove \eqref{wkLimsup} by again considering the gradient term and the term involving $b_\eps$ appearing in $F_\eps$ separately.
Using \eqref{uConv}, we infer that
 \begin{align*}
    \limse\io{\abs{\nabla \ee{\overline{v}}}^2}=\io{\abs{\nabla v}^2}.
 \end{align*}
For the second term, considering the representation
 \begin{align*}
        \ee{\overline{v}}
    =
        \frac{1}{\norm{\ee{v}}}\left[v-\sum_{i=1}^{k-1}(v,\ee{w}_i)\ee{w}_i\right],
 \end{align*}
 we see that this sequence has exactly the same properties as the same-named sequence in the beginning of this proof. Hence, proceeding as above, we use the convergence properties known for $\ee{w_i}$, the convergence $\norm{\ee{v}}_{L^2(\Omega)}\to \norm{v}_{L^2(\Omega)}>0$ and the crucial fact that $v\in V^{\varphi}$ to deduce
 \begin{align*}
    \limse \io{\ie{b}(\ee{\varphi})\abs{\ee{\overline{v}}}}=0=\io{b_0(\varphi)\abs{v}^2}.
 \end{align*}
 Hence, in particular, this verifies \eqref{wkLimsup}.
 
 \medskip
 
 Now, combining these two steps, we obtain for any arbitrary $v\in Q_0$,
 \begin{align*}
        F_0(u_k)
    \le
        \liminfse \ie{F}(\ee{w}_k)
    \le
        \limsupse \ie{F}(\ee{w}_k)
    \le
        \limsupse \ie{F}(\ee{\overline{v}})
    \le       
        F_0(v),    
 \end{align*}
since $\ee{w}_k$ is a minimizer of $\ie{F}$ over $\ie{Q}$.
 As $v\in Q_0$ was arbitrary, this finally shows that
 \begin{align*}
    F_0(u_k) = \underset{v\in Q_0}{\min}F_0(v).
 \end{align*}
 Furthermore, plugging $v=u_k$ into the above chain of estimates we get
 \begin{align*}
    F_0(u_k)=\liminfse \ee{F}(\ee{w_k})=\limsupse \ee{F}(\ee{w_k}),
 \end{align*}
 which directly yields
 \begin{align}\label{FkConv}
    \limse \ie{F}(\ee{w}_k)=F_0(u_k).
 \end{align}
 As in the proof of Lemma~\ref{LEM:lam1Conv}, this allows us to deduce
 \begin{align*}
      \underset{\varepsilon\searrow 0}{\lim}
      \norm{\ee{w}_k-u_k}_{\HO}=0,
      \quad\text{and}\quad
      u_k\in V^{\varphi}.
 \end{align*}

Therefore, it only remains to prove the following statement.

 \textit{%
     Step 3: The function $u_k\in V^{\varphi}$ solves
     \begin{align}\label{LimUk}
            \io{\nabla u_k\cdot \nabla \eta}
         =
            \lambda_k^{0,\varphi}
            \io{u_k\eta}
            \quad\text{for all } \eta\in V^{\varphi},
     \end{align}
      and it holds that
      $\lambda_k^{0,\varphi}=\limse\lambda_k^{\ie{\varphi}}$.
 }
 
 With an analogous reasoning as in \textit{Step~5} of the proof of Lemma~\ref{LEM:lam1Conv}, we see that
 \begin{align}\label{lamk}
     \tilde{\lambda}\coloneqq\min
         \left\{
             \left.
                 \frac{
                     \int_{\Omega}
                         \abs{\nabla v}^2
                     \text{\,d}x
                  }
                  {
                     \int_{\Omega} \abs{v}^2\text{\,d}x
                  }
           \right|
             \begin{aligned}
                    &v\in V^{\varphi}\cap W_0^{\perp}, \\
                    &v\neq 0
             \end{aligned}
        \right\}=
    \frac{
            \int_{\Omega}
                \abs{\nabla u_k}^2
            \text{\,d}x
        }
        {
            \int_{\Omega} \abs{u_k}^2\text{\,d}x
        }
     =F_0(u_k).
 \end{align}
 As $W_0=\langle u_1,\dots,u_{k-1}\rangle_{\text{span}}$ is a $(k-1)$-dimensional subspace of $V^{\varphi}$, the Courant--Fischer characterization \eqref{CFD0} entails  that $\lambda_k^{0,\varphi}\ge \tilde{\lambda}$. 

Furthermore, for $\varepsilon>0$, we have 
$\lambda_k^{\ie{\varphi}}\ge \lambda_{k-1}^{\ie{\varphi}}$,
 and by \eqref{FkConv} and the induction hypothesis we infer
 \begin{align*}
     \limse \lambda_k^{\ie{\varphi}}=
     \limse \ie{F}(w_k^{\varepsilon})=
     F_0(u_k)=
     \tilde{\lambda},
     \quad\text{and}\quad
     \limse \lambda_{k-1}^{\ie{\varphi}}=\lambda_{k-1}^{0,\varphi}.
 \end{align*}
 This proves that $\tilde{\lambda}\ge \lambda_{k-1}^{0,\varphi}$.
 
 Now, to show that $\tilde\lambda = \lambda_{k}^{0,\varphi}$, we need to consider two cases.
 
 \textit{Case 1:} It holds that $\lambda_k^{0,\varphi}=\lambda_{k-1}^{0,\varphi}$. 
 Then, from the above considerations we already infer that $\lambda_k^{0,\varphi}=\tilde{\lambda}=\lambda_{k-1}^{0,\varphi}$.
 
 \textit{Case 2:} It holds that $\lambda_k^{0,\varphi}>\lambda_{k-1}^{0,\varphi}$. 
 Then, the span of the eigenfunctions $\{w_1^{0,\varphi},\dots,w_{k-1}^{0,\varphi}\}\subset V^{\varphi}$ (given as in Theorem~\ref{LimEV}) contains the union of all eigenspaces belonging to the eigenvalues $\lambda_1^{0,\varphi},\dots,\lambda_{k-1}^{0,\varphi}$. On the other hand, we know from the induction hypothesis that $\left\{u_1,\dots,u_{k-1}\right\}$ is a linearly independent family of eigenfunctions belonging to the aforementioned eigenvalues. Hence, we conclude that
 \begin{align}
 \label{ID:W0}
        W_0=\langle u_1,\dots,u_{k-1}\rangle_{\text{span}}
    =
        \langle w_1^{0,\varphi},\dots,w_{k-1}^{0,\varphi}\rangle_{\text{span}}.
 \end{align}
 This means that $W_0$ is exactly the $(k-1)$-dimensional vector space on which the maximum in the Courant--Fischer characterization \eqref{CFD0} is attained. Thus, by \eqref{lamk}, we infer $\tilde{\lambda}=\lambda_k^{\varphi}$.
 
Now, computing the first-order optimality condition (Euler--Lagrange equation) of the minimization problem associated with  \eqref{lamk}, we conclude that $u_k\in V^{\varphi}$ and $\lambda_k^{\varphi}$ fulfill the weak formulation \eqref{EW0} of the limit problem for all test functions in $V^{\varphi}\cap W_0^{\perp}$. However, we know from \eqref{ID:W0} that $W_0$ is spanned by eigenfunctions to the limit problem \eqref{EW0}. This means that $u_k$ and $\lambda_k^{\varphi}$ trivially satisfy the weak formulation \eqref{EW0} for all test functions in $W_0$. In summary, this proves that $u_k$ is an eigenfunction of the limit problem \eqref{EW0} to the eigenvalue $\lambda_k^{0,\varphi}$. 

This completes the proof by induction. 
 \end{proof}
 
 \subsection{Sharp interface limit of the optimal control problem}\label{Sec:RigSIA}
We now show that a sequence of minimizers of the cost functionals for $\varepsilon>0$ converges, as $\varepsilon\to 0$, to a minimizer of the cost functional associated with the sharp interface setting which will be defined in the following.

First of all, we demand again that the coefficient functions satisfy the assumptions \ref{Asmp:ac} and \ref{Asmp:b}. For $\varepsilon>0$, we extend the cost functional of the problem \eqref{PD} to the space $L^1(\Omega)$ by defining
\begin{alignat}{3}\label{JeDef}
    \begin{aligned}
        J^{\varepsilon}(\varphi)\coloneqq
        \begin{cases}
            \begin{aligned}
                &\Psi(\lambda_{i_1}^{\varepsilon,\varphi},\dots,\lambda_{i_l}^{\varepsilon,\varphi})+
                \gamma E_{\text{GL}}^{\varepsilon}(\varphi)
                &&\quad\text{if } \varphi\in \Phi_{\text{ad}},\\
                &+\infty
                &&\quad\text{if } \varphi\in L^1(\Omega)\backslash \Phi_{\text{ad}}.
            \end{aligned}
        \end{cases}
    \end{aligned}    
\end{alignat}
Here, for any $k\in\mathbb{N}$, $\lambda^{\varepsilon,\varphi}_k$ denotes the $k$-th eigenvalue of the Dirichlet problem \eqref{WEstateD} with $\varepsilon>0$ and $\varphi\in \Phi_{\text{ad}}\subset L^{\infty}(\Omega)$.
In the sharp interface situation, we consider
\begin{align*}
    \tilde{\mathcal{G}}^{\beta}=
    \left\{
        \varphi\in L^1(\tOmega)
        \left|
            \abs{\varphi}\le 1,\;
            \beta_1\bigabs{\tOmega}\le \ito{\varphi}\le \beta_2\bigabs{\tOmega}
        \right.
    \right\},
\end{align*}
and define the cost functional as
\begin{align}\label{J0Def}
    \begin{aligned}
    &J^0(\varphi)\coloneqq
    \begin{cases}
        \begin{aligned}
            &\Psi(\lambda_{i_1}^{0,\varphi},\dots,\lambda_{i_l}^{0,\varphi})+
            \gamma c_0 P_{\tOmega}(\tilde{E}^{\varphi})
            &&\quad\text{if }
            \varphi\in \Phi_{\textup{ad}}^0,\\
            &+\infty
            &&\quad\text{if }
            \varphi\in L^1(\Omega)\backslash \Phi_{\textup{ad}}^0,
        \end{aligned}
    \end{cases}
    \\[1ex]
    &\text{where}\quad 
    \Phi_{\textup{ad}}^0
    \coloneqq
    BV(\Omega,\left\{\pm 1\right\})\cap\mathcal{U}\cap \tilde{\mathcal{G}}^{\beta}.
    \end{aligned}
\end{align}
Here, for any $k\in\mathbb{N}$, $\lambda_k^{0,\varphi}$ denotes the $k$-th eigenvalue of the limit problem \eqref{EW0} that was introduced in Theorem~\ref{LimEV} for $\varphi\in BV(\Omega,\left\{\pm 1\right\})\cap \mathcal{U}$. 

Let $P_{\tOmega}(\tilde{E}^{\varphi})$ denote the relative perimeter in $\tOmega$ of the set $\tilde{E}^{\varphi}\coloneqq\{ x\in \tOmega \,\vert\, \varphi(x)=1\}$,
 i.e.,
\begin{align*}
	P_{\tOmega}(\tilde{E}^{\varphi})\coloneqq 
	\sup \left\{\left.\int_{\tilde{E}^{\varphi}}\text{div}\B{\zeta} \text{\,d}x\,\right\vert\, \B{\zeta} \in C_0^1(\tOmega,\R^d), \norm{\zeta}_{L^{\infty}(\tOmega)}\le 1\right\}.
\end{align*}	
We further set
\begin{align*}
    c_0\coloneqq\int_{-1}^{1}\sqrt{2\psi_0(x)}\text{\,d}x,
\end{align*}
where $\psi_0$ is the potential appearing in the regularized Ginzburg--Landau energy
\begin{align*}
        E^{\eps}(\varphi)
    =
        \int_{\tOmega}
            \left(
                \frac{\eps}{2}\abs{\nabla\varphi}^2+\frac{1}{\eps}\psi_0(\varphi)
            \right)
        \text{\,d}x, 
        \quad \eps>0.
\end{align*}
Additionally to the assumptions in Section~\ref{SOPT}, we make the following assumption that is supposed to hold throughout the remainder of this section.
\begin{enumerate}[label = \textnormal{\bfseries{(A\arabic*)}}, start=7]
\item \label{PBounded}
    $\Psi$ is bounded from below, i.e., we find a constant $C_{\Psi}>0$ such that $\Psi(\bx)\ge -C_{\Psi}$ for all $\bx\in \left(\mathbb{R}_{>0}\right)^l$. Without loss of generality, we assume $C_{\Psi}=0$.
\end{enumerate}

Now, the goal is to show $\Gamma$-convergence of the cost functionals as this yields that a subsequence of minimizers $\ie{\varphi}$ of $\ee{J}$ converges in $L^1(\Omega)$ to a minimizer of $J^0$. In this sense, the diffuse interface optimization problem can be regarded as an approximation of the sharp interface optimization problem.

In our previous considerations, we needed to impose the rate condition
 \begin{align}
 \label{COND:RATE}
    \norm{\ie{\varphi}-\varphi_0}_{
       L^1\left(E^{\varphi_0}\cap\left\{\ie{\varphi}<0\right\}\right)
    }=\mathcal{O}(\varepsilon),
 \end{align}
 in order to show the desired properties such as the convergence of the eigenvalues as $\varepsilon\to 0$. However, to obtain a true unconditional $\Gamma$-convergence result, we do not want to impose such an additional assumption on our sequence of minimizers.
The $\liminf$ inequality can be shown for general cost functionals, i.e., for $\Psi$ fulfilling only the current assumptions. Furthermore, the proof does not rely on the continuity of eigenvalues when passing from diffuse to sharp interfaces. Therefore, no rate condition needs to be assumed.

For the $\limsup$ inequality, the classical recovery sequence for the Ginzburg--Landau energy constructed in \cite{BloweyElliott} fulfills the rate condition. However, it is a delicate aspect that this recovery sequence can only be constructed explicitly for sets fulfilling suitable regularity assumptions, but not for general finite perimeter sets. As also seen in \cite{Modica, BloweyElliott}, one therefore needs to approximate finite perimeter sets on the sharp interface level in a suitable way such that the perimeter converges and, in our framework, also the eigenvalues. This convergence of eigenvalues on the sharp interface level was studied in \cite{BucurHenrot, BogoselOudet} and can be applied here also in a slightly modified way in order to take care of the constraint formulated in $\mathcal{U}$. As done there we also need to assume that the cost functional satisfies a componentwise monotonicity.
Note that in \cite{BogoselOudet} the $\Gamma$-convergence was studied without any additional volume constraint, which allows the usage of the recovery sequence of \cite{ModicaMortola}. After the authors had shown the convergence of eigenvalues on the sharp interface level, their $\limsup$ inequality on the diffuse interface level was a direct consequence of the monotonicity of the cost functional. Hence, no continuity property for the eigenvalues was required. 
In our situation with an additional volume constraint, even though we also need to assume the monotonicity of the cost functional, we can rely on the continuity of eigenvalues on the diffuse interface level in the sense of Theorem~\ref{Thm:Sconv}. This allows us to use the recovery sequence from \cite{GarHe} which is based on the construction of \cite{Modica, BloweyElliott}.

To motivate the additional monotonicity assumption on $\Psi$, we first establish the following lemma. 
\begin{Lem}\label{Psiliminf}
    Let $X\subset \mathbb{R}^l$. We consider a continuous function
    \begin{align*}
        f: X\to \mathbb{R}.
    \end{align*}
    Then the following assertions are equivalent.
    \begin{enumerate}[label=\textnormal{(\alph*)},leftmargin=*]
        \item For any sequence $\left(\B{x}_k\right)_{k\in\mathbb{N}}\subset X$ and $\B{x}\in X$ fulfilling
        \begin{align*}
            \B{x}
        \le
            \underset{k\to \infty}{\liminf}\,\B{x}_k\in X \text{     componentwise},
    \end{align*}
    it holds
    \begin{align*}
            f(\B{x})
        \le 
            \underset{k\to \infty}{\liminf}\,f(\B{x}_k).
    \end{align*}
    \item f is monotonically increasing in the sense that for $\B{x},\B{y}\in X$,
    \begin{align}\label{Psimon}
        \B{x}\le\B{y}\,\text{componentwise }
        \Rightarrow
        f(\B{x})\le f(\B{y}).
    \end{align}
    \end{enumerate}
\end{Lem}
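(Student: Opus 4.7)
The two implications are both elementary but of distinct character. For the direction (a)$\Rightarrow$(b), the plan is to reduce the monotonicity statement to the liminf property by testing it against a constant sequence. Given $\B{x}, \B{y} \in X$ with $\B{x} \le \B{y}$ componentwise, I would apply (a) to the constant sequence $\B{x}_k := \B{y}$. Then $\liminf_{k\to\infty}\B{x}_k = \B{y} \in X$ and $\liminf_{k\to\infty} f(\B{x}_k) = f(\B{y})$, so (a) directly yields $f(\B{x}) \le f(\B{y})$.

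For the converse (b)$\Rightarrow$(a), I would set $\B{z} := \liminf_{k\to\infty}\B{x}_k \in X$, so that $\B{x} \le \B{z}$ componentwise and, by monotonicity, $f(\B{x}) \le f(\B{z})$. It thus suffices to establish $f(\B{z}) \le \liminf_{k\to\infty} f(\B{x}_k)$. The idea is to construct from $(\B{x}_k)$ an auxiliary sequence $(\B{w}_k)$ that lies componentwise below $\B{x}_k$, belongs to $X$, and converges to $\B{z}$, so that monotonicity and continuity can be combined. Concretely, I would define $\B{w}_k$ componentwise by $(\B{w}_k)_i := \min\{(\B{x}_k)_i, z_i\}$. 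Since $\liminf_{k\to\infty}(\B{x}_k)_i = z_i$ for each $i$, for every $\epsilon > 0$ there exists $N\in \N$ such that $(\B{x}_k)_i \ge z_i - \epsilon$ for all $k \ge N$ and all components $i$. This readily gives $(\B{w}_k)_i \in [z_i - \epsilon, z_i]$ for $k \ge N$ and hence $\B{w}_k \to \B{z}$ in $\R^l$. By $\B{w}_k \le \B{x}_k$ and (b) one has $f(\B{w}_k) \le f(\B{x}_k)$, while continuity of $f$ at $\B{z}$ gives $f(\B{w}_k) \to f(\B{z})$. Combining these observations yields
$$
f(\B{z}) \;=\; \lim_{k\to\infty} f(\B{w}_k) \;\le\; \liminf_{k\to\infty} f(\B{x}_k),
$$
which, together with $f(\B{x}) \le f(\B{z})$, completes the argument.

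The main technical subtlety is ensuring that the auxiliary points $\B{w}_k$ again lie in $X$ so that monotonicity (b) is actually applicable. In the setting in which this lemma is applied later in the paper, the function $\Psi$ is defined on $X=(\R_{>0})^l$, which is trivially closed under componentwise minima, so the argument runs without any further assumption. For fully general $X\subset\R^l$ one would implicitly need a structural hypothesis such as closedness under componentwise minima of elements of $X$, or that $\B{z}$ lies in the interior of $X$; but the required property is clearly in force for the application to $\Psi$ and hence no additional discussion is required here.
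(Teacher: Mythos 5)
Your proof is correct, and the (a)$\Rightarrow$(b) direction is identical to the paper's (constant sequence $\B{x}_k=\B{y}$). For (b)$\Rightarrow$(a) your strategy is the same in spirit but with a genuinely different auxiliary sequence: you clip $\B{x}_k$ at the level $\B{z}=\liminf\B{x}_k$ by setting $(\B{w}_k)_i=\min\{(\B{x}_k)_i,z_i\}$, whereas the paper works directly with the tail infima $\B{y}_n:=\inf_{k\ge n}\B{x}_k$ coming from the definition of $\liminf$, using that $\B{y}_n\nearrow\B{z}$ and $f(\B{y}_n)\le\inf_{k\ge n}f(\B{x}_k)$. Both routes hinge on monotonicity applied to an auxiliary sequence lying below $\B{x}_k$ and converging to $\B{z}$, followed by continuity of $f$ at $\B{z}$.

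One merit of your version is that you explicitly flag the structural requirement that the auxiliary points must lie in $X$, and you observe that this holds for $X=(\R_{>0})^l$, which is the case actually used in the paper. The paper's proof has exactly the same implicit requirement — one needs $\inf_{k\ge n}\B{x}_k\in X$ to evaluate $f$ there — and silently relies on the fact that if $\liminf(\B{x}_k)_i=z_i>0$ then each tail infimum $\inf_{k\ge n}(\B{x}_k)_i$ is already strictly positive. Your pairwise minimum $\min\{\B{x}_k,\B{z}\}$ requires only closure under componentwise binary minima of pairs in $X$, which is marginally weaker in general and is in any case just as immediate for $(\R_{>0})^l$. So there is no gap; you simply made visible a hypothesis that the paper leaves tacit.
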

\begin{proof}
    The implication (a)$\Rightarrow$(b) follows by choosing the constant sequence $x_k=y$ for all $k\in \mathbb{N}$.
    In order to show (b)$\Rightarrow$(a) we recall the definition of the limes inferior,
    and we use the monotonicity and the continuity of $f$ to obtain
    \begin{align*}
            f(\B{x})
        \le
            f\left(\underset{k\to \infty}{\liminf}\,\B{x}_k\right)
        =
            \underset{n\to\infty}{\lim}
            f\left(
                \inf\left\{\left.
                    \B{x}_k\right| k\ge n
                \right\}
            \right).
    \end{align*}
    Exploiting again the monotonicity of $f$, we deduce that for all $n\in\mathbb{N}$,
    \begin{align*}
            f\left(
                \inf\left\{\left.
                    \B{x}_k\right| k\ge n
                \right\}
            \right)
        \le
            \inf \left\{\left.
                    f(\B{x}_k)\right| k\ge n
                \right\}.
    \end{align*}
    This implies that
    \begin{align*}
            f(\B{x})
        \le
            \underset{n\to \infty}{\lim} \inf\left\{\left.
                    f(\B{x}_k)\right| k\ge n
                \right\}
        =
            \underset{k\to \infty}{\liminf}\, 
                    f(\B{x}_k),
    \end{align*}
    an thus, the claim is established.
\end{proof}

In order to be able to apply Lemma~\ref{Psiliminf}, we make the following additional assumption on the function $\Psi$, which is supposed to hold throughout the remainder of this section. 
\begin{enumerate}[label = \textnormal{\bfseries{(A\arabic*)}}, start=8]
\item \label{AsP}
    The function $\Psi: \left(\mathbb{R}_{>0}\right)^{l}\to \mathbb{R}_{\ge 0}$
    is assumed to be monotonically increasing in the sense of Lemma~\ref{Psiliminf}
    and exhibit the coercivity property
    \begin{align}\label{Psibound}
        \left(\Psi(\B{x}_k)\right)_{k\in\mathbb{N}} \text{ is bounded}
        \Rightarrow
        \left(\B{x}_k\right)_{k\in\mathbb{N}} \text{ is bounded},
    \end{align}
    for any sequence $\left(\B{x}_k\right)_{k\in\mathbb{N}}\subset \left(\mathbb{R}_{>0}\right)^{l}$.
\end{enumerate}

These properties are for example fulfilled if $\Psi$ is given as a positive linear combination of the components, i.e.,
    \begin{align*}
        \Psi(\B{x})=\sum_{j=1}^{l}\alpha_j x_j,
    \end{align*}
where $\alpha_j>0$ for $j=1,\dots,l$. In the context of our cost functional this would mean that linear combinations of eigenvalues $\lambda_{i_j}^{\varepsilon,\varphi}$ and $\lambda_{i_j}^{0,\varphi}$ respectively are involved in our optimization process.
In particular, by choosing $l=1$ and $\psi(x) = x$ for all $x\in \mathbb{R}_{\ge 0}$, the minimization of just one single eigenvalue of course also fulfills the assumption on $\Psi$.

Assumption~\ref{AsP} might look a bit technical at first sight, but it is exactly what we need in order to establish the $\liminf$ inequality. The monotonicity of $\Psi$ combined with Lemma~\ref{Psiliminf} allows us to infer the $\liminf$ inequality for the cost functional from the $\liminf$ inequality for the eigenvalues. On the other hand the coercivity property \eqref{Psibound} entails that the sequence of eigenvalues is bounded uniformly in $\eps$ if the cost functionals stay bounded.

Under Assumption~\ref{AsP} it is now possible to establish an unconditional $\Gamma$-convergence result. 
\begin{Thm}\label{GamCost}
     In addition to the assumptions made in Section~\ref{Sec:Form}, we suppose that the assumptions \ref{Asmpt:Dom}--\ref{AsP} are fulfilled. 
    Then, it holds that
    \begin{align*}
        \ee{J}\overset{\Gamma}{\to}J^0 \quad \text{as }\varepsilon\to 0.
    \end{align*}
\end{Thm}

By standard results in the theory of $\Gamma$-convergence (see e.g., \cite{DalMaso}), this theorem directly yields the following corollary
due to the compactness properties of the Ginzburg--Landau energy from \cite[Thm.~3.7]{BloweyElliott}.

\begin{Cor}
     In addition to the assumptions made in Section~\ref{Sec:Form}, we suppose that the assumptions \ref{Asmpt:Dom}--\ref{AsP} are fulfilled.
    Let $\left(\ie{\varphi}\right)_{\varepsilon>0}$ be a sequence of minimizers of the functionals $(\ee{J})_{\varepsilon>0}$. Then there exists a function $\varphi_0\in L^1(\Omega)$, such that
    \begin{align*}
        \limse \norm{\ie{\varphi}-\varphi_0}_{L^1(\Omega)}
        =0
        ,\qquad
        \limse \ee{J}(\ie{\varphi})
        =J^0(\varphi_0),
    \end{align*}
    and $\varphi_0$ is a minimizer of $J^0$. 
    In particular, this means that $\varphi_0 \in \Phi_{\mathrm{ad}}^{0} \subseteq BV(\Omega,\{\pm 1\})$.
\end{Cor}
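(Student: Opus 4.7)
The plan is to deduce the corollary from the $\Gamma$-convergence result of Theorem~\ref{GamCost} by the standard recipe: compactness of minimizing sequences plus the two $\Gamma$-inequalities. First I would establish $L^1(\Omega)$-precompactness of $(\varphi_\eps)_{\eps>0}$. Since each $\varphi_\eps$ is a minimizer and the admissible set $\Phi_{\mathrm{ad}}$ is nonempty (it contains, e.g., the function equal to $\beta_1$ on $\tilde\Omega$, $-1$ on $S_0$, $+1$ on $S_1$), testing against a fixed admissible competitor gives a uniform upper bound $J^\eps(\varphi_\eps)\le C$. Because $\Psi\ge 0$, this bound translates into a uniform bound on $E^\eps(\varphi_\eps)$, which together with $|\varphi_\eps|\le 1$ is exactly the hypothesis of the classical Modica--Mortola compactness result (cf.\ \cite[Prop.~3]{Modica} or \cite[Thm.~3.7]{BloweyElliott}). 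This yields, along a non-relabeled subsequence, a limit $\varphi_0\in BV(\tilde\Omega,\{\pm 1\})$ with $\varphi_\eps\to\varphi_0$ in $L^1(\tilde\Omega)$. Since every $\varphi_\eps$ is pinned to $-1$ on $S_0$ and $+1$ on $S_1$ by the constraint $\varphi_\eps\in\mathcal{U}$, we can extend $\varphi_0$ accordingly and obtain $\varphi_\eps\to\varphi_0$ in $L^1(\Omega)$.

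Second, I would apply the $\liminf$ inequality from Theorem~\ref{GamCost} at $\varphi_0$ to this converging subsequence, obtaining
\begin{equation*}
  J^0(\varphi_0) \le \liminfse J^\eps(\varphi_\eps).
\end{equation*}
For any competitor $\tilde\varphi\in L^1(\Omega)$, the $\limsup$ inequality produces a recovery sequence $(\tilde\varphi_\eps)$ with $\limsupse J^\eps(\tilde\varphi_\eps)\le J^0(\tilde\varphi)$. Combining these with the minimality $J^\eps(\varphi_\eps)\le J^\eps(\tilde\varphi_\eps)$ gives
\begin{equation*}
  J^0(\varphi_0) \le \liminfse J^\eps(\varphi_\eps) \le \limsupse J^\eps(\varphi_\eps) \le \limsupse J^\eps(\tilde\varphi_\eps) \le J^0(\tilde\varphi).
\end{equation*}
Since $\tilde\varphi$ was arbitrary, $\varphi_0$ is a minimizer of $J^0$. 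Taking in particular $\tilde\varphi=\varphi_0$ collapses the chain to equalities, which yields $\limse J^\eps(\varphi_\eps)=J^0(\varphi_0)$.

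Finally, the inclusion $\varphi_0\in\Phi_{\mathrm{ad}}^0\subseteq BV(\Omega,\{\pm 1\})$ is obtained as follows. The $L^1$-limit of $(\varphi_\eps)\subset\mathcal U\cap\tilde{\mathcal G}^\beta$ still satisfies $|\varphi_0|\le 1$, the integral constraints, and the pointwise constraints on $S_0,S_1$, because $\mathcal U\cap\tilde{\mathcal G}^\beta$ is closed under $L^1$-convergence. Moreover, since $J^0(\varphi_0)\le J^0(\tilde\varphi)<+\infty$ for some admissible $\tilde\varphi$ (chosen as above), the definition of $J^0$ in \eqref{J0Def} forces $\varphi_0\in\Phi_{\mathrm{ad}}^0$, and in particular $\varphi_0\in BV(\Omega,\{\pm 1\})$. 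No step here presents a serious obstacle; the only subtle point is the careful bookkeeping of the extension across $S_0\cup S_1$, which is precisely what makes the $L^1(\Omega)$-convergence (rather than just $L^1(\tilde\Omega)$) available for the $\Gamma$-convergence machinery.
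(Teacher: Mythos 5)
Your overall architecture is exactly the standard ``fundamental theorem of $\Gamma$-convergence'' argument that the paper invokes by citing \cite{DalMaso}, and it mirrors the explicit chain of inequalities already carried out at the end of the proof of Theorem~\ref{GamRate}: compactness, the $\liminf$ inequality at the limit point, the sandwich with a recovery sequence of an arbitrary competitor, the collapse to equality for $\tilde\varphi=\varphi_0$, and the deduction of $\varphi_0\in\Phi_{\mathrm{ad}}^0$ from $J^0(\varphi_0)<+\infty$ are all sound.

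There is, however, one step that fails as written: the claim that testing minimality against the \emph{fixed} competitor equal to $\beta_1$ on $\tOmega$ yields a uniform bound $J^{\eps}(\ie{\varphi})\le C$. Since $\beta_1\in(-1,1)$ one has $\psi_0(\beta_1)=\tfrac12(1-\beta_1^2)>0$, hence
\begin{align*}
    E^{\eps}_{\text{GL}}(\tilde\varphi)\;\ge\;\frac{1}{\eps}\,\psi_0(\beta_1)\,\bigabs{\tOmega}\;\longrightarrow\;+\infty
    \quad\text{as }\eps\searrow 0,
\end{align*}
so $J^{\eps}(\tilde\varphi)$ is not uniformly bounded and the comparison gives nothing. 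No fixed competitor can work here: uniform boundedness of $E^{\eps}_{\text{GL}}(\varphi)$ in $\eps$ forces $\psi_0(\varphi)=0$ a.e., i.e.\ $\varphi$ is $\{\pm1\}$-valued, and such a function with a nontrivial interface is not in $H^1(\tOmega)$, hence not in $\Phi_{\text{ad}}$, so $J^{\eps}(\varphi)=+\infty$. The correct move is to fix any $\tilde\varphi\in\Phi_{\mathrm{ad}}^0$ with $J^0(\tilde\varphi)<+\infty$, take its recovery sequence $(\tilde\varphi_{\eps})_{\eps>0}$ from \textit{Step~1} of Theorem~\ref{GamCost}, and use $J^{\eps}(\ie{\varphi})\le J^{\eps}(\tilde\varphi_{\eps})$ to obtain $\limsupse J^{\eps}(\ie{\varphi})\le J^0(\tilde\varphi)<+\infty$; this supplies the uniform bound on the Ginzburg--Landau energy needed for the Modica--Mortola compactness on $\tOmega$ (followed by the extension across $S_0\cup S_1$, which you handle correctly). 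With this repair the remainder of your argument goes through unchanged.
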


We now conclude this section by presenting the proof of the above theorem.
In order to tackle the volume constraint
\begin{align*}
	\beta_1\bigabs{\tOmega}\le \int_{\tOmega}\varphi \text{\,d}x\le \beta_2 \bigabs{\tOmega},
\end{align*}	
we first show a $\Gamma$-convergence result similar to \cite[Thm.~3.1]{BogoselOudet}, where the volume constraint is omitted and then, in a further step, we suitably modify the recovery sequence such that it actually fulfills the volume constraint.
\begin{Thm}\label{GamHelp}
	In addition to the assumptions made in Section~\ref{Sec:Form}, we suppose that the assumptions \ref{Asmpt:Dom}--\ref{AsP} are fulfilled.
	Let
	\begin{alignat}{3}\label{IeDef}
		\begin{aligned}
			I^{\varepsilon}(\varphi)\coloneqq
			\begin{cases}
				\begin{aligned}
					&\Psi(\lambda_{i_1}^{\varepsilon,\varphi},\dots,\lambda_{i_l}^{\varepsilon,\varphi})+
					\gamma E_{\mathrm{GL}}^{\varepsilon}(\varphi)
					&&\quad\text{if } \varphi\in \Lambda_{\textup{ad}},\\
					&+\infty
					&&\quad\text{if } \varphi\in L^1(\Omega)\backslash \Lambda_{\textup{ad}}
				\end{aligned}
			\end{cases}
		\end{aligned}
	\end{alignat}
and 
	\begin{align}\label{I0Def}
		\begin{aligned}
			&I^0(\varphi)\coloneqq
			\begin{cases}
				\begin{aligned}
					&\Psi(\lambda_{i_1}^{0,\varphi},\dots,\lambda_{i_l}^{0,\varphi})+
					\gamma c_0 P_{\tOmega}(\tilde{E}^{\varphi})
					&&\quad\text{if }
					\varphi\in \Lambda_{\textup{ad}}^0,\\
					&+\infty
					&&\quad\text{if }
					\varphi\in L^1(\Omega)\backslash \Lambda_{\textup{ad}}^0,
				\end{aligned}
			\end{cases}
		\end{aligned}
	\end{align}
with
\begin{align*}
		\Lambda_{\textup{ad}}
	&\coloneqq
		\left\{\left.\varphi\in H^1(\tOmega) \,\right\vert\, \abs{\varphi}\le 1\right\}\cap\mathcal{U},\\
		\Lambda_{\textup{ad}}^0
	&\coloneqq
		BV(\Omega,\left\{\pm 1\right\})\cap\mathcal{U}.
\end{align*}
Then, it holds that $I^{\eps}\overset{\Gamma}{\to} I$ as $\eps\to 0$.
\end{Thm}

To proof the assertion we will follow the reasoning in \cite{BogoselOudet}. Although, the arguments in \cite{BogoselOudet} contains highly valuable ideas, we have the impression that at some points the authors do not distinguish carefully enough between the global perimeter $P_{\R^d}$ on $\R^d$ and the relative perimeter $P_{\tOmega}$ on $\tOmega$ which does not see the boundary $\partial\tOmega$. This plays a crucial role when the $\Gamma$-convergence results of \cite{Modica,ModicaMortola,BloweyElliott} are applied. In the following, we thus present a very detailed proof where we take care that all steps are applicable for the relative perimeter $P_{\tOmega}$.

We further point out that in contrast to \cite{BogoselOudet}, our proof does \textit{not} rely on the property that the recovery sequence $(\varphi_{\eps})_{\eps>0}$ constructed in \cite{ModicaMortola} fulfills the inclusion
\begin{align*}
	\left\{\varphi=1\right\}\subset \left\{\varphi_{\eps}=1\right\} \quad \text{for all }\eps>0,
\end{align*}
where $\varphi_\eps\to \varphi$ in $L^1(\Omega)$. In \cite{BogoselOudet}, this inclusion is crucial to obtain the $\limsup$ inequality for the eigenvalues. As we do not require this condition, we can construct our recovery sequence in the spirit of \cite{GarHe, BloweyElliott}. Our strategy is based on the continuity properties of eigenvalues shown in the previous section. In this way, we achieve that our $\Gamma$-convergence result holds for any general coefficient function $b_{\eps}$ fulfilling Assumption~\ref{Asmp:b}. In particular, this means that the coefficient function can be chosen in a more general way compared to the explicit affine linear construction in \cite{BogoselOudet}.

\begin{proof}[Proof of Theorem~\ref{GamHelp}]
As previously explained, we need to approximate any general finite perimeter set by a sequence of (sufficiently) smooth sets in order to construct a recovery sequence for the $\lim\sup$ inequality. The construction of such an approximate sequence of smooth sets is presented now.

\textit{Step 1: For $\tilde{E}\subset \tOmega$ with $P_{\tilde{\Omega}}(\tilde{E})<\infty$ there exists a sequence of bounded smooth open sets
$E_{k}\subset \R^d$ fulfilling
\begin{alignat}{2}
    \left\{
	\begin{aligned}\label{ApproxE}
		\mathcal{H}^{d-1}(\partial E_{k}\cap\partial\tOmega)&=0,\\
		\underset{k\to\infty}{\lim}P_{\tilde{\Omega}}(E_{k})&=P_{\tOmega}(\tilde{E}),\\
		\underset{k\to\infty}{\lim}\varphi_{k}&=\varphi\text{ in }L^1(\Omega),\\
		\underset{k\to\infty}{\limsup}\,\lambda^{0,\varphi_{k}}&\le\lambda^{0,\varphi},
	\end{aligned}
	\right.
\end{alignat}
where $\varphi_{k}\coloneqq2\chi_{E_{k}^{\tOmega}\cup S_1}-1$ and $\varphi\coloneqq2\chi_{\tilde{E}\cup S_1}-1$, and for $m\in\N$, $\lambda^{0,\varphi}=\lambda_m^{0,\varphi}$, stands for an arbitrary eigenvalue. Here, for any set $A\subset \R^d$, we use the notation $A^{\tOmega}:=A\cap \tOmega$.}

To construct an approximate sequence of bounded smooth open sets, we follow the proof of \cite[Lem.~1]{Modica} which can also be found in \cite[Lem.~13.9]{Rindler}. For the sake of readability, we merely explain the key steps of this construction and will stick to the notation of \cite{Rindler}. Note that we cannot assume that the finite perimeter set $\tilde{E}$ contains an open ball. For pure perimeter minimization this assumption would be justified by \cite[Thm.~1]{Gonzalez} if only the convergence of minimizers is to be shown. For that reason, we cannot easily adjust the volume of the approximating sets $E_k$ by including or excluding balls as it was done in \cite{Modica, Rindler}. To overcome this, we will adjust the volume of the recovery sequence only at the diffuse interface level which will eventually be done in the proof of Theorem~\ref{GamCost}. An alternative way of tackling the volume constraint on the sharp interface level is performed in \cite{Sternberg} which does not need the finite perimeter set $\tilde{E}$ to contain any open ball. There, the approximating sequence $E_k$ is modified by adding or subtracting suitable hypercubes, but due to the rather technical construction this would require a delicate discussion in order to analyze the $\limsup$ inequality of eigenvalues in \eqref{ApproxE}.

The key idea of constructing a sequence $(E_k)_{k\in\N}$ of bounded smooth open sets is to extend $\chi_E\in BV(\tOmega)\cap L^{\infty}(\tOmega)$ to a function $v\in BV(\R^d)\cap L^{\infty}(\R^d)$ with $\abs{Dv}(\partial\tOmega)=0$ (which is possible as $\tOmega$ is assumed to be a bounded Lipschitz domain, see \cite[Prop.~3.21]{AmbrosioFuscoPallara}). Note that the set $\left\{v\neq 0\right\}$ is still bounded. Here, $Dv$ denotes the Radon measure associated with $v\in BV(\R^d)$ and $\abs{\,\cdot\,}$ denotes the total variation. It is crucial that $\abs{Dv}(\partial\tOmega)=0$ as we want to approximate the \textit{relative} perimeter which does not see the boundary of the design domain $\partial\tOmega$ but only the parts of the boundary of $\tilde{E}$ lying within $\tOmega$.

Now, in order to construct a sequence of smooth approximating sets $E_k$ fulfilling \eqref{ApproxE}, we choose a standard sequence of mollifieres $(\rho_n)_{n\in\N}\subset C_0^{\infty}(\R^d)$ and consider the superlevel sets
\begin{align*}
	\left\{v_n>t\right\},
	\quad\text{where $v_n\coloneqq v\ast \rho_n$,}
\end{align*}
for $t\in(0,1)$. 
In contrast to \cite{Modica,Rindler}, where for each $n\in\N$, a specific $t_n\in \big(\frac{1}{n},1-\frac{1}{n}\big)$ is selected in order to show the convergence of the corresponding super level sets with respect to perimeter and measure, we use the ideas of \cite[Proof of Thm.~3.1]{BogoselOudet} to obtain these convergences even for almost all $t\in (0,1)$. 

Due to our extension, we have $\abs{Dv}(\partial\tOmega)=0$. Proceeding as in \cite{Rindler}, we thus get
\begin{align*}
	\underset{n\to\infty}{\lim}\int_{\tilde{\Omega}}\abs{\nabla v_n}\text{\,d}x=P_{\tOmega}(\tilde{E}).
\end{align*}
In combination with the coarea formula for the relative perimeter (see \cite[Thm.~3.40]{AmbrosioFuscoPallara}) and Fatou's lemma, we deduce as in \cite{BogoselOudet} that
\begin{align*}
	P_{\tOmega}(\tilde E)=\underset{n\to\infty}{\lim}\int_0^1 P_{\tOmega}(\left\{v_n>t\right\})\text{\,d}t\ge 
	\int_0^1 \underset{n\to\infty}{\liminf}\, P_{\tOmega}(\left\{v_n>t\right\})\text{\,d}t.
\end{align*}	
On the other hand, as in \cite{Rindler}, we infer that for almost every $t\in (0,1)$,
\begin{align*}
	\abs{\left(\left\{v_n>t\right\}\cap\tOmega\right)\bigtriangleup \tilde E}\to 0
\end{align*}	
as $n\to\infty$ and thus,
\begin{align*}
	P_{\tOmega}(\tilde E)\le \underset{n\to\infty}{\liminf}\,P_{\tOmega}(\left\{v_n>t\right\}),
\end{align*}
due to the lower semicontinuity of the perimeter.
Combining the previous inequalities, we conclude
\begin{align}\label{convPer}
	P_{\tOmega}(\tilde E)=\underset{n\to\infty}{\liminf}\,P_{\tOmega}(\left\{v_n>t\right\})
\end{align}	
for almost every  $t\in (0,1)$.
Now, according to \cite{Rindler}, the properties
\begin{align}
	\nabla v_n (x)\neq 0 \text{ for all }x\in  \R^d \text{ with } v_n(x)&=t\quad\text{and}\label{Regk}\\
	\mathcal{H}^{d-1}\left(\left\{\left.x\in \partial\tOmega\right\vert v_n(x)=t\right\}\right)&=0,\label{Transk}
\end{align}
hold for all $n\in \N$ and almost all $t\in (0,T)$.
In summary, this means that we can choose a Lebesgue null set $\mathcal{N}\subset (0,1)$ such that for every $t\in (0,1)\backslash\mathcal{N}$ the sets $E_{n,t}\coloneqq \left\{v_n>t\right\}$ are bounded, smooth and fulfill the transversality condition
\begin{align*}
	\mathcal{H}^{d-1}(\partial E_{n,t}\cap\partial\tOmega)=0.
\end{align*}
After extracting a suitable (non-relabeled) subsequence (possibly depending on the choice of $t$), we further infer the convergence properties
\begin{alignat}{2}
    \left\{
	\begin{aligned}\label{SharpStabD}
		\underset{n\to\infty}{\lim}P_{\tilde{\Omega}}(E_{n,t})&=P_{\tOmega}(\tilde{E}),\\
		\underset{n\to\infty}{\lim}\varphi_{n,t}&=\varphi \quad\text{in }L^1(\Omega),
	\end{aligned}
	\right.
\end{alignat}
where $\varphi_{n,t}\coloneqq 2\chi_{E_{n,t}^{\tOmega}\cup S_1}-1$.

It thus remains to establish the $\limsup$ inequality for the eigenvalues. As the eigenvalue equation is formulated on the whole of $\Omega$ (not only $\tOmega$), we now consider $E\coloneqq\tilde{E}\cup S_1$. Here we can exactly apply the strategy employed in \cite[Thm.~3.1]{BogoselOudet} which can also be found in \cite[Thm.~3.5]{BucurHenrot}. For the sake of readability, we explain the key steps.

As mentioned in the beginning of Subsection~\ref{Sec:LimProp}, there is a quasi-open set $\omega\subset \Omega$ such that $V^{\varphi}=\tilde{H}_0^1(E)=H_0^1(\omega)$. Now, we choose $u_{\omega}\in H_0^1(\omega)$ as the solution of the Laplace equation \eqref{Lap1H}. It then holds $H_0^1(\omega)=H_0^1(\left\{u_{\omega}>0\right\})=\tilde{H}_0^1(\left\{u_{\omega}>0\right\})$. Hence, in particular, we have $\lambda^{0,\varphi}=\lambda^{0,\chi_{\left\{u_{\omega}>0\right\}}}$. Furthermore, we know $u_{\omega}\in L^{\infty}(\Omega)$ and hence, without loss of generality, we may assume that $u_{\omega}\le 1$ a.e.~on $\Omega$. 
Due to the inclusion $\left\{u_{\omega}>0\right\}\subset \omega\subset E$, we have $u_{\omega}\le \chi_{E} = v$ a.e.~on $\Omega$ and hence up to a Lebesgue null set,
\begin{align*}
	\left\{u_{\omega}\ast \rho_n>t\right\}\cap\Omega
	\subset \left\{v \ast\rho_n>t\right\}\cap \Omega
	\subset E_{n,t}^{\Omega} 
	\subset E_{n,t}^{\tOmega} \cup S_1,
\end{align*}
for all $t\in(0,1)$ and $n\in\N$.
In particular, we have
\begin{align*}
	\left\{u_{\omega}\ast \rho_n>t\right\}\cap \omega\subset E_{n,t}^{\tOmega}\cup S_1,
\end{align*}
up to a Lebesgue null set, for all $t\in (0,1)$ and $n\in\N$. Hence, due to the monotonicity of eigenvalues with respect to set inclusion, it holds
\begin{align*}
	\lambda^{0,\varphi_{n,t}}\le \lambda^{0,\chi_{\left\{u_{\omega}\ast \rho_n>t\right\}\cap \omega}}\le \lambda^{0,\chi_{\left\{u_{\omega}\ast \rho_n>t\right\}\cap\left\{u_{\omega}>t\right\}\cap \omega}}.
\end{align*}
Now, using the density result \cite[Prop.~5.5]{MasoMurat}, it was shown in \cite{BogoselOudet,BucurHenrot} that for all $t\in (0,1)\backslash\mathcal{N}$, 
\begin{align*}
	\left\{u_{\omega}\ast \rho_n>t\right\}\cap\left\{u_{\omega}>t\right\}\overset{\gamma}{\to}\left\{u_{\omega}>t\right\},
\end{align*}
as $k\to\infty$, in the sense of $\gamma$-convergence (see, e.g., \cite{HenrotPierre}).
As the $\gamma$-convergence is stable under intersection with quasi-open sets (see e.g. \cite[Prop.~4.5.6]{Bucur}), we conclude
\begin{align*}
		\left\{u_{\omega}\ast \rho_n>t\right\}\cap\left\{u_{\omega}>t\right\}\cap \omega\overset{\gamma}{\to}\left\{u_{\omega}>t\right\}\cap \omega.
\end{align*}
Now due to the continuity of the eigenvalue with respect to $\gamma$-convergence (see e.g., \cite[Cor.~6.1.8]{Bucur}), we have	
\begin{align*}
	\underset{n\to\infty}{\lim}\lambda^{0,\chi_{\left\{u_{\omega}\ast \rho_n>t\right\}\cap\left\{u_{\omega}>t\right\}\cap \omega}}=\lambda^{0,\chi_{\left\{u_{\omega}>t\right\}\cap \omega}},
\end{align*}
and thus,
\begin{align*}
	\underset{n\to\infty}{\limsup}\,\lambda^{0,\varphi_{n,t}}\le \lambda^{0,\chi_{\left\{u_{\omega}>t\right\}\cap \omega}}.
\end{align*}	
By means of the density result mentioned above, one can further show that for any zero sequence $(t_n)_{n\in \N}$, it holds
\begin{align*}
        \left\{u_{\omega}>t_n\right\}
    \overset{\gamma}{\to}
        \left\{u_{\omega}>0\right\},
\end{align*}
as $n\to\infty$ and thus
\begin{align*}
	\underset{n\to\infty}{\lim}\lambda^{0,\chi_{\left\{u_{\omega}>t_n\right\}\cap \omega}}=\lambda^{0,\chi_{\left\{u_{\omega}>0\right\}\cap \omega}}=\lambda^{0,\chi_{\left\{u_{\omega}>0\right\}}}
	=\lambda^{0,\varphi},
\end{align*}
where the second equality is valid due to the inclusion $\left\{u_{\omega}>0\right\}\subset \omega$.

Hence, by a diagonal sequence argument, we can now choose a zero sequence $(t_k)_{k\in\N}\subset (0,1)\backslash \mathcal{N}$ and a sequence of indices $(n_k)_{k\in\N}\in\N$ such that $E_k\coloneqq E_{n_k,t_k}$ fulfills the desired properties \eqref{ApproxE}.

\textit{%
	Step 2:
	Let $\varphi\in L^1(\Omega)$ be arbitrary. There exists a recovery sequence $(\varphi_{\varepsilon})_{\varepsilon>0}\subset L^1(\Omega)$ with
	\begin{align*}
		\limse\norm{\ie{\varphi}-\varphi}_{L^1(\Omega)}=0,
	\end{align*}
	such that the $\limsup$ inequality
	\begin{align*}
		\limsupse I^{\varepsilon}(\varphi_{\varepsilon})\le I^0(\varphi),
	\end{align*}
	holds.
}

Without loss of generality, we assume $I^0(\varphi)<\infty$. We thus have $\varphi\in \Lambda_{\textup{ad}}^0\subseteq BV(\Omega,\left\{\pm 1\right\})$.  Due to the previous step there exists a sequence of bounded smooth open sets $(E_k)_{k\in\N}\subset \R^d$ approximating $\tilde{E}^{\varphi}$ satisfying all the properties in \eqref{ApproxE}.
Now, the idea is to construct for each $k$ a recovery sequence $(\varphi_{k,\eps})_{\eps>0}\subset \Lambda_\text{ad}$ for $\varphi_k\coloneqq 2\chi_{E_k^{\tOmega}\cup S_1}-1\in \Lambda_\text{ad}^0$.
Due to the properties of the set $E_k$ we can proceed as in \cite[Thm.~2]{GarHe} (which relies on the ideas of \cite{BloweyElliott, Sternberg, Modica}). 
Note that we operate on the open subset $\tOmega\subset \Omega$ where the pointwise constraints incorporated in $\mathcal{U}$ do not play any role.
In this way, for every $k\in\N$, we obtain a recovery sequence 
\begin{align*}
	(\varphi_{k,\varepsilon})_{\eps>0}\subset \left\{\left.\varphi\in H^1(\tOmega)\right\vert \abs{\varphi}\le 1\right\},
\end{align*}	
which satisfies
\begin{align}\label{PerSup}
	\limsupse
	\ito{
		\frac{\gamma\varepsilon}{2}
		\abs{\nabla \varphi_{k,\varepsilon}}^2
		+\frac{\gamma}{\eps}\psi(\varphi_{k,\varepsilon})
	}
	\le
	\gamma c_0P_{\tOmega}(\tilde{E}^{\varphi_k}),
\end{align}
and
\begin{align*}
	\norm{\varphi_{k,\varepsilon}-\varphi_k}_{L^1(\tOmega)}
	=\mathcal{O}(\varepsilon).
\end{align*}
For any $\eps>0$, the function $\varphi_{k,\varepsilon}$ can be extended onto the whole design domain $\Omega$ by choosing $\varphi_{k,\varepsilon}:=-1$ on $S_0$ and $\varphi_{k,\varepsilon}:=1$ on $S_1$.
In particular, $\ie{\varphi} \in L^1(\Omega)$ for all $\eps>0$, and it holds that
\begin{align*}
	\norm{\varphi_{k,\varepsilon}-\varphi_k}_{L^1(\Omega)}=
	\norm{\varphi_{k,\varepsilon}-\varphi_k}_{L^1(\tOmega)}
	=\mathcal{O}(\varepsilon).
\end{align*}
It is worth mentioning that the constant hiding in $\mathcal{O}(\eps)$ might strongly depend on $k$.
Now, Theorem~\ref{Thm:Sconv} implies that for $k\in\N$ and for each $m=1,\dots,l$ we have
\begin{align}
	\label{CONV:LAM}
	\lambda_{m}^{\varphi_{k,\varepsilon}}
	\to \lambda_{m}^{0,\varphi_k}\quad\text{for }\eps\to 0,
\end{align}
along a non-relabeled subsequence,
where $\lambda_{m}^{\varphi_{k,\varepsilon}}$ and $\lambda_{m}^{0,\varphi_k}$ denote the $m$-th eigenvalues of the diffuse interface problem \eqref{WEstateD} and the limit problem \eqref{EW0}, respectively. Recalling that $\Psi$ is continuous, we use \eqref{PerSup} and \eqref{CONV:LAM} to conclude that
\begin{align*}
	\limsupse
	I^{\varepsilon}(\varphi_{k,\varepsilon})\le
	I^0(\varphi_k).
\end{align*}
By \textit{Step 1}, we also know from the properties \eqref{ApproxE} and Assumption~\ref{AsP} that
\begin{align*}
	\underset{k\to\infty}{\limsup\,}I^0(\varphi_k)\le I^0(\varphi).
\end{align*}
Therefore, by a diagonal sequence argument, we find a zero sequence $(\eps_k)_{k\in\N}$ such that
\begin{align*}
		\underset{k\to\infty}{\limsup\,}I^{\eps_k}(\varphi_{k,\eps_k})\le I^0(\varphi).
\end{align*}	
This proves \textit{Step~2}.

\textit{%
	Step 3:
	Let $\varphi\in L^1(\Omega)$ be arbitrary. For any sequence $\left(\ie{\varphi}\right)_{\varepsilon>0}\subset L^1(\Omega)$ with
	\begin{align*}
		\limse \norm{\ie{\varphi}-\varphi}_{L^1(\Omega)}=0,
	\end{align*}
	it holds that
	\begin{align*}
		I^0(\varphi)\le\liminfse I^{\varepsilon}(\ie{\varphi}).
	\end{align*}
} 

This is also shown in the setting of \cite{BogoselOudet} using the compactness of the $\gamma$-convergence (see e.g., \cite[Prop.~4.3.7]{Bucur}). In this paper, we provide an alternative proof which does not rely on $\gamma$-convergence but directly uses the Courant--Fischer characterization of eigenvalues.

Without loss of generality, we may assume
\begin{align}\label{JeBound*}
	\liminfse \ee{I}(\ie{\varphi})<+\infty.
\end{align}
Moreover, after extracting a suitable subsequence, we have
\begin{align}\label{JeBound}
	\limse I^{\varepsilon}(\varphi_{\varepsilon})=\liminfse \ee{I}(\ie{\varphi})<+\infty.
\end{align}
Applying \cite[Prop.~3.8]{BloweyElliott}, we conclude again that $\varphi\in \Lambda^0_{\text{ad}}$ and that
\begin{align}\label{GinzSub}
	\gamma c_0P_{\tOmega}(\tilde{E}^{\varphi})\le
	\liminfse
	\ito{
		\left(
		\frac{\gamma\varepsilon}{2}
		\abs{\nabla \varphi_{\varepsilon}}^2+
		\frac{\gamma}{\varepsilon}\psi(\varphi_{\varepsilon})
		\right)
	}.
\end{align}
Therefore, for $n=i_j\in\mathbb{N}$ with $j=1,\dots,l$ recall the Courant--Fischer characterization of the diffuse interface problem \eqref{CFD} for $\varepsilon>0$, that is
\begin{align}
	\label{CFD*}
	\lambda_n^{\varepsilon,\varphi}
	=
	\underset{W\in\mathcal{S}_{n-1}}{\max}\min
	\left\{
	\left.
	\frac{
		\int_{\Omega}
		\abs{\nabla v}^2
		\text{\,d}x
		+\int_{\Omega}
		b_{\varepsilon}(\varphi)\abs{v}^2
		\text{\,d}x
	}
	{
		\int_{\Omega}\abs{v}^2\text{\,d}x
	}
	\right|
	\begin{aligned}
		&v\in \Hz\cap W^{\perp}, \\
		&v\neq 0
	\end{aligned}
	\right\},
\end{align}
and the Courant--Fischer characterization for the sharp interface problem \eqref{CFD0}, that is
\begin{align}
	\label{CFD0*}
	\lambda_n^{0,\varphi}
	=
	\underset{W\in\mathcal{S}_{n-1}}{\max}\min
	\left\{
	\left.
	\frac{
		\int_{\Omega}
		\abs{\nabla v}^2
		\text{\,d}x
	}
	{
		\int_{\Omega}\abs{v}^2\text{\,d}x
	}
	\right|
	\begin{aligned}
		&v\in V^{\varphi}\cap W^{\perp}, \\
		&v\neq 0
	\end{aligned}
	\right\},
\end{align} 
where in both cases the maximum is taken over all $(n-1)$-dimensional subspaces of $L^2(\Omega)$.

Now, our goal is to show that
\begin{align}
	\label{TODO}
	\lambda_n^{0,\varphi}\le
	\liminfse\lambda_n^{\varepsilon,\varphi_{\eps}} \in \mathbb{R}_{>0},
\end{align}
since then Lemma~\ref{Psiliminf} implies that
\begin{align*}
	\Psi(\lambda_{i_1}^{0,\varphi},\dots,\lambda_{i_l}^{0,\varphi})
	\le
	\liminfse\;
	\Psi(
	\lambda_{i_1}^{\varepsilon_k,\varphi_{\eps}},
	\dots,\lambda_{i_l}^{\varepsilon_k,\varphi_{\eps}}
	),
\end{align*}
and along with \eqref{JeBound} and \eqref{GinzSub}, this proves the assertion of \textit{Step~3}.

First of all, Theorem~\ref{LimEV} yields that the maximum in \eqref{CFD0*} is attained in the space $W\coloneqq\langle w_1^{0,\varphi},\dots, w_{n-1}^{0,\varphi}\rangle_{\text{span}}\subset L^2(\Omega)$, where $w_1^{0,\varphi},\dots,w_{n-1}^{0,\varphi}\in V^{\varphi}$ are the first $n-1$ eigenfunctions of the limit problem \eqref{EW0}. Hence, we can reformulate the Courant--Fischer characterization as
\begin{align}\label{lam0Min}
	\lambda_n^{0,\varphi}
	=
	\min
	\left\{
	\left.
	\int_{\Omega}
	\abs{\nabla v}^2
	\text{\,d}x
	\right|
	\begin{aligned}
		&v\in V^{\varphi}\cap W^{\perp}, \\
		&\norm{v}_{L^2(\Omega)}=1
	\end{aligned}
	\right\}.
\end{align}
Since $W\subset L^2(\Omega)$ is a $(n-1)$-dimensional subspace, we infer from \eqref{CFD*} that
\begin{align}\label{lamBound}
	\lambda_n^{\varepsilon,\ie{\varphi}}
	\ge
	\min
	\left\{
	\left.
	\int_{\Omega}
	\abs{\nabla v}^2
	\text{\,d}x
	+\int_{\Omega}
	b_{\varepsilon}(\ie{\varphi})\abs{v}^2
	\text{\,d}x
	\right|
	\begin{aligned}
		&v\in \Hz \cap W^{\perp}, \\
		&\norm{v}_{L^2(\Omega)}=1
	\end{aligned}
	\right\}.
\end{align}
By means of the direct method in the calculus of variations it is straightforward to show that for any fixed $\varepsilon>0$, there exists a $L^2(\Omega)$-normalized function $\ie{v}\in \Hz\cap W^{\perp}$ at which the minimum in \eqref{lamBound} is attained.

Next, from \eqref{JeBound} we deduce that the sequence $\big(\Psi(\lambda_{i_1}^{\varepsilon,\ie{\varphi}},\dots,\lambda_{i_l}^{\varepsilon,\ie{\varphi}})\big)_{\eps>0}$ is bounded. Hence, using condition \eqref{Psibound} from Assumption~\ref{AsP}, we conclude that the sequence $(\lambda_n^{\varepsilon,\ie{\varphi}})_{\eps>0}$ is also bounded and in particular the limes inferior of this sequence exists.

Now, using \eqref{lamBound}, we infer that $(\norm{\nabla \ie{v}}_{L^2(\Omega)})_{\eps>0}$ is bounded and hence, we find a function $\overline{v}\in \Hz$ such that the convergences
\begin{alignat}{3}\label{vMinConv}
	\begin{aligned}
		\ie{v}&\rightharpoonup \overline{v}\quad\text{in }\Hz,
		\qquad
		\ie{v}\to \overline{v}\quad\text{in }L^2(\Omega),
		\qquad
		\ie{v}\to \overline{v}\quad\text{a.e.~in }\Omega
	\end{aligned}
\end{alignat}
hold along a non-relabeled subsequence.
Moreover, we have
\begin{align*}
	\ie{b}(\ie{\varphi}(x))\to b_0(\varphi(x))
	\quad\text{as $\eps\to 0$ for almost all $x\in\Omega$,}
\end{align*}
up to subsequence extraction.
This convergence was shown in \textit{Step~1} of the proof of Lemma~\ref{LEM:lam1Conv} and its proof did not require any rate assumption on $\left(\ie{\varphi}\right)_{\varepsilon>0}$. We thus obtain
\begin{align*}
	\liminfse \lambda_n^{\varepsilon,\ie{\varphi}}
	&\ge
	\liminfse
	\left[
	\io{\abs{\nabla \ie{v}}^2}+\io{\ie{b}(\ie{\varphi})\abs{\ie{v}}^2}
	\right]\\
	&\ge
	\io{\abs{\nabla \overline{v}}^2}+\io{b_0(\varphi)\abs{\overline{v}}^2},
\end{align*}
by applying Fatou's lemma on the $\ie{b}$ term, and employing the weak lower semi-continuity of $\norm{\nabla\,\cdot\,}_{L^2(\Omega)}$.
In particular, this implies that 
\begin{align*}
	\io{b_0(\varphi)\abs{\overline{v}}^2}<+\infty.
\end{align*}
Hence, recalling that $\varphi\in\Phi_\mathrm{ad}^0$, we conclude that $\overline{v}\in V^{\varphi}$ which in turn implies
\begin{align*}
	\io{b_0(\varphi)\abs{\overline{v}}^2}=0.
\end{align*}
Now, by construction, we have $\norm{\ie{v}}_{L^2(\Omega)}=1$ and $\ie{v}\in \Hz\cap W^{\perp}$. Due to the convergences in \eqref{vMinConv} the same holds for the limit $\overline{v}\in V^{\varphi}$. This means that $\overline{v}$ belongs to the set appearing in \eqref{lam0Min}, and we finally deduce
\begin{align*}
	\liminfse \lambda_n^{\varepsilon,\ie{\varphi}}
	\ge
	\io{\abs{\nabla \overline{v}}^2}
	\ge
	\min
	\left\{
	\left.
	\int_{\Omega}
	\abs{\nabla v}^2
	\text{\,d}x
	\right|
	\begin{aligned}
		&v\in V^{\varphi}\cap W^{\perp}, \\
		&\norm{v}_{L^2(\Omega)}=1
	\end{aligned}
	\right\}
	=\lambda_n^{0,\varphi}>0,    
\end{align*}
which proves \eqref{TODO}. This means that \textit{Step~2} is established and thus, the proof of Theorem~\ref{GamHelp} is complete.
\end{proof}

Now, using Theorem~\ref{GamHelp}, we can finally prove the desired $\Gamma$-convergence result $\ee{J}\overset{\Gamma}{\to}J^0$ where the volume constraint is incorporated.

\begin{proof}[Proof of Theorem~\ref{GamCost}]
	Due to the inclusion $\Phi_{\text{ad}}\subset \Lambda_{\text{ad}}$ we have $I^{\eps}(\varphi)\le J^{\eps}(\varphi)$ for all $\varphi\in L^1(\Omega)$. Furthermore for a sequence $(\varphi_{\eps})_{\eps>0}\subset \Phi_{\text{ad}}$ with $\varphi_{\eps}\to\varphi$ in $L^1(\Omega)$ we deduce $\varphi\in \tilde{\mathcal{G}}^{\beta}$ and therefore $I^0(\varphi)=J^0(\varphi)$. Hence, the $\liminf$ inequality for $J^{\eps}$ directly follows from Theorem~\ref{GamHelp}.
	
	It remains to prove that for any $\varphi\in \Phi_{\text{ad}}^0$, there exists a recovery sequence $(\tve)_{\eps>0}\subset \Phi_{\text{ad}}$ such that
	\begin{align}
		\limse\norm{\tve-\varphi}_{L^1(\Omega)}&=0,\label{L1Phi}\\
		\limsupse J^{\eps}(\tve)&\le J^0(\varphi).\label{LimSupF}
	\end{align}
	Here our strategy is now to use the recovery sequence from Theorem~\ref{GamHelp}. In the following, it will be denoted by $(\varphi_{\eps})_{\eps>0}\subset \Lambda_{\text{ad}}$. For any $\eps>0$, we now carefully modify the function $\varphi_\eps$ via a diffeomorphism in order to ensure that it additionally fulfills the volume constraint comprised in $\tilde{\mathcal{G}}^{\beta}$. In the following, we will always understand the functions $\varphi_{\eps}, \varphi\in L^1(\R^d)$ as being trivially extended onto $\R^d$, i.e., these functions are constant zero on $\R^d\backslash\Omega$.
	
	The key idea is to construct for any $\eps>0$ a suitable transformation $T_{s(\eps)}:\R^d \to \R^d$ with $T_{s(\eps)}(\tilde\Omega)=\tilde\Omega$ such that the modified functions 
	$\tve := \varphi_{\eps}(T_{s(\eps)}^{-1})$
	belong to $\Phi_{\text{ad}}$
	and satisfy the convergence properties \eqref{L1Phi} and \eqref{LimSupF}.
	This is a common method in geometric analysis and a similar procedure in the sharp interface case can be found for example in the proof of \cite[Thm.~19.8]{Maggi}. 
	
	We now fix an arbitrary function $\varphi\in \Phi_{\text{ad}}^{0}$.
	First of all, we find a vector field $\B{\xi}\in C_0^1(\tOmega;\R^d)$ such that
	\begin{align*}
		\int_{\tOmega}\varphi\nabla\cdot \B{\xi}\dLL>0,
	\end{align*}
	as otherwise the total variation of the associated Radon measure would vanish, i.e., $\abs{D\varphi}(\tOmega)=0$, which would imply that $\varphi$ is constant almost everywhere in $\tOmega$. However, this is not possible as neither $\varphi\equiv 1$ nor $\varphi\equiv -1$ in $\tOmega$ would fulfill the mean value constraint in $\tilde{\mathcal{G}}^{\beta}$ due to the choice of $\beta_1,\beta_2$ in \eqref{GDef}.
	
	Using the vector field $\B{\xi}$, we now define a family of transformations
	\begin{align*}
		T_s: \R^d \to \R^d,\quad
		x \mapsto x+s\B{\xi}(x),
	\end{align*}
	for $s\in\mathbb{R}.$ As this map is a perturbation of the identity via a $C^1$-map with compact support in $\tOmega$, it is clear that, for $\abs{s}$ sufficiently small, $T_s$ is a $C^1$-diffeomorphism with $T_s(\tOmega) = \tOmega$. Hence,
	\begin{align*}
		T_s\vert_{\tOmega}: \tOmega\to\tOmega
	\end{align*}
	is also a $C^1$-diffeomorphism.
	Moreover, the chain rule for Sobolev functions (see e.g., \cite[4.26]{Alt}) implies
	\begin{align*}
		\varphi_{\eps}\circ T_s^{-1} \in H^1(\tOmega).
	\end{align*}
	
	Since $\varphi_{\eps}\in \mathcal{U}$
	and $T_s\vert_{\mathbb{R}^d\backslash \tOmega}=\mathrm{id}_{\mathbb{R}^d\backslash \tOmega}$, 
	we infer
	\begin{align*}
		\int_{\R^d}\varphi_{\eps}\circ T_s^{-1}\dLL=\int_{\tOmega}\varphi_{\eps}\circ T_s^{-1}\dLL+\abs{S_1}-\abs{S_0},
	\end{align*}
	due to the trivial extension of $\varphi_{\eps}$ on $\R^d\backslash\Omega.$
	Moreover, we use the representation
	\begin{align}\label{fCont0}
		\int_{\R^d}\varphi_{\eps}\circ T_{s}^{-1}\dLL
		=\int_{\R^d}\varphi_{\eps}\abs{\det DT_{s}}\dLL.
	\end{align}
	Recalling that the determinant is a multilinear form, a straightforward computation reveals that there is a $\delta>0$ and a constant $C>0$ depending only on $\delta$ and $\B{\xi}$
	such that for all $x\in \mathbb{R}^d$ and $s\in (-\delta,\delta)$,
	\begin{align}\label{detExp}
		\frac 12 \le  1-C\abs{s} \le \det DT_{s}(x)\le 1+C\abs{s}.
	\end{align}
	In the following, we use the convenient notation $\varphi_0:=\varphi$. We now define the function
	\begin{align*}
		f:B_{\delta}(0)\subset \R^2\to \R,
	    \quad
		(\eps,s)\mapsto
		\int_{\R^d}\varphi_{|\eps|}\circ T_s^{-1}\dLL-\int_{\tOmega}\varphi\text{\,d}x-\abs{S_1}+\abs{S_0},
	\end{align*}%
	where $0<\delta<1$ is chosen sufficiently small in order to ensure that $T_s\vert_{\tOmega}:\tOmega\to\tOmega$ is a $C^1$-diffeomorphism and \eqref{detExp} holds
	for all $s\in (-\delta,\delta)$.

	Now our next goal is to apply the implicit function theorem formulated in \cite[Thm.~4.B]{Zeidler1} to the equation $f(\eps,s)=0$.
	First of all, $f(0,0)=0$ is clear since $\varphi\in \Phi_\text{ad}^0$.
	We next prove that $f$ is continuous at $(0,0)$.
	To this end, let us choose zero sequences $\left(\eps_k\right)_{k\in\mathbb{N}},\left(s_k\right)_{k\in\mathbb{N}}$.
	Due to the symmetry of $f$ with respect to its first argument, we may assume without loss of generality that $\eps_k\geq 0$.
	As per construction, we find a non-relabeled subsequence $(\varphi_{\eps_k})_{k\in\N}$ which converges to $\varphi$ almost everywhere as $k\to \infty$, and since $\norm{\varphi_{\eps_k}}_{L^{\infty}(\mathbb{R}^n)}\le 1$, we apply Lebesgue's dominated convergence theorem to the right-hand side of \eqref{fCont0}. Along with \eqref{detExp}, we deduce 
	\begin{align*}
		f(\eps_k,s_k)\to f(0,0)\quad\text{for }k\to \infty.
	\end{align*}
	As this limit does not depend on the choice of the subsequence, this argument can be repeated for any subsequence, thus, continuity of $f$ at $(0,0)$ is shown.
	
	In order to apply the implicit function theorem it remains to show that $\frac{\partial}{\partial s}f$ exists on $B_{\delta}(0)$, is continuous at $(0,0)$ and does not vanish at $(0,0)$.
	First of all, due to \eqref{detExp}, the modulus in \eqref{fCont0} can be omitted. Furthermore, the proof of \cite[Lem.~1]{Schmidt} implies that for fixed $x\in \R^d$ and $t\in (-\delta,\delta)$ we have
	\begin{align}
	    \label{EQ:DSDET}
		\frac{\mathrm{d}}{\mathrm{d}s}\left[\det DT_s\right]_{\vert s=t}&=
		\text{tr}\left(
		\frac{\mathrm{d}}{\mathrm{d}s}\left[DT_s\right]_{\vert s=t}(DT_t)^{-1}
		\right)
		\det DT_t
		\notag\\
		&=
		\text{tr}\left(\nabla\B{\xi}(\mathrm{Id}+t\nabla\B{\xi})^{-1}\right)
		\det(\mathrm{Id}+t\nabla\B{\xi})
		\notag\\
		&=
		\text{tr}\left(
		\nabla\B{\xi}\sum_{k=0}^{\infty}(-t\nabla\B{\xi})^k
		\right)
		\det(\mathrm{Id}+t\nabla\B{\xi}).        
	\end{align}
	As $\B{\xi}\in C_0^1(\tOmega,\R^d)$, we directly see that
	\begin{align*}
	    \underset{t\in (-\delta,\delta)}{\sup}\;
		\norm{\frac{\mathrm{d}}{\mathrm{d}s}\left[\det DT_s\right]_{\vert s=t}}_{C^0(\tOmega)} <\infty.
	\end{align*}
	Noticing again that $\norm{\varphi_\eps}_{L^{\infty}(\Omega)}\le 1$ for all $\eps\ge0$, we deduce via Lebesgue's dominated convergence theorem that for any $(\eps,t)\in B_{\delta}(0)$, we have
	\begin{align*}
		\frac{\partial f}{\partial s}(\eps,t)=
		\int_{\R^n}\varphi_{ \eps}\,
		\text{tr}\left(
		\nabla\B{\xi}\sum_{k=0}^{\infty}(-t\nabla\B{\xi})^k
		\right)
		\det(\mathrm{Id}+t\nabla\B{\xi})\dLL.
	\end{align*}
	Therefore we directly infer
	\begin{align*}
		\frac{\partial f}{\partial s}(0,0)=
		\int_{\R^n}\varphi_0 \nabla\cdot\B{\xi}\dLL=
		\int_{\tOmega}\varphi\nabla\cdot\B{\xi}\dLL>0,
	\end{align*}
	by our choice of $\B{\xi}\in C_0^1(\tOmega,\R^n)$ at the beginning of this proof.
	Now, the continuity of $\frac{\partial f}{\partial s}$ at $(0,0)$ follows again via Lebesgue's dominated convergence theorem using that for any $x\in \mathbb{R}^d$,
	\begin{align*}
		\text{tr}\left(
		\nabla\B{\xi}\sum_{k=0}^{\infty}(-t\nabla\B{\xi})^k
		\right)
		\det(\mathrm{Id}+t\nabla\B{\xi}).    
	\end{align*}
	is continuous at $t=0$.
	
	Now that we have checked all the assumptions of the implicit function theorem \cite[Thm.~4.B]{Zeidler1}, we deduce the existence of a $\tilde{\delta}>0$ and a function
	\begin{align*}
		s: (-\tilde{\delta},\tilde{\delta})\to (-\delta,\delta),
	\end{align*}
	which is continuous at $0$,
	such that
	\begin{align*}
		f(\eps,s(\eps))&=0\quad \text{for all }
		\eps\in (-\tilde{\delta},\tilde{\delta})\text{ and}\\
		s(0)&=0.
	\end{align*}
	In our framework, this means that having started with the recovery sequence $({\varphi}_{\eps})_{\eps>0}$ of Theorem~\ref{GamHelp}, we now know
	\begin{align*}
		\tve\coloneqq
		{\varphi}_{\eps}\circ T_{s(\eps)}^{-1}\in \Phi_{\text{ad}},
	\end{align*}
	i.e., we have constructed an admissible sequence. Note that the pointwise constraints $\tilde{\varphi}_{\eps}=1$ a.e.~in $S_1$ and $\tilde{\varphi}_{\eps}=-1$ a.e.~in $S_0$ are fulfilled since $T_s\vert_{\mathbb{R}^d\backslash \tOmega}=\mathrm{id}_{\mathbb{R}^d\backslash \tOmega}$.
	Hence, it remains to show
	\begin{align*}
		\limse\norm{\tve-\varphi}_{L^1(\Omega)}&=0,\\
		\limsupse J^{\eps}(\tve)&\le J^0(\varphi).
	\end{align*}
	The $L^1$ convergence follows from the triangle inequality
	\begin{align*}
		\norm{\varphi_{\eps}\circ T_{s(\eps)}^{-1}-\varphi}_{L^1(\Omega)}\le 
		\norm{(\varphi_{\eps}-\varphi)\circ T_{s(\eps)}^{-1}}_{L^1(\Omega)}+
		\norm{\varphi\circ T_{s(\eps)}^{-1}-\varphi}_{L^1(\Omega)}.
	\end{align*}
	Here, the convergence of the first summand can be shown by the same argumentation as for the continuity of $f$ at $0$, whereas the convergence of the second summand can be established via Lebesgue's dominated convergence theorem after approximating $\varphi_0$ by a sequence of $C_0^0(\Omega)$ functions.
	
	To verify the $\limsup$ inequality, let us first consider the Ginzburg--Landau energy separately.
	For the potential term we compute with the help of \eqref{detExp}
	\begin{align*}
		\int_{\tOmega}\psi(\tve)\dLL=
		\int_{\tOmega}\psi(\varphi_{\eps})\abs{\det DT_{s(\eps)}}\dLL\le
		(1+C\abs{s(\eps)})\int_{\tOmega}\psi(\varphi_{\eps})\dLL.
	\end{align*}
	Using the fact that for every $x\in \R^d$,
	\begin{align*}
	    \Big(D\big(T_s^{-1}\big)\Big)(x) = \Big(DT_s\big(T_s^{-1}(x)\big)\Big)^{-1},
	\end{align*}	
	we infer that the gradient term can be expressed as
	\begin{alignat*}{2}
    \begin{aligned}
		\int_{\tOmega}\abs{\nabla{\tve}(x)}^2\dLL
		&=
		\int_{\tOmega}\abs{ \Big( DT_{s(\eps)}\big(T_{s(\eps)}^{-1}(x)\big) \Big)^{-T} \nabla\varphi_\eps\Big(T_{s(\eps)}^{-1}(x)\Big)}^2\dLL\\
		&=
		\int_{\tOmega}\abs{\big(\mathrm{Id}+s(\eps)\nabla\B{\xi}(x)\big)^{-T}\nabla\varphi_{\eps}(x)}^2\abs{\det DT_{s(\eps)}(x)}\dLL\\
		&=
		\int_{\tOmega}\abs{\left(\sum_{k=0}^{\infty}(-s(\eps)\nabla\B{\xi})^k\right)^{T}\nabla\varphi_{\eps}(x)}^2\abs{\det DT_{s(\eps)}(x)}\dLL\\
		&\le
		\big(1+C\abs{s(\eps)}\big)\int_{\tOmega}\abs{\nabla\varphi_{\eps}(x)}^2\dLL,
    \end{aligned}    
	\end{alignat*}
	where we use \eqref{detExp} and a straightforward computation involving the geometrical series. Therefore, the constant $C$ only depends on $\tilde{\delta}$ and $\B{\xi}$.
	Altogether, we deduce
	\begin{align}
    \label{supPer}
		\limsupse \int_{\tOmega} \frac{\eps}{2}\abs{\nabla \tve}^2+\frac{1}{\eps}\psi(\tve)\dLL
		&\le
		\limse (1+C\abs{s(\eps)})\,
		\limsupse \int_{\tOmega} \frac{\eps}{2}\abs{\nabla \varphi_{\eps}}^2+\frac{1}{\eps}\psi(\varphi_{\eps})\dLL
		\notag\\
		&\le c_0\, P_{\tOmega}(\tilde{E}^\varphi),
	\end{align}
	as $(\varphi_{\eps})_{\eps>0}$ was the recovery sequence for $\varphi$ of Theorem \ref{GamHelp}.
	
    Now, we consider the eigenvalue term of the cost functional $J^{\eps}$. Due to Theorem \ref{GamHelp}, we already know
    \begin{align}\label{supEV}
       \limsupse\lambda_{i_j}^{\eps,\ie{\varphi}}\le \lambda_{i_j}^{0,\varphi},
    \end{align}
    for $j=1,\dots,l$. In the following, to provide a cleaner presentation, we will write $k\coloneqq i_j$.
        We intend to show that there exists a sequence $(\alpha_{s(\eps)})_{\eps>0}$ (that may depend on $k$) with
        \begin{align*}
            \limse \alpha_{s(\eps)}=1,
        \end{align*}
        such that for all $\eps>0$ small enough
        \begin{align}\label{LambdaTve}
            \alpha_{s(\eps)}\lambda_k^{\tve}\le \lambda_k^{\varphi_{\eps}}.
        \end{align}
        Using \eqref{supEV} this then directly gives
    \begin{align}\label{supEV2}
       \limsupse\lambda_{k}^{\eps,\ie{\tilde{\varphi}}}\le \lambda_{k}^{0,\varphi}.
    \end{align}      
        So for $\eps>0$ let us consider the Courant--Fischer characterization of $\lambda_k^{\ie{\varphi}}$ which due to Theorem~\ref{EEW} reads as
      		\begin{align}\label{CFkeps} 
                \lambda_k^{\varphi_{\eps}}
            =
                \underset{V\in\mathcal{S}_{k-1}}{\max}\min
                    \left\{
                        \left.
                            \frac{
                                \int_{\Omega}
                                    \abs{\nabla v}^2
                                \text{\,d}x
                              +\int_{\Omega}
                                    \ie{b}({\varphi}_{\eps})\abs{v}^2
                                 \text{\,d}x
                             }
                             {
                                \int_{\Omega}\abs{v}^2\text{\,d}x
                             }
                      \right|
                        \begin{aligned}
                              	&v\in V^{\perp,L^2(\Omega)}\cap \Hz, \\
                            &v\neq 0
                        \end{aligned}
                \right\}.
        \end{align}
  Now let us choose the subspace
  \begin{align*}
    V_{T_{\eps}}\coloneqq
    \left\{
        \left(w_1^{\tilde{\varphi}_{\eps}}\circ T_{s(\eps)}\right)\abs{\det{T_{s(\eps)}}},\dots,
        \left(w_{k-1}^{\tilde{\varphi}_{\eps}}\circ T_{s(\eps)}\right)\abs{\det{T_{s(\eps)}}}        
    \right\}\subset L^2(\Omega).
  \end{align*}
  As the family of eigenfunctions
  \begin{align*}
    W_{\eps}\coloneqq\left\{w_1^{\tilde{\varphi}_{\eps}},\dots, w_{k-1}^{\tilde{\varphi}_{\eps}} \right\}\subset L^2(\Omega)
  \end{align*}
  is linearly independent, $V_{T_{\eps}}\subset L^2(\Omega)$ is indeed a $(k-1)$-dimensional subspace.
  Hence, we infer from \eqref{CFkeps} that
  \begin{align*}
    \lambda_k^{\varphi_{\eps}}\ge
                \min
                    \left\{
                        \left.
                            \frac{
                                \int_{\Omega}
                                    \abs{\nabla v}^2
                                \text{\,d}x
                              +\int_{\Omega}
                                    \ie{b}({\varphi}_{\eps})\abs{v}^2
                                 \text{\,d}x
                             }
                             {
                                \int_{\Omega}\abs{v}^2\text{\,d}x
                             }
                      \right|
                        \begin{aligned}
                              	&v\in V_{\ie{T}}^{\perp,L^2(\Omega)}\cap \Hz, \\
                            &v\neq 0
                        \end{aligned}
                \right\}.    
  \end{align*}
  Now we want to show that
  \begin{alignat}{2}
  \label{minTrafo}
       &\min
                    \left\{
                        \left.
                            \frac{
                                \int_{\Omega}
                                    \abs{\nabla v}^2
                                \text{\,d}x
                              +\int_{\Omega}
                                    \ie{b}({\varphi}_{\eps})\abs{v}^2
                                 \text{\,d}x
                             }
                             {
                                \int_{\Omega}\abs{v}^2\text{\,d}x
                             }
                      \right|
                        \begin{aligned}
                              	&v\in V_{\ie{T}}^{\perp,L^2(\Omega)}\cap \Hz, \\
                            &v\neq 0
                        \end{aligned}
                \right\}
    \notag\\
  \ge
    &\min
        \left\{
            \left.
                \frac{
                    \int_{\Omega}
                        \abs{\nabla \left(v\circ T_{s(\eps)}\right)}^2
                    \text{\,d}x
                  +\int_{\Omega}
                        \ie{b}({\varphi}_{\eps})\abs{v\circ T_{s(\eps)}}^2
                     \text{\,d}x
                 }
                 {
                    \int_{\Omega}\abs{v\circ T_{s(\eps)}}^2\text{\,d}x
                 }
          \right|
            \begin{aligned}
                           	&v\in W_{\eps}^{\perp,L^2(\Omega)}\cap \Hz, \\
                &v\neq 0
            \end{aligned}
    \right\}.
  \end{alignat}
  To verify this, denote with $0\neq\overline{v}\in V_{\ie{T}}^{\perp,L^2(\Omega)}\cap \Hz$ a function at which the minimum in the first line is attained. Then, by the transformation formula it holds for $m=1,\dots,k-1$
  \begin{align*}
    0=\int_{\Omega}
        \overline{v}\left(w_m^{\tve}\circ T_{s(\eps)}\right)\abs{\det T_{s(\eps)}}
    \dLL
    =
    \int_{\Omega}
        \left(\overline{v}\circ T_{s(\eps)}^{-1}\right)w_m^{\tve}
    \dLL.  
  \end{align*}
  As we additionally know $T_{s(\eps)}(\partial\Omega)= \partial\Omega$, the function
  \begin{align*}
    0\neq\left(\overline{v}\circ T_{s(\eps)}^{-1}\right)\in W_{\eps}^{\perp,L^2(\Omega)}\cap H_0^1(\Omega)
  \end{align*}
  is admissible and, per construction, \eqref{minTrafo} holds. For any arbitrary $v\in H_0^1(\Omega)$, we find that
  \begin{align*}
        \int_{\Omega}
            \abs{\nabla \left(v\circ T_{s(\eps)}\right)(x)}^2
        \text{\,d}x
        &=\int_{\Omega}
            \abs{\left(\mathrm{Id}+s(\eps)\nabla\xi(x)\right)^{T} \,\nabla v\left(T_{s(\eps)}(x)\right)}^2\text{\,d}x\\
       &\ge \left(1-C\abs{s(\eps)}\right)
       \int_{\Omega}\abs{\nabla v\left(T_{s(\eps)}(x)\right)}^2\dLL,      
  \end{align*}
  with a constant $C>0$ depending only on $\tilde{\delta}$ and $\B{\xi}$.
 Hence, invoking the transformation theorem and using \eqref{detExp}, we conclude for the remaining terms in \eqref{minTrafo} that
  \begin{align*}
    &\min
        \left\{
            \left.
                \frac{
                    \int_{\Omega}
                        \abs{\nabla \left(v\circ T_{s(\eps)}\right)}^2
                    \text{\,d}x
                  +\int_{\Omega}
                        \ie{b}({\varphi}_{\eps})\abs{v\circ T_{s(\eps)}}^2
                     \text{\,d}x
                 }
                 {
                    \int_{\Omega}\abs{v\circ T_{s(\eps)}}^2\text{\,d}x
                 }
          \right|
            \begin{aligned}
                           	&v\in W_{\eps}^{\perp,L^2(\Omega)}\cap \Hz, \\
                &v\neq 0
            \end{aligned}
    \right\}\\
    \ge
    &\left(1-C\abs{s(\eps)}\right)\min
        \left\{
            \left.
                \frac{
                    \int_{\Omega}
                        \abs{\nabla v}^2
                    \text{\,d}x
                  +\int_{\Omega}
                        \ie{b}(\tve)\abs{v}^2
                     \text{\,d}x
                 }
                 {
                    \int_{\Omega}\abs{v}^2\text{\,d}x
                 }
          \right|
            \begin{aligned}
                           	&v\in W_{\eps}^{\perp,L^2(\Omega)}\cap \Hz, \\
                &v\neq 0
            \end{aligned}
    \right\}.     
  \end{align*}
  Combining all the previous computations and recalling that $W_{\eps}$ is the space spanned by the first $k-1$ eigenfunctions corresponding to $\tve=\varphi_{\eps}\circ T^{-1}_{s(\eps)}$, we use Theorem~\ref{EEW}
  to conclude \eqref{LambdaTve} with $\alpha(\eps) := (1-C\abs{s(\eps)})$, and we eventually arrive at \eqref{supEV2}.
  
  Finally, we use the monotonicity of $\Psi$ from Assumption~\ref{AsP} along with \eqref{supPer} to deduce
  \begin{align*}
    \limsupse J^{\eps}(\tve)\le J^0(\varphi).
  \end{align*}
This completes the proof.
\end{proof}	

\begin{Rem}
    Assume that the sets $S_0$ and $S_1$ are compactly contained in the design domain $\Omega$ (i.e., $\overline{S_0},\overline{S_1}\subset \Omega$).
    Then, the minimization of $J^{\eps}$ implicitly enforces the Neumann boundary condition $\frac{\partial \ie{\varphi}}{\partial \B{\nu}}=0$ on $\partial\Omega$. As discussed in Section~\ref{Sec:Form}, we could alternatively impose the Dirichlet condition $\ie{\varphi}=-1$ on $\partial \Omega$ which does not change the line of reasoning of the current section but produces a different limiting cost functional for $\eps\to 0$ in which a term penalizing the energy of transitions from $\varphi_0=1$ to $\varphi_0=-1$ when approaching $\partial\Omega$ needs to be added; see \cite{Owen} for a detailed discussion. In Figure~\ref{fig:num:tight}, we show a numerical example where the Dirichlet condition $\ie{\varphi}=-1$ on $\partial \Omega$ is explicitly imposed in order for the boundary of $\Omega$ to act as an obstacle.
\end{Rem}


\section{Numerical Computations}
 \label{sec:num}

 In this section, we validate our approach by presenting several numerical examples.
 In Section~\ref{sec:num:FEVMPT}, we describe the methods we use for the numerical approximation of 
 a solution to \eqref{PD} or \eqref{PN}, respectively.
 In Section~\ref{sec:num:find-abc}, we study the solutions to some standard examples with known analytical solution to fix the parameters $b_\eps$ and $c_\eps$ in our approach.
Thereafter, in Section~\ref{sec:num:noAnalyticalSolution}, we show further capabilities of our proposed method by solving problems whose solution is analytically unknown.

 \subsection{The numerical realization}
 \label{sec:num:FEVMPT}
 To discretize \eqref{PD} and \eqref{PN}, we use standard piecewise linear and globally continuous finite elements, provided by
 the finite element toolbox \texttt{FEniCS} \cite{fenics1,fenics_book}, for all appearing functions
 to obtain finite dimensional approximations $\varphi^h$ of $\varphi$ and $w_i^h$ of $w_i$, where $i$ corresponds to the index of the eigenvalue. 
 The finite dimensional variants of the state equations \eqref{statewD} and \eqref{statewN} are solved by the eigenvalue solver \texttt{SLEPc} \cite{slepc:2005} provided by \texttt{PETSc} \cite{petsc-user-ref,petsc-efficient} to obtain 
 approximate eigenvalues $\lambda_i^h$ and $\mu_i^h$.
 The optimization problems \eqref{PD} and \eqref{PN} are treated by the \texttt{VMPT} method, see \cite{BlankRupprecht}. 
 This method can be understood as an extension of the projected gradient method to non-reflexive Banach spaces. 
 In our setting, we consider  $\varphi \in H^1(\Omega) \cap L^\infty(\Omega)$.
 As part of this method we need to solve projection-type subproblems, that have the form of linear-quadratic optimization problems. These are solved using the package \texttt{IPOPT}, see \cite{WachterIPOPT}.
 
 Since the phase field $\varphi^h$ changes its value between $-1$ and $1$ over a length-scale of size $\eps$, a very high resolution of 
 certain parts of the computational domain is required. Here, we use locally refined meshes. 
 For given $\eps>0$, we fix the mesh parameter $h_{\min}$ as $h_{\min} = s\eps$, with $s \approx 0.08$.  
 This leads to about 12 cells over a length of $\eps$.
 We start the optimization with a very coarse mesh and use the \texttt{VMPT} 
 method until convergence to a numerical solution $\varphi^h$ occurs. 
 Thereafter, we adapt the mesh and 
 refine all cells $K$ with diameter larger than $h_{\min}$ which satisfy $|\varphi^h (K_m)| \leq 0.99$ 
 (where $K_m$ denotes the midpoint of $K$), 
 whereas cells that satisfy $|\varphi^h (K_m)| \geq 1.0$ are coarsened. 
 We then optimize again on the the new mesh. 
 This loop is executed until no refinement is performed during the adaptation step.
 Alternative concepts for local error estimation might be used, e.g., residual based error estimation or dual weighted residuals,
 but we stress that, in any case, a high resolution of the interface $|\varphi^h| \leq 0.99$ is required for successful numerical calculations.
 
 We point out that for small values of $\eps$ and $\gamma$, the interfaces tend to become very thin 
 and thus, starting with a coarse mesh is numerically not feasible. 
 In this situation a homotopy starting from larger values for $\gamma$ is used. 
 We choose $\gamma$ as homotopy parameter because $\gamma$ can be varied over larger scales than $\varepsilon$.
 
 \subsection{Fixing model parameters}
\label{sec:num:find-abc}
  The considered optimization problems \eqref{PD} and \eqref{PN}
  involve several parameters that need to be chosen. 
  Here, we fix some of them, and we will stick to this choice unless stated differently.
  We fix $\Omega = (0,1)^d$, $d\in\{2,3\}$,  $\int_\Omega\frac{\varphi+1}{2} = ({1}/{2})^d|\Omega|$, i.e., $\beta_1 = \beta_2 = -0.5$
  if $d=2$ and   $\beta_1 = \beta_2 = -0.75$ if $d=3$, and start from a constant initial value $\varphi^0$. 
  Moreover, we use $\eps=0.02,\gamma=1$ and $\psi_0(\varphi) = \frac{1}{2}(1-\varphi^2)$ as the regular part of the potential $\psi$.
  
  The phase field approximations \eqref{statewD} and \eqref{statewN} involve three model functions, 
  namely $a_\eps(\varphi)$, $b_\eps(\varphi)$, and $c_\eps(\varphi)$.
  Here we make the following settings 
  \begin{itemize}
  \item We fix $a_\eps(\varphi) = \frac{1-\eps}{2} \varphi + \frac{1+\eps}{2}$,   meaning that $a_\eps(1) = 1$ and $a_\eps(-1) = \eps$.
  \item We fix $c_\eps(\varphi) =\frac{1-c\eps}{2} \varphi + \frac{1+c\eps}{2}$ with some $c>0$, meaning that $c_\eps(1) = 1$ and $c_\eps(-1) = c\eps$.
  In case we consider Dirichlet boundary data we fix $c=1$.
  \item We fix $b_\eps(\varphi) = b\frac{1-\varphi}{2\eps^{4/3}}$ with some $b>0$, meaning that $b_\eps(1) = 0$ and $b_\eps(-1) = \frac{b}{\eps^{4/3}}$.
  \end{itemize}
  We note that, in the following, the rate $\eps^{4/3}$ appearing in $b_\eps$ leads to a common choice for $b$ independent of $\eps$.
  In summary, in case that we apply Dirichlet boundary data, we have one unknown parameter, namely $b$, and in case that we use Neumann boundary data, we have one unknown parameter, namely $c$.
  

\begin{Rem}
In case of Dirichlet boundary data, we could potentially set $a_\eps(\varphi) \equiv c_\eps(\varphi) \equiv 1$ as stated in Assumption~\ref{Asmp:ac}.
However, in this setting, we experienced when solving the minimization problem that the shape tends to attain the form of one large ball, even in cases where, for instance, two balls are the optimal solution. 
Nevertheless, using $a_\eps(-1)=\eps$ and $c_\eps(-1)=c\eps$ as introduced above does not conflict with the assumptions made in Section~\ref{Sub:coeff}. Hence, the analytical results obtained in Section~\ref{SEC:Analysis} are valid in the current setting.

Functions like $a_\eps$ are often chosen as polynomials to mimic the SIMP approach (see, e.g., \cite{Pedersen}). 
However, during the minimization process such an
approach would lead to very thin interfaces that can barely be resolved by a finite element mesh. 
As we solve a minimization problem, the final shape of the interface is adjusted in an optimal way in terms of the chosen parameter functions.
\end{Rem}

 \paragraph{Fixing $b$ for Dirichlet boundary data.}
To fix $b$ we solve the minimization problem
related to minimizing the first eigenvalue $\lambda_1$ for the Laplace operator with Dirichlet boundary data
for sequences of $b$ and $\eps$. The analytical result is given by Theorem~\ref{thm:num:min_lam1} which is known as the Faber--Krahn theorem.

  \begin{Thm}[Faber--Krahn, cf.~{\cite[Thm.~3.2.1]{Henrot}}]
 \label{thm:num:min_lam1}
        The minimum of $\lambda_1(D)$ among all bounded open sets $D \subset \mathbb{R}^d$, $d\in \N$, 
        with given volume is achieved by one ball.
 \end{Thm}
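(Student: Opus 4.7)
The plan is to reduce the theorem to the variational (Rayleigh quotient) characterization of the first Dirichlet eigenvalue and then exploit symmetric decreasing rearrangement. Concretely, I would first observe that for any bounded open set $D$ the first eigenvalue admits the characterization
\begin{align*}
    \lambda_1(D) = \inf\left\{ \frac{\int_D |\nabla u|^2 \,\mathrm dx}{\int_D u^2 \,\mathrm dx} \,\Big|\, u \in H_0^1(D),\; u \neq 0 \right\},
\end{align*}
and that by Theorem~\ref{EEW}\ref{l1s} (or its classical counterpart) the infimum is attained by an eigenfunction $u_1 \in H_0^1(D)$ which may be chosen positive in $D$. Let $B \subset \mathbb{R}^d$ denote the open ball centered at the origin with $|B| = |D|$.

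Next I would introduce the Schwarz symmetrization $u_1^{\ast}$ of $u_1$, defined as the unique radially symmetric, radially nonincreasing function on $B$ which is equimeasurable with $u_1$ (i.e., $|\{u_1 > t\}| = |\{u_1^{\ast} > t\}|$ for every $t>0$). Since $u_1 > 0$ on $D$ and vanishes on $\partial D$, its superlevel sets are contained in $D$ and $u_1^{\ast}$ belongs to $H_0^1(B)$. The two decisive properties of the symmetrization are:
\begin{align*}
    \int_B (u_1^{\ast})^2 \,\mathrm dx = \int_D u_1^2 \,\mathrm dx,
    \qquad
    \int_B |\nabla u_1^{\ast}|^2 \,\mathrm dx \le \int_D |\nabla u_1|^2 \,\mathrm dx.
\end{align*}
The first identity is immediate from equimeasurability via Cavalieri's principle applied to $u_1^2$; the second is the Pólya--Szegő inequality.

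Combining these ingredients is then immediate: using $u_1^{\ast}$ as a test function in the Rayleigh quotient for $B$,
\begin{align*}
    \lambda_1(B) \le \frac{\int_B |\nabla u_1^{\ast}|^2 \,\mathrm dx}{\int_B (u_1^{\ast})^2 \,\mathrm dx} \le \frac{\int_D |\nabla u_1|^2 \,\mathrm dx}{\int_D u_1^2 \,\mathrm dx} = \lambda_1(D),
\end{align*}
which is the desired minimality of the ball.

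The main technical obstacle is the Pólya--Szegő inequality, whose proof is the nontrivial ingredient. The standard route I would take is through the coarea formula: write $\int_D |\nabla u_1|^2 \,\mathrm dx = \int_0^{\infty} \int_{\{u_1 = t\}} |\nabla u_1| \,\mathrm d\mathcal{H}^{d-1} \,\mathrm dt$, combine this with the Cauchy--Schwarz inequality and the identity $-\frac{\mathrm d}{\mathrm dt}|\{u_1 > t\}| = \int_{\{u_1=t\}} |\nabla u_1|^{-1} \,\mathrm d\mathcal{H}^{d-1}$ (valid for a.e.~$t$ by Sard's theorem applied to an appropriate regularization), and then invoke the classical isoperimetric inequality $\mathcal{H}^{d-1}(\partial^{\ast} E) \ge d\, \omega_d^{1/d}\,|E|^{(d-1)/d}$ to compare the perimeter of each superlevel set $\{u_1 > t\}$ with that of the corresponding ball $\{u_1^{\ast} > t\}$. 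The radial symmetry of $u_1^{\ast}$ converts every inequality along the way into an equality on $B$, yielding the Pólya--Szegő estimate. Since this chain of arguments is classical and available in \cite[Chap.~3]{Henrot}, I would ultimately simply cite it rather than reproduce it in full.
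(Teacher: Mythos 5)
Your proposal is a correct sketch of the classical Faber--Krahn proof via Schwarz symmetrization and the P\'olya--Szeg\H{o} inequality. The paper itself does not prove this theorem; it simply cites it as a known result from Henrot's book (\cite[Thm.~3.2.1]{Henrot}), and the argument you lay out --- Rayleigh quotient, positivity of the first eigenfunction, symmetric decreasing rearrangement preserving the $L^2$-norm while not increasing the Dirichlet energy, the coarea/isoperimetric route to P\'olya--Szeg\H{o} --- is precisely the proof given in that reference. So you have reconstructed the standard argument that the paper delegates to the literature rather than reproducing.

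One minor point worth making explicit if you were to write this out in full: the claim that $u_1^{\ast}\in H_0^1(B)$ requires that the superlevel sets $\{u_1^{\ast}>t\}$ are open balls centered at the origin contained in $B$ for all $t>0$, which follows from equimeasurability together with $|B|=|D|$; this deserves a sentence. Also, the P\'olya--Szeg\H{o} argument as you sketch it glosses over the technical issue that $|\nabla u_1|$ may vanish on sets of positive measure in the superlevel boundaries, so the coarea/Cauchy--Schwarz step needs a truncation or an approximation by smooth functions before invoking Sard's theorem; but since you explicitly defer to the classical source for the details, this is a fair shortcut.
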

  
 In Table~\ref{tab:num:eigval_balls}, we present the first four analytical eigenvalues
 on one ball of the given volume and, for later reference, on two balls with the given volume in sum.

  \begin{table}
     \centering

    \begin{tabular}{c|cc|cc}
         &  \multicolumn{2}{c|}{2D, $|D| = \frac{1}{4}$} & \multicolumn{2}{c}{3D, $|D| = \frac{1}{8}$} \\
         \hline
         & one ball & two balls & one ball & two balls\\
         \hline
    $r$  & $\sqrt{\frac{1}{4\pi}}$ &   $\sqrt{\frac{1}{8\pi}}$ & $\sqrt[3]{\frac{3}{32\pi}}$ & $\sqrt[3]{\frac{3}{64\pi}}$\\
    \hline  
    $\lambda_1$ & 72.67 & 145.34 & 102.59 & 162.84\\
    $\lambda_2$ & 184.50 & 145.34 & 209.86 & 162.84\\
    $\lambda_3$ & 184.50 & 369.00 & 209.86 & 333.14\\
    $\lambda_4$ & 331.43 & 369.00 & 209.86 & 333.14
    \end{tabular}

     \caption{The first eigenvalues of the Laplace operator with Dirichlet boundary condition in two dimensions on a ball
     of volume $\frac{1}{4}$ and on two balls of volume $\frac{1}{8}$ each, and in three dimensions on a ball
     of volume $\frac{1}{8}$ and two balls of volume $\frac{1}{16}$ each.
     The value $r$ denotes the radius of one ball.
     We refer to \cite[Prop.~1.2.14]{Henrot} on how to compute these values.
     }
     \label{tab:num:eigval_balls}
 \end{table}

 We solve the minimization problem related to Theorem~\ref{thm:num:min_lam1} for 
 $b \in \{300,400,500,550,600,$ $700,800\}$ and $\eps\in \{0.04,0.02,0.01,0.005\}$
 and compare the numerically found eigenvalue $\lambda_1^h$ to the analytical known values $\lambda_1$ provided in Table~\ref{tab:num:eigval_balls}. The relative errors $\eta_1^\lambda :=|\lambda_1 - \lambda_1^h|/\lambda_1$ are presented in 
 Figure~\ref{fig:num:lam1_over_eps}.
  From Figure~\ref{fig:num:lam1_over_eps}, we obtain that for the scaling $b(-1) = b\eps^{-4/3}$ the choice of $b = 550$ is optimal in this situation and thus, in the following, we fix $b = 550$.  
  
  For $\eps = 0.02$ and $b=550$, the eigenfunction $w_1^{D,h}$ related to the eigenvalue $\lambda_1^h$ approximates the analytical $w_1^D$  with a relative error
  $\|w_1^D-w_1^{D,h}\|_{L^2(\Omega)} / \|w_1^D\|_{L^2(\Omega)} = 12\cdot 10^{-4}$. Here, $w_1^D$ is a scaled Bessel function, see~\cite[Prop.~1.2.14]{Henrot}. 
  It is extended to the whole computational domain $\Omega$ with the constant value zero.

   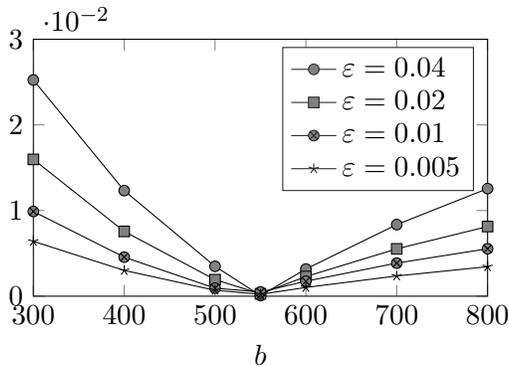
\begin{figure}
      \centering
      \begin{tikzpicture}

\begin{axis}[%
width=0.5\textwidth,
height=5cm,
xmin=300,
xmax=800,
xminorticks=true,
xlabel={$b$},
ymin=0.000001,
ymax=0.03,
yminorticks=true,
axis background/.style={fill=white},
cycle list name=black white,
legend style={legend cell align=left, align=left, draw=white!15!black},
legend pos={north east}
]
\addplot 
  table[row sep=crcr]{%
800	0.0125635422672526\\
700	0.00835998959710818\\
600	0.00317292330894519\\
550	7.22560652575229e-05\\
500	0.00348847769639253\\
400	0.0123284176458366\\
300	0.0252487268153804\\
};
\addlegendentry{ $\eps =0.04$}

\addplot+ 
  table[row sep=crcr]{%
800	0.00813296987427096\\
700	0.00552451353407975\\
600	0.002292943234061\\
550	0.000281235852548621\\
500	0.00192984652982375\\
400	0.00756524168147051\\
300	0.0159737368260996\\
};
\addlegendentry{ $\eps = 0.02$}

\addplot+ 
  table[row sep=crcr]{%
800	0.00553739330499591\\
700	0.00386266531447425\\
600	0.00174670884699903\\
550	0.000493793353983073\\
500	0.000934141162839679\\
400	0.00455926001863171\\
300	0.00989112740652888\\
};
\addlegendentry{ $\eps = 0.01$ }

\addplot+ 
  table[row sep=crcr]{%
800	0.00344532495964745\\
700	0.0023605620313711\\
600	0.00102076312642673\\
550	0.000223909863620299\\
500	0.000693636209769698\\
400	0.00300814205176406\\
300	0.0064068466440353\\
};
\addlegendentry{ $\eps = 0.005$}

\end{axis}

\end{tikzpicture}%
      \caption{The relative error $\eta_1^\lambda = |(\lambda_1 - \lambda_1^h)|/ \lambda_1$ when minimizing $\lambda_1$ in two dimensions for several values of $b$ and $\eps$. Here $b_\eps(-1) = b\eps^{-4/3}$.
      }
      \label{fig:num:lam1_over_eps}
  \end{figure}
  We validate our choice by solving the optimization problem related to $\lambda_2$ and $\lambda_3$ for $\eps = 0.02$. 
  The global optimal solutions are stated in Theorem~\ref{thm:num:min_lam2}
 and Theorem~\ref{thm:num:min_lam3}. 
     \begin{Thm}[{\cite[Thm.~4.1.1]{Henrot}}]
  \label{thm:num:min_lam2}
            The minimum of $\lambda_2(D)$ among all bounded open sets $D \subset \mathbb{R}^d$, $d\in\N$, 
            with given volume is achieved by the union of two identical disjoint balls.
  \end{Thm}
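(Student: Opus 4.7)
The plan is to use Faber--Krahn (Theorem~\ref{thm:num:min_lam1}) together with a nodal-domain decomposition, which is the classical strategy for what is usually called the Hong--Krahn--Szegő inequality. The key observation is that $\lambda_2(D)$ can always be bounded from below by the first Dirichlet eigenvalue on \emph{two} disjoint open subsets of $D$; symmetrizing each piece to a ball of the same volume and then balancing the volumes pins down the minimizer.

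First I would fix a bounded open set $D\subset \R^d$ with $\abs{D}=V$ and pick an $L^2$-normalized eigenfunction $u_2$ associated with $\lambda_2(D)$. I distinguish two cases. Case~1: $u_2$ changes sign. Then $D^{+}\coloneqq \{u_2>0\}$ and $D^{-}\coloneqq \{u_2<0\}$ are disjoint open subsets of $D$, and $u_2|_{D^{\pm}}$, extended by zero, lies in $H^1_0(D^{\pm})$, is of fixed sign, and satisfies the eigenvalue equation with eigenvalue $\lambda_2(D)$. Since the first Dirichlet eigenfunction on any open set is the unique (up to sign) sign-definite one, this forces $\lambda_1(D^{+})=\lambda_1(D^{-})=\lambda_2(D)$. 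Case~2: $u_2$ does not change sign. Then $D$ cannot be connected (else $u_2$ would be a first eigenfunction, contradicting $\lambda_2(D)>\lambda_1(D)$, which one checks via orthogonality to the first eigenfunction). Hence there are at least two connected components $D_1,D_2$ of $D$ such that the associated first eigenvalues $\lambda_1(D_1),\lambda_1(D_2)$ are both $\le \lambda_2(D)$. In both cases we have produced two disjoint open sets $U_1,U_2\subset D$ with $\abs{U_1}+\abs{U_2}\le V$ and $\max\{\lambda_1(U_1),\lambda_1(U_2)\}\le \lambda_2(D)$.

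Next I apply Theorem~\ref{thm:num:min_lam1} (Faber--Krahn) to each $U_i$: letting $B_i$ denote a ball with $\abs{B_i}=\abs{U_i}$, one has $\lambda_1(B_i)\le \lambda_1(U_i)$, and thus
\begin{align*}
\max\{\lambda_1(B_1),\lambda_1(B_2)\}\le \lambda_2(D).
\end{align*}
Using the scaling identity $\lambda_1(B_r)=\lambda_1(B_1)\, r^{-2}$ and writing $\abs{B_i}=\omega_d r_i^d$, the problem reduces to minimizing $\max\{r_1^{-2},r_2^{-2}\}$ subject to $r_1^d+r_2^d\le V/\omega_d$. An elementary Lagrange argument (or monotonicity) shows the minimum occurs exactly when $r_1=r_2$, i.e.~$\abs{B_1}=\abs{B_2}=V/2$; call this common ball $B_\ast$. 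This yields $\lambda_2(D)\ge \lambda_1(B_\ast)$.

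Finally I would check that equality is attained for $D^\ast\coloneqq B_\ast\,\dot\cup\, B_\ast'$, where $B_\ast'$ is a disjoint translate of $B_\ast$: the spectrum of a disjoint union is the union of the spectra, so $\lambda_1(D^\ast)=\lambda_2(D^\ast)=\lambda_1(B_\ast)$, matching the lower bound. The main obstacle I expect is the bookkeeping in Case~2 (the disconnected case), because one must rule out that $u_2$ can be sign-definite on a connected $D$; the cleanest way is to invoke orthogonality of $u_2$ to the positive first eigenfunction $u_1$ in $L^2(D)$. A secondary subtlety is that the nodal sets $D^{\pm}$ need not be connected, but this does not affect the argument since Faber--Krahn applies to any open set.
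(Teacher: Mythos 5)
The paper does not prove this statement at all: Theorem~\ref{thm:num:min_lam2} is quoted verbatim from \cite[Thm.~4.1.1]{Henrot} (the Hong--Krahn--Szeg\H{o} inequality) purely as a validation benchmark for the numerics, so there is no in-paper proof to compare against. Your argument is the standard Hong--Krahn--Szeg\H{o} proof as given in Henrot's book: split into the sign-changing case (nodal domains $D^\pm$ give two disjoint sets with $\lambda_1(D^\pm)=\lambda_2(D)$) and the disconnected case, apply Faber--Krahn to each piece, and use the scaling $\lambda_1(B_r)\sim r^{-2}$ together with the constraint $r_1^d+r_2^d\le V/\omega_d$ to see that the lower bound is worst at $r_1=r_2$. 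This is correct and is essentially the same route as the cited source. One small point worth flagging if you were to write this out in full: the claim that $u_2|_{D^\pm}$ extended by zero belongs to $H^1_0(D^\pm)$ is not purely formal for arbitrary open sets; it relies on the continuity of the eigenfunction (so that $u_2^+\in H^1_0(D)$ has support where $u_2\ge 0$, and one can then show it lies in $H^1_0(D^+)$). This is a standard lemma but deserves a citation or a sentence, since for a generic pair of disjoint open subsets of $D$ the corresponding restriction claim would be false.
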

  
  \begin{Thm}[{\cite[Cor.~5.2.2]{Henrot}}]
  \label{thm:num:min_lam3}
    The minimum of $\lambda_3(D)$ among all bounded open sets $D \subset \mathbb{R}^d$, $d \in \{2,3\}$, 
      with given volume is achieved by one ball.
 \end{Thm}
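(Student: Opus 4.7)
My plan is to follow the classical Bucur--Henrot approach for this third-eigenvalue minimization problem. First I would establish existence of a minimizer in a sufficiently large box, working in the framework of Buttazzo--Dal Maso on quasi-open sets (since arbitrary bounded open sets do not form a class that is closed under the natural $\gamma$-convergence). A concentration-compactness argument then upgrades this to an unconstrained existence result in $\mathbb{R}^d$, ruling out the possibility of mass escaping to infinity by exploiting that $\lambda_3$ is translation-invariant and that splitting a minimizer can only increase the functional beyond a certain point.

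Once a minimizer $D^*$ of volume $c$ is in hand, I would reduce its geometry using continuous Steiner symmetrization, combined with Courant's nodal domain theorem (the third eigenfunction has at most three nodal domains). The reduction should show that up to negligible sets, $D^*$ is a disjoint union of at most two balls: if it had a single connected component, Schwarz symmetrization applied componentwise shows the component must itself be a ball; and a general minimizer can be reduced to at most two connected components by a comparison argument on the first few eigenvalues.

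It then remains to compare the two remaining candidates: (a) a single ball $B$ of volume $c$, and (b) two disjoint balls $B_1 \cup B_2$ of volumes $c_1, c_2$ with $c_1 + c_2 = c$. On a single ball in dimension $d \ge 2$, $\lambda_3(B) = \lambda_2(B)$ by rotational symmetry, since the second eigenspace has dimension $d$. On $B_1 \cup B_2$, the spectrum is the reordered union of the two individual spectra, so $\lambda_3(B_1 \cup B_2)$ is the third-smallest element of $\{\lambda_1(B_i), \lambda_2(B_i) : i=1,2\}$. Using the scaling $\lambda_k(tB) = t^{-2}\lambda_k(B)$ and Faber--Krahn, this reduces to a one-dimensional optimization in the volume ratio, and an elementary calculation (exploiting that $\lambda_2(B_1)/\lambda_1(B_1)$ is a universal constant depending only on $d$) shows that $\lambda_3(B_1 \cup B_2) > \lambda_3(B)$ for every admissible split, which rules out case (b).

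The main obstacle is the symmetrization/reduction step in the middle, since Schwarz symmetrization is not directly available for $\lambda_3$: the third eigenfunction changes sign, so the standard rearrangement inequality for the Rayleigh quotient does not apply to the eigenfunction itself. The standard workaround is continuous Steiner symmetrization applied to the sublevel sets of the capacitary potential together with a careful analysis of the nodal structure, which is technically the delicate part of the argument.
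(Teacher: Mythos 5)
The paper itself does not prove this statement; it is quoted from \cite[Cor.~5.2.2]{Henrot} and used only to validate the numerical computations, so there is no in-paper proof to compare against.

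Your framework — Buttazzo--Dal Maso existence over quasi-open sets, concentration-compactness in $\mathbb{R}^d$, and a Wolf--Keller comparison of one ball against two disjoint balls — is the right one, and the first and last steps are sound (in particular the final comparison, using $\lambda_3(B)=\lambda_2(B)$ for $d\ge 2$, the scaling $\lambda_k(tB)=t^{-2}\lambda_k(B)$, and Faber--Krahn, correctly rules out disjoint unions of balls in $d=2,3$). The genuine gap is the middle step, and it is more serious than you characterize. Because the third eigenfunction changes sign, there is no rearrangement inequality for $\lambda_3$: neither Schwarz symmetrization of a connected component nor continuous Steiner symmetrization of sublevel sets of a capacitary potential yields monotonicity of $\lambda_3$, so your claim that a connected minimizer ``must itself be a ball'' by symmetrization is unjustified, and the ``standard workaround'' you invoke does not exist. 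What the Bucur--Henrot machinery actually delivers via continuous Steiner symmetrization together with Wolf--Keller is the weaker conclusion that a minimizer of $\lambda_3$, if it exists, is \emph{connected} in dimensions $2$ and $3$ — it does not identify the connected minimizer as a ball. Your outline therefore establishes only the conditional statement that one ball is optimal among disjoint unions of balls and that any true minimizer is connected, which is strictly weaker than the theorem as stated; the step you yourself flag as delicate is precisely where these symmetrization techniques cannot supply the needed input.
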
 
 
\begin{Rem}[Eigenvalues with multiplicity larger one]
\label{rm:num:multiEV}
In the situation of Theorem~\ref{thm:num:min_lam1}, $\lambda_2$ has multiplicity two, 
while in the situation of Theorem~\ref{thm:num:min_lam3}, $\lambda_3$ has multiplicity equal to the spatial dimension.
If eigenvalues have multiplicity larger one,
the corresponding gradient is no longer unique and depends on the random ordering that the eigenvalue solver provides for these identical eigenvalues.
This problem can be detected by considering the relative difference of subsequent eigenvalues during the optimization run.
If this problem is detected for an eigenvalue of multiplicity two,
we modify the objective functional to minimize the arithmetic mean of these equal eigenvalues. 
As both eigenvalues are equal, this does not change the value attained by the objective at the current local optimum.
We stress, that changing the objective functional in advance does lead to a different optimization problem and thus, we typically obtain different local minimizers. 
We also refer to \cite[Sec.~4.5]{Oudet} for more details.

In practice, we notice that this modification does not work for eigenvalues with a multiplicity larger than two. 
This is because it is rather unlikely, that the pairwise relative difference of more than two eigenvalues becomes small at the same time and thus jointly trigger the modification of the objective.
In this situation, the above modification actually changes the objective and we would thus solve a different problem. 
This situation appears, for instance, when minimizing $\lambda_3$ in three spatial dimensions, where $\lambda_2=\lambda_3=\lambda_4$ holds for the optimal shape, namely a ball. Luckily, in this situation, by solving the modified optimization problem we still detect the correct minimizer predicted by Theorem~\ref{thm:num:min_lam3} if $\gamma$
is initially chosen sufficiently large and decreased subsequently. 
\end{Rem}
 
The global optimal solutions are successfully found in two and three spatial dimensions. 
In Figure~\ref{fig:num:lam5-min} we show optimal shapes for minimizing $\lambda_i$, $i=1,2,3$ in two spatial dimensions.
We note, that in case of minimizing $\lambda_3$ there also is an attracting local minimum, containing three small balls.
Here, we need to start with a large value of $\gamma=10^3$ to guide the optimization process to the correct global optimum in combination  
with a homotopy of decreasing values for $\gamma$ towards $\gamma=1$. 
Moreover, in three dimensions we need to substitute the minimization problem for $\lambda_3$ by 
$\frac{1}{3}(\lambda_2+\lambda_3+\lambda_4)$ to deal with the multiple eigenvalue. In two dimensions this problem is handled as described in Remark~\ref{rm:num:multiEV}. 
The correct topologies are found and in Table~\ref{tab:num:lam123}, we present our numerical results in terms of the eigenvalues that we obtained. 

\begin{table}
    \centering
    \begin{tabular}{c|rrr|rrr}
         &  \multicolumn{3}{c|}{2D, $|\{\varphi >0\}| = \frac{1}{4}$} & \multicolumn{3}{c}{3D, $|\{\varphi >0\}| = \frac{1}{8}$} \\
       $k$  &  \multicolumn{1}{c}{$\lambda_k$} & \multicolumn{1}{c}{$\lambda_k^h$} & \multicolumn{1}{c|}{$\eta_k^\lambda$} 
            &   \multicolumn{1}{c}{$\lambda_k$} &  \multicolumn{1}{c}{$\lambda_k^h$} & \multicolumn{1}{c}{$\eta_k^\lambda$} \\
       \hline
       1  &  72.67 & 72.68  & $1\cdot 10^{-4}$     & 102.59 & 101.43 & $113\cdot 10^{-4}$\\
       2  & 145.34 & 143.16 & $150\cdot 10^{-4}$   & 162.84  & 162.40     & $27\cdot 10^{-4} $  \\
       3  & 184.50 & 183.53 & $54\cdot 10^{-4}$    & 209.86 & 209.21 & $31\cdot 10^{-4}$ \\
    \end{tabular}
    \caption{Analytical and numerically found eigenvalues related to minimizing $\lambda_k$, $k\in\{1,2,3\}$ in two and three spatial dimensions.
    Here $\lambda_k$ denotes the analytical value of the $k$-th eigenvalue, $\lambda_k^h$ denotes the numerical found approximation of this value and 
    $\eta_k^\lambda := {|\lambda_k-\lambda_k^h|}/{\lambda_k}$ denotes the related relative numerical error. 
     }
    \label{tab:num:lam123}
\end{table}

\paragraph{One additional example with known solution.}
As another example for which a reference solution exists, we consider the minimization of $\lambda_5$.
From \cite[Fig.~11.1]{AntunesOudet-NumericalResultsExtremalProblemsEigenvaluesLaplace} 
we expect a butterfly-like shape and the proposed eigenvalue is $\lambda_5 = 312.60$. 
In contrast to the simulation above, we use $\gamma=0.1$ to get closer to the features of the butterfly.
In Figure~\ref{fig:num:lam5-min} (right),
we show our numerically obtained shape together with the corresponding eigenfunction $w_5^{D,h}$. 
We obtain the eigenvalue $\lambda_5^h = 311.59$. The normalized amplitude of $w_5^{D,h}$ is $\|w_5^{D,h}\|_{L^\infty(\Omega)} = 3.94$,
while the amplitude of $w_5^{D,h}$ on the zero-level line of $\varphi^h$ 
is of order $\|w_5^{D,h}\|_{L^\infty(\{\varphi^h = 0\})} = 0.50$. 
We point out that the result proposed in  \cite[Fig.~11.1]{AntunesOudet-NumericalResultsExtremalProblemsEigenvaluesLaplace}
is more pronounced in the middle part and also on the left and right there are additional small deflections.

\begin{figure}
    \centering
    \includegraphics[width=0.22\textwidth]{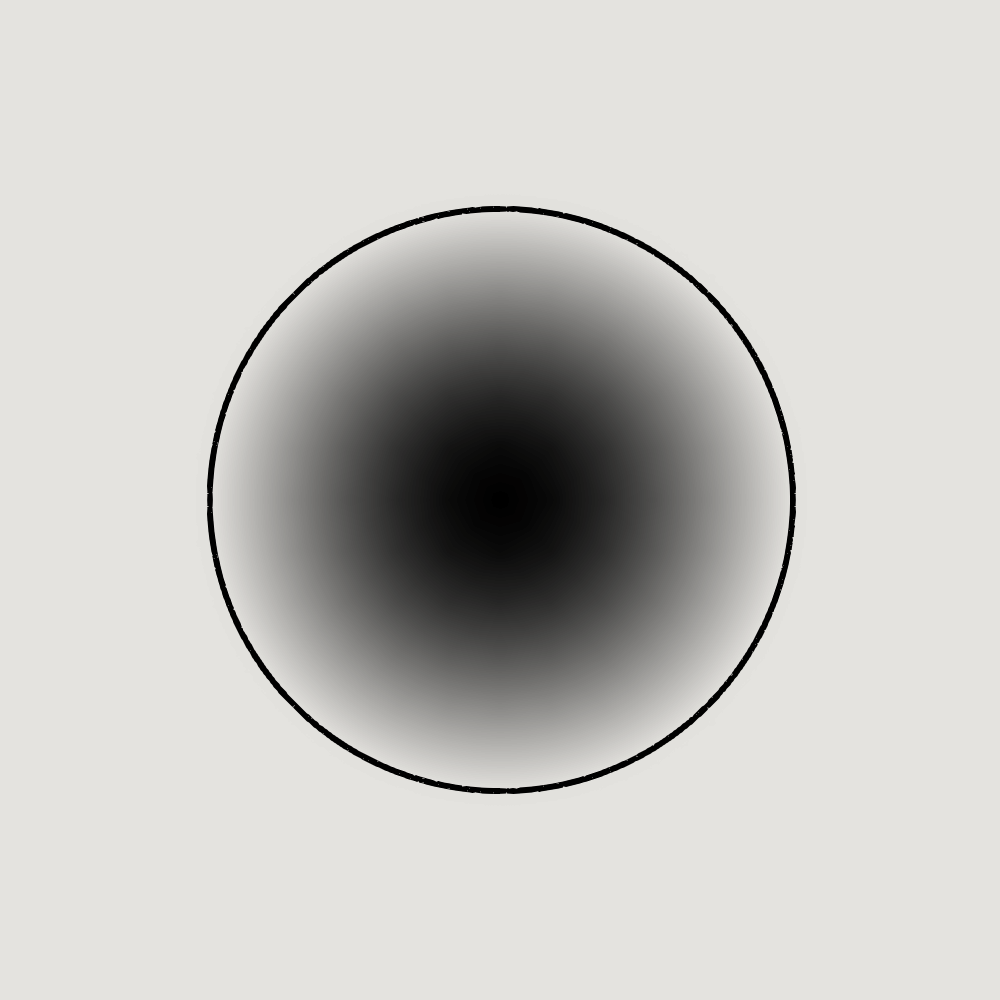}
    \includegraphics[width=0.22\textwidth]{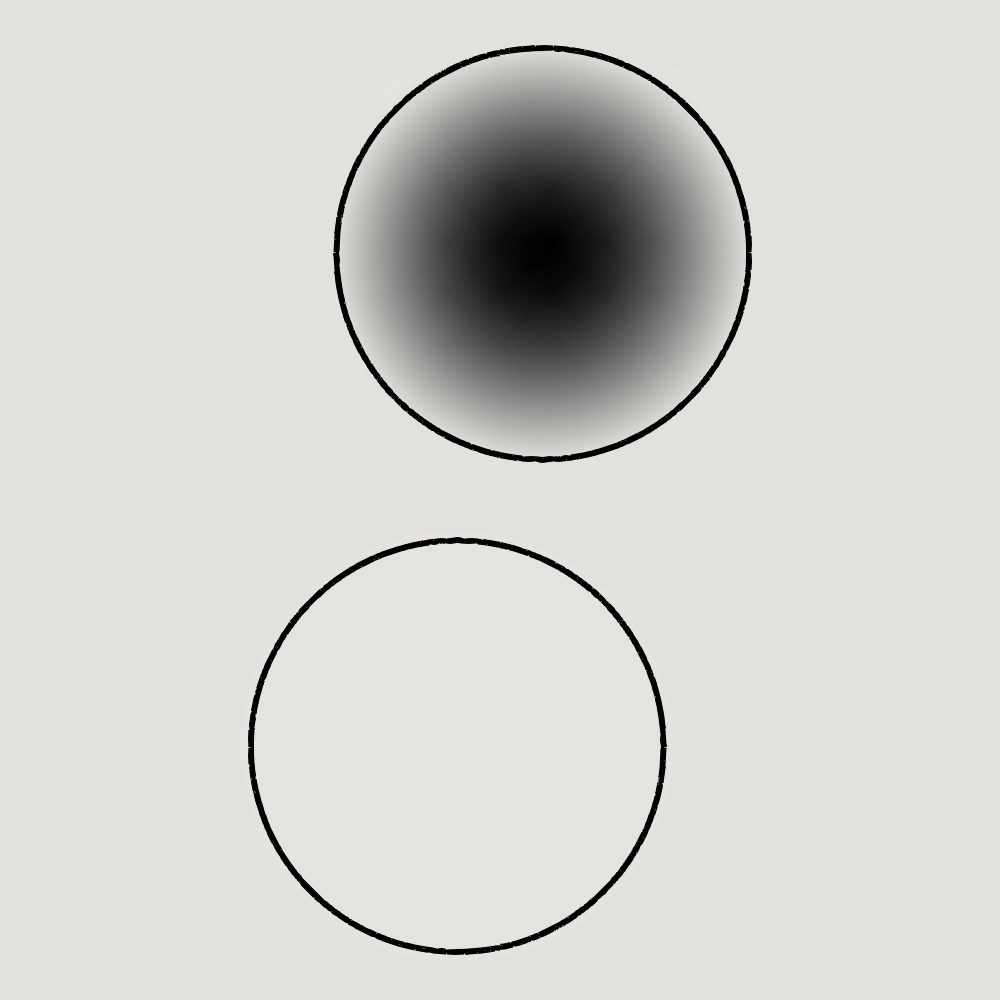}
    \includegraphics[width=0.22\textwidth]{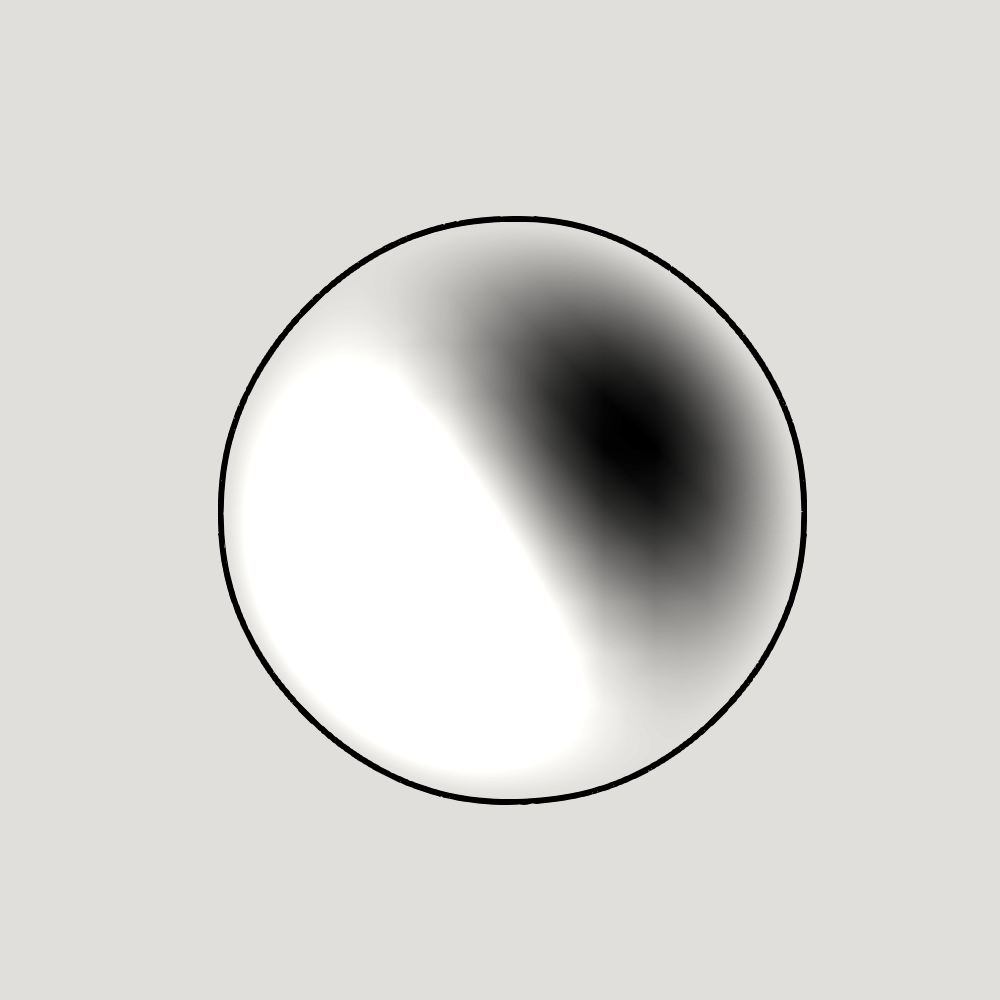}
    \includegraphics[width=0.22\textwidth]{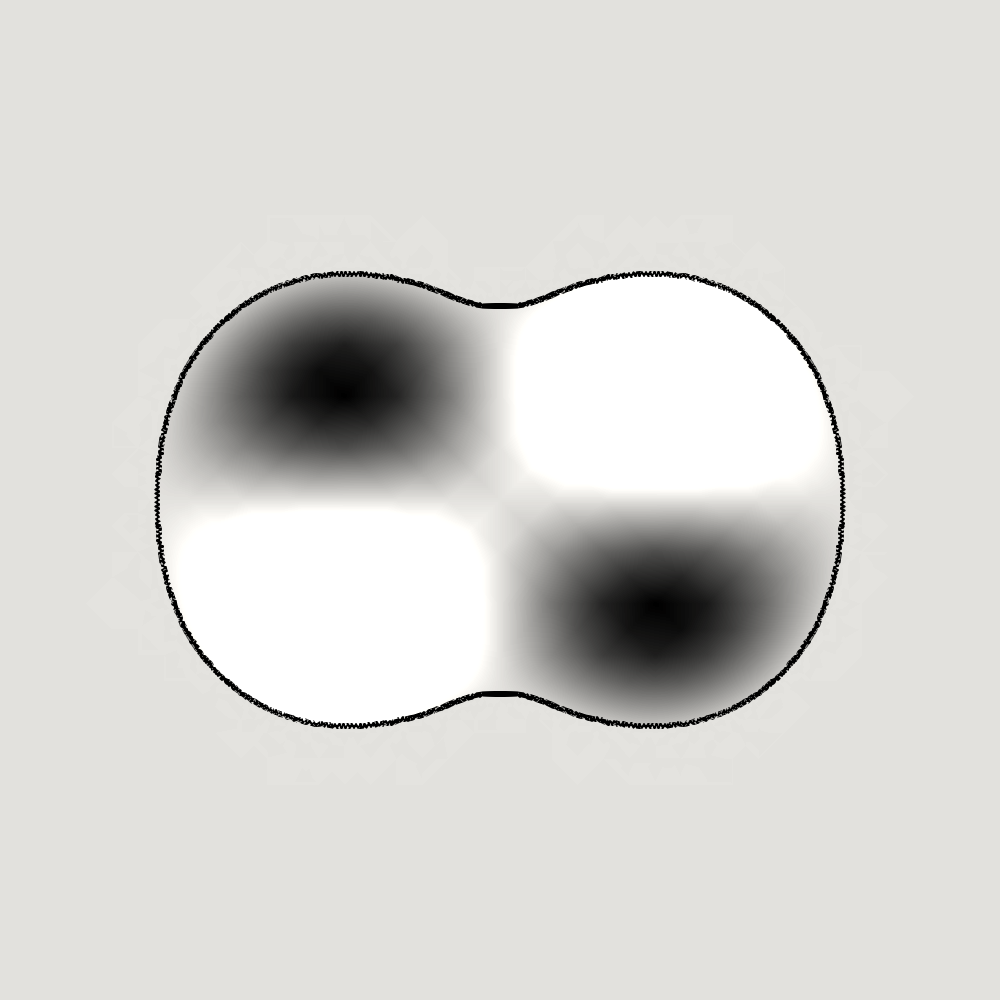}
    \caption{The optimal shape for minimization of $\lambda_i$, $i=1,2,3,5$ (left to right) is presented by the
    zero level line of $\varphi^h$ black in each case, 
    while the value of the corresponding eigenfunction $w_i^{D,h}$ is shown in gray scale.
    The gray outer domain corresponds to $w_i^{D,h} \approx 0$.
    Note that in the case of minimizing $\lambda_2$, there is a second eigenfunction (to same eigenvalue) that is supported on the bottom circle, while in the case of minimizing $\lambda_3$, there is a second eigenfunction (to same eigenvalue) that is rotated by $90\degree$.
    }
    \label{fig:num:lam5-min}
\end{figure}

 \paragraph{Fixing $c$ for Neumann boundary data.}
  Here we proceed as in the case of fixing $b$. 
 We solve the corresponding maximization problem for $\mu_1$ for $c \in \{0.2,0.15,0.1,0.05\}$ and $\eps \in \{0.04,0.02,0.01,0.005\}$ 
 on $\Omega = (0,1)^2$.
In this situation, the optimal shape is a disc of radius $r=\sqrt{{1}/{(4\pi)}}$ with first eigenvalue $\mu_1 = 42.6002$. This can be obtained from \cite[Prop.~1.2.14]{Henrot}.  
 
 For the sake of brevity, we omit the presentation of relative errors as in Figure~\ref{fig:num:lam1_over_eps} and just state
 that we observe that $c=0.1$ is a good choice independent of $\eps$. In the following, we fix $c=0.1$.

\subsection{Numerical examples without known solution}
\label{sec:num:noAnalyticalSolution}
In the following, we investigate some numerical examples where the analytical solution is unknown
in order to show the strength of our approach in finding unknown shapes with a priori unknown topologies. We also remark that the boundary of $\Omega$ acts as an obstacle which can be seen in several computations below. We refer to \cite{Harrell,Henrot-Zucco} for more information on obstacle type problems for eigenvalues.

\paragraph{Minimization of $\mu_1$.}
As discussed above, the \textit{maximization} of $\mu_1$ leads to a disc. 
Now we ask for the optimal shape and topology when \textit{minimizing} $\mu_1$ on $\Omega = (0,1)^2$.
In Figure~\ref{fig:num:min_mu1-phi}, we present the found optimal topology and the
influence of $\gamma$ on the result by showing the iterates for a homotopy reducing $\gamma$ from $10^3$ to $10^{-3}$.
In Figure~\ref{fig:num:min_mu1-u}, we present the first three corresponding non-trivial eigenfunctions on the optimal topology for $\gamma=10^{-3}$. We observe that the boundary of $\Omega$ might act as an obstacle for the shape optimization problem.

\begin{figure}
    \centering
    \includegraphics[width=0.25\textwidth]{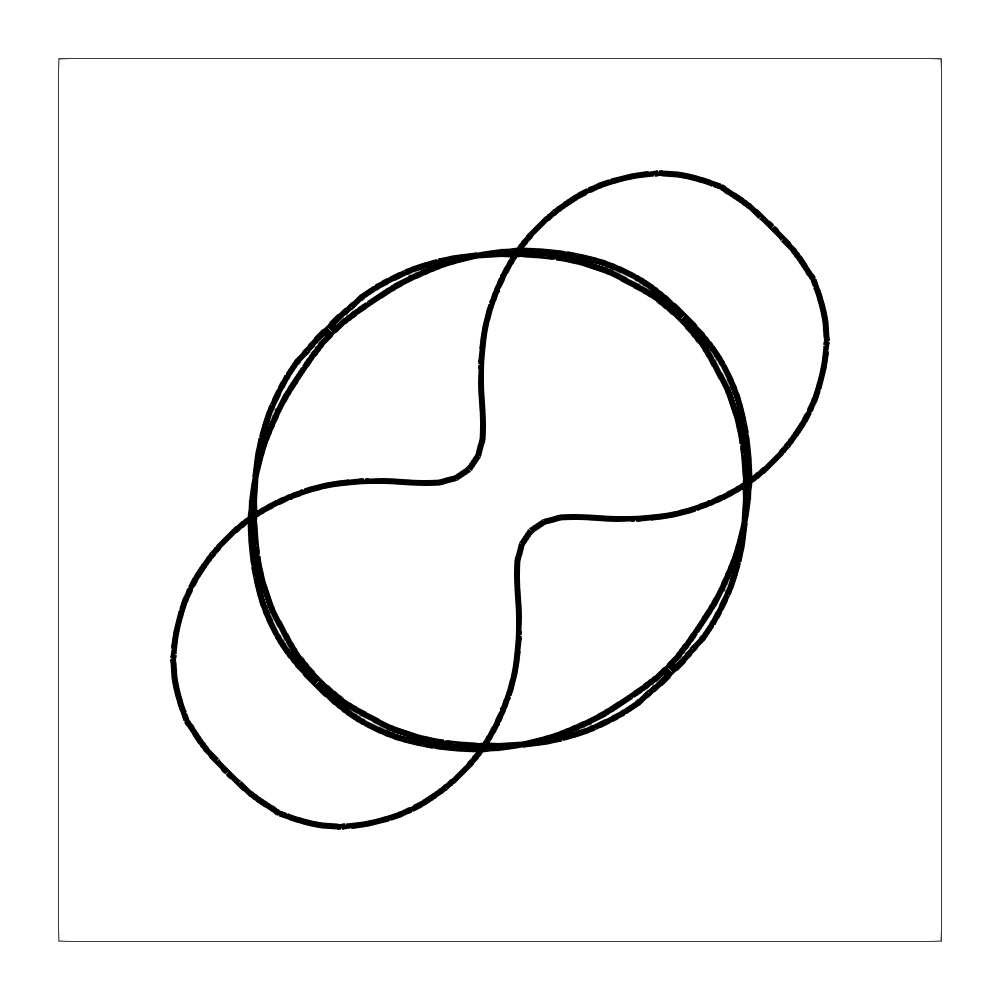}
        \includegraphics[width=0.25\textwidth]{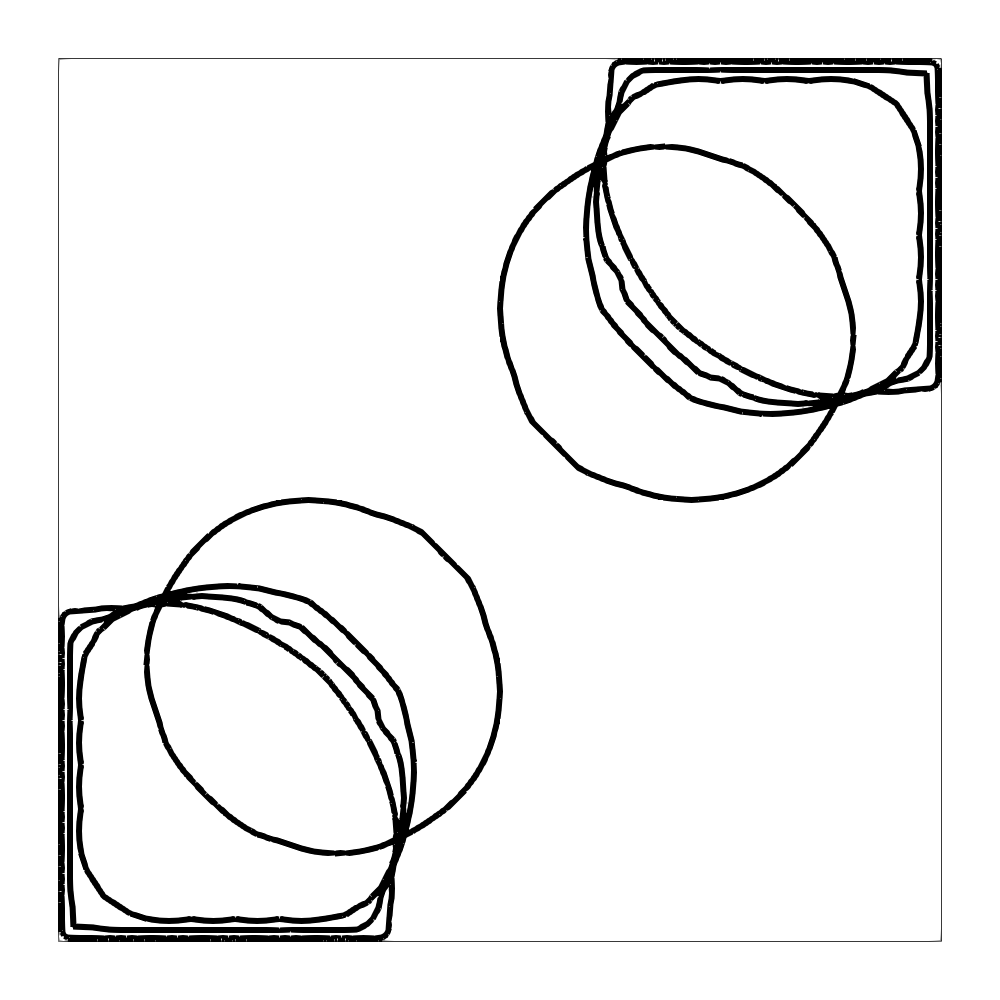}\\
    
    \caption{The zero level lines of $\varphi^h$ for minimizing $\mu_1$ with $\gamma\in \{10^3,10^2,10^1\}$ (left) and with $\gamma \in \{10^0,10^{-1},10^{-2},10^{-3}\}$ (right). We observe that a large value of $\gamma$ leads to a circle and that decreasing $\gamma$ allows the optimizer to find topologies with longer boundaries.}
    \label{fig:num:min_mu1-phi}
\end{figure}

\begin{figure}
    \centering
        \includegraphics[width=0.25\textwidth]{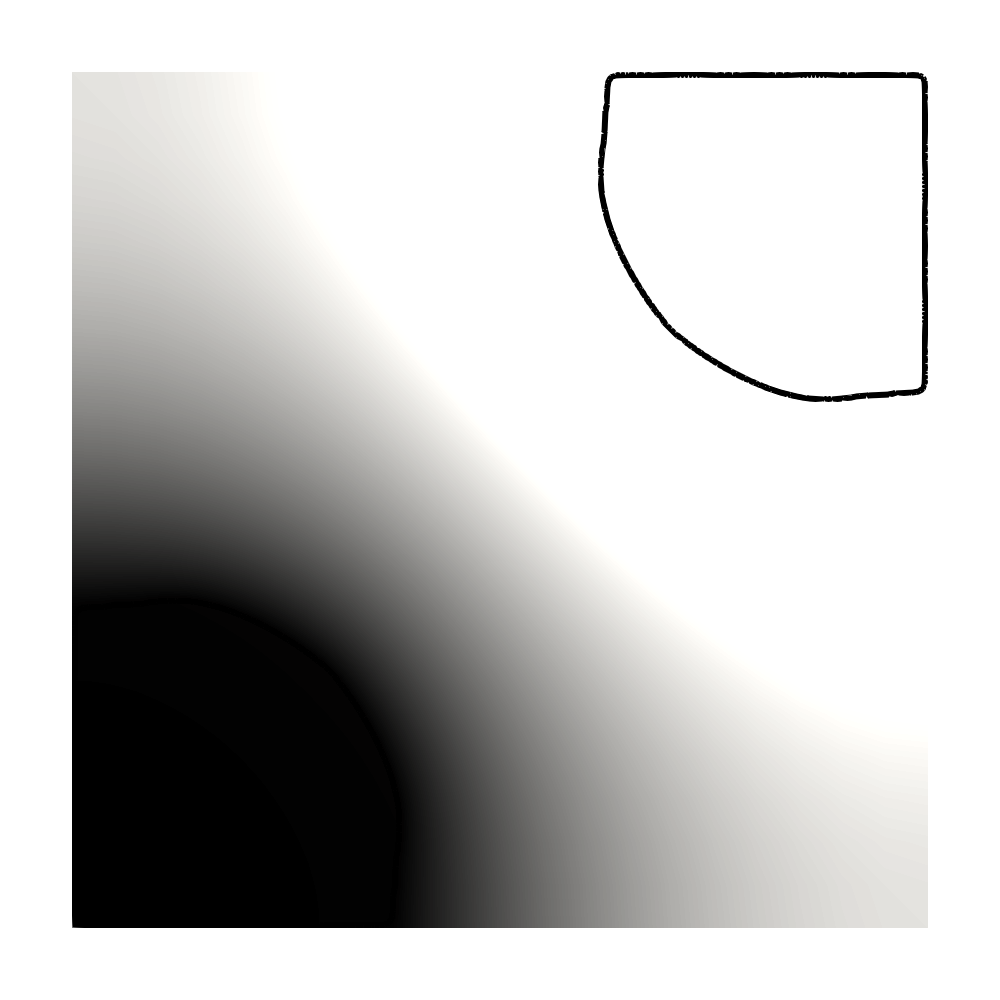}
        \includegraphics[width=0.25\textwidth]{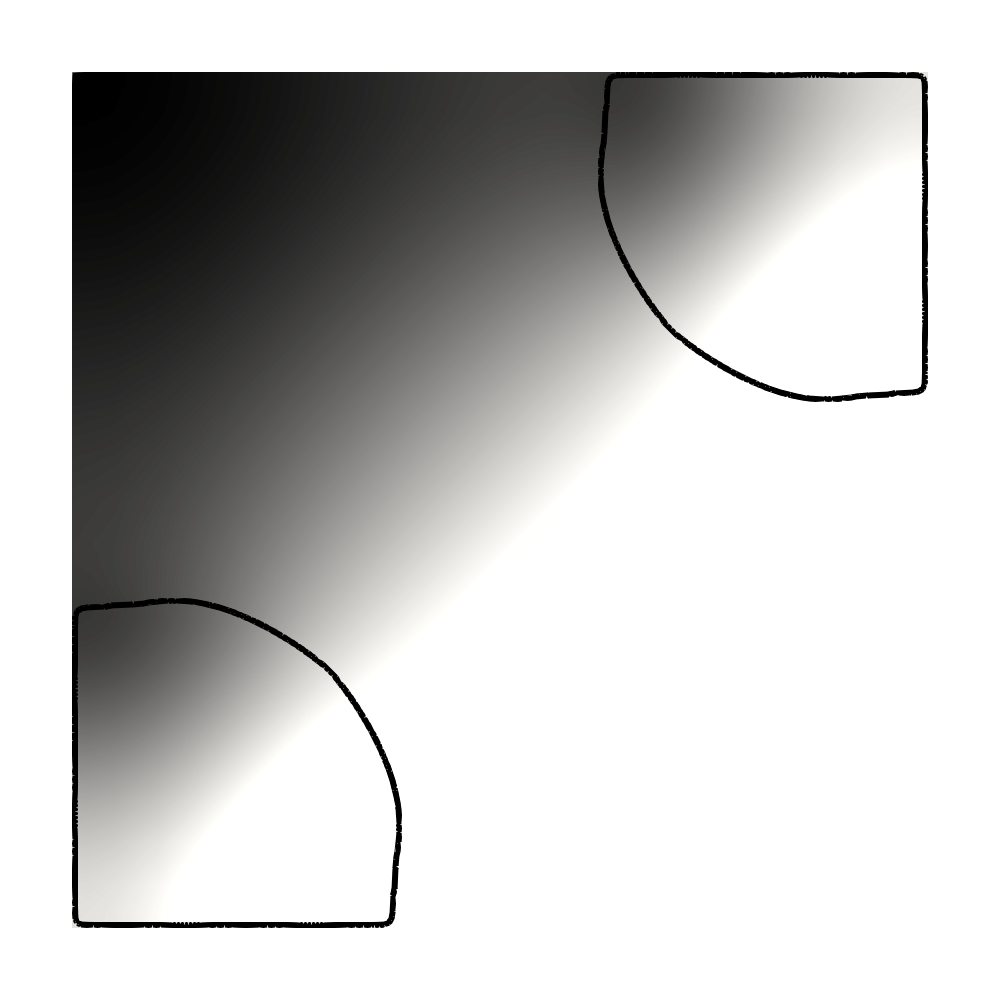}
        \includegraphics[width=0.25\textwidth]{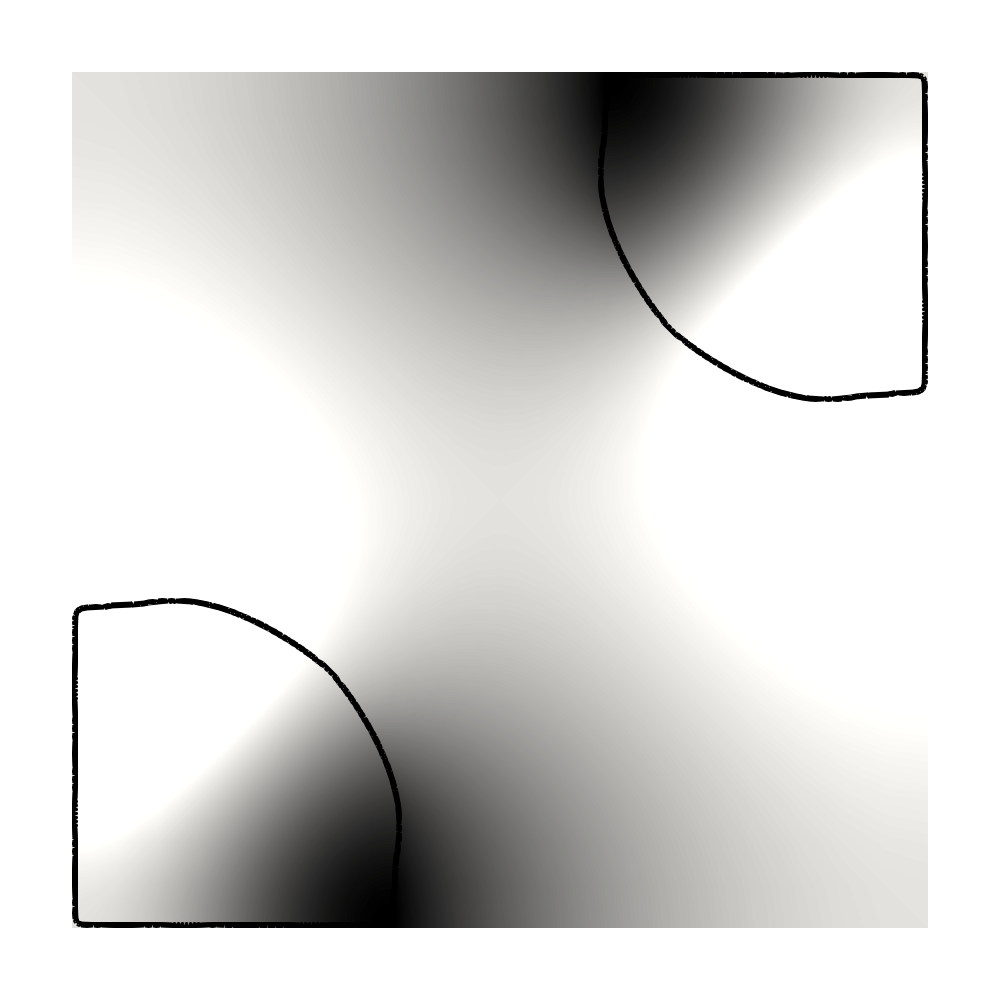}
    \caption{When minimizing $\mu_1$ we obtain the shape that is indicated by black lines, which is the zero-level line of $\varphi^h$.
    From left to right, we show the first three (non-trivial) eigenfunctions $w_1^{N,h}$, $w_2^{N,h}$, and $w_3^{N,h}$ on this shape.
    Here, gray corresponds to $w_i^{N,h} \approx 0$.
    By our approach, the eigenfunctions are defined on the complete domain $\Omega$, and as we are considering the Neumann case, they do not degenerate on the complement of the shape.
    The corresponding eigenvalues are $\mu_1^h = 0.40$, $\mu_2^h = 67.55$, and $\mu_3^h = 68.23$.
    In this example we chose $\gamma=10^{-3}$.
    }
    \label{fig:num:min_mu1-u}
\end{figure}

\paragraph{Mixing minimization and maximization.}
In this example, we consider the minimization of a weighted sum of eigenvalues. Especially, 
we consider weights with different signs which leads to simultaneous minimization and maximization of certain eigenvalues.
In Figure~\ref{fig:num:mixed-lambda}, we present numerical results for the objective
$J^\lambda = \frac{6\lambda_1 -  \lambda_3}{7} + E^{0.02}(\varphi)$ and in 
Figure~\ref{fig:num:mixed-mu}, we present numerical results for the objective
$J^\mu = \frac{12\mu_1 -\mu_3-\mu_4}{14} + 10 E^{0.02}(\varphi)$.

\begin{figure}
    \centering
    \includegraphics[width=0.25\textwidth]{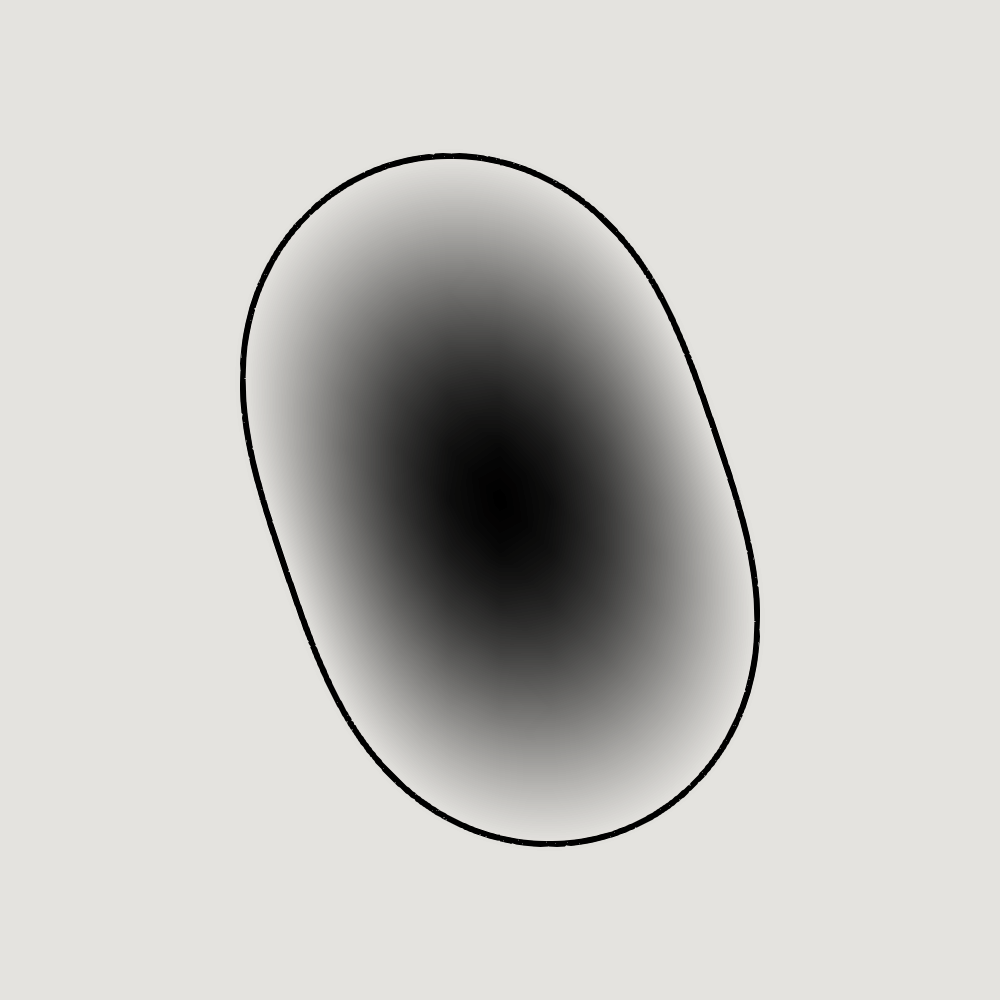} 
    \includegraphics[width=0.25\textwidth]{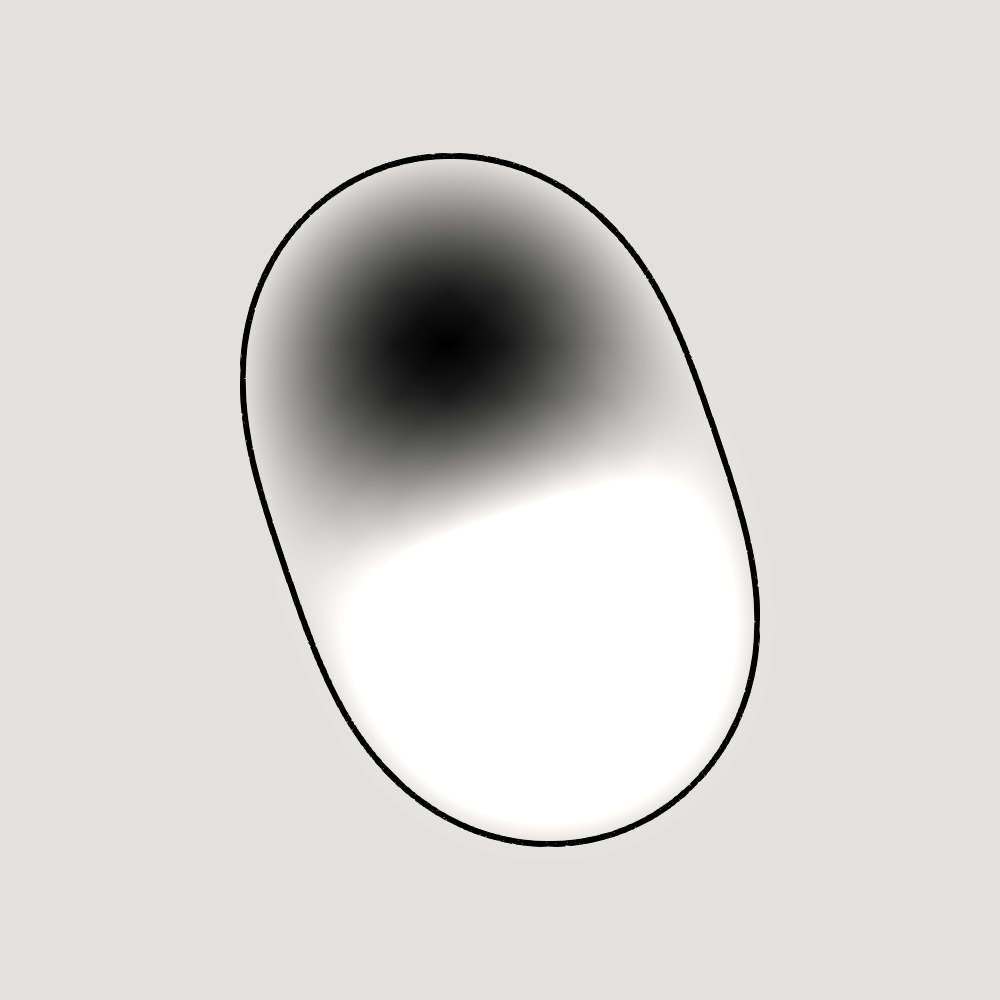} 
    \includegraphics[width=0.25\textwidth]{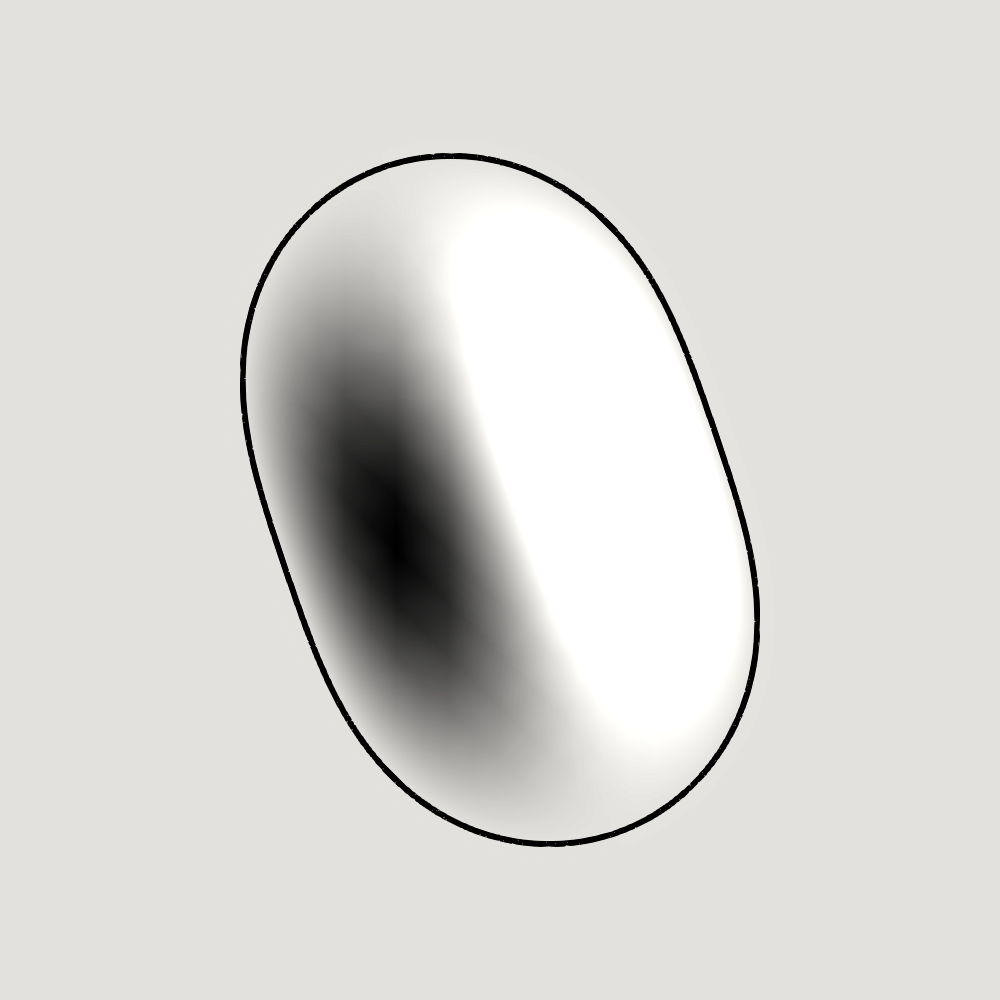} 
    \caption{Optimal shapes for an example of mixed minimization and maximization, 
    namely $J^\lambda= \frac{6\lambda_1 -  \lambda_3}{7} + E^{0.02}(\varphi)$.
    The optimal shape is indicated by the zero-level line of $\varphi^h$ in black, 
    and we show $w^{D,h}_1$, $w^{D,h}_2$, and $w^{D,h}_3$ in gray scale (left to right). Gray indicates the zero level of the eigenfunctions.
    The corresponding eigenvalues are $\lambda_1^h = 81.32$, $\lambda_2^h = 161.01$, and $\lambda_3^h = 255.42$. 
    }
    \label{fig:num:mixed-lambda}
\end{figure}

\begin{figure}
    \centering
    \includegraphics[width=0.25\textwidth]{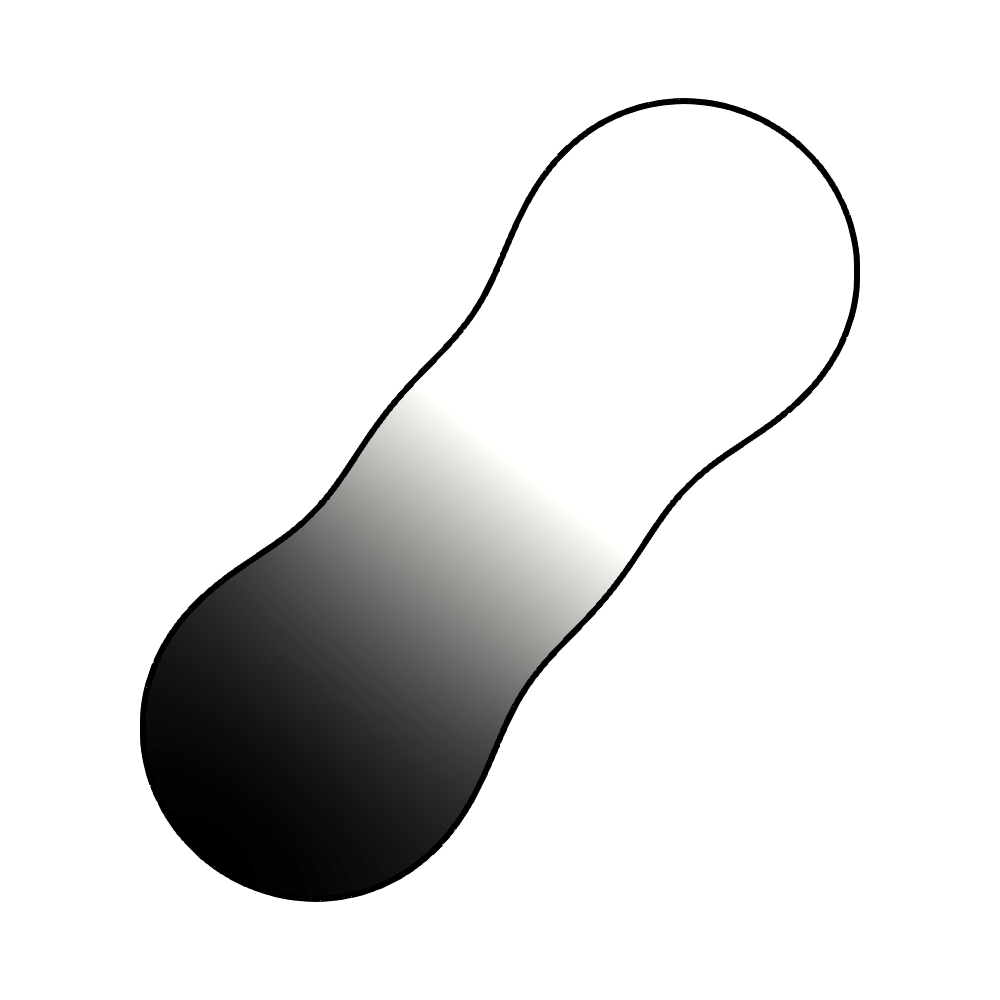} 
    \includegraphics[width=0.25\textwidth]{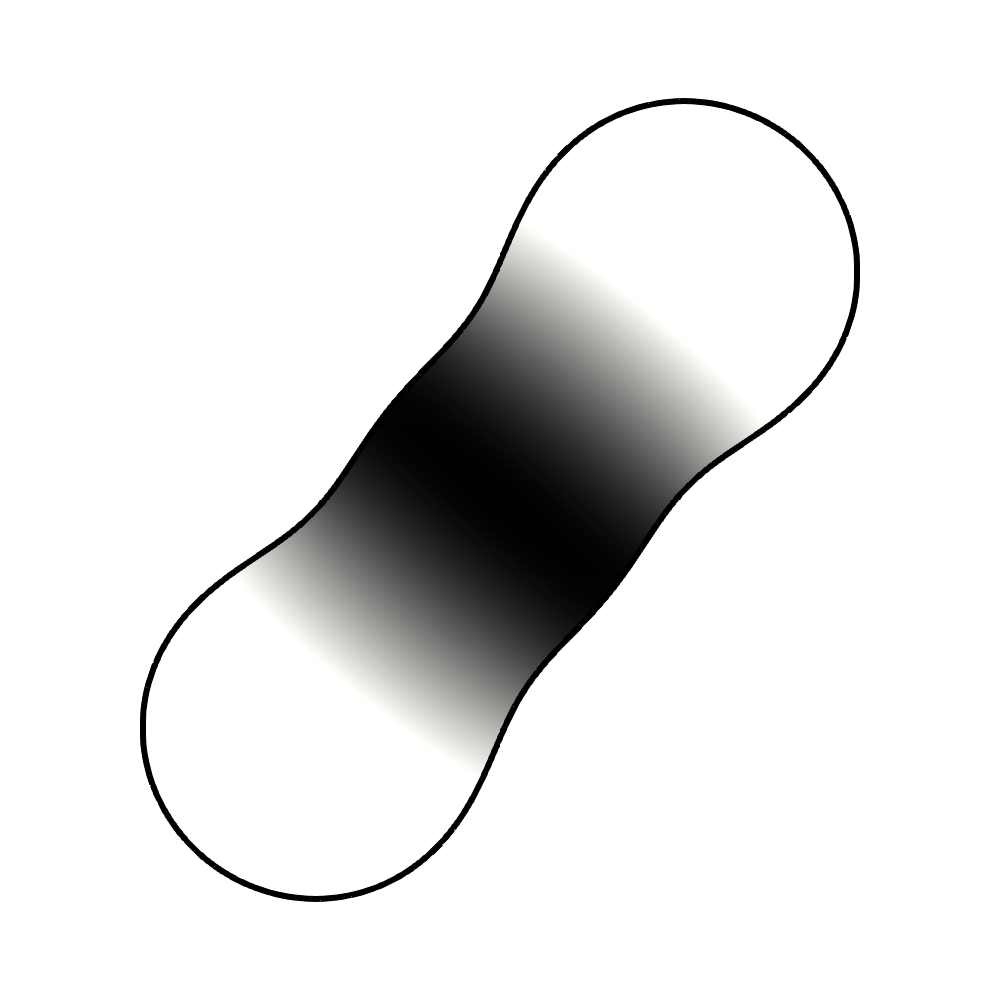} 
    \includegraphics[width=0.25\textwidth]{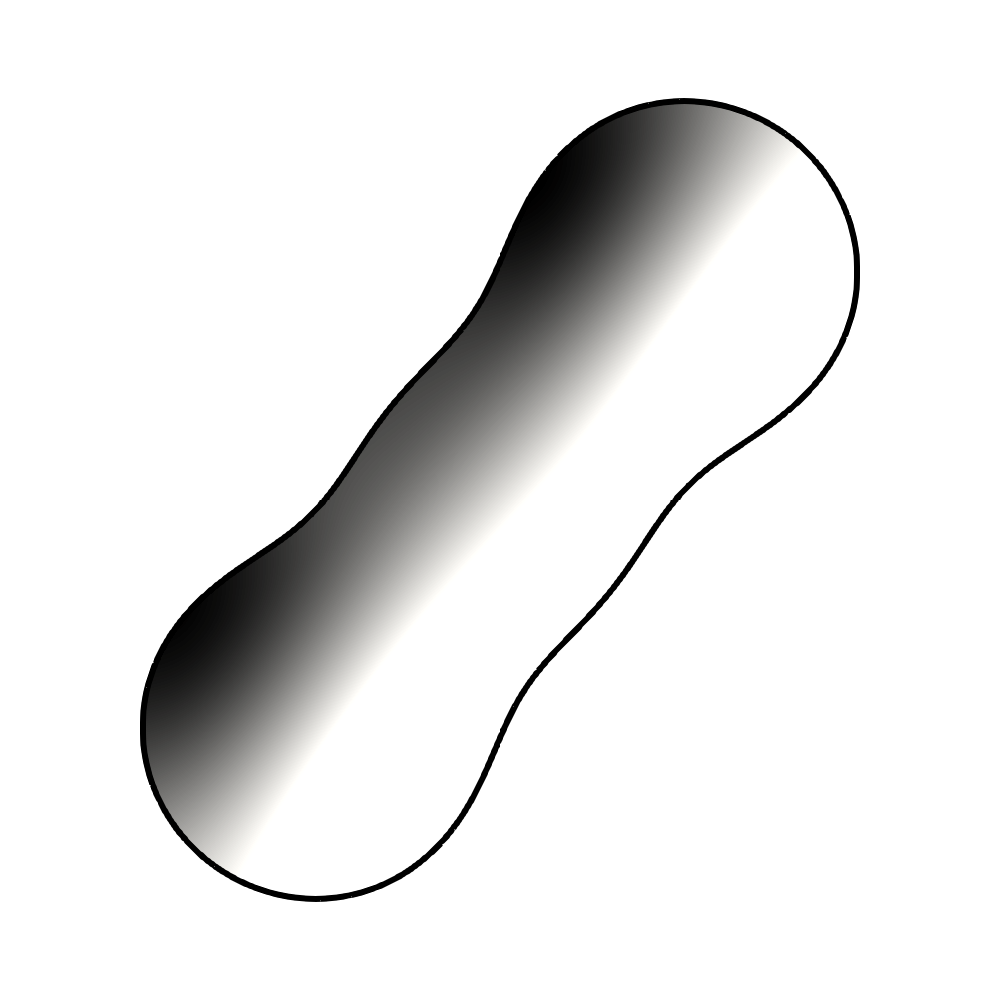} 
    \caption{Optimal shapes for an example of mixed minimization and maximization, 
    namely $J^\mu= \frac{12\mu_1 -\mu_3-\mu_4}{14} + 10 E^{0.02}(\varphi)$.
      The optimal shape is indicated by the zero-level line of $\varphi^h$ in black, 
    and we show $w^{N,h}_1$, $w^{N,h}_2$, and $w^{N,h}_3$ in gray scale (left to right). Gray indicates the zero level of the eigenfunctions. 
    The corresponding eigenvalues are $\mu_1^h = 12.94$, $\mu_2^h = 56.94$, and $\mu_3^h = 115.27$.
    Here, the eigenfunctions $w_i^{N,h}$, $i=1,2,3$, are plotted only on the actual shape.
    }
    \label{fig:num:mixed-mu}
\end{figure}

\begin{figure}
    \centering
    \includegraphics[width=0.25\textwidth]{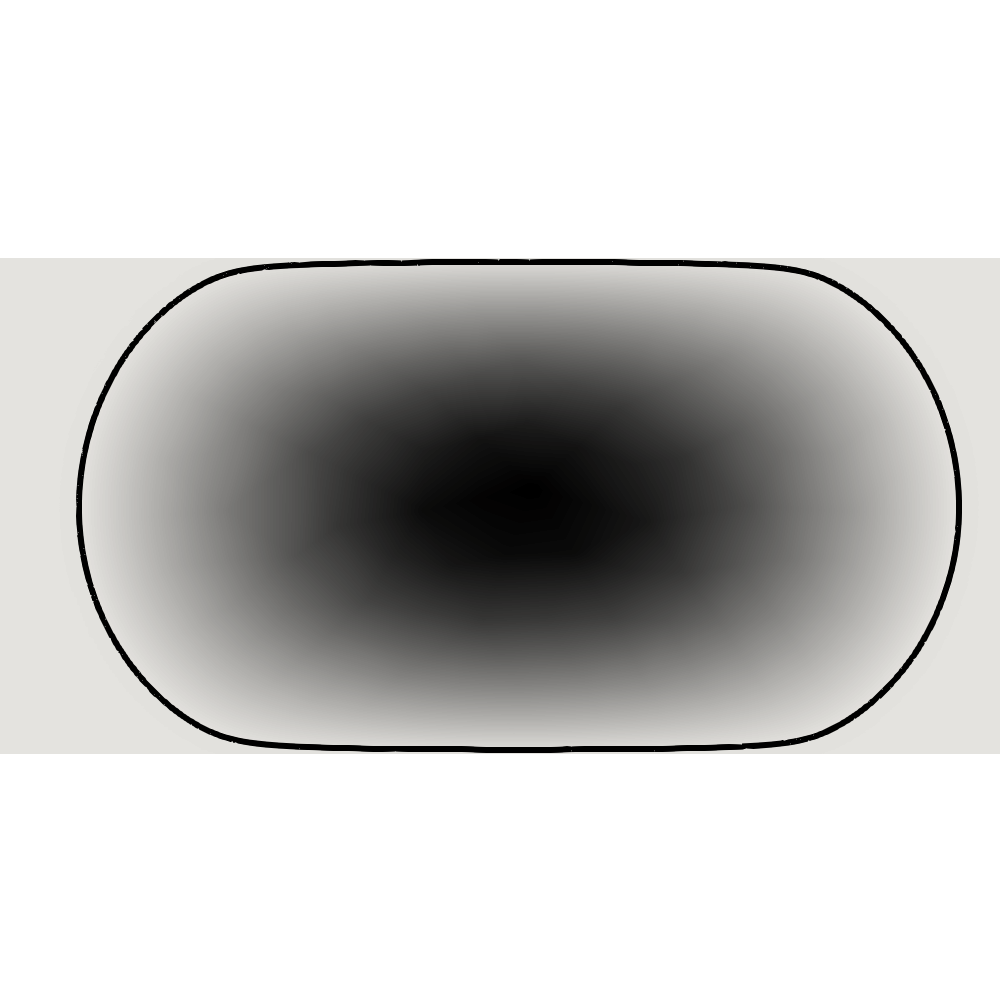}
    \hspace{0.5cm}
    \includegraphics[width=0.25\textwidth]{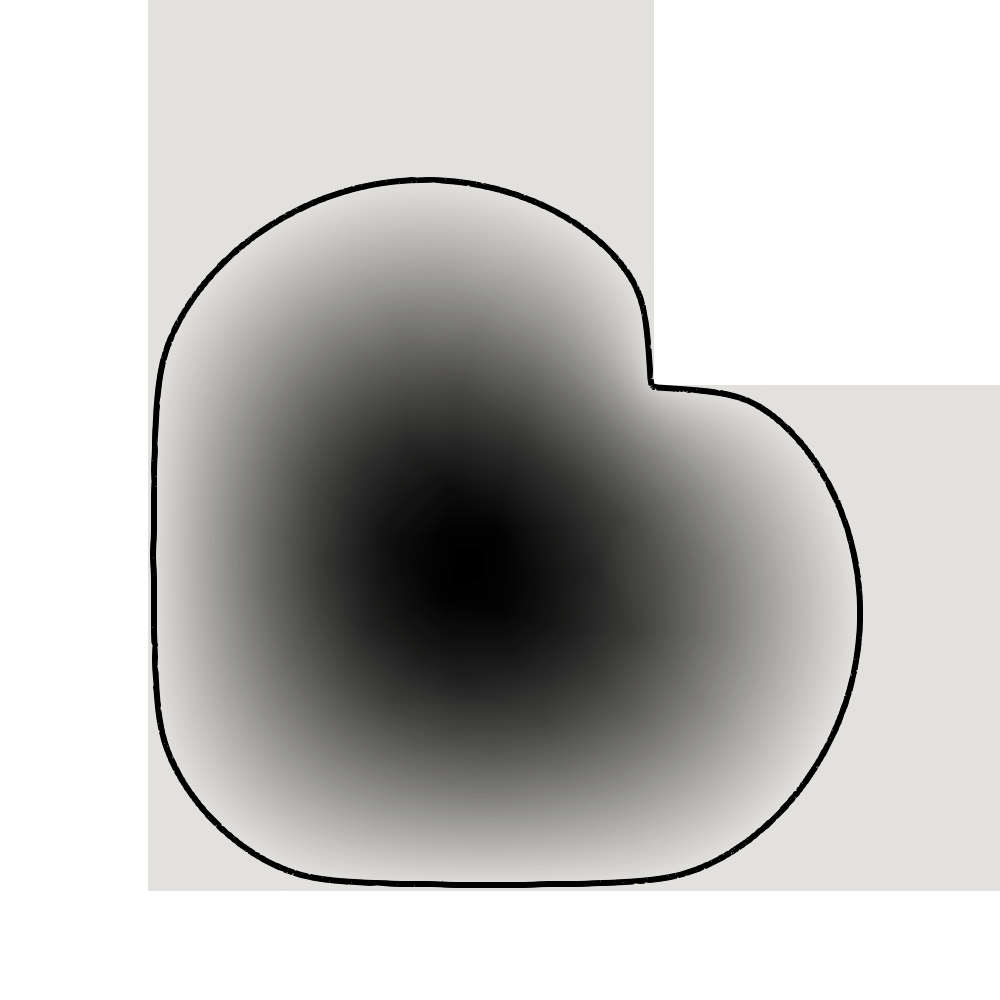}
    \caption{The optimal shapes for minimizing $\lambda_1$ on the rectangular domain $\Omega_1 = (0.0,2.5) \times (0.0,0.4)$ (left)
    and the  L-shaped domain $\Omega_2 = (0,1.45)^2 \backslash (0.4,1.45)^2$ (right).
    The shapes are indicated by the zero level line of $\varphi_\eps^h$ in black and we show the corresponding first 
    eigenfunction $w_1^{D,h}$ in grayscale. Gray indicates the zero level of $w_1^{D,h}$. 
    We show only the relevant part of the computational domain.
    The corresponding eigenvalues are $\lambda_1^h(\Omega_1) = 85.54$ and $\lambda_1^h(\Omega_2) = 77.13$.
    A disc of the same size would lead to $\lambda_1 = 72.68$ as stated in Table~\ref{tab:num:lam123}.}
    \label{fig:num:tight}
\end{figure}

\begin{figure}
    \centering
    \fbox{
    \includegraphics[width=0.3\textwidth]{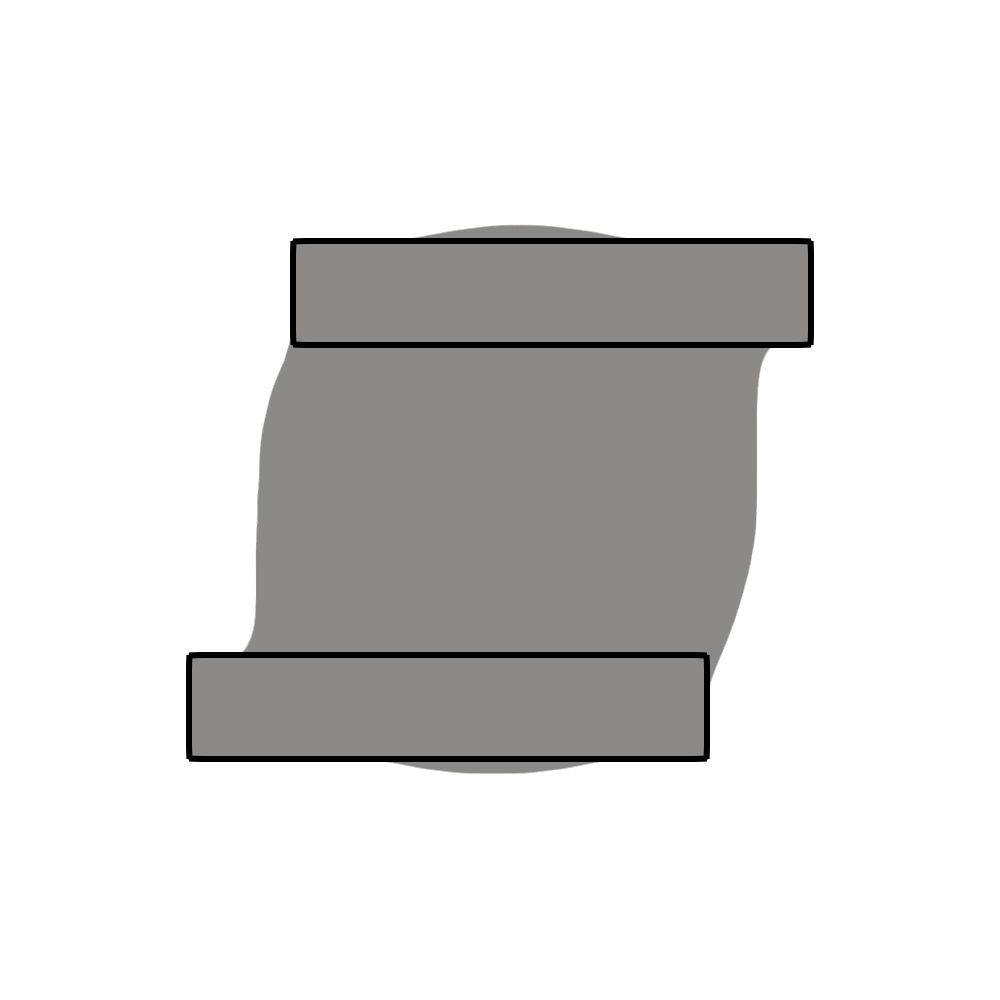}
    } 
    \hspace{10pt}
    \fbox{\includegraphics[width=0.3\textwidth]{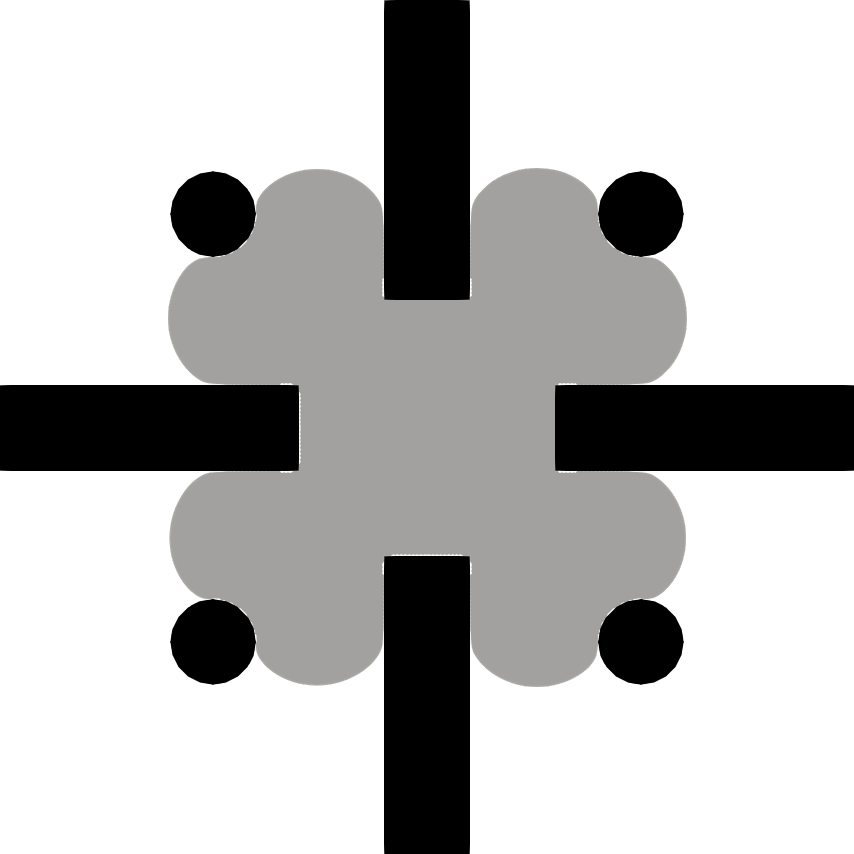}}
    \caption{Numerically obtained optimal shapes in gray for minimizing $\lambda_1$. 
    On the left we fix the domains inside the gray boxes as part of the shape, i.e., $\varphi^h=1$, while on the right we fix
    the black domains to be void, i.e., $\varphi^h = -1$.}
    \label{fig:num:fixSomeStructureVoid}
\end{figure}

\paragraph{Influences from $\Omega$.}
As stated in Theorem~\ref{thm:num:min_lam1}, the minimizer of the first eigenvalue is one single disc of diameter $d = 2\sqrt{{1}/{(4\pi)}} \approx 0.56$. 
Here, we show the optimal shape in the case that this ball does not fit into the computational domain. This leads to an obstacle like problem where the boundary of $\Omega$ acts as an obstacle, see also \cite[Sec.~3.4]{Henrot}.

We consider two cases, namely $\Omega_1 = (0.0,2.5) \times (0.0,0.4)$ which is a rectangular domain of height $0.4 \leq d$, and $\Omega_2 = (0,1.45)^2 \backslash (0.4,1.45)^2$
which is an L-shaped domain. 
In both cases, a disc of diameter $d \approx 0.56$ does not fit into the domain. 
In this example we fix $\varphi^h = -1$ on $\partial\Omega$ to prevent the shape from touching the boundary.
In Figure~\ref{fig:num:tight} we present numerical results for the minimization of $\lambda_1$ in this situation.  

\paragraph{Prescribing parts of the optimal topology.}
Finally, we show another aspect of the flexibility of the proposed approach. We present two examples, in which we a-priori fix certain parts of the
design domain.
In Figure~\ref{fig:num:fixSomeStructureVoid}, 
we present numerical results obtained by either fixing some part of the domain as shape (left) or as void (right).
In both cases, we minimize $\lambda_1$ and we fix $\gamma=0.01$.

\section*{Acknowledgment}
Harald Garcke, Paul Hüttl and Patrik Knopf gratefully acknowledge the support by the Graduiertenkolleg 2339 IntComSin of the Deutsche Forschungsgemeinschaft (DFG, German Research Foundation) – Project-ID 321821685. 
Tim Laux has received funding from Deutsche Forschungsgemeinschaft (DFG, German Research Foundation) under Germany's Excellence Strategy -- EXC-2047/1 -- 390685813. The support is gratefully acknowledged.


\footnotesize\setlength{\parskip}{0cm}

\bibliographystyle{siam}
\bibliography{Laplace_REVISION}

\begin{thebibliography}{10}

\bibitem{AllaireBook}
{\sc G.~Allaire}, {\em Shape Optimization by the Homogenization Method},
  Applied Mathematical Sciences, Springer-Verlag, New York, 2002.

\bibitem{Allaire2}
{\sc G.~Allaire and F.~Jouve}, {\em A level-set method for vibration and
  multiple loads structural optimization}, Comput. Methods Appl. Mech. Engrg.,
  194 (2005), pp.~3269--3290.

\bibitem{Allaire1}
{\sc G.~Allaire, F.~Jouve, and A.-M. Toader}, {\em Structural optimization
  using sensitivity analysis and a level-set method}, J. Comput. Phys., 194
  (2004), pp.~363--393.

\bibitem{fenics1}
{\sc M.~Aln{\ae}s, J.~Blechta, J.~Hake, A.~Johansson, B.~Kehlet, A.~Logg,
  C.~Richardson, J.~Ring, M.~Rognes, and G.~Wells}, {\em The {FEniCS} project
  version 1.5}, Archive of Numerical Software, 3 (2015).

\bibitem{Alt}
{\sc H.~W. Alt}, {\em Linear Functional Analysis}, Universitext,
  Springer-Verlag London, London, 2016.

\bibitem{Ambrosio}
{\sc L.~Ambrosio and G.~Buttazzo}, {\em An optimal design problem with
  perimeter penalization}, Calc. Var. Partial Differential Equations, 1 (1993),
  pp.~55--69.

\bibitem{AmbrosioFuscoPallara}
{\sc L.~Ambrosio, N.~Fusco, and D.~Pallara}, {\em Functions of bounded
  variation and free discontinuity problems}, Oxford Mathematical Monographs,
  The Clarendon Press, Oxford University Press, New York, 2000.

\bibitem{Antunes}
{\sc P.~R.~S. Antunes and P.~Freitas}, {\em Numerical optimization of low
  eigenvalues of the {D}irichlet and {N}eumann {L}aplacians}, J. Optim. Theory
  Appl., 154 (2012), pp.~235--257.

\bibitem{AntunesOudet-NumericalResultsExtremalProblemsEigenvaluesLaplace}
{\sc P.~R.~S. Antunes and E.~Oudet}, {\em Numerical results for extremal
  problem for eigenvalues of the {L}aplacian}, in Shape Optimization and
  Spectral Theory, De Gruyter Open, Warsaw, 2017, pp.~398--411.

\bibitem{AttouchButtazzo}
{\sc H.~Attouch, G.~Buttazzo, and G.~Michaille}, {\em Variational analysis in
  {S}obolev and {BV} spaces}, vol.~17 of MOS-SIAM Series on Optimization,
  Society for Industrial and Applied Mathematics (SIAM), Philadelphia, PA;
  Mathematical Optimization Society, Philadelphia, PA, second~ed., 2014.
\newblock Applications to PDEs and optimization.

\bibitem{petsc-user-ref}
{\sc S.~Balay, S.~Abhyankar, M.~Adams, J.~Brown, P.~Brune, K.~Buschelman,
  L.~Dalcin, V.~Eijkhout, W.~Gropp, D.~Kaushik, M.~Knepley, D.~May, L.~C.
  McInnes, R.~T. Mills, T.~Munson, K.~Rupp, P.~Sanan, B.~Smith, S.~Zampini,
  H.~Zhang, and H.~Zhang}, {\em {PETS}c users manual}, Tech. Rep. ANL-95/11 -
  Revision 3.9, Argonne National Laboratory, 2018.

\bibitem{petsc-efficient}
{\sc S.~Balay, W.~D. Gropp, L.~C. McInnes, and B.~F. Smith}, {\em {Efficient
  management of parallelism in object-oriented numerical software libraries}},
  in Modern Software Tools for Scientific Computing, E.~Arge, A.~M. Bruaset,
  and H.~P. Langtangen, eds., Birkh{\"{a}}user Press, 1997, pp.~163--202.

\bibitem{Bendsoe}
{\sc M.~P. Bendsoe and O.~Sigmund}, {\em Topology Optimization: Theory, Methods
  and Applications}, Springer, 2004.

\bibitem{Blank}
{\sc L.~Blank, H.~Garcke, M.~H. Farshbaf-Shaker, and V.~Styles}, {\em Relating
  phase field and sharp interface approaches to structural topology
  optimization}, ESAIM Control Optim. Calc. Var., 20 (2014), pp.~1025--1058.

\bibitem{BlankRupprecht}
{\sc L.~Blank and C.~Rupprecht}, {\em An extension of the projected gradient
  method to a {B}anach space setting with application in structural topology
  optimization}, SIAM J. Control Optim., 55 (2017), pp.~1481--1499.

\bibitem{BloweyElliott}
{\sc J.~F. Blowey and C.~M. Elliott}, {\em The {C}ahn-{H}illiard gradient
  theory for phase separation with nonsmooth free energy. {I}. {M}athematical
  analysis}, European J. Appl. Math., 2 (1991), pp.~233--280.

\bibitem{BogoselOudet}
{\sc B.~Bogosel and E.~Oudet}, {\em Qualitative and numerical analysis of a
  spectral problem with perimeter constraint}, SIAM J. Control Optim., 54
  (2016), pp.~317--340.

\bibitem{BogoselVelichkov}
{\sc B.~Bogosel and B.~Velichkov}, {\em A multiphase shape optimization problem
  for eigenvalues: qualitative study and numerical results}, SIAM J. Numer.
  Anal., 54 (2016), pp.~210--241.

\bibitem{Bourdin}
{\sc B.~Bourdin, D.~Bucur, and E.~Oudet}, {\em Optimal partitions for
  eigenvalues}, SIAM J. Sci. Comput., 31 (2009/10), pp.~4100--4114.

\bibitem{Bucur}
{\sc D.~Bucur and G.~Buttazzo}, {\em Variational methods in shape optimization
  problems}, vol.~65 of Progress in Nonlinear Differential Equations and their
  Applications, Birkh\"{a}user Boston, Inc., Boston, MA, 2005.

\bibitem{BucurHenrot}
{\sc D.~Bucur, G.~Buttazzo, and A.~Henrot}, {\em Minimization of
  {$\lambda_2(\Omega)$} with a perimeter constraint}, Indiana Univ. Math. J.,
  58 (2009), pp.~2709--2728.

\bibitem{Burger}
{\sc M.~Burger, B.~Hackl, and W.~Ring}, {\em Incorporating topological
  derivatives into level set methods}, J. Comput. Phys., 194 (2004),
  pp.~344--362.

\bibitem{ButtazzoMaso91}
{\sc G.~Buttazzo and G.~Dal~Maso}, {\em Shape optimization for {D}irichlet
  problems: relaxed formulation and optimality conditions}, Appl. Math. Optim.,
  23 (1991), pp.~17--49.

\bibitem{Buttazzo}
\leavevmode\vrule height 2pt depth -1.6pt width 23pt, {\em An existence result
  for a class of shape optimization problems}, Arch. Rational Mech. Anal., 122
  (1993), pp.~183--195.

\bibitem{DalMaso}
{\sc G.~Dal~Maso}, {\em An introduction to {$\Gamma$}-convergence}, vol.~8 of
  Progress in Nonlinear Differential Equations and their Applications,
  Birkh\"{a}user Boston, Inc., Boston, MA, 1993.

\bibitem{MasoMurat}
{\sc G.~Dal~Maso and F.~Murat}, {\em Asymptotic behaviour and correctors for
  {D}irichlet problems in perforated domains with homogeneous monotone
  operators}, Ann. Scuola Norm. Sup. Pisa Cl. Sci. (4), 24 (1997),
  pp.~239--290.

\bibitem{DePhilippis}
{\sc G.~De~Philippis, J.~Lamboley, M.~Pierre, and B.~Velichkov}, {\em
  Regularity of minimizers of shape optimization problems involving perimeter},
  J. Math. Pures Appl. (9), 109 (2018), pp.~147--181.

\bibitem{DePhilippisVelichkov}
{\sc G.~De~Philippis and B.~Velichkov}, {\em Existence and regularity of
  minimizers for some spectral functionals with perimeter constraint}, Appl.
  Math. Optim., 69 (2014), pp.~199--231.

\bibitem{DelZol}
{\sc M.~C. Delfour and J.-P. Zol\'{e}sio}, {\em Uniform fat segment and cusp
  properties for compactness in shape optimization}, Appl. Math. Optim., 55
  (2007), pp.~385--419.

\bibitem{Evans}
{\sc L.~C. Evans}, {\em Partial Differential Equations}, vol.~19 of Graduate
  Studies in Mathematics, American Mathematical Society, Providence, RI,
  second~ed., 2010.

\bibitem{GarHe}
{\sc H.~Garcke and C.~Hecht}, {\em Shape and topology optimization in {S}tokes
  flow with a phase field approach}, Appl. Math. Optim., 73 (2016), pp.~23--70.

\bibitem{GarHeHin}
{\sc H.~Garcke, C.~Hecht, M.~Hinze, C.~Kahle, and K.~F. Lam}, {\em Shape
  optimization for surface functionals in {N}avier-{S}tokes flow using a phase
  field approach}, Interfaces Free Bound., 18 (2016), pp.~219--261.

\bibitem{Garcke}
{\sc H.~Garcke, P.~H\"uttl, and P.~Knopf}, {\em Shape and topology optimization
  including the eigenvalues of an elastic structure: a multi-phase-field
  approach}, Adv. Nonlinear Anal., 11 (2022), pp.~159--197.

\bibitem{Gilbarg}
{\sc D.~Gilbarg and N.~S. Trudinger}, {\em Elliptic Partial Differential
  Equations of Second Order}, Classics in Mathematics, Springer-Verlag, Berlin,
  2001.
\newblock Reprint of the 1998 edition.

\bibitem{Gonzalez}
{\sc E.~Gonzalez, U.~Massari, and I.~Tamanini}, {\em On the regularity of
  boundaries of sets minimizing perimeter with a volume constraint}, Indiana
  Univ. Math. J., 32 (1983), pp.~25--37.

\bibitem{Harrell}
{\sc E.~M. Harrell, II, P.~Kr\"{o}ger, and K.~Kurata}, {\em On the placement of
  an obstacle or a well so as to optimize the fundamental eigenvalue}, SIAM J.
  Math. Anal., 33 (2001), pp.~240--259.

\bibitem{Hecht}
{\sc C.~Hecht}, {\em Shape and topology optimization in fluids using a phase
  field approach and an application in structural optimization}, PhD thesis,
  Universit\"at Regensburg, Mai 2014.

\bibitem{Henrot}
{\sc A.~Henrot}, {\em Extremum Problems for Eigenvalues of Elliptic Operators},
  Frontiers in Mathematics, Birkh\"{a}user Verlag, Basel, 2006.

\bibitem{Henrot2}
{\sc A.~Henrot}, {\em Shape Optimization and Spectral Theory}, De Gruyter Open
  Poland, 2017.

\bibitem{HenrotPierre}
{\sc A.~Henrot and M.~Pierre}, {\em Shape variation and optimization}, vol.~28
  of EMS Tracts in Mathematics, European Mathematical Society (EMS),
  Z\"{u}rich, 2018.
\newblock A geometrical analysis, English version of the French publication
  [MR2512810] with additions and updates.

\bibitem{Henrot-Zucco}
{\sc A.~Henrot and D.~Zucco}, {\em Optimizing the first {D}irichlet eigenvalue
  of the {L}aplacian with an obstacle}, Ann. Sc. Norm. Super. Pisa Cl. Sci.
  (5), 19 (2019), pp.~1535--1559.

\bibitem{slepc:2005}
{\sc V.~Hernandez, J.~E. Roman, and V.~Vidal}, {\em {SLEPc}: A scalable and
  flexible toolkit for the solution of eigenvalue problems}, {ACM} Trans. Math.
  Software, 31 (2005), pp.~351--362.

\bibitem{fenics_book}
{\sc A.~Logg, K.-A. Mardal, and G.~Wells}, eds., {\em {Automated Solution of
  Differential Equations by the Finite Element Method - The FEniCS Book}},
  vol.~84 of Lecture Notes in Computational Science and Engineering, Springer,
  2012.

\bibitem{Maggi}
{\sc F.~Maggi}, {\em Sets of finite perimeter and geometric variational
  problems}, vol.~135 of Cambridge Studies in Advanced Mathematics, Cambridge
  University Press, Cambridge, 2012.
\newblock An introduction to geometric measure theory.

\bibitem{Modica}
{\sc L.~Modica}, {\em The gradient theory of phase transitions and the minimal
  interface criterion}, Arch. Rational Mech. Anal., 98 (1987), pp.~123--142.

\bibitem{ModicaMortola}
{\sc L.~Modica and S.~Mortola}, {\em Un esempio di {$\Gamma
  ^{-}$}-convergenza}, Boll. Un. Mat. Ital. B (5), 14 (1977), pp.~285--299.

\bibitem{Murat-Simon}
{\sc F.~Murat and S.~Simon}, {\em {Etudes de probl{\`e}mes d'optimal design}},
  in Lecturenotes in Computer Science, vol.~41, Springer Verlag, Berlin, 1976,
  pp.~54--62.

\bibitem{OudetPhd}
{\sc E.~Oudet}, {\em Quelques r\'{e}sultats en optimisation de forme et
  stabilisation}, vol.~2002/36 of Pr\'{e}publication de l'Institut de Recherche
  Math\'{e}matique Avanc\'{e}e [Prepublication of the Institute of Advanced
  Mathematical Research], Universit\'{e} Louis Pasteur, D\'{e}partement de
  Math\'{e}matique, Institut de Recherche Math\'{e}matique Avanc\'{e}e,
  Strasbourg, 2002.
\newblock Th\`ese, l'Universit\'{e} de Strasbourg I (Louis Pasteur),
  Strasbourg, 2002.

\bibitem{Oudet}
\leavevmode\vrule height 2pt depth -1.6pt width 23pt, {\em Numerical
  minimization of eigenmodes of a membrane with respect to the domain}, ESAIM
  Control Optim. Calc. Var., 10 (2004), pp.~315--330.

\bibitem{Owen}
{\sc N.~C. Owen, J.~Rubinstein, and P.~Sternberg}, {\em Minimizers and gradient
  flows for singularly perturbed bi-stable potentials with a {D}irichlet
  condition}, Proc. Roy. Soc. London Ser. A, 429 (1990), pp.~505--532.

\bibitem{Pedersen}
{\sc N.~L. Pedersen}, {\em Maximization of eigenvalues using topology
  optimization}, Structural and Multidisciplinary Optimization, 20 (2000),
  pp.~2--11.

\bibitem{Rindler}
{\sc F.~Rindler}, {\em Calculus of variations}, Universitext, Springer, Cham,
  2018.

\bibitem{Schmidt}
{\sc S.~Schmidt and V.~Schulz}, {\em {Shape derivatives for general objective
  functions and the incompressible Navier--Stokes equations}}, Control and
  Cybernetics, 39 (2010), pp.~677--713.

\bibitem{Simon}
{\sc J.~Simon}, {\em Differentiation with respect to the domain in boundary
  value problems}, Numer. Funct. Anal. Optim., 2 (1980), pp.~649--687 (1981).

\bibitem{Sokolowski}
{\sc J.~Sokolowski and J.-P. Zolesio}, {\em {Introduction to Shape
  Optimization: Shape Sensitivity Analysis}}, vol.~16 of Springer Series in
  Computational Mathematics, Springer-Verlag Berlin Heidelberg, 1992.

\bibitem{Sternberg}
{\sc P.~Sternberg}, {\em The effect of a singular perturbation on nonconvex
  variational problems}, Arch. Rational Mech. Anal., 101 (1988), pp.~209--260.

\bibitem{WachterIPOPT}
{\sc A.~W\"{a}chter and L.~T. Biegler}, {\em On the implementation of an
  interior-point filter line-search algorithm for large-scale nonlinear
  programming}, Math. Program., 106 (2006), pp.~25--57.

\bibitem{Zeidler1}
{\sc E.~Zeidler}, {\em Nonlinear functional analysis and its applications, {I}:
  Fixed-point theorems}, Springer-Verlag, New York, 1986.

\end{thebibliography}


\end{document}